 \theoremstyle{plain}
 \newtheorem{Thm}{Theorem}[section]
 \newtheorem{Cor}[Thm]{Corollary}
 \newtheorem{Lemma}[Thm]{Lemma}
 \newtheorem{Prop}[Thm]{Proposition}
 \theoremstyle{definition}
 \newtheorem{Rem}[Thm]{Remark}
 \newtheorem{Ex}[Thm]{Example}
 \newtheorem{Defi}[Thm]{Definition}
 \numberwithin{Thm}{section}
 \numberwithin{equation}{section}
\def\qquad{\quad\quad}
\def\msy#1{{\mathbb #1}}
\def\C{{\msy C}}
\def\N{{\msy N}}
\def\P{{\msy P}}
\def\R{{\msy R}}
\def\Z{{\msy Z}}
\def\fa{{\mathfrak a}}
\def\fe{{\mathfrak e}}
\def\ff{{\mathfrak f}}
\def\fg{{\mathfrak g}}
\def\fh{{\mathfrak h}}
\def\fl{{\mathfrak l}}
\def\fm{{\mathfrak m}}
\def\fn{{\mathfrak n}}
\def\fp{{\mathfrak p}}
\def\fq{{\mathfrak q}}
\def\fs{{\mathfrak s}}
\def\cA{{\mathcal A}}
\def\cC{{\mathcal C}}
\def\cE{{\mathcal E}}
\def\cF{{\mathcal F}}
\def\cG{{\mathcal G}}
\def\cH{{\mathcal H}}
\def\cM{{\mathcal M}}
\def\cN{{\mathcal N}}
\def\cO{{\mathcal O}}
\def\cR{{\mathcal R}}
\def\cS{{\mathscr S}}
\def\cV{{\mathcal V}}
\def\cW{{\mathcal W}}
\def\to{\rightarrow}
\def\Im{\mathrm{Im}\,}
\def\Ad{\mathrm{Ad}}
\def\End{\mathrm{End}}
\def\ad{\mathrm{ad}}
\def\supp{\mathop{\rm supp}}
\def\Lie{\mathop{\rm Lie}}
\def\dotvar{\, \cdot\,}
\def\1{\mathbf{1}}
\def\bs{\backslash}
\def\spn{\mathrm{span}}
\def\Gr{\mathrm{Gr}}
\def\reg{\mathrm{reg}}
\def\nc{\mathrm{nc}}
 \title{On the little Weyl group of a real spherical space}
 \author{Job J.~Kuit\footnote{JJK was funded by the Deutsche Forschungsgemeinschaft grant 262362164}, Eitan Sayag}
 \date{}
\begin{document}
 \maketitle
\vspace{-20pt}
\begin{abstract}
In the present paper we further the study of the compression cone of a real spherical homogeneous space $Z=G/H$. In particular we provide a geometric construction of the little Weyl group of $Z$ introduced recently by Knop and Kr{\"o}tz. Our technique is based on a fine analysis of limits of conjugates of the subalgebra  $\Lie(H)$ along one-parameter subgroups in the Grassmannian of subspaces of $\Lie(G)$. The little Weyl group is obtained as a finite reflection group generated by the reflections in the walls of the compression cone.
\end{abstract}
 {\small\tableofcontents}

\section{Introduction}
\label{Section Introduction}
In this article  we present an elementary construction of the little Weyl group of a real homogeneous spherical space $Z=G/H$, which was first defined in \cite{KnopKrotz_ReductiveGroupActions}. Here $G$ is the group of real points of an algebraic reductive group defined over $\R$ and $H$ the set of real points of an algebraic subgroup. We assume that $H$ is real spherical, i.e., a minimal parabolic subgroup $P$ of $G$ admits an open orbit in $Z$. Our construction does not rely on algebraic geometry. Instead we further develop the limit construction of spherical subalgebras from \cite{KrotzKuitOpdamSchlichtkrull_InfinitesimalCharactersOfDiscreteSeriesForRealSphericalSpaces}. More specifically we use a fine analysis of limits of conjugates of the subalgebra  $\Lie(H)$ along one-parameter subgroups in the Grassmannian of subspaces of $\Lie(G)$.

Our main interest is in $G$-invariant harmonic analysis on a real spherical homogeneous space $Z$.
If $Z$ admits a positive $G$-invariant Radon measure, then the space $L^{2}(Z)$ of square integrable functions on $Z$ is a unitary representation for $G$. Recently large progress has been made towards a precise description of the Plancherel decomposition for real spherical spaces, see \cite{KnopKrotzSchlichtkrull_TemperedSpectrumOfRealSphericalSpace}, \cite{DelormeKrotzSouaifi_ConstantTerm}, \cite{KrotzKuitOpdamSchlichtkrull_InfinitesimalCharactersOfDiscreteSeriesForRealSphericalSpaces}, \cite{DelormeKnopKrotzSchlichtkrull_PlancherelTheoryForRealSphericalSpacesConstructionOfTheBernsteinMorphisms}, \cite{Delorme_ScatteringOperators} and \cite{KuitSayag_MostContinuousPart}.
From the last two mentioned articles it is seen that the little Weyl group plays an important role in the multiplicities with which representations occur in $L^{2}(Z)$. Such a relationship was earlier observed in the work of Sakellaridis and Venkatesh on $p$-adic spherical spaces in \cite{SakellaridisVenkatesh_PeriodsAndHarmonicAnalysisOnSphericalVarieties} and the description of the  Plancherel decomposition for real reductive symmetric spaces by Delorme,  \cite{Delorme_FormuleDePlancherelPourLesEspaceSymmetriques} and Van den Ban and Schlichtkrull \cite{vdBanSchlichtkrull_PlancherelDecompositionForReductiveSymmetricSpace_I}, \cite{vdBanSchlichtkrull_PlancherelDecompositionForReductiveSymmetricSpace_II}.
The theory we develop to construct the little Weyl group is central to our article \cite{KuitSayag_MostContinuousPart}, in which we determine the most continuous part of the Plancherel decomposition of a real spherical space.

For harmonic analysis it is important to understand the asymptotics of the generalized matrix-coefficients of $H$-invariant functionals on induced representations. An example of this is Theorem 5.1 in \cite{KrotzKuitOpdamSchlichtkrull_InfinitesimalCharactersOfDiscreteSeriesForRealSphericalSpaces}, where the asymptotics of an $H$-fixed linear functional is described in terms of a limit of translates of this functional. Such a limit-functional is no longer invariant under the action of $\Lie(H)$ or a conjugate of it, but rather by a corresponding limit of conjugates of $\Lie(H)$ in the Grassmannian of subspaces in $\Lie(G)$. In our approach the elements of the little Weyl group are obtained by examining such limit subalgebras.

\medbreak

We will now describe our construction and results. For convenience we assume that $Z$ is quasi-affine, i.e., a Zariski open subvariety of an affine variety.
For a point $z\in Z$ we write $\fh_{z}$ for its stabilizer subalgebra.
We fix a minimal parabolic subgroup $P$ of $G$ and a Langlands decomposition $P=MAN$ of $P$. Given a direction $X\in \fa:=\Lie(A)$ we consider the limit subalgebra
$$
\fh_{z,X}
=\lim_{t\to\infty}\Ad\big(\exp(tX)\big)\fh_{z},
$$
where the limit is taken in the Grassmannian. If $X$ is contained in the negative Weyl chamber with respect to $P$, then the limit $\fh_{z,X}$ is up to $M$-conjugacy the same for all $z\in Z$ with the property that $P\cdot z$ is open. Such a limit is called a horospherical degeneration of $\fh_{z}$. We fix a horospherical degeneration $\fh_{\emptyset}$, i.e., $\fh_{\emptyset}=\fh_{z,X}$ for some choice of $X$ in the negative Weyl chamber and $z\in Z$ for which $P\cdot z$ is open.
The $M$-conjugacy class of a subalgebra $\fs$ of $\fg$ we denote by $[\fs]$.
We define
$$
\cN_{\emptyset}
:=\{v\in N_{G}(\fa):\Ad(v)[\fh_{\emptyset}]=[\fh_{\emptyset}]\},
$$
which is a subgroup of $G$.
For $z\in Z$ we further define
\begin{equation}\label{eq def cV}
\cV_{z}
:=\{v\in N_{G}(\fa):[\fh_{z,X}]=\Ad(v)[\fh_{\emptyset}] \text{ for some } X\in\fa\}.
\end{equation}
and the set of cosets
$$
\cW_{z}
:=\cV_{z}/\cN_{\emptyset}
\subseteq G/\cN_{\emptyset}.
$$
The main result of the paper is that for a suitable choice of $z\in Z$ the above set admits the structure of a finite Coxeter group and
\medbreak
\noindent
{\em
The group $\cW_{z}$ is a finite crystallographic group, which can be identified with the little Weyl group as defined in \cite{KnopKrotz_ReductiveGroupActions}.
}
\medbreak
Our strategy is to obtain the little Weyl group as a subquotient of the Weyl group $W(\fg,\fa)$ by first determining a cone that can serve as a fundamental domain.
The perspective of limit subalgebras suggests that for a given point $z\in Z$ we should consider all directions $X\in\fa$ for which the limit $\fh_{z,X}$ is $M$-conjugate to $\fh_{\emptyset}$, i.e., we should consider the cone
$$
\cC_{z}
:=\{X\in\fa:[\fh_{z,X}]=[\fh_{\emptyset}] \}.
$$
If $P\cdot z$ is open, then $\cC_{z}$ contains the negative Weyl-chamber and therefore has non-empty interior. However, in general the cone $\cC_{z}$ strongly depends on the choice of $z$.  It turns out that when $\cC_{z}$ is maximal then it is a fundamental domain for a reflection group. Thus our first step is to identify points $z$ for which the cone $\cC_{z}$ is maximal. For this we introduce the concept of an {\em adapted point}.

The definition is motivated by the local structure theorem from \cite{KnopKrotzSchlichtkrull_LocalStructureTheorem}. The local structure theorem provides a canonical parabolic subgroup $Q$ so that $P\subseteq Q$. Let $\fl_{Q}$ be the Levi-subalgebra of $\fq:=\Lie(Q)$ that contains $\fa$. We denote by ${}^{\perp}$ the orthocomplement with respect to a $G$-invariant non-degenerate bilinear form on $\fg$. We say that a point $z\in Z$ is adapted to the Langlands decomposition $P=MAN$ if
\begin{enumerate}[(i)]
\item
$P\cdot z$ is open
\item there exists an $X\in\fa\cap\fh_{z}^{\perp}$ so that $Z_{\fg}(X)=\fl_{Q}$.
\end{enumerate}
It follows from the local structure theorem that every open $P$-orbit in $Z$ contains adapted points.
Adapted points are special in the sense that their stabilizer subgroups $H_{z}$ intersect with $P$ in a clean way:
$$
P\cap H_{z}
=(M\cap H_{z})(A\cap H_{z})(N\cap H_{z}).
$$
In fact $A\cap H_{z}$ and $N\cap H_{z}$ are the same for all adapted points $z$ in $Z$.
In the present article adapted points play a fundamental role because their cones $\cC_{z}$ are of maximal size and identical. Therefore, $\cC:=\cC_{z}$, where $z$ is adapted, is an invariant of $Z$. It is called the {\it compression cone of $Z$}. The closure $\overline{\cC}$ of the compression cone is a finitely generated convex cone. In general it is not a proper cone in the sense that it may contain a non-trivial subspace; in fact $\fa_{\fh}:=\Lie(A\cap H_{z})$ is contained in the edge of $\overline{\cC}$. We denote the projection $\fa\to\fa/\fa_{\fh}$ by $p_{\fh}$. The cone $p_{\fh}(\overline{\cC})$ is finitely generated convex cone in $\fa/\fa_{\fh}$.
It is this cone that will be a fundamental domain for the little Weyl group.

The passage from the cone $p_{\fh}(\overline{\cC})$ to the reflection group requires some multiplication law among certain cosets of $W(\fg,\fa)$.
For this we consider $N_{G}(\fa)$-conjugates of $\fh_{\emptyset}$ that appear as a limit $\fh_{z,X}$, i.e., we consider the set $\cV_{z}$ defined in (\ref{eq def cV}).
If $z\in Z$ is adapted and $v\in\cV_{z}$, then $v^{-1}\cdot z$ is again adapted. If moreover, $v'\in\cV_{v^{-1}\cdot z}$ then there exists an $X\in \fa$ so that
$$
[\fh_{z,\Ad(v)X}]
=\Ad(v)[\fh_{v^{-1}\cdot z,X}]
=\Ad(vv')[\fh_{\emptyset}],
$$
and hence $vv'\in\cV_{z}$. If $\cV_{v^{-1}\cdot z}$ would be the same for all $v\in\cV_{z}$, then this would define a product map on $\cV_{z}$. However, a priori this is not the case for all adapted points $z$.
It turns out that this can be achieved by restricting further to {\em admissible points}, i.e., adapted points $z$ for which the limits $\fh_{z,X}$ are conjugate to $\fh_{\emptyset}$ for all $X\in\fa$ outside of a finite set of hyperplanes.  One of the main results in this article is that admissible points exist; in fact every open $P$-orbit in $Z$ contains admissible points. Moreover, the sets $\cV:=\cV_{z}$ are the same for all admissible points.

The set $\cV$ is contained in
$$
\cN
:=N_{G}(\fa)\cap N_{G}(\fl_{Q,\nc}+\fa_{\fh}),
$$
where $\fl_{Q,\nc}$ is the sum of all non-compact simple ideals in $\fl_{Q}$.
The group $\cN_{\emptyset}$ is a normal subgroup of $\cN$ and $\cN/\cN_{\emptyset}$ is finite. We define
\begin{equation}\label{eq Def cW}
\cW
:=\cV/\cN_{\emptyset}
\subseteq\cN/\cN_{\emptyset}.
\end{equation}
Now $\cW$ is finite and closed under multiplication in the group $\cN/\cN_{\emptyset}$.
It therefore is a group. We now come to our main theorem, see Theorem \ref{Thm cW is the little Weyl group}.

\begin{Thm}\label{Thm Main Theorem}
The following assertions hold true.
\begin{enumerate}[(i)]
\item The group $\cW$ is a subgroup of $\cN/\cN_{\emptyset}$, and as such it is a subquotient of the Weyl group $W(\fg,\fa)$ of the root system of $\fg$ in $\fa$.
\item The group $\cW$ acts faithfully on $\fa/\fa_{\fh}$ as a finite crystallographic group, i.e. it is a finite group generated by reflections $s_{1},\dots,s_{l}$ and for each $i,j$ the order $m_{i,j}$ of $s_{i}s_{j}$ is contained in the set $\{1,2,3,4,6\}$.
\item The cone $p_{\fh}(\overline{\cC})$ is a fundamental domain for the action of $\cW$ on $\fa/\fa_{\fh}$. Moreover, $\cW$ is generated by the simple reflections in the walls $p_{\fh}(\overline{\cC})$.
\end{enumerate}
In fact, $\cW$ is equal to the little Weyl group of $Z$ as defined in \cite[Section 9]{KnopKrotz_ReductiveGroupActions}.
\end{Thm}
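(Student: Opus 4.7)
The plan is to establish the three assertions in order, leveraging the fact that $\cV=\cV_{z}$ is independent of the choice of admissible point $z$. First I would address (i). Fix an admissible $z$ and take $v\in\cV$. Since $\Ad(v)\fh_{\emptyset}$ arises as a limit $\fh_{z,X}$ along some direction, one checks that $v^{-1}\cdot z$ is again admissible, so $\cV_{v^{-1}\cdot z}=\cV$; together with the identity
$$
[\fh_{z,\Ad(v)X}]
=\Ad(v)[\fh_{v^{-1}\cdot z,X}]
=\Ad(vv')[\fh_{\emptyset}]
$$
already exhibited in the introduction, this shows $\cV$ is closed under the product. Since $\cN_{\emptyset}$ is a normal subgroup of $\cN$ of finite index and $\cV\subseteq\cN$, the quotient $\cW=\cV/\cN_{\emptyset}$ is a finite submonoid of the finite group $\cN/\cN_{\emptyset}$ and hence a subgroup. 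The subquotient statement then follows from $\cN\subseteq N_{G}(\fa)$ together with $Z_{G}(\fa)=MA\subseteq\cN_{\emptyset}$, so $\cN/\cN_{\emptyset}$ is a quotient of a subgroup of $N_{G}(\fa)/Z_{G}(\fa)\subseteq W(\fg,\fa)$.

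Parts (ii) and (iii) rest on showing that the translates $\{\Ad(v)p_{\fh}(\overline{\cC})\}_{v\in\cW}$ tile $\fa/\fa_{\fh}$ in a face-to-face manner. Three ingredients are needed. First, disjointness of interiors: for admissible $z$ the class $[\fh_{z,X}]$ equals $\Ad(v)[\fh_{\emptyset}]$ on the interior of $\Ad(v)p_{\fh}(\overline{\cC})$ and these classes are distinct for different cosets $v\in\cW$, so the tiles meet only in boundary faces. Second, and this is the technical heart, for each codimension-one wall $F$ of $p_{\fh}(\overline{\cC})$ one must construct an element $s_{F}\in\cV$ whose induced action on $\fa/\fa_{\fh}$ is the orthogonal reflection in $F$; this requires tracking how $\fh_{z,X}$ degenerates as $X$ approaches $F$ from the interior and producing an explicit element of $N_{G}(\fa)$ that conjugates the two neighbouring limits, using the one-parameter analysis in the Grassmannian developed in \cite{KrotzKuitOpdamSchlichtkrull_InfinitesimalCharactersOfDiscreteSeriesForRealSphericalSpaces}. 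Once the $s_{F}$ are available, the standard Tits-Bourbaki theory of chambers shows that they generate a finite reflection group, that $p_{\fh}(\overline{\cC})$ is its fundamental domain, that the tiles exhaust $\fa/\fa_{\fh}$, and that this reflection group is exactly $\cW$, yielding (iii). Faithfulness of the action in (ii) follows because any element of $\cW$ acting trivially on $\fa/\fa_{\fh}$ would preserve $p_{\fh}(\overline{\cC})$ and hence lie in $\cN_{\emptyset}$. Crystallographicity is then inherited: each $s_{F}$ lies in $W(\fg,\fa)$, which preserves the coroot lattice of $\fa$, so pairwise products automatically have orders in $\{1,2,3,4,6\}$.

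To identify $\cW$ with the little Weyl group of \cite{KnopKrotz_ReductiveGroupActions}, I would compare the fundamental-domain characterisations. Knop and Krötz produce a finite reflection group on $\fa/\fa_{\fh}$ whose distinguished chamber coincides, under the compression-cone interpretation of horospherical asymptotics, with $p_{\fh}(\overline{\cC})$; since a finite reflection group on a real vector space is determined by a fundamental chamber, the two groups must agree. The principal obstacle I foresee is the wall-reflection construction in the second paragraph: producing an honest element of $N_{G}(\fa)$, rather than merely a statement about $M$-conjugacy classes of limit subalgebras, demands the delicate Grassmannian-limit technology that is the methodological novelty of this paper, and without this step neither the tiling, the faithfulness of the action, nor the comparison with the Knop-Krötz construction can be carried through cleanly.
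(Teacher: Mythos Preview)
Your outline for (i) matches the paper's Proposition~\ref{Prop cW is subgroup} essentially verbatim, and your identification of the wall-reflection construction as the technical heart of (ii)--(iii) is correct.

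Where your proposal diverges from the paper is in how the wall reflections are produced and shown to act correctly on $\fa/\fa_{\fh}$. You propose a direct Grassmannian-limit analysis near each wall; the paper instead takes a two-stage route. First it passes to the boundary degeneration $Z_{\cO,\cF}$ attached to the wall $\cF$, whose compression cone is an open half-space, so its little Weyl group $\cW_{\cF}$ has order $2$ and embeds in $\cW$ (Lemma~\ref{Lemma cW_F subgroup of cW}); this already forces the nontrivial element to act as a reflection on $\fa/\fa_{E}$, the quotient by the \emph{edge} of $\overline{\cC}$ (Lemma~\ref{Lemma cW_F is reflection group on fa/fa_E}), and chamber geometry then gives that $\cW$ is a reflection group on $\fa/\fa_{E}$ with fundamental domain $p_{E}(\overline{\cC})$. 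The second, more delicate stage is to lift this to $\fa/\fa_{\fh}$: via an explicit classification of the wall directions (Appendix, Proposition~\ref{Prop Form of simple spherical roots}) the paper shows each wall hyperplane is $\ker\alpha$ for $\alpha\in\Sigma\cup2\Sigma$, or $\ker(\beta+\gamma)$ with $\beta,\gamma$ orthogonal roots and $\spn(\beta^{\vee},\gamma^{\vee})\cap\fa_{\fh}\neq\{0\}$; in each case one exhibits an element of $W(\fg,\fa)$ normalising $\fa_{\fh}$ and inducing the required reflection on $\fa/\fa_{\fh}$, then matches it with the nontrivial element of $\cW_{\cF}$ via a $\rho_{Q}$-weight comparison (Lemma~\ref{Lemma cW_alpha generated by simple reflection in alpha}). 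Your proposal does not separate the two quotients $\fa/\fa_{E}$ and $\fa/\fa_{\fh}$, and without the Appendix classification it is unclear how you would guarantee that the order-$2$ element acts trivially on $\fa_{E}/\fa_{\fh}$.

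There is also a gap in your crystallographicity argument. You invoke the coroot lattice in $\fa$ and pass to $\fa/\fa_{\fh}$; but the projection of $\Z\Sigma^{\vee}$ to $\fa/\fa_{\fh}$ need not be discrete, since $\fa_{\fh}\cap\Z\Sigma^{\vee}$ need not have full rank in $\fa_{\fh}$. The paper works on the dual side instead: $(\fa/\fa_{E})^{*}$ sits inside $\fa^{*}$, and its intersection with the root lattice $\Z\Sigma$ is a full-rank lattice preserved by $\cW$ (since $\cN\subseteq N_{G}(\fa)$ preserves $\Z\Sigma$), whence Bourbaki's criterion applies.
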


For the proof of the theorem we use two results from the literature. The first is the local structure theorem from \cite{KnopKrotzSchlichtkrull_LocalStructureTheorem}, which we use to establish the existence of adapted points. The second is the polar decomposition from \cite{KnopKrotzSayagSchlichtkrull_SimpleCompactificationsAndPolarDecomposition}, which we use to describe the closure of $\Ad(G)\fh_{z}$ in the Grassmannian. Besides these two theorems the proof is essentially self-contained. It is based on an analysis of the limits $\fh_{z,X}$, where $z\in Z$ is adapted and $X\in \fa$.

The heart of the proof is to show the existence of admissible points. For this we first classify the adapted points and study the correspondence between adapted points in $Z$ and in its boundary degenerations. For the horospherical boundary degeneration $G/H_{\emptyset}$ of $Z$ the existence of such points is clear, but it cannot be used to deduce anything for $Z$. We thus consider the second most degenerate boundary degenerations, for which the existence of admissible points can be proven by a non-trivial direct computation. The existence of admissible points in $Z$ is then proven by a reduction to these boundary degenerations.
The realization of $\cW$ as a reflection group is then obtained from the natural relation between the little Weyl group of a space and its degenerations.

\medbreak
For convenience of the reader we give a short description of each section in this paper.
In \S \ref{Section Notation and Assumptions} we recall the definition of a real spherical space and introduce our notations and basic assumptions. We then properly start in \S \ref{Section Adapted points} by defining the notion of adapted points. We further prove several properties of adapted points, in particular that they satisfy the main conclusion from the local structure theorem, and we provide a kind of parametrization.
In the short section \S \ref{Section Description of fh} we provide a description of the stabilizer subalgebra $\fh_{z}$ of an adapted point $z$  in terms of a linear map $T_{z}$. This description is a direct generalization of Brion's description in the complex case (\cite[Proposition 2.5]{Brion_VersUneGeneralisationDesEspacesSymmetriques}) and was also used in \cite{KnopKrotzSayagSchlichtkrull_SimpleCompactificationsAndPolarDecomposition}. In the following section, Section \S \ref{Section Limits of subspaces}, we discuss limits in the Grassmannian of $k$-dimensional subspaces of the Lie algebra $\fg$ and we collect all properties of such limits that will be needed in the following sections.
We introduce the compression cone in \S \ref{Section Compression cone}. The main result in the section is that the compression cone $\cC_{z}$ is of maximal size if $z$ is adapted and does not depend on the choice of the adapted point. It therefore is an invariant of $Z$.

In \S \ref{Section Limits and open orbits} we describe the relation between limits subalgebras, open $P$-orbits in $Z$ and the compression cone. This description gives the first indication that the little Weyl group may be constructed from such limits.
The sections \S \ref{Section Limits of fh} and \S \ref{Section Adapted points in boundary degenerations} serve as a preparation for the proof of the existence of admissible points. In \S \ref{Section Limits of fh} we describe the $\Ad(G)$-orbits in the closure of $\Ad(G)\fh_{z}$ in the Grassmannian. Each of the subalgebras in this closure gives rise to a boundary degeneration of $Z$, i.e., a real spherical homogeneous space which is determined by a subalgebra contained in the closure of $\Ad(G)\fh_{z}$. In \S \ref{Section Adapted points in boundary degenerations} we show that there is a correspondence between adapted points in $Z$ and adapted points in a boundary degeneration. After these preparations we can prove the existence of admissible points in \S \ref{Section Admissible points}. This is done through a reduction to the same problem for the second-most degenerate boundary degenerations of $Z$.

In \S \ref{Section Little Weyl group} we finally define the set $\cW$ by (\ref{eq def cV}) and (\ref{eq Def cW}) using an admissible point $z$. We then prove that $\cW$ has the properties listed in Theorem \ref{Thm Main Theorem}. It is relatively easy to see that $\cW$ is a group acting on $\fa/\fa_{\fh}$. For the proof that it is generated by reflections an explicit computation on the walls of the compression cone is needed.
This computation is performed in Section \ref{Section Walls of Compression Cone}.

In Section \ref{Section Spherical root system} we prove that the group $\cW$ is a crystallographic group and show how to attach to it a reduced root system, the spherical root system.

The technique developed in the body of the paper works under the assumption that $Z$ is quasi-affine.  In Section \ref{Section Reduction to quasi-affine spaces} we extend many of the concepts that were studied in the previous sections to any real spherical space, in particular we construct the little Weyl group $\cW$ and hence the reduced root system $\Sigma_{Z}$ in this generality. This is done by a standard trick that is based on a theorem of Chevalley.

\medbreak

We end this introduction with a short account of related works on the little Weyl group.
Recall that an algebraic ${\bf G}$-variety ${\bf Z}$ defined over $k=\C$ is called spherical if a Borel subgroup of ${\bf G}$ defined over $k$ admits an open orbit in ${\bf Z}$. Here ${\bf G}$ is an algebraic connected reductive group defined over $k$.
In \cite{Brion_VersUneGeneralisationDesEspacesSymmetriques} Brion first introduced the compression cone for complex spherical varieties and showed that the asymptotic behavior of such varieties is determined by a root system: the spherical root system. The little Weyl group is the Weyl group for this root system.

By now, there are several constructions of the little Weyl group for complex varieties. Next to the construction of Brion, Knop gave a vast generalization. In fact, in \cite{Knop_WeylgruppeUndMomentabbildung} he constructed the little Weyl group for an arbitrary irreducible ${\bf G}$-variety and connected it to the ring of ${\bf G}$-invariant differential operators on ${\bf Z}$, see \cite{Knop_Harish-ChandraHomomorphism}. We also mention here a second construction by Knop in \cite{Knop_AsymptoticBehaviorOfInvariantCollectiveMotion} and the explicit calculation of these groups by Losev \cite{Losev_ComputationsOfWeylGroups}.

In the case where $k$ is an algebraically closed field of characteristic different from $2$, Knop gave in \cite{Knop_SphericalRootsOfSphericalVarieties} a construction of the little Weyl group and the spherical root system. The technique is close in spirit to Brion's approach for $k=\C$. Moving to fields that are not necessarily algebraically closed, a natural concept is that of a $k$-spherical variety, i.e., a ${\bf G}$-variety ${\bf Z}$ defined over $k$ for which a minimal parabolic subgroup ${\bf P}$ of ${\bf G}$ defined over $k$ admits an open orbit.
In \cite{KnopKrotz_ReductiveGroupActions}, the authors assume that $k$ is of characteristic $0$ and use algebraic geometry to define the little Weyl group of such a space $Z={\bf Z}(k)$.
The construction is based on algebra geometric invariants attached to the variety ${\bf Z}$, especially the cone of ${\bf G}$-invariant central valuations on ${\bf Z}$,  as is the case for Knop's construction for $k=\C$ in \cite{Knop_WeylgruppeUndMomentabbildung}.
This valuation cone serves as a fundamental domain for the action of $W_{\bf{Z}}$.

The compression cone plays an important role in this work. It was first considered for real spherical spaces in \cite{KnopKrotzSayagSchlichtkrull_SimpleCompactificationsAndPolarDecomposition} by employing the local structure theorem of \cite{KnopKrotzSchlichtkrull_LocalStructureTheorem}. In \cite{Brion_VersUneGeneralisationDesEspacesSymmetriques} Brion showed that in the complex case the closure of the compression cone may be identified with the valuation cone. This argument generalizes to real spherical spaces.

The compression cone can be viewed as a dual object to the weight-monoid used by algebraic geometers to study spherical spaces.
In the present work the compression cone is defined purely in terms of limits of subalgebras in the Grassmannian and is from our point of view better suited for application in harmonic analysis, like in \cite{KrotzKuitOpdamSchlichtkrull_InfinitesimalCharactersOfDiscreteSeriesForRealSphericalSpaces}.
We mention here our article \cite{KuitSayag_MostContinuousPart}, in which we determine the Plancherel decomposition of the most continuous part of $L^{2}(Z)$.
A major step towards this is the construction of $H$-fixed functionals on principal series representations. For the analysis of $P$-orbits that is needed for this, we use the theory of  limits of subalgebras.

\medbreak

Our approach to the little Weyl group is closest to that taken by Brion in his article \cite{Brion_VersUneGeneralisationDesEspacesSymmetriques} on complex spherical spaces. However, there are  notable differences. Brion studies the relation between the closure of $\Ad(G)\fh_{z}$ in the Grassmannian and the wonderful compactification. In our approach compactifications do not enter directly. Further, Brion uses explicit computations related to the structure of $\fh_{z}$ for a well chosen point $z$. Some of these computations are adapted in Section \ref{Section Walls of Compression Cone} to the case of real spherical spaces. It appears that Brion's computations do not generalize easily to real spherical spaces as they rely on the fact that root spaces are $1$-dimensional. We therefore put more attention to the compression cone and the limits $\fh_{z,X}$ for generic elements $X\in\fa$ and adapted points $z\in Z$. We do not fix a specific point $z$, but rather study the dependence of compression cones and limit subalgebras $\fh_{z,X}$ on adapted points $z$. In particular we obtain the group law for the little Weyl group from these considerations as explained above, rather than from explicit computations.

\medbreak

We thank Bernhard Kr{\"o}tz,  Friedrich Knop and Vladimir Zhgoon for various discussions on the subject matter of this paper.

\section{Notation and assumptions}\label{Section Notation and Assumptions}
Let $\underline{G}$ be a reductive algebraic group defined over $\R$ and let $G$ be an open subgroup of $\underline{G}(\R)$. Let $H$ be a closed subgroup of $G$. We assume that there exists a subgroup $\underline{H}$ of $\underline{G}$ defined over $\R$ so that $H=G\cap\underline{H}(\C)$. We define
$$
Z
:=G/H
$$
We fix a minimal parabolic subgroup $P$ of $G$ and a Langlands decomposition $P=MAN$. We assume that $Z$ is real spherical, i.e., there exists an open $P$-orbit in $Z$.

Until Section \ref{Section Reduction to quasi-affine spaces} we assume that $Z$ is quasi-affine. The assumption is used in one place only, namely for Proposition \ref{Prop Local structure theorem}. In Section \ref{Section Adapted points} we will define a notion of adapted points in $Z$. Proposition \ref{Prop Local structure theorem}, and therefore the assumption that $Z$ is quasi-affine, is needed to show that adapted points exist. In Section \ref{Section Reduction to quasi-affine spaces} we will consider real spherical spaces $Z$ that are not necessarily quasi-affine and describe a reduction to the quasi-affine case.

Groups are indicated by capital roman letters. Their Lie algebras are indicated by the corresponding lower-case fraktur letter. If $z\in Z$, then the stabilizer subgroup of $Z$ is indicated by $H_{z}$ and its Lie algebra by $\fh_{z}$.

The root system of $\fg$ in $\fa$ we denote by $\Sigma$. If $Q$ is a parabolic subgroup containing $A$ we write $\Sigma(Q)$ for the subset of $\Sigma$ of roots that occur in the nilpotent radical of $\fq$. We write $\Sigma^{+}$ for $\Sigma(P)$. We further write $\fa^{-}$ for the open negative Weyl chamber, i.e.,
$$
\fa^{-}
:=\{X\in\fa:\alpha(X)<0 \text{ for all }\alpha\in\Sigma^{+}\}.
$$

We fix a Cartan involution $\theta$ of $G$ that stabilizes $A$.
If $Q$ is a parabolic subgroup containing $A$, then we write $\overline{Q}$ for the opposite parabolic subgroup containing $A$, i.e., $\overline{Q}=\theta(Q)$. The unipotent radical of $Q$ we denote by $N_{Q}$. We further agree to write $\overline{N}_{Q}$ for $N_{\overline{Q}}$.

We fix an $\Ad(G)$-invariant bilinear form $B$ on $\fg$ so that $-B(\dotvar,\theta \dotvar)$ is positive definite. For $E\subseteq \fg$, we define
$$
E^{\perp}
=\big\{X\in\fg:B(X,E)=\{0\}\big\}.
$$

If $E$ is a finite dimensional real vector space, then we write $E_{\C}$ for its complexification $E\otimes_{\R}\C$. If $S$ is an algebraic subgroup of $G$, then we write $S_{\C}$ for the complexification of $S$.

\section{Adapted points}
\label{Section Adapted points}
In this section we introduce the notion of an adapted point in $Z$.
We further parameterize the set of adapted points and end the section with some applications which will be of use in the following sections.

\medbreak

We recall that we have fixed a minimal parabolic subgroup $P$ and a Langlands decomposition $P=MAN_{P}$ of $P$.
For $z\in Z$, let $H_{z}$ be the stabilizer of $z$ in $G$ and let $\fh_{z}$ be the Lie algebra of $H_{z}$.

The following proposition is a reformulation of the so-called local structure theorem \cite[Theorem 2.3]{KnopKrotzSchlichtkrull_LocalStructureTheorem}.

\begin{Prop}\label{Prop Local structure theorem}
There exists a parabolic subgroup $Q$ with $P\subseteq Q$, and a Levi decomposition $Q=L_{Q}N_{Q}$ with $A\subseteq L_{Q}$, so that for every open $P$-orbit $\cO$ in $Z$
$$
Q\cdot \cO=\cO,
$$
and there exists a $z\in \cO$, so that the following hold,
\begin{enumerate}[(i)]
 \item\label{Prop Local structure theorem - item 1}
          $Q\cap H_{z}=L_{Q}\cap H_{z}$,
  \item\label{Prop Local structure theorem - item 2}
          the map
          $$
          N_{Q}\times L_{Q}/L_{Q}\cap H_{z}\to Z,
            \qquad\big(n,l(L_{Q}\cap H_{z})\big)\mapsto nl\cdot z
          $$
          is a diffeomorphism onto $\cO$,
  \item\label{Prop Local structure theorem - item 3}
          the sum $\fl_{Q,\nc}$ of all non-compact simple ideals in $\fl_{Q}$ is contained in $\fh_{z}$,
  \item\label{Prop Local structure theorem - item 4}
          there exists an $X\in\fa\cap\fh_{z}^{\perp}$ so that $L_{Q}=Z_{G}(X)$ and $\alpha(X)>0$ for all $\alpha\in\Sigma(Q)$.
\end{enumerate}
\end{Prop}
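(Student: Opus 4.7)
The plan is to extract the proposition from Theorem~2.3 of \cite{KnopKrotzSchlichtkrull_LocalStructureTheorem} (the local structure theorem of Knop--Kr\"otz--Schlichtkrull) together with a close reading of its constructive proof. The parabolic $Q$ is not something to be guessed; it is produced by that theorem and is intrinsic to the pair $(G,Z)$, so that the same $Q$ (with the same standard Levi $L_{Q}\supseteq A$) serves simultaneously for every open $P$-orbit in $Z$.

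First I would fix an arbitrary open $P$-orbit $\cO$ in $Z$ and apply the cited theorem to $\cO$. This directly produces a point $z\in\cO$, a parabolic $Q\supseteq P$ with Levi decomposition $Q=L_{Q}N_{Q}$ satisfying $A\subseteq L_{Q}$, and the clean intersection $Q\cap H_{z}=L_{Q}\cap H_{z}$ of (i). The diffeomorphism in (ii) is the parameterization of $\cO$ built from this twisted product. Once (ii) holds, the equality $Q\cdot\cO=\cO$ is immediate, as the image of $N_{Q}\times L_{Q}/(L_{Q}\cap H_{z})\to Z$ is manifestly stable under $Q=L_{Q}N_{Q}$. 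Because $Q$ depends only on $Z$ and not on the choice of $\cO$, the same $Q$ works for any other open $P$-orbit $\cO'$, and applying the theorem to $\cO'$ produces a corresponding $z'\in\cO'$ with the same properties.

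Property (iii), the inclusion $\fl_{Q,\nc}\subseteq\fh_{z}$, is where the structure of the construction really bites. Via (ii) the $L_{Q}$-action on the slice $L_{Q}\cdot z\cong L_{Q}/(L_{Q}\cap H_{z})$ models the $L_{Q}$-piece of $\cO$, and the local structure theorem is set up precisely so that this quotient is abelian modulo a compact factor; equivalently $L_{Q}\cap H_{z}$ contains the non-compact semisimple part $L_{Q,\nc}$, from which (iii) follows. This is the essential content of the constructive definition of $Q$ in the reference: $Q$ is chosen as the largest parabolic containing $P$ for which such a splitting is available.

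The only point that is not formal from the statement of \cite[Thm.~2.3]{KnopKrotzSchlichtkrull_LocalStructureTheorem} is property (iv), and this will be the main obstacle. Since $L_{Q}$ is a standard Levi of $G$, we automatically have $L_{Q}=Z_{G}(X)$ for every $X$ in the relative interior of the face of $\overline{\fa^{-}}$ associated to $Q$ (with signs flipped so that $\alpha(X)>0$ on $\Sigma(Q)$), which yields an entire open cone of candidate elements. The non-trivial claim is that this cone meets $\fh_{z}^{\perp}$. I would obtain this by tracing through the constructive proof in the reference: there $Q$ is produced as $Z_{G}$ of a particular element $X\in\fa$ arising from a moment-map/cotangent argument, which by construction is $B$-orthogonal to $\fh_{z}$. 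This same $X$ then simultaneously realises $L_{Q}=Z_{G}(X)$, positivity on $\Sigma(Q)$, and orthogonality to $\fh_{z}$, giving (iv). The quasi-affineness hypothesis on $Z$ enters only to legitimise invoking the cited theorem in this generality.
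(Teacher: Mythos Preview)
Your proposal is correct and takes essentially the same approach as the paper, which simply states that the proposition ``follows directly from \cite[Theorem 2.3]{KnopKrotzSchlichtkrull_LocalStructureTheorem} and its constructive proof'' without giving further details; your outline is a faithful expansion of that citation. One small correction: you write that the quasi-affineness hypothesis ``enters only to legitimise invoking the cited theorem in this generality,'' but the paper's own Remark after the proposition is more precise---Theorem~2.3 itself applies without quasi-affineness, and the hypothesis is needed specifically to extract property~(iv) (the existence of $X\in\fa\cap\fh_{z}^{\perp}$ with $L_{Q}=Z_{G}(X)$ and $\alpha(X)>0$ on $\Sigma(Q)$) from the constructive proof.
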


\begin{Rem}\,
\begin{enumerate}[(i)]
\item The point $z\in\cO$ with the properties asserted in the above proposition is in general not unique. On the other hand the parabolic subgroup $Q$ and its Levi-decomposition are uniquely determined by $\cO$. (Of course, the parabolic subgroup $Q$ and the Levi decomposition of $Q$ do depend on the choice of the minimal parabolic $P$ and its Langlands decomposition $P=MAN_{P}$, but these choices we have assumed to be fixed.)
\item Property (\ref{Prop Local structure theorem - item 4}) in Proposition \ref{Prop Local structure theorem} is not explicitly stated in \cite[Theorem 2.3]{KnopKrotzSchlichtkrull_LocalStructureTheorem}, but does follow from the proof of the theorem if $Z$ is quasi-affine.
     For completeness, we give here an account of how this follows.

   Let $z_{0}\in Z$ be so that $P\cdot z_{0}$ is open.
   In the proof of \cite[Theorem 2.3]{KnopKrotzSchlichtkrull_LocalStructureTheorem} an iterative process is used to produce a sequence of parabolic subgroups $G=Q_{0}\supseteq Q_{1}\supseteq Q_{2}\supseteq \dots$, each containing $P$. Further, for each $i\in\N$ a hyperbolic element $X_{i}\in (\fl_{i-1}\cap \fh_{z_{0}})^{\perp}$ is constructed, with the property that $L_{i}:=Z_{G}(X_{i})$ is a Levi subgroup of $Q_{i}$ and the restriction of $\ad(X_{i})$ to $\fn_{Q_{i}}$ has only strictly negative eigenvalues. Since the sequence of parabolic subgroups descends it stabilizes, and hence there exists a parabolic subgroup $Q$ with $Q_{i}=Q$ for sufficiently large $i\in\N$. This is the parabolic subgroup $Q$ in Proposition \ref{Prop Local structure theorem}.

   Since $A\subseteq Q\subseteq Q_{i}$ and $Z_{G}(X_{i})$ is a Levi subgroup of $Q_{i}$, there exists an $n\in N_{Q_{i}}$ so that $A\subseteq L_{i}:=nZ_{G}(X_{i})n^{-1}=Z_{G}\big(\Ad(n)X_{i}\big)$. Moreover, as $\Ad(n)X_{i}$ is an hyperbolic element in $\fl_{i}$, there exists an $l\in L_{i}$ so that $X_{i}':=\Ad(ln)X_{i}\in \fa$. We set $z_{i}:=ln\cdot z_{0}\in P\cdot z_{0}$. Now $L_{i}=Z_{G}(X_{i}')$ and $X_{i}'\in (\fa\cap \fh_{z_{i}})^{\perp}$.
   We set $z:=z_{i}$, $L_{Q}:=L_{i}$ and $X':=X_{i}$ for some $i\in\N$ with $Q_{i}=Q$. Then $Q$, $L_{Q}$ and $z$ satisfy (\ref{Prop Local structure theorem - item 1}), (\ref{Prop Local structure theorem - item 2}) and (\ref{Prop Local structure theorem - item 3}) in the proposition.

 Each iteration uses a finite dimensional representation as input. To be more precise, for the $i$-th iteration a finite dimensional representation of $L_{i-1}$ is used as input with the property that it contains a cyclic vector whose stabilizer is equal to $L_{i-1}\cap H_{z_{i-1}}$. As $Z$ is assumed to be quasi-affine, the theorem of Chevalley guarantees the existence of such representations. The representation that is used can freely be chosen from the set of representations with the mentioned property. If for the first iteration a representation is chosen with the additional requirement that it contains a lowest weight that does not vanish on any of the $\alpha^{\vee}$ with $\alpha\in \Sigma(Q)$, then the process yields $Q_{1}=Q$, and hence only one iteration is needed. Moreover, in this case $X:=-X_{1}'$ has the property listed in (\ref{Prop Local structure theorem - item 4}). It thus remains to show that there exists a finite dimensional representation of $G$ with a lowest weight that does not vanish on $\alpha^{\vee}$ for every $\alpha\in \Sigma(Q)$ and that contains a cyclic vector whose stabilizer is equal to $H_{z}$.

 It follows from \cite[Lemma 3.4 \& Remark 3.5]{KnopKrotzSchlichtkrull_LocalStructureTheorem} that the lowest weights of irreducible finite dimensional $H_{z}$-spherical representations span $\big(\fa/(\fa\cap\fh_{z})\big)^{*}$. Therefore, the lattice of lowest weights of  $H_{z}$-spherical representations contains a weight $\lambda$ so that
$\lambda(\alpha^{\vee})\neq 0 $ for all $\alpha\in\Sigma$ with $\alpha^{\vee}\notin\fa\cap\fh_{z}$. Let $X'\in (\fa\cap\fh_{z})^{\perp}$ be as above. As the centralizer of $X'$ is equal to $L_{Q}$, it follows that for every $\alpha\in\Sigma$ we have $\alpha^{\vee}\in \fa\cap\fh_{z}$ only if $\fg_{\alpha}\subseteq\fl_{Q}$. Therefore, there exists an irreducible finite dimensional $H_{z}$-spherical representation $V$ with lowest weight $\lambda$ so that $\lambda(\alpha^{\vee})\neq0$ for all $\alpha\in\Sigma(Q)$. Let $W$ be any finite dimensional representation that contains a cyclic vector whose stabilizer is equal to $H_{z}$. Then for sufficiently large $n\in\N$ the representation $W\otimes V^{\otimes n}$ contains a cyclic vector whose stabilizer is equal to $H_{z}$ and admits a lowest weight does not vanish on any of the $\alpha^{\vee}$ with $\alpha\in \Sigma(Q)$, as requested.

The assumption that $Z$ is quasi-affine is crucial here. Up until Section \ref{Section Reduction to quasi-affine spaces} this is the only place where the assumption is explicitly used.
\end{enumerate}
\end{Rem}

\begin{Defi}\label{Def Adapted points}
We say that a point $z\in Z$ is adapted (to the Langlands decomposition $P=MAN_{P}$) if the following three conditions are satisfied.
\begin{enumerate}[(i)]
\item\label{Def Adapted points - item 1}
        $P\cdot z$ is open in $Z$, i.e., $\fp+\fh_{z}=\fg$,
\item\label{Def Adapted points - item 2}
        $\fl_{Q,\nc}\subseteq\fh_{z}$,
\item\label{Def Adapted points - item 3}
        There exists an $X\in\fa\cap\fh_{z}^{\perp}$ so that $Z_{\fg}(X)=\fl_{Q}$.
\end{enumerate}
\end{Defi}

\begin{Rem}\label{Rem Adapted points}\,
\begin{enumerate}[(a)]
\item\label{Rem Adapted points - item 1}
    It follows from Proposition \ref{Prop Local structure theorem}, that every open $P$-orbit $\cO$ in $Z$ contains an adapted point.
\item\label{Rem Adapted points - item 2}
    If a point $z\in Z$ satisfies (\ref{Def Adapted points - item 1}) and (\ref{Def Adapted points - item 3}), then (\ref{Def Adapted points - item 2}) is automatically satisfied. We will give a proof of this fact later in this section, see Proposition \ref{Prop z adapted iff (1) and (3)}.
\item\label{Rem Adapted points - item 3}
    (\ref{Def Adapted points - item 3}) can be stated alternatively as
        \begin{enumerate}
        \item[(\ref{Def Adapted points - item 3}')] There exists an $X\in\fa\cap\fh_{z}^{\perp}$ so that $\alpha(X)\neq 0$ for all $\alpha\in\Sigma(Q)$.
        \end{enumerate}
\item\label{Rem Adapted points - item 4}
    The set of adapted points in $Z$ is $L_{Q}$-stable. To see this, let $z\in Z$ be adapted. The Levi-subgroup $L_{Q}$ decomposes as
\begin{equation}\label{eq L_Q=MAL_(Q,nc)}
L_{Q}
=MAL_{Q,\nc},
\end{equation}
where $L_{Q,\nc}$ is the connected subgroup with Lie algebra $\fl_{Q,\nc}$. Note that
\begin{equation}\label{eq L_(Q,nc) subseteq H}
L_{Q,\nc}\subseteq H_{z}
\end{equation}
since $\fl_{Q,\nc}\subseteq\fh_{z}$.
Let $m\in M$, $a\in A$ and $l\in L_{Q,\nc}$. Then  $l\cdot z=z$, and therefore, $Pmal\cdot z=P\cdot z$ is open and
$$
\fa\cap\fh_{mal\cdot z}^{\perp}
=\fa\cap\Ad(ma)\fh_{z}^{\perp}
=\Ad(ma)\big(\fa\cap\fh_{z}^{\perp}\big)
=\fa\cap\fh_{z}^{\perp}.
$$
Moreover, $\fl_{Q,\nc}$ is $L_{Q}$-stable and hence $\fl_{Q,\nc}\subseteq\Ad(l)\fh_{z}=\fh_{l\cdot z}$ for all $l\in L_{Q}$. This proves the assertion.
\end{enumerate}
\end{Rem}

\begin{Ex}\label{ex G/oN adapted points}
Let $Z=G/\overline{N}_{P}$ and let $z:=e\cdot \overline{N}_{P}$. We claim that the set of adapted points is equal to $MA\cdot z$.

Let $W:=N_{G}(A)/MA$ be the Weyl group of $\Sigma$.
The Bruhat decomposition of $G$ provides a description of $P\bs Z$,
$$
Z
=\bigsqcup_{w\in W}Pw\cdot z.
$$
There is only one open $P$-orbit in $Z$, namely $\cO:=P\cdot z$. Since for every $p\in P$
$$
\fp\cap\fh_{p\cdot z}
=\fp\cap\Ad(p)\overline{\fn}_{P}
=\{0\},
$$
we have $Q=P$. It is now easy to see that $z$ satisfies (\ref{Prop Local structure theorem - item 1}) -- (\ref{Prop Local structure theorem - item 4}) in Proposition \ref{Prop Local structure theorem}.  Since the set of adapted points is $MA$-stable, it suffices to show that the only adapted point in $N_{P}\cdot z$ is $z$ in order to prove the claim. Let $n\in N_{P}$ and assume that $n\cdot z$ is adapted. Now $\fh_{n\cdot z}^{\perp}=\Ad(n)\overline{\fp}$, and hence
$$
\fa\cap\fh_{n\cdot z}^{\perp}
=\fa\cap\Ad(n)\overline{\fp}
\subseteq\fp\cap\Ad(n)\overline\fp
=\Ad(n)\big(\fp\cap\overline{\fp}\big)
=\Ad(n)(\fm\oplus\fa).
$$
Since $n\cdot z$ is adapted, there exists a regular element $X\in\fa\cap\fh_{n\cdot z}^{\perp}$. It follows that $X\in\Ad(n)(\fm\oplus\fa)$, and hence $\Ad(n^{-1})X\in\fm\oplus\fa$. This implies that $n$ stabilizes $X$. Since $X$ is regular, it follows that $n=e$.
\end{Ex}

\begin{Prop}\label{Prop LST holds for adapted points}
Let $z\in Z$ be adapted. Then the following hold.
\begin{enumerate}[(i)]
  \item\label{Prop LST holds for adapted points - item 1}
          $Q\cap H_{z}=L_{Q}\cap H_{z}$,
  \item\label{Prop LST holds for adapted points - item 2}
          The map
          $$
          N_{Q}\times L_{Q}/L_{Q}\cap H_{z}\to Z,
            \qquad\big(n,l(L_{Q}\cap H_{z})\big)\mapsto nl\cdot z
          $$
          is a diffeomorphism onto $P\cdot z$.
\end{enumerate}
\end{Prop}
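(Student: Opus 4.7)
The plan breaks into three steps: orbit identification, a Lie-algebra version of (i), and an algebraic-group upgrade that yields (i) and (ii) in succession.

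\textbf{Step 1 (Orbit identification).} Since $z$ is adapted, $P\cdot z$ is open in $Z$ and thus equals some open $P$-orbit $\cO$. Proposition~\ref{Prop Local structure theorem} asserts $Q\cdot\cO=\cO$, and combined with $P\subseteq Q$ this forces $Q\cdot z=P\cdot z$. In particular the image of the map in (ii) is $N_Q L_Q\cdot z=Q\cdot z=P\cdot z$.

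\textbf{Step 2 (Lie-algebra core: $\fh_z\cap\fn_Q=\{0\}$).} I would exploit all three parts of the adapted condition at once. Dualising the openness relation $\fp+\fh_z=\fg$ via the non-degenerate form $B$ gives $\fp^{\perp}\cap\fh_z^{\perp}=\fn\cap\fh_z^{\perp}=\{0\}$, and since $P\subseteq Q$ forces $\fn_Q\subseteq\fn$, we obtain the key transversality $\fn_Q\cap\fh_z^{\perp}=\{0\}$. Now pick $X\in\fa\cap\fh_z^{\perp}$ with $Z_{\fg}(X)=\fl_Q$ as in condition~(iii) of Definition~\ref{Def Adapted points}. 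Because $\fh_z$ is a Lie subalgebra and $B$ is $\Ad$-invariant, we have the bridging identity $[X,\fh_z]\subseteq\fh_z^{\perp}$: for $V,W\in\fh_z$, $B([X,W],V)=B(X,[W,V])=0$ since $[W,V]\in\fh_z$. For any $Y\in\fh_z\cap\fn_Q$ this gives $[X,Y]\in\fh_z^{\perp}\cap\fn_Q=\{0\}$, hence $Y\in Z_{\fg}(X)\cap\fn_Q=\fl_Q\cap\fn_Q=\{0\}$.

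\textbf{Step 3 (Upgrade to the group statement and (ii)).} The group $H_z\cap N_Q$ consists of the real points of a Zariski-closed subgroup of the unipotent algebraic group $\underline{N_Q}$, whose complexified Lie algebra is $(\fh_z\cap\fn_Q)_{\C}=0$ by Step~2. In characteristic zero a Zariski-closed subgroup of a unipotent algebraic group with trivial Lie algebra is itself trivial, so $H_z\cap N_Q=\{e\}$. Combined with the uniqueness of the $L_Q N_Q$ factorisation in $Q$, this yields $Q\cap H_z=L_Q\cap H_z$, proving~(i). Assertion~(ii) is then routine: injectivity on the quotient $N_Q\times L_Q/(L_Q\cap H_z)$ follows from~(i), surjectivity onto $P\cdot z$ from Step~1, and the differential at the basepoint is an isomorphism by a dimension count, its surjectivity following from $\fq+\fh_z=\fg$ and its injectivity from $\fn_Q\cap(\fl_Q+\fh_z)=\{0\}$, a direct consequence of Step~2.

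The main obstacle is the bridging identity $[X,\fh_z]\subseteq\fh_z^{\perp}$ in Step~2: it is what couples the transversality $\fn\cap\fh_z^{\perp}=\{0\}$ coming from openness of $P\cdot z$ to the single regular direction $X\in\fh_z^{\perp}$ supplied by condition~(iii), and without this observation the three conditions defining \emph{adapted} look essentially independent and do not obviously combine to kill $\fh_z\cap\fn_Q$.
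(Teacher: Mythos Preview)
Your Lie-algebra argument in Step~2 is exactly the infinitesimal version of what the paper does, and it is correct as far as it goes. The gap is in Step~3, where you claim that $H_z\cap N_Q=\{e\}$ together with the uniqueness of the $L_Q N_Q$ factorisation yields $Q\cap H_z=L_Q\cap H_z$. This implication is false: if $q=ln\in H_z$ with $l\in L_Q$ and $n\in N_Q$, nothing you have proved forces $n$ (or $l$) to lie in $H_z$ individually, so you cannot invoke $H_z\cap N_Q=\{e\}$. A concrete counterexample inside $B\subseteq\mathrm{GL}_2(\R)$: take $H$ to be the two-component algebraic subgroup $\{aI:a\in\R^\times\}\cup\{\big(\begin{smallmatrix}-a & a\\ 0 & a\end{smallmatrix}\big):a\in\R^\times\}$. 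Then $\fh=\R\cdot I\subseteq\fl_Q$, so $\fh\cap\fn_Q=0$ and $H\cap N_Q=\{e\}$, yet $Q\cap H=H\not\subseteq L_Q$. Even the stronger Lie-algebra equality $\fq\cap\fh_z=\fl_Q\cap\fh_z$ (which you in effect also prove, via the differential computation at the end) does not control the non-identity components of $Q\cap H_z$.

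The fix is to run your own argument at the group level rather than passing through the Lie algebra. Your bridging identity $[X,\fh_z]\subseteq\fh_z^\perp$ integrates to $\Ad(H_z)\fh_z^\perp=\fh_z^\perp$, hence $\Ad(q^{-1})X\in\fh_z^\perp$ for $q\in Q\cap H_z$. Writing $q=ln$ and using that $l\in L_Q=Z_G(X)$ gives $\Ad(n^{-1})X=\Ad(q^{-1})X\in\fh_z^\perp$, so $\Ad(n^{-1})X-X\in\fn_Q\cap\fh_z^\perp=\{0\}$ by your transversality, whence $n\in Z_{N_Q}(X)=\{e\}$. This is precisely the paper's proof via Lemma~\ref{Lemma fa cap fh_z^perp determines MA cdot z}; your approach rediscovers the right mechanism but executes it at the wrong level.
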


\begin{Rem}\label{Rem on Prop LST holds for adapted points}
\,
\begin{enumerate}[(a)]
\item\label{Rem on Prop LST holds for adapted points - item 1}
The proposition shows that besides (\ref{Prop Local structure theorem - item 3}) and a weaker version of (\ref{Prop Local structure theorem - item 4}), which hold by definition, also (\ref{Prop Local structure theorem - item 1}) and (\ref{Prop Local structure theorem - item 2}) in Proposition \ref{Prop Local structure theorem} hold for adapted points $z\in Z$.
\item\label{Rem on Prop LST holds for adapted points - item 2}
Let $z\in Z$ be adapted.
We claim that
\begin{equation}\label{eq MA cap H=(M cap H)(A cap H)}
MA\cap H_{z}
=(M\cap H_{z})(A\cap H_{z})
=(M\cap H_{z})\exp(\fa\cap\fh_{z}).
\end{equation}
To prove the claim, we first note that $MA\cap H_{z}$, $M\cap H_{z}$ and $A\cap H_{z}$ are algebraic subgroups of $G$, and that $M\cap H_{z}$ is a normal subgroup of $MA\cap H_{z}$. Define $A'$ and $M'$ to be the images of the projections of $MA\cap H_{z}$ onto $A$ and $M$, respectively. Then $A'$ and $M'$ are algebraic subgroups of $A$ and $M$, respectively. Moreover, $A\cap H_{z}$ and $M\cap H_{z}$ are normal subgroups of $A'$ and $M'$, respectively. Let $\phi:A'/(A\cap H_{z})\to M'/(M\cap H_{z})$ be the unique map so that
$$
a\phi(a)\in (MA\cap H_{z})/(M\cap H_{z})(A\cap H_{z})
\qquad\big(a\in A'/(A\cap H_{z})\big).
$$
Then $\phi$ is an algebraic homomorphism. An algebraic homomorphism from a split torus to a compact group is necessarily trivial. It follows that $A'=A\cap H_{z}$, and hence $M'=M\cap H_{z}$. Moreover, the group $A\cap H_{z}$ is connected since $A\cap H_{z}$ is an algebraic subgroup of $A$ and $A$ is isomorphic to a vector space. This proves (\ref{eq MA cap H=(M cap H)(A cap H)}).
From (\ref{eq L_Q=MAL_(Q,nc)}), (\ref{eq L_(Q,nc) subseteq H}) and (\ref{eq MA cap H=(M cap H)(A cap H)}) it follows that
$$
M/(M\cap H_{z})\times A/\exp(\fa_{\fh})\to L_{Q}\cdot z;
\qquad(m(M\cap H_{z}),a\exp(\fa_{\fh}))\mapsto ma\cdot z
$$
is a diffeomorphism. Therefore, if $z\in Z$ is adapted, then (\ref{Prop LST holds for adapted points - item 2}) in Proposition \ref{Prop LST holds for adapted points} can be replaced by
\begin{enumerate}
\item[(ii')]\label{Prop LST holds for adapted points - item 2 alternative}
          The map
          \begin{align*}
          N_{Q}\times M/(M\cap H_{z})\times A/\exp(\fa\cap\fh_{z})& \to Z;\\
          \big(n,m(M\cap H_{z}),a\exp(\fa\cap\fh_{z})\big)&\mapsto nma\cdot z
          \end{align*}
          is a diffeomorphism onto $P\cdot z$.
\end{enumerate}
\end{enumerate}
\end{Rem}

Before we prove the proposition, we first prove a lemma.

\begin{Lemma}\label{Lemma fa cap fh_z^perp determines MA cdot z}
Let $z\in Z$ be adapted and let $q\in Q$.  Then $q\in L_{Q}$ if and only if there exists an element
$$
X\in\fa\cap\fh_{z}^{\perp}\cap\fh_{q\cdot z}^{\perp}
$$
so that $\fl_{Q}=Z_{\fg}(X)$ (and thus $\alpha(X)\neq0$ for all $\alpha\in\Sigma(Q)$).
In that case
$$
\fa\cap\fh_{z}^{\perp}
=\fa\cap\fh_{q\cdot z}^{\perp}.
$$
\end{Lemma}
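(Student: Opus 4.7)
The plan is to handle the two implications separately, pivoting on the observation that any $X\in\fa$ with $Z_\fg(X)=\fl_Q$ lies in the center $\fz(\fl_Q)$ and is therefore fixed by $\Ad(L_Q)$.

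For the forward direction, given $q\in L_Q$, I would decompose $q=mal$ using (\ref{eq L_Q=MAL_(Q,nc)}). Since $L_{Q,\nc}$ is connected with $\fl_{Q,\nc}\subseteq\fh_z$, one has $l\cdot z=z$, whence $\fh_{q\cdot z}^\perp=\Ad(ma)\fh_z^\perp$. Because $MA$ centralizes $\fa$ pointwise, intersecting with $\fa$ then gives $\fa\cap\fh_{q\cdot z}^\perp=\fa\cap\fh_z^\perp$, so any $X$ witnessing adaptedness of $z$ also witnesses the property for $q\cdot z$. This simultaneously establishes the ``in that case'' equality.

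For the reverse direction, assume there exists $X\in\fa\cap\fh_z^\perp\cap\fh_{q\cdot z}^\perp$ with $Z_\fg(X)=\fl_Q$, and decompose $q=l_0n_0$ according to $Q=L_QN_Q$. Since $\fh_{q\cdot z}^\perp=\Ad(q)\fh_z^\perp$, we have $\Ad(q^{-1})X\in\fh_z^\perp$. The key observation is that $X\in\fz(\fl_Q)$, so $\Ad(l_0^{-1})X=X$, reducing the condition to $\Ad(n_0^{-1})X\in\fh_z^\perp$. Writing $n_0=\exp N$ with $N\in\fn_Q$, the adjoint expansion yields $\Ad(n_0^{-1})X=X+Y'$ with $Y'\in\fn_Q$, because $[\fn_Q,\fa]\subseteq\fn_Q$. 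Subtracting $X$ (which already lies in $\fh_z^\perp$) gives $Y'\in\fh_z^\perp\cap\fn_Q$.

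The final step is to show this intersection vanishes. From $\fp+\fh_z=\fg$ and $\fp^\perp=\fn$ one obtains $\fh_z^\perp\cap\fn=\{0\}$, and $\fn_Q\subseteq\fn$ gives $Y'=0$. Then $n_0\in Z_G(X)\cap N_Q$; this is an algebraic subgroup of the unipotent group $N_Q$ with Lie algebra $\fn_Q\cap\fl_Q=\{0\}$, hence trivial in characteristic zero. Therefore $n_0=e$ and $q=l_0\in L_Q$, whereupon the asserted equality $\fa\cap\fh_z^\perp=\fa\cap\fh_{q\cdot z}^\perp$ is given by the forward direction. The main simplification making this argument clean is recognizing that $Z_\fg(X)=\fl_Q$ forces $X\in\fz(\fl_Q)$, so that the ``Levi part'' $\Ad(l_0^{-1})X$ collapses back to $X$; without it one would have to separately control a generic element of $\fl_Q\cap\fh_z^\perp$ together with the $\fn_Q$-component, and the pairing with $\fh_z$ would not decouple so neatly.
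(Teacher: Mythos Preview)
Your proof is correct and follows essentially the same approach as the paper's: both directions rely on the observation that $L_Q$ centralizes $X$ (you via $Z_\fg(X)=\fl_Q\Rightarrow X\in\fz(\fl_Q)$, the paper via $\alpha^\vee\in\fh_z$ for roots in $\fl_Q$), then use $\fh_z^\perp\cap\fn_Q=\{0\}$ to force $\Ad(n_0^{-1})X=X$, and finally conclude $n_0=e$ from $\alpha(X)\neq 0$ on $\Sigma(Q)$. The only cosmetic differences are that the paper works with $\fn_Q$ directly (using $\fq^\perp=\fn_Q$) rather than passing through $\fn$, and deduces $n_0=e$ immediately from the nonvanishing eigenvalues rather than via triviality of the unipotent algebraic group $Z_G(X)\cap N_Q$.
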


\begin{proof}
Assume that there exists an element $X\in\fa\cap\fh_{z}^{\perp}\cap\fh_{q\cdot z}^{\perp}$ so that $\alpha(X)\neq0$ for all $\alpha\in\Sigma(Q)$. Let $l\in L_{Q}$ and $n\in N_{Q}$ be so that $q=ln$. We will show that $n=e$.
Since $\alpha^{\vee}\in\fh_{z}$ for every $\alpha\in\Sigma$ with $\fg_{\alpha}\subseteq\fl_{Q}$, we have $\alpha(X)=0$ for these roots. This implies that $\fl_{Q}$ centralizes $X$. Now in view of (\ref{eq L_Q=MAL_(Q,nc)}) also the group $L_{Q}$ centralizes $X$. It follows that
$$
\Ad(n^{-1})X
=\Ad(n^{-1}l^{-1})X
=\Ad(q^{-1}) X
\in\Ad(q^{-1})\fh_{q\cdot z}^{\perp}
=\fh_{z}^{\perp},
$$
and hence $\Ad(n^{-1})X-X$ is contained in both $\fh_{z}^{\perp}$ and $\fn_{Q}$. However, since $P\cdot z$ is open, we have
$$
\fh_{z}^{\perp}\cap\fn_{Q}
=(\fh_{z}+\fq)^{\perp}
=\fg^{\perp}
=\{0\}.
$$
Hence $\Ad(n^{-1})X=X$. Since $\alpha(X)\neq 0$ for every $\alpha\in\Sigma(Q)$, it follows that $n=e$.

Now assume that $q\in L_{Q}$. It follows from (\ref{eq L_Q=MAL_(Q,nc)}) that there exist $m\in M$, $a\in A$ and $l_{\nc}\in L_{Q,\nc}$ so that $q=mal_{\nc}$. Since $l_{\nc}$ is contained in $H_{z}$, it normalizes $\fh_{z}^{\perp}$ and hence
$$
\fa\cap\fh_{q\cdot z}^{\perp}
=\fa\cap\Ad(q)\fh_{z}^{\perp}
=\fa\cap\Ad(ma)\fh_{z}^{\perp}
=\Ad(ma)\big(\fa\cap\fh_{z}^{\perp}\big)
=\fa\cap\fh_{z}^{\perp}.
$$
The latter set contains an element $X$ with $Z_{\fg}(X)=\fl_{Q}$ in view of Definition \ref{Def Adapted points}.
\end{proof}

\begin{proof}[Proof of Proposition \ref{Prop LST holds for adapted points}]
Let $q\in Q\cap H_{z}$. Then $\fh_{q\cdot z}^{\perp}=\Ad(q)\fh_{z}^{\perp}=\fh_{z}^{\perp}$. Since there exists an element $X\in\fa\cap\fh_{z}^{\perp}$ so that $Z_{\fg}(X)=\fl_{Q}$, it follows from  Lemma \ref{Lemma fa cap fh_z^perp determines MA cdot z} that $q\in L_{Q}$. Therefore, $Q\cap H_{z}\subseteq L_{Q}\cap H_{z}$. The other inclusion is trivial. This proves (\ref{Prop LST holds for adapted points - item 1}).

The map $Q/(Q\cap H_{z})\to Z$, $q\mapsto q\cdot z$ is a diffeomorphism onto $Q\cdot z$. Since also $N_{Q}\times L_{Q}\to Q$ is a diffeomorphism and $P\cdot z=Q\cdot z$ by Proposition \ref{Prop Local structure theorem}, assertion (\ref{Prop LST holds for adapted points - item 2}) follows from (\ref{Prop LST holds for adapted points - item 1}).
\end{proof}

We move on to give a description of the adapted points in $Z$. We begin with a lemma parameterizing the points that satisfy (\ref{Def Adapted points - item 2}) in Definition \ref{Def Adapted points} and the infinitesimal version of (\ref{Prop LST holds for adapted points - item 1}) in Proposition \ref{Prop LST holds for adapted points}.

\begin{Lemma}\label{Lemma characterization weakly adapted points}
Fix an adapted point $z\in Z$. Let $z'\in P\cdot z$. Then
\begin{equation}\label{eq fl_(Q,nc) subseteq fq cap fh_z =fl_Q cap fh_z}
\fl_{Q,\nc}
\subseteq\fq\cap\fh_{z'}
=\fl_{Q}\cap\fh_{z'}
\end{equation}
if and only if there exist $m\in M$, $a\in A$ and $n\in Z_{N_{Q}}(\fl_{Q}\cap\fh_{z})$ so that $z'=man\cdot z$. In that case
\begin{equation}\label{eq fl_Q cap fh_(man z)=Ad(m)(fl_Q cap fh_z)}
\fl_{Q}\cap\fh_{z'}
=\Ad(m)\big(\fl_{Q}\cap\fh_{z}\big),
\end{equation}
and hence in particular
\begin{equation}\label{eq fa cap fh_(man z)=fa cap fh_z}
\fa\cap\fh_{z'}
=\fa\cap\fh_{z}.
\end{equation}
\end{Lemma}

\begin{proof}
Let $n\in Z_{N_{Q}}(\fl_{Q}\cap\fh_{z})$. Since $\fl_{Q,\nc}\subseteq \fl_{Q}\cap\fh_{z}$, the element $n$ centralizes $\fl_{Q,\nc}$, and hence
$$
\fl_{Q,\nc}
=\Ad(n)\fl_{Q,\nc}
\subseteq \Ad(n)\big(\fq\cap\fh_{z})
=\fq\cap\Ad(n)\fh_{z}
=\fq\cap\fh_{n\cdot z}.
$$
Moreover, as
$$
\fl_{Q}\cap\fh_{z}
\subseteq\Ad(n)\fh_{z}
=\fh_{n\cdot z}
$$
and $\fq\cap\fh_{z}=\fl_{Q}\cap\fh_{z}$ by Proposition \ref{Prop LST holds for adapted points} (\ref{Prop LST holds for adapted points - item 2}), we have
$$
\fl_{Q}\cap\fh_{z}
\subseteq\fl_{Q}\cap\fh_{n\cdot z}
\subseteq\fq\cap\fh_{n\cdot z}
=\Ad(n)(\fq\cap\fh_{z})
=\Ad(n)(\fl_{Q}\cap\fh_{z})
=\fl_{Q}\cap\fh_{z}.
$$
It follows that $\fq\cap\fh_{n\cdot z}=\fl_{Q}\cap\fh_{n\cdot z}$. We have now proven (\ref{eq fl_(Q,nc) subseteq fq cap fh_z =fl_Q cap fh_z}) for $z'=n\cdot z$.
The subalgebras $\fl_{Q,\nc}$, $\fq$ and $\fl_{Q}$ are $MA$-stable. Therefore,
$$
\fl_{Q,\nc}
=\Ad(ma)\fl_{Q,\nc}
\subseteq\Ad(ma)(\fq\cap\fh_{n\cdot z})
=\fq\cap\fh_{man\cdot z}
$$
and
$$
\Ad(ma)(\fq\cap\fh_{n\cdot z})
=\Ad(ma)(\fl_{Q}\cap\fh_{n\cdot z})
=\fl_{Q}\cap\fh_{man\cdot z}.
$$
This proves that (\ref{eq fl_(Q,nc) subseteq fq cap fh_z =fl_Q cap fh_z}) holds as well for $z'=man\cdot z$.

For the converse implication, let $z'\in Z$ and assume that (\ref{eq fl_(Q,nc) subseteq fq cap fh_z =fl_Q cap fh_z}) holds. By Remark \ref{Rem on Prop LST holds for adapted points} (\ref{Rem on Prop LST holds for adapted points - item 2}) there exist $m\in M$, $a\in A$ and $n\in N_{Q}$ so that $z'=man\cdot z$.
Since $\fq\cap\fh_{z}=\fl_{Q}\cap\fh_{z}$ by Proposition \ref{Prop LST holds for adapted points} (\ref{Prop LST holds for adapted points - item 2}), we have
$$
\Ad(n)\big(\fl_{Q}\cap\fh_{z}\big)
=\Ad(n)\big(\fq\cap\fh_{z}\big)
=\Ad(ma)^{-1}\big(\fq\cap\fh_{man\cdot z}\big)
=\Ad(ma)^{-1}\big(\fl_{Q}\cap\fh_{man\cdot z}\big).
$$
The space on the right-hand side is contained in $\fl_{Q}$. It follows that $\Ad(n)\big(\fl_{Q}\cap\fh_{z}\big)\subseteq\fl_{Q}$. Now for every $Y\in\fl_{Q}\cap\fh_{z}$
$$
\Ad(n)Y
\in (Y+\fn_{Q})\cap\fl_{Q}
=Y+(\fn_{Q}\cap\fl_{Q})
=Y+\{0\}.
$$
We thus conclude that $n$ centralizes $\fl_{Q}\cap\fh_{z}$.

We continue to prove the identities (\ref{eq fl_Q cap fh_(man z)=Ad(m)(fl_Q cap fh_z)}) and (\ref{eq fa cap fh_(man z)=fa cap fh_z}). Let $m\in M$, $a\in A$ and $n\in Z_{N_{Q}}(\fl_{Q}\cap\fh_{z})$. Then
$$
\fl_{Q}\cap \fh_{man\cdot z}
=\fq\cap\fh_{man\cdot z}
=\fq\cap\Ad(man)\fh_{z}
=\Ad(man)\big(\fq\cap\fh_{z}\big)
=\Ad(man)\big(\fl_{Q}\cap\fh_{z}\big).
$$
Now $a$ normalizes and $n$ centralizes $\fl_{Q}\cap\fh_{z}$. This proves (\ref{eq fl_Q cap fh_(man z)=Ad(m)(fl_Q cap fh_z)}). Equation (\ref{eq fa cap fh_(man z)=fa cap fh_z}) follows from (\ref{eq fl_Q cap fh_(man z)=Ad(m)(fl_Q cap fh_z)}) by intersecting both sides with $\fa$.
\end{proof}

For an adapted point $z\in Z$ we define
$$
\fa_{z}^{\circ}
:=\fa\cap(\fa\cap\fh_{z})^{\perp}
$$
and
$$
\fa_{z,\reg}^{\circ}
:=\{X\in\fa_{z}^{\circ}:Z_{\fg}(X)=\fl_{Q}\}
=\{X\in\fa_{z}^{\circ}:\alpha(X)\neq0 \text{ for all }\alpha\in\Sigma(Q)\}.
$$
If $z,z'\in Z$ are both adapted and $P\cdot z=P\cdot z'$, then in view of Proposition \ref{Prop LST holds for adapted points} we may apply Lemma \ref{Lemma characterization weakly adapted points} to $z$ and $z'$ and conclude that $\fa\cap\fh_{z}=\fa\cap\fh_{z'}$.
It follows that $\fa\cap\fh_{z}$, $\fa_{z}^{\circ}$ and $\fa_{z,\reg}^{\circ}$ only depend on the open $P$-orbit $\cO=P\cdot z$, not on the adapted point in $\cO$. Later we will prove that $\fa\cap\fh_{z}$, $\fa_{z}^{\circ}$ and $\fa_{z,\reg}^{\circ}$ are in fact the same for all adapted points $z\in Z$. See Corollary \ref{Cor fa cap fh_z = fa cap fh_z'}.

For the next lemma we adapt the analysis in \cite[Section 12.2]{DelormeKnopKrotzSchlichtkrull_PlancherelTheoryForRealSphericalSpacesConstructionOfTheBernsteinMorphisms}.

\begin{Lemma}\label{Lemma def T^perp}
Let $z\in Z$ be adapted. There exists a unique linear map
$$
T_{z}^{\perp}:\fa^{\circ}_{z}\to Z_{\fn_{Q}}(\fl_{Q}\cap\fh_{z})
$$
with the property that for every $X\in\fa_{z}^{\circ}$
$$
X+T_{z}^{\perp}(X)\in\fh_{z}^{\perp}.
$$
\end{Lemma}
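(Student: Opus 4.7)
First I would prove $\fh_z^{\perp}\cap\fn_Q=\{0\}$. Since $P\cdot z$ is open and $P\subseteq Q$, we have $\fh_z+\fq=\fg$, so $\fh_z^{\perp}\cap\fq^{\perp}=\{0\}$; and $\fq^{\perp}=\fn_Q$ under $B$ (because $B(\fl_Q,\fn_Q)=B(\fn_Q,\fn_Q)=0$ while $B$ pairs $\fn_Q$ perfectly with $\overline{\fn}_Q$). This gives uniqueness of $T_z^{\perp}(X)$: two lifts $X+Y_1,X+Y_2\in\fh_z^{\perp}$ with $Y_i\in\fn_Q$ satisfy $Y_1-Y_2\in\fh_z^{\perp}\cap\fn_Q=\{0\}$. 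Linearity of $T_z^{\perp}$ then follows immediately from uniqueness.

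For existence, I would use $\fh_z\cap\fn_Q=\{0\}$, $\fh_z+\fq=\fg$, and $\fh_z\cap\fq=\fl_Q\cap\fh_z$ (from Proposition \ref{Prop LST holds for adapted points}) to parametrize $\fh_z$ as
\[
\fh_z=\bigl\{\bar n+c(\bar n)+n(\bar n)+l_0:\ \bar n\in\overline{\fn}_Q,\ l_0\in\fl_Q\cap\fh_z\bigr\},
\]
where, after fixing a complement $\fc$ of $\fl_Q\cap\fh_z$ in $\fl_Q$, the linear maps $c\colon\overline{\fn}_Q\to\fc$ and $n\colon\overline{\fn}_Q\to\fn_Q$ are uniquely determined. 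Using the orthogonality relations $B(\fa,\overline{\fn}_Q+\fn_Q)=0$ and $B(\fn_Q,\fl_Q+\fn_Q)=0$, the requirement $B(X+Y,\fh_z)=0$ for $Y\in\fn_Q$ reduces to the two conditions $B(X,\fl_Q\cap\fh_z)=0$ and $B(Y,\bar n)=-B(X,c(\bar n))$ for all $\bar n\in\overline{\fn}_Q$. The second uniquely determines $Y\in\fn_Q$ via the perfect $\fn_Q$-$\overline{\fn}_Q$ pairing, so existence reduces to the first.

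The first condition is the main obstacle and rests on observing that $\fa_z^{\circ}\subseteq\fa\cap\fz(\fl_Q)$. Since $\fl_{Q,\nc}\subseteq\fh_z$, one has $\fa\cap\fl_{Q,\nc}\subseteq\fa\cap\fh_z$, so any $X\in\fa_z^{\circ}$ is $B$-orthogonal to $\fa\cap\fl_{Q,\nc}$. Combined with the $B$-orthogonal decomposition $\fa=(\fa\cap\fz(\fl_Q))\oplus(\fa\cap\fl_{Q,\nc})$---which holds because $\fa\subseteq\fp$ meets the compact simple ideals of $\fl_Q$ trivially and because in a reductive Lie algebra the center is $B$-orthogonal to the derived subalgebra---this forces $X\in\fa\cap\fz(\fl_Q)$. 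In particular $X\perp\fl_{Q,\nc}$; together with $X\perp\fa\cap\fh_z$ (by definition) and $X\perp\fm\cap\fh_z$ (since $\fa\subseteq\fp$ and $\fm\subseteq\fk$ are $B$-orthogonal), this yields $X\perp\fl_Q\cap\fh_z$ and produces $Y$.

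Finally, to verify $Y\in Z_{\fn_Q}(\fl_Q\cap\fh_z)$, I would use that $\fh_z$ is a subalgebra: for $l\in\fl_Q\cap\fh_z$ and $\bar n\in\overline{\fn}_Q$, the bracket $[l,\bar n+c(\bar n)+n(\bar n)]$ again lies in $\fh_z$, and matching its $\fl_Q$-component produces the congruence $c([l,\bar n])\equiv[l,c(\bar n)]\pmod{\fl_Q\cap\fh_z}$. Then $\ad$-invariance of $B$ yields
\[
B([l,Y],\bar n)=-B(Y,[l,\bar n])=B(X,c([l,\bar n]))=B(X,[l,c(\bar n)])=B([X,l],c(\bar n))=0,
\]
using $[X,l]=0$ (from $X\in\fz(\fl_Q)$) and $X\perp\fl_Q\cap\fh_z$. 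The $\fn_Q$-$\overline{\fn}_Q$ pairing then forces $[l,Y]=0$.
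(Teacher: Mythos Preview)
Your proof is correct and reaches the goal by a more explicit, constructive route than the paper's. The paper's key device is the single identity $(\fl_Q\cap\fh_z)^{\perp}=\fh_z^{\perp}\oplus\fn_Q$, obtained by a dimension count; once this is known and $\fa_z^{\circ}\subseteq(\fl_Q\cap\fh_z)^{\perp}$ is observed, existence of $Y$ is immediate, and the centralizer property drops out in two lines from $[\fl_Q\cap\fh_z,Y]=[\fl_Q\cap\fh_z,X+Y]\subseteq\fh_z^{\perp}\cap\fn_Q=\{0\}$. You instead parametrize $\fh_z$ via the maps $c,n$ (this is essentially the map $T_z$ that the paper introduces only later, in Section~\ref{Section Description of fh}) and build $Y$ and verify $[l,Y]=0$ by direct computation with the $\fn_Q$--$\overline{\fn}_Q$ pairing. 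Both approaches hinge on the same pivotal fact $\fa_z^{\circ}\subseteq\fa\cap\fz(\fl_Q)$ (equivalently $\fa_z^{\circ}\perp\fl_Q\cap\fh_z$), which you justify more carefully than the paper does. One small point: your implication ``$X\perp\fl_{Q,\nc}$, $X\perp\fm$, $X\perp(\fa\cap\fh_z)$ $\Rightarrow$ $X\perp(\fl_Q\cap\fh_z)$'' tacitly uses $\fl_Q\cap\fh_z\subseteq\fl_{Q,\nc}+\fm+(\fa\cap\fh_z)$; this follows from $\fl_Q=\fz(\fl_Q)\oplus\fl_{Q,c}\oplus\fl_{Q,\nc}$ with $\fz(\fl_Q)+\fl_{Q,c}\subseteq\fm\oplus\fa$ together with the Lie-algebra version of Remark~\ref{Rem on Prop LST holds for adapted points}(\ref{Rem on Prop LST holds for adapted points - item 2}), and is worth stating. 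The paper's argument is shorter; yours has the virtue of making the later $T_z$--description appear naturally.
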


\begin{proof}
Since $\fg=\fh_{z}+\fp$, we have
$$
\fh_{z}^{\perp}\cap\fn_{P}
=\fh_{z}^{\perp}\cap\fp^{\perp}
=(\fh_{z}+\fp)^{\perp}
=\fg^{\perp}
=\{0\}.
$$
Therefore, $\fh_{z}^{\perp}+\fn_{Q}=\fh_{z}^{\perp}\oplus\fn_{Q}$.
As $\fl_{Q}^{\perp}=\overline{\fn}_{Q}\oplus\fn_{Q}$, we further have
$$
\fh_{z}^{\perp}\oplus\fn_{Q}
\subseteq\fh_{z}^{\perp}+\fl_{Q}^{\perp}
=(\fl_{Q}\cap\fh_{z})^{\perp}.
$$
Moreover,
$$
\dim\big((\fl_{Q}\cap\fh_{z})^{\perp}\big)
=2\dim(\fn_{Q})+\dim(\fl_{Q})-\dim(\fl_{Q}\cap\fh_{z})
$$
and, in view of Proposition \ref{Prop LST holds for adapted points} (\ref{Prop LST holds for adapted points - item 2}) and the fact that $\fg=\fh_{z}+\fq$,
$$
\dim(\fh_{z}^{\perp})
=\dim(\fq)-\dim(\fq\cap\fh_{z})
=\dim(\fn_{Q})+\dim(\fl_{Q})-\dim(\fl_{Q}\cap\fh_{z}).
$$
It follows that $\dim\big((\fl_{Q}\cap\fh_{z})^{\perp}\big)=\dim\big(\fh_{z}^{\perp}\oplus\fn_{Q}\big)$, and hence
\begin{equation}\label{eq (fl_Q cap fh)^perp=fh^perp oplus fn_Q}
(\fl_{Q}\cap\fh_{z})^{\perp}
=\fh_{z}^{\perp}\oplus\fn_{Q}.
\end{equation}
In particular, for every $X\in\fa_{z}^{\circ}$ there exists a unique element $Y\in\fn_{Q}$ so that $X+Y\in\fh_{z}^{\perp}$, and thus there exists a unique linear map $T:\fa_{z}^{\circ}\to\fn_{Q}$ whose graph is contained in $\fh_{z}^{\perp}$. It remains to be shown that $T$ actually maps to $Z_{\fn_{Q}}(\fl_{Q}\cap\fh_{z})$.

Let $X\in\fa_{z}^{\circ}$ and $Y\in\fn_{Q}$ satisfy $X+Y\in\fh_{z}^{\perp}$. We will show that $Y\in Z_{\fn_{Q}}(\fl_{Q}\cap\fh_{z})$.
Note that $\fa_{z}^{\circ}=\fa\cap(\fl_{Q}\cap\fh_{z})^{\perp}$.  As $[\fl_{Q}\cap\fh_{z},(\fl_{Q}\cap\fh_{z})^{\perp}]\subseteq(\fl_{Q}\cap\fh_{z})^{\perp}$ and $[\fl_{Q}\cap\fh_{z},\fa]\subseteq \fl_{Q}\cap\fh_{z}$ we have $[\fl_{Q}\cap\fh_{z},\fa_{z}^{\circ}]=\{0\}$. From $[\fl_{Q}\cap\fh_{z},Y]\subseteq[\fl_{Q}\cap\fh_{z},\fn_{Q}]\subseteq\fn_{Q}$ and
$$
[\fl_{Q}\cap\fh_{z},Y]
=[\fl_{Q}\cap\fh_{z},X+Y]
\subseteq[\fl_{Q}\cap\fh_{z},\fh_{z}^{\perp}]
\subseteq\fh_{z}^{\perp},
$$
it follows that
$$
[\fl_{Q}\cap\fh_{z},Y]
\subseteq \fh_{z}^{\perp}\cap\fn_{Q}
=\{0\},
$$
and thus
$Y\in Z_{\fn_{Q}}(\fl_{Q}\cap\fh_{z})$.
\end{proof}

Given an adapted point $z$, the following lemma gives a characterization of the adapted points in the open $P$-orbit $P\cdot z$.

\begin{Lemma}\label{Lemma characterization adapted points with T_z^perp}
Let $z\in Z$ be adapted and $n\in N_{Q}$. Then $n\cdot z$ is adapted if and only if there exists an $X\in\fa_{z,\reg}^{\circ}$ so that
$$
\Ad(n^{-1})X
=X+T_{z}^{\perp}(X).
$$
\end{Lemma}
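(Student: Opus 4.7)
The whole statement turns on two ingredients: the elementary identity $\fh_{n\cdot z}^{\perp}=\Ad(n)\fh_{z}^{\perp}$, and the uniqueness clause of Lemma \ref{Lemma def T^perp}. Also useful is that for any $X\in\fa\subseteq\fl_{Q}$ and $n\in N_{Q}$, the difference $\Ad(n^{-1})X-X$ is automatically in $\fn_{Q}$ (because $\fl_{Q}$ normalizes $\fn_{Q}$ and $\Ad(n^{-1})-\Id$ maps $\fl_{Q}$ into $\fn_{Q}$).

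For the forward implication, I would suppose $n\cdot z$ is adapted and extract a witness $X\in\fa\cap\fh_{n\cdot z}^{\perp}$ with $Z_{\fg}(X)=\fl_{Q}$. Since both $z$ and $n\cdot z$ are adapted, each satisfies (\ref{eq fl_(Q,nc) subseteq fq cap fh_z =fl_Q cap fh_z}) by Proposition \ref{Prop LST holds for adapted points}(i), so Lemma \ref{Lemma characterization weakly adapted points} applies to $z'=n\cdot z$ and, via (\ref{eq fa cap fh_(man z)=fa cap fh_z)}), yields $\fa\cap\fh_{n\cdot z}=\fa\cap\fh_{z}$. Consequently $\fa_{n\cdot z,\reg}^{\circ}=\fa_{z,\reg}^{\circ}$, so in particular $X\in\fa_{z,\reg}^{\circ}$. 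Applying $\Ad(n^{-1})$ places $\Ad(n^{-1})X$ in $\fh_{z}^{\perp}$, and the difference $\Ad(n^{-1})X-X$ lies in $\fn_{Q}$ by the observation above. The uniqueness of $T_{z}^{\perp}(X)$ as the element of $\fn_{Q}$ making $X+T_{z}^{\perp}(X)$ land in $\fh_{z}^{\perp}$ then forces $\Ad(n^{-1})X-X=T_{z}^{\perp}(X)$.

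For the converse, given $X\in\fa_{z,\reg}^{\circ}$ with $\Ad(n^{-1})X=X+T_{z}^{\perp}(X)$, the right-hand side is in $\fh_{z}^{\perp}$ by the defining property of $T_{z}^{\perp}$, so $X\in\Ad(n)\fh_{z}^{\perp}=\fh_{n\cdot z}^{\perp}$. Combined with $Z_{\fg}(X)=\fl_{Q}$ this verifies (\ref{Def Adapted points - item 3}) of Definition \ref{Def Adapted points} for $n\cdot z$; (\ref{Def Adapted points - item 1}) holds because $P\cdot(n\cdot z)=P\cdot z$ is open; and (\ref{Def Adapted points - item 2}) follows from Remark \ref{Rem Adapted points}(\ref{Rem Adapted points - item 2}), i.e.\ from Proposition \ref{Prop z adapted iff (1) and (3)}, which asserts that (\ref{Def Adapted points - item 1}) and (\ref{Def Adapted points - item 3}) already imply (\ref{Def Adapted points - item 2}).

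The main subtlety is the forward direction, where one must reconcile the two \emph{a priori} distinct $\fa$-stabilizers $\fa\cap\fh_{z}$ and $\fa\cap\fh_{n\cdot z}$ before the uniqueness of $T_{z}^{\perp}$ can be invoked; this is precisely what Lemma \ref{Lemma characterization weakly adapted points} delivers. The converse is then a formal consequence of the uniqueness of $T_{z}^{\perp}$ together with the (forward-referenced) equivalence between being adapted and satisfying conditions (\ref{Def Adapted points - item 1}) and (\ref{Def Adapted points - item 3}) alone.
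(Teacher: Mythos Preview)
Your forward implication is correct and is essentially the paper's argument, streamlined slightly: the paper first notes via Lemma~\ref{Lemma characterization weakly adapted points} that $n\in Z_{N_{Q}}(\fl_{Q}\cap\fh_{z})$ so that $\Ad(n^{-1})X-X$ lands in $Z_{\fn_{Q}}(\fl_{Q}\cap\fh_{z})$, but as you observe, uniqueness already holds in all of $\fn_{Q}$ by (\ref{eq (fl_Q cap fh)^perp=fh^perp oplus fn_Q}), so your shortcut is fine.

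The converse, however, is circular as written. You invoke Proposition~\ref{Prop z adapted iff (1) and (3)} to get condition~(\ref{Def Adapted points - item 2}), but the proof of that proposition relies on Proposition~\ref{Prop parametrization of adapted points}, which in turn quotes the present lemma. So appealing to the forward reference is not a matter of presentation: it is a genuine logical loop. The paper breaks the loop by verifying~(\ref{Def Adapted points - item 2}) directly. From $\Ad(n^{-1})X=X+T_{z}^{\perp}(X)$ and the fact (built into Lemma~\ref{Lemma def T^perp}) that $T_{z}^{\perp}(X)\in Z_{\fn_{Q}}(\fl_{Q}\cap\fh_{z})$, one deduces $n\in Z_{N_{Q}}(\fl_{Q}\cap\fh_{z})$: since $X\in\fa_{z,\reg}^{\circ}$ the map $N_{Q}\to\fn_{Q}$, $n'\mapsto\Ad(n'^{-1})X-X$, is a diffeomorphism, and its restriction to $Z_{N_{Q}}(\fl_{Q}\cap\fh_{z})$ lands in and onto $Z_{\fn_{Q}}(\fl_{Q}\cap\fh_{z})$. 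Lemma~\ref{Lemma characterization weakly adapted points} then gives $\fl_{Q,\nc}\subseteq\fh_{n\cdot z}$ directly. With this fix your argument is complete and matches the paper's.
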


\begin{proof}
Assume that $n\cdot z$ is adapted. By definition there exists an element $X\in\fa_{n\cdot z,\reg}^{\circ}=\fa_{z,\reg}^{\circ}$. Now
$$
\Ad(n^{-1})X
\in \Ad(n^{-1})\fh_{n\cdot z}^{\perp}
=\fh_{z}^{\perp}.
$$
Moreover, in view of Proposition \ref{Prop LST holds for adapted points} and Lemma \ref{Lemma characterization weakly adapted points} we have $n\in Z_{N_{Q}}(\fl_{Q}\cap\fh_{z})$.
The Lie algebra $\fl_{Q}\cap\fh_{z}$ is normalized by $\fa$ and the roots of $\fl_{Q}\cap\fh_{z}$ in $\fa$ vanish on $\fa_{z}^{\circ}$. Therefore, $X$ centralizes $\fl_{Q}\cap\fh_{z}$. It follows that
\begin{align*}
\Ad(n^{-1})X
&\in \Ad(N_{Q})X\cap \Ad\big(Z_{G}(\fl_{Q}\cap\fh_{z})\big)X
\subseteq (X+\fn_{Q})\cap Z_{\fg}(\fl_{Q}\cap\fh_{z})\\
&=X+Z_{\fn_{Q}}(\fl_{Q}\cap\fh_{z}).
\end{align*}
Since also $X+T_{z}^{\perp}(X)\in\fh_{z}^{\perp}\cap\big(X+Z_{\fn_{Q}}(\fl_{Q}\cap\fh_{z})\big)$, it follows from (\ref{eq (fl_Q cap fh)^perp=fh^perp oplus fn_Q}) that
$$
\Ad(n^{-1})X
=X+T_{z}^{\perp}(X).
$$

We move on to prove the other implication. Assume that there exists an $X\in\fa_{z,\reg}^{\circ}$ so that
$$
\Ad(n^{-1})X
=X+T_{z}^{\perp}(X).
$$
First note that
$$
Pn\cdot z
=P\cdot z
$$
is an open $P$-orbit in $Z$.
Further,
$\Ad(n^{-1})X\in \fh_{z}^{\perp}$ and thus
$$
X
=\Ad(n)\Ad(n^{-1})X
\in \Ad(n)\fh_{z}^{\perp}
=\fh_{n\cdot z}^{\perp}.
$$
Finally, we claim that $n\in Z_{N_{Q}}(\fl_{Q}\cap\fh_{z})$. From the claim and Lemma \ref{Lemma characterization weakly adapted points} it follows that
$$
\fl_{Q,\nc}
\subseteq\fh_{n\cdot z},
$$
and hence that $n\cdot z$ is adapted.

It thus remains to prove the claim.
Since $\alpha(X)\neq 0$ for all roots $\alpha\in\Sigma(Q)$, the map
$$
\Xi: N_{Q}\to\fn_{Q},\quad u\mapsto \Ad(u)X-X
$$
is a diffeomorphism. The image of $Z_{N_{Q}}(\fl_{Q}\cap\fh_{z})$ under $\Xi$ is a submanifold of $Z_{\fn_{Q}}(\fl_{Q}\cap \fh_{z})$ that contains $0$. Moreover, its dimension coincides with the dimension of $Z_{\fn_{Q}}(\fl_{Q}\cap \fh_{z})$, and hence it is an open neighborhood of $0$ in $Z_{\fn_{Q}}(\fl_{Q}\cap \fh_{z})$.
As $\fl_{Q}\cap\fh_{z}$ is normalized by $A$, also $Z_{N_{Q}}(\fl_{Q}\cap\fh_{z})$ is normalized by $A$. Therefore, $\Xi\big(Z_{N_{Q}}(\fl_{Q}\cap\fh_{z}))$ is $A$-stable. The only $A$-stable open neighborhood of $0$ in $Z_{\fn_{Q}}(\fl_{Q}\cap \fh_{z})$ is $Z_{\fn_{Q}}(\fl_{Q}\cap \fh_{z})$ itself. We thus conclude that
$$
\Xi\big(Z_{N_{Q}}(\fl_{Q}\cap\fh_{z}))
=Z_{\fn_{Q}}(\fl_{Q}\cap \fh_{z}).
$$
The claim now follows as
$$
n^{-1}
=\Xi^{-1}\big(\Ad(n^{-1})X-X\big)
=\Xi^{-1}\big(T_{z}^{\perp}(X)\big)
\in \Xi^{-1}\big(Z_{\fn_{Q}}(\fl_{Q}\cap\fh_{z})\big)
=Z_{N_{Q}}(\fl_{Q}\cap\fh_{z}).
$$
\end{proof}

We can now describe the adapted points in a given open $P$-orbit in $Z$.

\begin{Prop}\label{Prop parametrization of adapted points}
Let $z\in Z$ be adapted. There exists a unique smooth rational map
$$
\Phi_{z}:
\fa_{z,\reg}^{\circ}\to \fn_{Q}
$$
so that the following hold.
\begin{enumerate}[(i)]
\item\label{Prop parametrization of adapted points - item 2}
A point $z'\in P\cdot z$ is adapted if and only if there exist $m\in M$, $a\in A$ and $X\in \fa_{z,\reg}^{\circ}$ so that
\begin{equation}\label{eq z'=ma exp Phi(X) cdot z}
z'
=ma\exp\big(\Phi_{z}(X)\big)\cdot z
\end{equation}
\item\label{Prop parametrization of adapted points - item 3}
For every $X\in \fa_{z,\reg}^{\circ}$ we have $\R X\subseteq\fh_{\exp(\Phi_{z}(X))\cdot z}^{\perp}$.
\end{enumerate}
The map $\Phi_{z}$ is determined by the identity
\begin{equation}\label{eq defining property Phi}
\Ad\big(\exp(-\Phi_{z}(X))\big)X
=X+T_{z}^{\perp}(X)
\in\fh_{z}^{\perp}
\qquad(X\in\fa_{z,\reg}^{\circ}).
\end{equation}
Finally, if $z'\in P\cdot z$ is adapted and $X\in\fa_{z,\reg}^{\circ}\cap\fh_{z'}^{\perp}$, then
$$
z'\in MA\exp\big(\Phi_{z}(X)\big)\cdot z
$$
\end{Prop}

\begin{proof}
Define the map
$$
\Psi:
\fa_{z,\reg}^{\circ}\times \fn_{Q}\to\fa_{z,\reg}^{\circ}\times\fn_{Q};
\quad(X,Y)\mapsto (X,\Ad\big(\exp(-Y)\big)X-X).
$$
$\Psi$ is a diffeomorphism and its inverse defines a smooth rational map from $\fa_{z,\reg}^{\circ}\times \fn_{Q}$ to itself. Define $\Phi_{z}:\fa_{z,\reg}^{\circ}\to \fn_{Q}$ to be the map determined by
\begin{equation}\label{eq def Phi}
\big(X,\Phi_{z}(X)\big)=\Psi^{-1}\big(X,T_{z}^{\perp}(X)\big)
\qquad(X\in\fa_{z,\reg}^{\circ}).
\end{equation}
By construction (\ref{eq defining property Phi}) holds.
It follows from Lemma \ref{Lemma characterization adapted points with T_z^perp} that a point $z'\in P\cdot z$ is adapted if and only if (\ref{eq z'=ma exp Phi(X) cdot z}) holds.
Moreover, (\ref{eq defining property Phi}) implies that for every $X\in\fa_{z,\reg}^{\circ}$
$$
X
\in\Ad\big(\exp(\Phi_{z}(X))\big)\fh_{z}^{\perp}
=\fh_{\Phi_{z}(X)\cdot z}^{\perp}.
$$
This shows that $\Phi_{z}$ has all the desired properties.

We move on to show uniqueness. Let $\Phi': \fa_{z,\reg}^{\circ}\to \fn_{Q}$ be a second map satisfying the properties (\ref{Prop parametrization of adapted points - item 2}) and (\ref{Prop parametrization of adapted points - item 3}). If $X\in\fa_{z,\reg}^{\circ}$, then
$$
X\in \fa\cap \fh_{\exp(\Phi_{z}(X))\cdot z}^{\perp}\cap \fh_{\exp(\Phi'_{z}(X))\cdot z}^{\perp}
$$
In view of Lemma \ref{Lemma fa cap fh_z^perp determines MA cdot z}
$$
\exp\big(\Phi_{z}(X)\big)\exp\big(-\Phi'_{z}(X)\big)
\in N_{Q}\cap L_{Q}
=\{e\},
$$
and hence $\Phi_{z}(X)=\Phi'_{z}(X)$. This shows that $\Phi_{z}$ is unique.

Finally, let $z'\in P\cdot z$ be adapted and $X\in\fa_{z,\reg}^{\circ}\cap\fh_{z'}^{\perp}$. Then
$$
X\in \fa\cap \fh_{z'}^{\perp}\cap \fh_{\exp(\Phi_{z}(X))\cdot z}^{\perp}
$$
By Lemma \ref{Lemma fa cap fh_z^perp determines MA cdot z}
$$
z'\in L_{Q}\exp\big(\Phi_{z}(X)\big)\cdot z
$$
Since $\exp\big(\Phi_{z}(X)\big)\cdot z$ is adapted we have $L_{Q,\nc}\subseteq H_{\exp(\Phi_{z}(X))\cdot z}$, and thus
$$
 L_{Q}\exp\big(\Phi_{z}(X)\big)\cdot z
=MA\exp\big(\Phi_{z}(X)\big)\cdot z.
$$
This proves the final assertion.
\end{proof}

We complete the description of the adapted points in $Z$ with a proposition, which provides a set of adapted points that intersects with all open $P$-orbits in $Z$.

\begin{Prop}\label{Prop Open orbits}
Let $z\in Z$ be adapted. Every open $P$-orbit in $Z$ contains a point $f\cdot z$ with $f\in G\cap\exp(i\fa)H_{z,\C}$.
For every $f\in G\cap\exp(i\fa)H_{z,\C}$ the point $f\cdot z$ is adapted. Moreover,
\begin{align*}
\fl_{Q}\cap\fh_{f\cdot z}
&=\fl_{Q}\cap\fh_{z},\\
\fa\cap\fh_{f\cdot z}
&=\fa\cap\fh_{z}\\
\fa\cap\fh_{f\cdot z}^{\perp}
&=\fa\cap\fh_{z}^{\perp}.
\end{align*}
\end{Prop}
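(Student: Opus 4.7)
The plan is to treat the two assertions separately, handling the second (identities and adaptedness of $f\cdot z$) by a direct computation and reserving the first (existence) as the principal obstacle.

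For the second assertion, write $f=\exp(iY)h$ with $Y\in\fa$ and $h\in H_{z,\C}$. Since $h$ normalizes $\fh_{z,\C}$ and $\Ad(h)$ preserves the $\C$-bilinear extension of $B$, we have
\[
\fh_{f\cdot z,\C}=\Ad(\exp(iY))\fh_{z,\C},\qquad \fh_{f\cdot z,\C}^{\perp}=\Ad(\exp(iY))\fh_{z,\C}^{\perp}.
\]
The key observation is that $\Ad(\exp(iY))$ acts on each $\fa$-root space $\fg_{\alpha,\C}$ by the scalar $e^{i\alpha(Y)}$, so it preserves every $\fa$-stable subspace of $\fg_{\C}$ setwise. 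Consequently, for any $\fa$-stable $V\subseteq\fg_{\C}$ and any $W\subseteq\fg_{\C}$,
\[
V\cap\Ad(\exp(iY))W=\Ad(\exp(iY))(V\cap W),
\]
and this equals $V\cap W$ whenever $V\cap W$ is itself $\fa$-stable.

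I apply this with $V=\fa_{\C}$ and $V=\fl_{Q,\C}$. The subspaces $\fa\cap\fh_{z}$ and $\fa\cap\fh_{z}^{\perp}$ lie inside $\fa$ and are trivially $\fa$-stable. For $\fl_{Q}\cap\fh_{z}$, combining $L_{Q}=MAL_{Q,\nc}$, the inclusion $\fl_{Q,\nc}\subseteq\fh_{z}$, and the identity $MA\cap H_{z}=(M\cap H_{z})\exp(\fa\cap\fh_{z})$ from Remark \ref{Rem on Prop LST holds for adapted points}(b) yields
\[
\fl_{Q}\cap\fh_{z}=(\fm\cap\fh_{z})+(\fa\cap\fh_{z})+\fl_{Q,\nc},
\]
in which each summand is $\fa$-stable ($\fm$ commutes with $\fa$, the middle sits in $\fa$, and $\fl_{Q,\nc}$ is an $A$-stable ideal of $\fl_{Q}$). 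Taking real points yields the three identities. For adaptedness of $f\cdot z$: condition (i) ($\fp+\fh_{f\cdot z}=\fg$) complexifies to $\fp_{\C}+\Ad(\exp(iY))\fh_{z,\C}=\fg_{\C}$, which by $\fa$-stability of $\fp_{\C}$ is $\Ad(\exp(iY))$-equivalent to the known identity $\fp_{\C}+\fh_{z,\C}=\fg_{\C}$; condition (ii) follows from $\fa$-stability of $\fl_{Q,\nc}$ by the same argument; and condition (iii) is immediate from $\fa\cap\fh_{f\cdot z}^{\perp}=\fa\cap\fh_{z}^{\perp}$.

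For the first assertion, fix an open $P$-orbit $\cO'\subseteq Z$ and, by Remark \ref{Rem Adapted points}(a), an adapted point $z'\in \cO'$. It suffices to produce $Y\in\fa$ with $\exp(iY)\cdot z=z'$ in $Z_{\C}$, for then any $g\in G$ with $g\cdot z=z'$ satisfies $g^{-1}\exp(iY)\in H_{z,\C}$ and so $g\in\exp(iY)H_{z,\C}$. My plan is to invoke the complexified local structure theorem, identifying the unique open $P_{\C}$-orbit through $z$ as $Q_{\C}\cdot z\cong N_{Q,\C}\times L_{Q,\C}/(L_{Q,\C}\cap H_{z,\C})$ and noting that every open $P$-orbit in $Z$ lies in this real locus (by real sphericality the open $P_{\C}$-orbit is unique). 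Using $L_{Q}=MAL_{Q,\nc}$ with $\fl_{Q,\nc}\subseteq\fh_{z}$, the $L_{Q,\C}$-slice reduces modulo $M$-action to $A_{\C}/(A_{\C}\cap H_{z,\C})$, and the polar decomposition $A_{\C}=A\cdot\exp(i\fa)$ shows that real points on this slice are precisely of the form $a\exp(iY)\cdot z$ with $a\in A$ and $\exp(iY)\in G\cdot H_{z,\C}$.

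The main obstacle is verifying that, as $Y$ ranges over $\{Y\in\fa:\exp(iY)\in G\cdot H_{z,\C}\}$, the points $\exp(iY)\cdot z$ meet every open $P$-orbit. My approach is to combine the slice analysis with Proposition \ref{Prop parametrization of adapted points}: the latter shows each open $P$-orbit contains exactly one $MA$-orbit of adapted points, so the open $P$-orbits correspond bijectively to distinct cosets of $\fa\cap\fh_{z}$ in $\{Y\in\fa:\exp(iY)\in G\cdot H_{z,\C}\}$; exhaustiveness then follows by matching the count of connected components of the real form $Z\cap(Q_{\C}\cdot z)$ to the union of realized $MA\exp(i\fa)$-orbits through $z$, completing the existence argument.
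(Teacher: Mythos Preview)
Your treatment of the second assertion (adaptedness of $f\cdot z$ and the three identities) is correct and essentially matches the paper's proof: both arguments reduce to the observation that $\Ad(\exp(iY))$ preserves each $\fa$-stable complex subspace, together with the fact that $\fl_{Q}\cap\fh_{z}$ is $\fa$-stable. Your justification of the latter via the decomposition $(\fm\cap\fh_{z})+(\fa\cap\fh_{z})+\fl_{Q,\nc}$ is valid.

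For the first assertion, however, there is a genuine gap. Your reduction of the $L_{Q,\C}$-slice ``modulo $M$-action to $A_{\C}/(A_{\C}\cap H_{z,\C})$'' silently assumes that the $M_{\C}$-part of any real point can be absorbed either into the real $M$-action or into $H_{z,\C}$; equivalently, that $M_{\C}=M\cdot(M_{\C}\cap H_{z,\C})$ along the relevant fiber. This is precisely the nontrivial step, and your closing appeal to ``matching the count of connected components'' does not supply it. Also, fixing a specific adapted $z'\in\cO'$ and seeking $\exp(iY)\cdot z=z'$ exactly is stronger than needed and may fail; the proposition only asks for $f\cdot z\in\cO'$.

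The paper's argument is structurally different. It begins from the external input that $G\cap P_{\C}H_{z,\C}$ is the union of all open $P\times H_{z}$-double cosets (\cite[Lemma~2.1]{KnopKrotzSayagSchlichtkrull_SimpleCompactificationsAndPolarDecomposition}), so any point of $\cO'$ is $ph\cdot z$ with $p\in P_{\C}$, $h\in H_{z,\C}$, $ph\in G$. It then uses a complex version of Lemma~\ref{Lemma fa cap fh_z^perp determines MA cdot z} (Lemma~\ref{Lemma C-version of Lemma fa cap fh_z^perp determines MA cdot z}) to force $p\in L_{Q,\C}$, and after absorbing $L_{Q,\nc,\C}$ and the real $MA$-part reduces to $p\in\exp(i\fm)\exp(i\fa)$. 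The key step you are missing is the conjugation trick: since $ph\in G$ one has $\overline{p}\,\overline{h}=ph$, and $\overline{p}=p^{-1}$ gives $p^{2}=\overline{h}h^{-1}\in H_{z,\C}$; a connectedness argument for $\exp(i\fm)\cap H_{z,\C}$ then shows the $\exp(i\fm)$-factor of $p$ already lies in $H_{z,\C}$, yielding $ph\in\exp(i\fa)H_{z,\C}$. This is exactly the mechanism that eliminates the $M_{\C}$-ambiguity your sketch leaves open.
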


To prove the proposition we make use of the following complex version of Lemma \ref{Lemma fa cap fh_z^perp determines MA cdot z}.
The proof for this lemma is the same as the proof for Lemma \ref{Lemma fa cap fh_z^perp determines MA cdot z}.

\begin{Lemma}\label{Lemma C-version of Lemma fa cap fh_z^perp determines MA cdot z}
Let $z\in Z$ be adapted and let $q\in Q_{\C}$.  Then $q\in L_{Q,\C}$ if and only if there exists an element
$$
X\in\fa_{\C}\cap\fh_{z,\C}^{\perp}\cap\Ad(q)\fh_{z,\C}^{\perp}
$$
so that $\fl_{Q,\C}=Z_{\fg_{\C}}(X)$ (and thus $\alpha(X)\neq0$ for all $\alpha\in\Sigma(Q)$).
In that case
$$
\fa_{\C}\cap\fh_{z,\C}^{\perp}
=\fa_{\C}\cap\Ad(q)\fh_{z,\C}^{\perp}.
$$
\end{Lemma}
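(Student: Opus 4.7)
The plan is to copy the proof of Lemma~\ref{Lemma fa cap fh_z^perp determines MA cdot z} verbatim, replacing every occurrence of $\fg$, $\fh_z$, $L_Q$, etc., by their complexifications. Three ingredients of the real argument all complexify: the decomposition $L_Q = MAL_{Q,\nc}$ yields $L_{Q,\C} = M_\C A_\C L_{Q,\nc,\C}$; the inclusion $\fl_{Q,\nc} \subseteq \fh_z$ yields $\fl_{Q,\nc,\C} \subseteq \fh_{z,\C}$ and, at the group level, $L_{Q,\nc,\C} \subseteq H_{z,\C}$; and the identity
$$
\fh_{z,\C}^\perp \cap \fn_{Q,\C} = (\fh_{z,\C} + \fq_\C)^\perp = \{0\}
$$
follows from $\fg = \fh_z + \fq$, which holds since $P\cdot z$ is open and $P \subseteq Q$.

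For the forward implication, assume there is $X \in \fa_\C \cap \fh_{z,\C}^\perp \cap \Ad(q)\fh_{z,\C}^\perp$ with $Z_{\fg_\C}(X) = \fl_{Q,\C}$, and write $q = ln$ with $l \in L_{Q,\C}$ and $n \in N_{Q,\C}$. Since $\fl_{Q,\C}$ centralizes $X$, and $L_{Q,\C}$ is generated by $M_\C$, $A_\C$, and the connected group $L_{Q,\nc,\C}$, each of which centralizes $X$ at the group level, $l$ itself centralizes $X$. Therefore
$$
\Ad(n^{-1})X = \Ad(q^{-1})X \in \fh_{z,\C}^\perp,
$$
and $\Ad(n^{-1})X - X$ lies in $\fh_{z,\C}^\perp \cap \fn_{Q,\C} = \{0\}$. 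Since no nontrivial element of $N_{Q,\C}$ fixes a regular element of $\fa_\C$, this forces $n = e$ and $q = l \in L_{Q,\C}$.

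For the reverse implication, write $q = m a l_{\nc}$ with $l_{\nc} \in L_{Q,\nc,\C} \subseteq H_{z,\C}$ and $ma \in M_\C A_\C$. Then $\Ad(l_{\nc})$ preserves $\fh_{z,\C}^\perp$ and $ma$ centralizes $\fa_\C$ pointwise, so
$$
\fa_\C \cap \Ad(q)\fh_{z,\C}^\perp = \Ad(ma)\bigl(\fa_\C \cap \fh_{z,\C}^\perp\bigr) = \fa_\C \cap \fh_{z,\C}^\perp.
$$
This gives the asserted equality, and Definition~\ref{Def Adapted points}(iii) supplies a real $X \in \fa \cap \fh_z^\perp$ with $Z_\fg(X) = \fl_Q$; since the defining conditions $\alpha(X) \neq 0$ for $\alpha \in \Sigma(Q)$ are insensitive to scalar extension, the same $X$ satisfies $Z_{\fg_\C}(X) = \fl_{Q,\C}$, yielding the required element in the intersection.

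The only nontrivial obstacle is verifying the complex-group factorization $L_{Q,\C} = M_\C A_\C L_{Q,\nc,\C}$ and the inclusion $L_{Q,\nc,\C} \subseteq H_{z,\C}$; both follow by complexifying the corresponding real statements from Remark~\ref{Rem Adapted points}(\ref{Rem Adapted points - item 4}) and from the identification $H_{z,\C} = (H_z)_\C$ used to define $H_{z,\C}$.
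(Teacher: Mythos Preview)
Your proposal is correct and follows essentially the same approach as the paper, which simply states that the proof is identical to that of Lemma~\ref{Lemma fa cap fh_z^perp determines MA cdot z}. You have been appropriately careful in identifying which ingredients of the real argument need complexification and why they continue to hold.
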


\begin{proof}[Proof of Proposition \ref{Prop Open orbits}]
We adapt the arguments from \cite[Sections 2.4 \& 2.5]{KnopKrotzSayagSchlichtkrull_SimpleCompactificationsAndPolarDecomposition}.

Let $\cO$ be an open $P$-orbit in $Z$. By \cite[Lemma 2.1]{KnopKrotzSayagSchlichtkrull_SimpleCompactificationsAndPolarDecomposition} the set
$$
G\cap P_{\C}H_{z,\C}
$$
is the union of all open $P\times H_{z}$-double cosets in $G$. Therefore, there exist $p\in P_{\C}$ and $h\in H_{z,\C}$ so that $ph\in G$ and $\cO=Pph\cdot z$.
Let $X\in\fa_{z,\reg}^{\circ}\cap\fh_{z}^{\perp}$. It follows from Proposition \ref{Prop parametrization of adapted points} that we may choose  $p$ so that $X\in\fh_{ph\cdot z}$.
In view of Lemma \ref{Lemma C-version of Lemma fa cap fh_z^perp determines MA cdot z} we have
$$
p
\in P_{\C}\cap L_{Q,\C}
=M_{\C}A_{\C} (N_{P,\C}\cap L_{Q,\C}).
$$
As $N_{P,\C}\cap L_{Q,\C}\subseteq H_{z,\C}$, $M_{\C}=M\exp(i\fm)$ and $A_{\C}=A\exp(i\fa)$, we may further choose $p$ so that $p\in \exp(i\fm)\exp(i\fa)$. We claim that now  $ph\in\exp(i\fa)H_{z,\C}$.

To prove the claim, define $g\mapsto \overline{g}$ to be the conjugation on $G_{\C}$ with respect to $G$. Note that $M_{\C}\exp(i\fa)$ is a group that is stable under this conjugation. Since $ph\in G$, we have $ph=\overline{p}\overline{h}$. Moreover, since $p\in \exp(i\fm)\exp(i\fa)$ we have $\overline{p}=p^{-1}$, and hence
$$
p^{2}
=\overline{p}^{-1}p
=\overline{h}h^{-1}.
$$
The group $M_{\C}\exp(i\fa)\cap H_{z,\C}$ is algebraic and hence it has finitely many connected components. Therefore, $\exp(i\fm)\cap H_{z,\C}$ is connected, and thus equal to $\exp(i\fm\cap\fh_{z})$. It follows that $p\in \exp(i\fa)H_{z,\C}$.
This proves the claim.
We have now proven that the set
$$
\big(G\cap \exp(i\fa)H_{z,\C}\big)\cdot z
$$
intersects with every open $P$-orbit in $Z$. We move on to show that all points in this set are adapted.

Let $a\in \exp(i\fa)$ and $h\in H_{z,\C}$ be so that $ah\in G$. Then
$$
\fp_{\C}+\fh_{ah\cdot z,\C}
=\fp_{\C}+\Ad(ah)\fh_{z,\C}
=\Ad(a)\big(\fp_{\C}+\fh_{z,\C}\big).
$$
Since $P\cdot z$ is open, the right-hand side is equal to $\fg_{\C}$. Intersection both sides with $\fg$ now yields
$$
\fp+\fh_{ah\cdot z}
=\fg
$$
and therefore $Pah\cdot z$ is open in $Z$.
Furthermore, since $\fl_{Q,\nc,\C}$ is stable under the action of $A_{\C}$ and $\fl_{Q,\nc}\subseteq\fh_{z}$, we have
$$
\fl_{Q,\nc,\C}
=\Ad(a)\fl_{Q,\nc,\C}
\subseteq\Ad(a)\fh_{z,\C}
=\Ad(ah)\fh_{z,\C}
=\fh_{ah\cdot z,\C}.
$$
Intersecting both sides with $\fg$ yields
$$
\fl_{Q,\nc}
\subseteq\fh_{ah\cdot z}.
$$
Finally,
$$
\fa_{\C}\cap\fh_{ah\cdot z,\C}^{\perp}
=\fa_{\C}\cap\Ad(ah)\fh_{z,\C}^{\perp}
=\Ad(a)\big(\fa_{\C}\cap\fh_{z,\C}^{\perp}\big)
=\fa_{\C}\cap\fh_{z,\C}^{\perp},
$$
and hence intersecting with $\fg$ yields
$$
\fa\cap\fh_{ah\cdot z,}^{\perp}
=\fa\cap\fh_{z}^{\perp}.
$$
Since $z$ is adapted, $\fa\cap\fh_{z}^{\perp}$ contains an element $X$ so that $Z_{\fg}(X)=\fl_{Q}$. This concludes the proof that $ah\cdot z$ is adapted.

It remains to prove that $\fl_{Q}\cap\fh_{ah\cdot z}=\fl_{Q}\cap\fh_{z}$ and $\fa\cap\fh_{ah\cdot z}=\fa\cap\fh_{z}$. These identities follow by intersecting with $\fg$ and $\fa$, respectively, from
$$
\fl_{Q,\C}\cap\fh_{ah\cdot z,\C}
=\fl_{Q,\C}\cap\Ad(ah)\fh_{z,\C}
=\Ad(a)\big(\fl_{Q,\C}\cap\fh_{z,\C}\big)
=\fl_{Q,\C}\cap\fh_{z,\C}.
$$
\end{proof}

We end this section with three corollaries of the previous results in this section. We begin with a description of the normalizer of  $\fh_{z}$.

\begin{Cor}\label{Cor Normalizer of fh}
Let $z\in Z$ be adapted. Then
$$
N_{G}(\fh_{z})\subseteq MA\Big(G\cap\exp(i\fa)H_{z,\C}\Big).
$$
In particular,
$$
N_{\fg}(\fh_{z})
=\fh_{z}+ N_{\fa}(\fh_{z})+ N_{\fm}(\fh_{z}).
$$
\end{Cor}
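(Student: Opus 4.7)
The plan is to establish the group-theoretic inclusion first, then deduce the Lie algebra identity from it by a tangent space computation, and finally refine the decomposition using a Jordan decomposition argument.

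For the group inclusion, set $F:=G\cap\exp(i\fa)H_{z,\C}$ and take $g\in N_G(\fh_z)$. From $\fh_{g\cdot z}=\Ad(g)\fh_z=\fh_z$ it follows that $g\cdot z$ is again adapted, so by Proposition \ref{Prop Open orbits} one may pick $f\in F$ with $P\cdot f\cdot z=P\cdot g\cdot z$; the same proposition gives that $f\cdot z$ is adapted and that $\fa\cap\fh_{f\cdot z}^{\perp}=\fa\cap\fh_z^{\perp}$. Choosing $X\in\fa\cap\fh_z^{\perp}$ with $Z_{\fg}(X)=\fl_{Q}$ yields $X\in\fa_{f\cdot z,\reg}^{\circ}\cap\fh_{f\cdot z}^{\perp}\cap\fh_{g\cdot z}^{\perp}$. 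Since $X\in\fh_{f\cdot z}^{\perp}$, the uniqueness in Lemma \ref{Lemma def T^perp} forces $T_{f\cdot z}^{\perp}(X)=0$, and then \eqref{eq defining property Phi} combined with $Z_{\fg}(X)=\fl_{Q}$ and $\fl_{Q}\cap\fn_{Q}=\{0\}$ forces $\Phi_{f\cdot z}(X)=0$. The last assertion of Proposition \ref{Prop parametrization of adapted points} then gives $g\cdot z\in MA\cdot f\cdot z$, hence $g\in MAfH_{z}\subseteq MA\cdot F$.

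For the Lie algebra identity, the inclusion $\supseteq$ is immediate. For the reverse inclusion, observe that $F\subseteq A_{\C}H_{z,\C}$, so the first part yields $N_G(\fh_z)\subseteq MA_{\C}H_{z,\C}$. The multiplication map $M\times A_{\C}\times H_{z,\C}\to G_{\C}$ has differential with image $\fm+\fa_{\C}+\fh_{z,\C}$ at $(e,e,e)$, and by a constant rank argument any smooth curve in its image through $e$ has tangent vector in this subspace. Applying this to $\exp(tY)$ for $Y\in N_{\fg}(\fh_z)$ and checking imaginary parts shows $\fg\cap(\fm+\fa_{\C}+\fh_{z,\C})=\fm+\fa+\fh_z$, so $Y=H+M+A$ with $H\in\fh_z$, $M\in\fm$, $A\in\fa$, and $M+A$ normalizes $\fh_z$.

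To separate $A$ and $M$, use that $\fa$ is maximal abelian in $\fp$, whence $\fm=Z_{\fk}(\fa)\subseteq\fk$ and $\ad M$ is skew-symmetric, hence semisimple with purely imaginary spectrum on $\fg_{\C}$. As $[M,A]=0$ and $A$ is $\R$-split semisimple, the element $M+A$ is itself semisimple in $\fg$, with real Jordan decomposition exhibiting $A$ as its $\R$-split part and $M$ as its compact part. The real-algebraic Zariski closure of the one-parameter subgroup $\{\exp(r(M+A)):r\in\R\}$ in the algebraic group $G$ is therefore a commuting product of an $\R$-split torus containing $\exp(\R A)$ with a compact torus whose Lie algebra contains $M$; since $N_G(\fh_z)$ is algebraic and contains the original one-parameter subgroup, this closure lies in $N_G(\fh_z)$, and passing to Lie algebras gives $A,M\in N_{\fg}(\fh_z)$. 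Intersecting with $\fa$ and $\fm$ yields $A\in N_{\fa}(\fh_z)$ and $M\in N_{\fm}(\fh_z)$, completing the decomposition $Y=H+A+M$. The main obstacle is this final separation: it is the maximality of $\fa$ as an $\R$-split subalgebra that prevents $M$ from having any $\R$-split semisimple component, so that the $\R$-split part of $A+M$ in its Jordan decomposition coincides exactly with $A$ and the two components decouple inside the Zariski closure.
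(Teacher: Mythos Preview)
Your argument is correct. The first part (the group inclusion) is essentially identical to the paper's proof: both observe that $g\cdot z$ is adapted, invoke Proposition~\ref{Prop Open orbits} to produce $f\in F$ with $Pf\cdot z=Pg\cdot z$ and $\fa\cap\fh_{f\cdot z}^{\perp}=\fa\cap\fh_{z}^{\perp}$, and then conclude $g\cdot z\in MA\cdot f\cdot z$. You reach this last step through the final assertion of Proposition~\ref{Prop parametrization of adapted points} after computing $\Phi_{f\cdot z}(X)=0$; the paper applies Lemma~\ref{Lemma fa cap fh_z^perp determines MA cdot z} directly, which is shorter (and is in fact what underlies the assertion you invoke).

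For the Lie algebra identity the two approaches diverge. The paper argues that $F/H_{z}$ is finite, so that $N_{\fg}(\fh_{z})=\fh_{z}+\fs$ for some $\fs\subseteq\fm\oplus\fa$, and then reuses the argument behind \eqref{eq MA cap H=(M cap H)(A cap H)} (an algebraic homomorphism from a split torus to a compact group is trivial) with $H_{z}$ replaced by $N_{G}(\fh_{z})$ to split $\fs$. You instead pass to $G_{\C}$, use a constant-rank argument to get $N_{\fg}(\fh_{z})\subseteq\fm+\fa+\fh_{z}$, and then separate the $\fm$- and $\fa$-components via the real Jordan decomposition and the Zariski closure of the one-parameter group. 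Both methods rest on the same structural fact---that $\R$-split and compact tori do not interact---but package it differently: the paper's version is more economical and stays inside results already established in the section, while yours is self-contained and makes the role of semisimplicity explicit. One small remark: your constant-rank claim is justified because $A_{\C}$ centralises $\fm$ and $\Ad(h)\fh_{z,\C}=\fh_{z,\C}$ for $h\in H_{z,\C}$, so the rank of the multiplication map is indeed constant; it may be worth saying this rather than leaving it implicit.
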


\begin{Rem}\label{Rem Normalizer in KKSS}
The second assertion in the corollary was proven in \cite[(5.10)]{KnopKrotzSayagSchlichtkrull_SimpleCompactificationsAndPolarDecomposition} and a slightly weaker version in \cite[Lemma 4.2]{KnopKrotzSchlichtkrull_LocalStructureTheorem}.
In these articles the requirement (\ref{Def Adapted points - item 3}) in Definition \ref{Def Adapted points} is not mentioned, but in general it cannot be omitted. If for example $H=\overline{N}_{P}$ and $z=man\cdot \overline{N}_{P}$, then $N_{\fg}(\fh_{z})=\Ad(man)\overline{\fp}$. This is only contained in $\fm\oplus\fa\oplus\overline{\fn}_{P}$ if $n=e$. In Example \ref{ex G/oN adapted points} we showed that the latter condition is equivalent to the existence of regular elements in $\fa\cap\fh_{z}^{\perp}$. The additional requirement (\ref{Def Adapted points - item 3}) in Definition \ref{Def Adapted points} is therefore necessary in this case.
\end{Rem}

\begin{proof}[Proof of Corollary \ref{Cor Normalizer of fh}]
Let $g\in N_{G}(\fh_{z})$. Then $\fh_{g\cdot z}=\Ad(g)\fh_{z}=\fh_{z}$. It follows that the properties (\ref{Def Adapted points - item 1}) -- (\ref{Def Adapted points - item 3}) in Definition \ref{Def Adapted points} hold for the point $g\cdot z$ and thus $g\cdot z$ is adapted.
By Proposition \ref{Prop Open orbits} there exists an $f\in G\cap\exp(i\fa)H_{z,\C}$ so that $Pg\cdot z=Pf\cdot z$. Moreover, $f\cdot z$ is adapted and
$$
\fa\cap\fh_{f\cdot z}^{\perp}
=\fa\cap\fh_{z}^{\perp}
=\fa\cap\fh_{g\cdot z}^{\perp}.
$$
Since $z$ is adapted, these spaces contain elements $X$ so that $\fl_{Q}=Z_{\fg}(X)$. It follows from Lemma \ref{Lemma fa cap fh_z^perp determines MA cdot z}
that $g\cdot z\in L_{Q}f\cdot z$. Since $L_{Q,\nc}\subseteq H_{f\cdot z}$ and $L_{Q}=MAL_{Q,\nc}$, there exist $m\in M$ and $a\in A$ so that $g\cdot z=maf\cdot z$. This proves the first assertion in the Corollary.

We move on to the second assertion. Consider the set $\Lambda$ consisting of all $\lambda\in \fa^{*}$ for which there exists a regular function $\phi_{\lambda}\in\R[G]$ so that $\phi(e)=1$ and
$$
\phi_{\lambda}(manxh)
=a^{\lambda}\phi(x)
\qquad \big(m\in M, a\in A, n\in N_{P},h\in H_{z}, x\in G\big).
$$
It follows from \cite[Lemma 3.4 \& Remark 3.5]{KnopKrotzSchlichtkrull_LocalStructureTheorem} that $\Lambda$ spans $(\fa/\fa_{\fh})^{*}$.
For each $\lambda\in \Lambda$ the function $\phi_{\lambda}$ extends to regular function on $G_{\C}$ which satisfies
$$
\phi_{\lambda}(ah)=a^{\lambda}
\qquad\big(a\in \exp(i\fa), h\in (H_{z,\C})_{e}\big),
$$
where $(H_{z,\C})_{e}$ is the connected open subgroup of $H_{z,\C}$. Note that $a^{\lambda}$ with $a\in \exp(i\fa)$ is real if and only if $a^{\lambda}=\pm1$.
From this it follows that $\big(G\cap \exp(i\fa)H_{z,\C}\big)/H_{z}$ is discrete. Since it is algebraic, it is in fact a finite set, and hence $H_{z}$ is a relatively open subset of $G\cap\exp(i\fa)H_{z,\C}$. Therefore, there exists a subspace $\fs$ of $\fm\oplus\fa$ so that
$$
N_{\fg}(\fh_{z})
=\fh_{z}+\fs.
$$
We may assume that $N_{\fm}(\fh_{z})\oplus N_{\fa}(\fh_{z})\subseteq\fs$.
To prove the second assertion, it now suffices to show that $\fs=(\fs\cap\fm)\oplus(\fs\cap\fa)$. The latter follows from (\ref{eq MA cap H=(M cap H)(A cap H)}) with $H_{z}$ replaced by the real spherical subgroup $N_{G}(\fh_{z})$.
\end{proof}

The spaces $\fa\cap\fh_{z}$ and $\fa^{\circ}_{z}$ play an important role in this article. By the following corollary these spaces do not depend on the adapted point $z$.

\begin{Cor}\label{Cor fa cap fh_z = fa cap fh_z'}
If $z,z'\in Z$ are adapted, then $\fa\cap\fh_{z}=\fa\cap\fh_{z'}$.
\end{Cor}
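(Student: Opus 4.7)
The plan is to reduce the general case to the case where $z$ and $z'$ lie in the same open $P$-orbit, which has already been settled in the discussion preceding Lemma \ref{Lemma def T^perp}. That reduction will be done via Proposition \ref{Prop Open orbits}, which lets us move between different open $P$-orbits by elements of $\exp(i\fa)H_{z,\C}$ without changing $\fa \cap \fh_z$.

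Concretely, first I would invoke Proposition \ref{Prop Open orbits} for the adapted point $z$ to produce an element $f \in G \cap \exp(i\fa) H_{z,\C}$ such that $f \cdot z$ lies in the open $P$-orbit $P \cdot z'$. That same proposition guarantees two things: $f \cdot z$ is itself adapted, and
\[
\fa \cap \fh_{f \cdot z} = \fa \cap \fh_z.
\]
So $z$ can be replaced by $f \cdot z$ without changing the quantity we are trying to compute.

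Next, since $f \cdot z$ and $z'$ are both adapted and satisfy $P \cdot (f \cdot z) = P \cdot z'$, I would apply the observation (established right after Proposition \ref{Prop parametrization of adapted points}, based on Lemma \ref{Lemma characterization weakly adapted points} together with Proposition \ref{Prop LST holds for adapted points}) that $\fa \cap \fh_w$ depends only on the open $P$-orbit of an adapted point $w$. This gives
\[
\fa \cap \fh_{f \cdot z} = \fa \cap \fh_{z'}.
\]
Chaining the two equalities yields $\fa \cap \fh_z = \fa \cap \fh_{z'}$, which is the desired conclusion.

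There is no real obstacle here beyond correctly citing the two inputs: the invariance of $\fa \cap \fh_z$ under the complex-translation construction of Proposition \ref{Prop Open orbits}, and the $P$-orbit invariance coming from Lemma \ref{Lemma characterization weakly adapted points}. The whole argument is a two-line chase once one has Proposition \ref{Prop Open orbits}; the substance of the corollary is entirely contained in that proposition, which guarantees that every open $P$-orbit can be reached from a fixed adapted $z$ by an element that preserves $\fa \cap \fh_z$.
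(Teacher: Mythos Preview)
Your proposal is correct and follows essentially the same approach as the paper: invoke Proposition~\ref{Prop Open orbits} to move $z$ to an adapted point $f\cdot z$ in $P\cdot z'$ without changing $\fa\cap\fh_z$, then use Lemma~\ref{Lemma characterization weakly adapted points} (applicable via Proposition~\ref{Prop LST holds for adapted points}) to conclude that adapted points in the same open $P$-orbit have the same $\fa\cap\fh$. The paper's proof is precisely this two-step chain.
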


\begin{proof}
By Proposition \ref{Prop Open orbits} there exits an $f\in G\cap \exp(i\fa)H_{z,\C}$ and a $p\in P$ so that $z'=pf\cdot z$. Moreover, $f\cdot z$ is adapted and $\fa\cap\fh_{f\cdot z}=\fa\cap\fh_{z}$. It follows from Proposition \ref{Prop LST holds for adapted points} that we may apply Lemma \ref{Lemma characterization weakly adapted points} to the points $f\cdot z$ and $pf\cdot z$. It follows that $\fa\cap\fh_{pf\cdot z}=\fa\cap\fh_{f\cdot z}=\fa\cap\fh_{z}$.
\end{proof}

In view of Corollary \ref{Cor fa cap fh_z = fa cap fh_z'} we may make the following definition.

\begin{Defi}\label{Def fa_fh, fa^circ, fa_reg^circ}
We define
$$
\fa_{\fh}
:=\fa\cap\fh_{z},
$$
where $z\in Z$ is any adapted point. We further define
$$
\fa^{\circ}
:=\fa\cap\fa_{\fh}^{\perp}
$$
and
$$
\fa_{\reg}^{\circ}
:=\{X\in\fa^{\circ}:\alpha(X)\neq 0 \text{ for all }\alpha\in\Sigma(Q)\}
=\{X\in\fa^{\circ}:Z_{\fg}(X)=\fl_{Q}\}.
$$
\end{Defi}

The subalgebras $\fm\cap\fh_{z}$ and $\fm\cap\fh_{z'}$ may not be equal for all adapted points $z$ and $z'$. However, there exists an $m\in M$ so that
$$
\fm\cap\fh_{z}
=\Ad(m)\big(\fm\cap\fh_{z'}\big).
$$
We note that $\fa_{z}^{\circ}=\fa^{\circ}$ and $\fa_{z,\reg}^{\circ}=\fa_{\reg}^{\circ}$ for all adapted points $z\in Z$.

Finally, we give the alternative characterization of adapted points which we announced in Remark \ref{Rem Adapted points} (\ref{Rem Adapted points - item 2}).

\begin{Prop}\label{Prop z adapted iff (1) and (3)}
Let $z\in Z$. Then $z$ is adapted if and only if the following hold.
\begin{enumerate}[(i)]
\item\label{Prop z adapted iff (1) and (3) -  item 1} $P\cdot z$ is open in $Z$, i.e., $\fp+\fh_{z}=\fg$,
\item\label{Prop z adapted iff (1) and (3) -  item 2} There exists an $X\in\fa\cap\fh_{z}^{\perp}$ so that $Z_{\fg}(X)=\fl_{Q}$, i.e., $\fa^{\circ}_{\reg}\cap\fh_{z}^{\perp}\neq\emptyset$.
\end{enumerate}
\end{Prop}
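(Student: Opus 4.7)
The forward direction is immediate from Definition \ref{Def Adapted points}; the task is to show that (i) and (ii) together force $\fl_{Q,\nc}\subseteq\fh_z$. My plan is to pick an auxiliary adapted point $z_0$ in the same open $P$-orbit as $z$, reduce to the situation $z=n\cdot z_0$ with $n\in N_Q$, and then verify the criterion of Lemma \ref{Lemma characterization adapted points with T_z^perp} to upgrade $z$ itself to an adapted point.

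Remark \ref{Rem Adapted points}(\ref{Rem Adapted points - item 1}) produces an adapted point $z_0\in P\cdot z$. Since $P\subseteq Q=L_Q N_Q$, I can write $z=ln\cdot z_0$ with $l\in L_Q$ and $n\in N_Q$; the normality of $N_Q$ in $Q$ combined with the $L_Q$-stability of adapted points (Remark \ref{Rem Adapted points}(\ref{Rem Adapted points - item 4})) lets me absorb $l$ into $z_0$ and assume $z=n\cdot z_0$ with $n\in N_Q$ and $z_0$ still adapted. The core of the argument is then to verify, for the element $X\in\fa\cap\fh_z^\perp$ with $Z_{\fg}(X)=\fl_Q$ supplied by (ii), the two conditions required by Lemma \ref{Lemma characterization adapted points with T_z^perp}: namely that $X\in\fa^\circ_{z_0,\reg}$ and that $\Ad(n^{-1})X=X+T_{z_0}^\perp(X)$.

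Regularity of $X$ is immediate from $Z_{\fg}(X)=\fl_Q$. The orthogonality $B(X,\fa\cap\fh_{z_0})=\{0\}$ is the one genuine calculation: $\fh_z^\perp=\Ad(n)\fh_{z_0}^\perp$ yields $\Ad(n^{-1})X\in\fh_{z_0}^\perp$, and $\Ad$-invariance of $B$ combined with $\Ad(n)Y-Y\in\fn_Q$ for $Y\in\fa$ and the root-space orthogonality $\fa\perp\fn_Q$ forces $B(X,Y)=0$. The identity $\Ad(n^{-1})X=X+T_{z_0}^\perp(X)$ is then forced by uniqueness: $\Ad(n^{-1})X$ sits in both $X+\fn_Q$ (because $n\in N_Q$) and $\fh_{z_0}^\perp$, while the decomposition $(\fl_Q\cap\fh_{z_0})^\perp=\fh_{z_0}^\perp\oplus\fn_Q$ from the proof of Lemma \ref{Lemma def T^perp} identifies the unique such element as $X+T_{z_0}^\perp(X)$. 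Lemma \ref{Lemma characterization adapted points with T_z^perp} now delivers that $z=n\cdot z_0$ is adapted, yielding in particular $\fl_{Q,\nc}\subseteq\fh_z$.

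The main obstacle I anticipate is the interplay between the hypothesis data $X$ (attached to $z$) and the combinatorics of the auxiliary point $z_0$: recognizing that $X$ actually sits in $\fa^\circ_{z_0,\reg}$ is not a priori obvious and hinges on the $B$-invariance together with the root-space orthogonality $\fa\perp\fn_Q$. Once this orthogonality is secured, both Lemma \ref{Lemma def T^perp} and Lemma \ref{Lemma characterization adapted points with T_z^perp} apply essentially by inspection.
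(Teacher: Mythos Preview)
Your proof is correct and follows essentially the same strategy as the paper's. The paper's proof is a touch shorter in presentation: it invokes Proposition~\ref{Prop parametrization of adapted points} to produce an adapted point $z'\in P\cdot z$ with $X\in\fh_{z'}^{\perp}$ (this is your $\exp(\Phi_{z_0}(X))\cdot z_0$), and then applies Lemma~\ref{Lemma fa cap fh_z^perp determines MA cdot z} to conclude $z\in L_Q\cdot z'$, whence $z$ is adapted by $L_Q$-stability. Your route instead verifies the criterion of Lemma~\ref{Lemma characterization adapted points with T_z^perp} directly, which amounts to unpacking the content of Proposition~\ref{Prop parametrization of adapted points} in this specific situation; the underlying computation is the same. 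One small point in your favor: you explicitly check $X\perp\fa\cap\fh_{z_0}$ via the $\Ad$-invariance of $B$ and $\fa\perp\fn_Q$, whereas the paper folds this into the ``i.e.'' in the statement of (ii) without further comment.
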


\begin{proof}
By definition adapted points satisfy (\ref{Prop z adapted iff (1) and (3) -  item 1}) and (\ref{Prop z adapted iff (1) and (3) -  item 2}). For the converse implication, assume that $z$ satisfies (\ref{Prop z adapted iff (1) and (3) -  item 1}) and (\ref{Prop z adapted iff (1) and (3) -  item 2}). Let $X\in\fa^{\circ}_{\reg}\cap\fh_{z}^{\perp}$. It follows from Proposition \ref{Prop parametrization of adapted points} that there exists an adapted point $z'\in P\cdot z$ so that $X\in\fa^{\circ}_{\reg}\cap\fh_{z'}^{\perp}$. By Lemma \ref{Lemma fa cap fh_z^perp determines MA cdot z} there exists a $l\in L_{Q}$ so that $z=l\cdot z'$. Since the set of adapted points is $L_{Q}$-stable, it follows that $z$ is adapted.
\end{proof}

\section{Description of $\fh_{z}$  in terms of a graph}
\label{Section Description of fh}

As in \cite[Proposition 2.5]{Brion_VersUneGeneralisationDesEspacesSymmetriques} we may describe of the stabilizer subalgebra $\fh_{z}$ of an adapted point $z$
in terms of a graph.
It follows from Proposition \ref{Prop LST holds for adapted points} that for every adapted point $z\in Z$ there exists a unique linear map
$$
T_{z}:\overline{\fn}_{Q}\to\big(\fm\cap(\fm\cap\fh_{z})^{\perp}\big)\oplus\fa^{\circ}\oplus\fn_{Q},
$$
so that
$$
\fh_{z}
=(\fl_{Q}\cap \fh_{z})\oplus\cG(T_{z}).
$$
Here $\cG(T_{z})$ denotes the graph of $T_{z}$.

\begin{Lemma}\label{Lemma T_z is L_Q cap H equivariant}
Let $z\in Z$ be adapted.
The subspace $\big(\fm\cap(\fm\cap\fh_{z})^{\perp}\big)\oplus\fa^{\circ}\oplus\fn_{Q}$ of $\fg$ is $(L_{Q}\cap H_{z})$-stable. Moreover, the map $T_{z}$ is $(L_{Q}\cap H_{z})$-equivariant.
\end{Lemma}

\begin{proof}
Note that
$$
\big(\fm\cap(\fm\cap\fh_{z})^{\perp}\big)\oplus\fa^{\circ}\oplus\fn_{Q}
=\fq\cap (\fl_{Q}\cap\fh_{z})^{\perp}.
$$
As $L_{Q}\cap H_{z}$ stabilizes both $\fq$ and  $\fl_{Q}\cap\fh_{z}$, and the adjoint representation preserves Killing-orthocomplements,  the first assertion follows.

It follows from the first assertion that the decomposition
$$
\fg
=\overline{\fn}_{Q}\oplus (\fl_{Q}\cap\fh_{z})\oplus \Big(\big(\fm\cap(\fm\cap\fh_{z})^{\perp}\big)\oplus\fa^{\circ}\oplus\fn_{Q}\Big)
$$
is stable under the adjoint action of $L_{Q}\cap H_{z}$. The second assertion now follows from the uniqueness of $T_{z}$.
\end{proof}

For $\alpha\in\Sigma(Q)$ let $p_{\alpha}$ be the projection $\fg\to\fg_{\alpha}$ with respect to the root space decomposition
$$
\fg
=\fm\oplus\fa
\oplus\bigoplus_{\alpha\in\Sigma}\fg_{\alpha}.
$$
Likewise, we write $p_{\fm}$ and $p_{\fa}$ for the projections $\fg\to \fm$ and $\fg\to\fa$ with respect to this decomposition.
For an adapted point $z\in Z$ and $Y\in \overline{\fn}_{Q}$ we define the $z$-support of $Y$ to be
$$
{\supp}_{z}(Y)
:=\big\{\beta\in\Sigma(Q)\cup\{\fm,\fa\}:p_{\beta}\big(T_{z}(Y)\big)\neq0\big\}.
$$

For every $\alpha\in \Sigma(Q)$ we have
$$
-\alpha(X)Y+\big(\ad(X)\circ T_{z}\big)(Y)
=\big[X,Y+T_{z}(Y)\big]\subseteq\fh_{z}
\qquad\big(X\in\fa_{\fh}, Y\in \fg_{-\alpha}\big)
$$
The uniqueness of the map $T$ implies that
$$
\ad(X)\circ T_{z}\big|_{\fg_{-\alpha}}
=-\alpha(X)T_{z}\big|_{\fg_{-\alpha}}
$$
In particular,
\begin{equation}\label{eq alpha|_fa_fh formulae}
\alpha|_{\fa_{\fh}}
=\left\{
   \begin{array}{ll}
     -\beta|_{\fa_{\fh}} & \text{if } \beta\in\Sigma(Q) \text{ with } \beta\in {\supp}_{z}(\fg_{-\alpha}),\\
     0 & \text{if } \fm\in{\supp}_{z}(\fg_{-\alpha})\text{ or }\fa\in{\supp}_{z}(\fg_{-\alpha}).
   \end{array}
 \right.
\end{equation}

The map $T_{z}$ possesses several symmetries, some of which are described in the following lemma.

\begin{Lemma}\label{Lemma Symmetry in T-map}
Let $z\in Z$ be adapted. If $X\in\fa\cap\fh_{z}^{\perp}$, then
$$
B\big([X,Y_{1}],T_{z}(Y_{2})\big)
=B\big([X,Y_{2}],T_{z}(Y_{1})\big)
\qquad \big(Y_{1},Y_{2}\in\overline{\fn}_{Q}\big).
$$
\end{Lemma}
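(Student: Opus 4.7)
The entire lemma rests on two facts: $\fh_{z}$ is a Lie subalgebra and $X$ is $B$-orthogonal to it. Because $Y_i + T_z(Y_i) \in \cG(T_z) \subseteq \fh_z$ for $i=1,2$, the bracket $[Y_1 + T_z(Y_1),\, Y_2 + T_z(Y_2)]$ is again in $\fh_z$, and pairing with $X \in \fh_z^\perp$ followed by one use of $\Ad(G)$-invariance of $B$ gives
\[
0 = B\bigl(X,\, [Y_1 + T_z(Y_1),\, Y_2 + T_z(Y_2)]\bigr) = B\bigl([X,\, Y_1 + T_z(Y_1)],\, Y_2 + T_z(Y_2)\bigr).
\]
This identity is the skeleton of the proof; everything else is bookkeeping against the root space decomposition.

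For the bookkeeping I would decompose $T_z(Y_i) = S_i + U_i$ according to the direct sum $\bigl(\fm \cap (\fm \cap \fh_z)^\perp\bigr) \oplus \fa^{\circ} \oplus \fn_Q$, so that $S_i \in \fm \oplus \fa$ and $U_i \in \fn_Q$. Since $X \in \fa$ commutes with $\fm \oplus \fa$, we have $[X, Y_i + T_z(Y_i)] = [X, Y_i] + [X, U_i]$ with $[X, Y_i] \in \overline{\fn}_Q$ and $[X, U_i] \in \fn_Q$. Expanding the surviving bilinear form and discarding every term that vanishes by the standard orthogonalities $B(\fg_\alpha, \fg_\beta) = 0$ for $\alpha + \beta \neq 0$ and $B(\fm, \fa) = 0$ collapses the identity to the single equation
\[
B([X, Y_1],\, U_2) + B([X, U_1],\, Y_2) = 0.
\]

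The conclusion now follows from one more application of ad-invariance, namely $B([X, U_1], Y_2) = -B(U_1, [X, Y_2]) = -B([X, Y_2], U_1)$, combined with the observation that $[X, Y_j] \in \overline{\fn}_Q$ is $B$-orthogonal to $S_i \in \fm \oplus \fa$, so that $B([X, Y_j],\, T_z(Y_i)) = B([X, Y_j],\, U_i)$. Substituting these two facts into the displayed equation yields the claim. I do not anticipate a serious obstacle: the argument is essentially forced once one recalls that $\fh_z$ is a subalgebra and uses the decomposition of the codomain of $T_z$ recorded at the beginning of \S\ref{Section Description of fh}. The only points requiring care are the sign in the ad-invariance identity and the verification that the surviving cross terms are exactly the two written above.
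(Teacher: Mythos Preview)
Your proof is correct and follows essentially the same route as the paper. Both arguments reduce to the vanishing of $B\bigl([X,\,Y_{1}+T_{z}(Y_{1})],\,Y_{2}+T_{z}(Y_{2})\bigr)$ and then discard the same cross terms using $B(\overline{\fn}_{Q},\overline{\fn}_{Q})=0$ and $B(\fn_{Q},\fq)=0$; the only cosmetic difference is that the paper obtains the vanishing via $[X,\fh_{z}]\subseteq\fh_{z}^{\perp}$, whereas you first invoke $[\fh_{z},\fh_{z}]\subseteq\fh_{z}$ and then shift the bracket with ad-invariance, which is the same statement read from the other side.
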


\begin{Rem}
If $\alpha,\beta\in\Sigma(Q)$ and $Y_{-\alpha}\in\fg_{-\alpha}$ and $Y_{-\beta}\in\fg_{-\beta}$, then the identity in the lemma specializes to
\begin{equation}\label{eq Symmetry in T-map}
B\big(Y_{-\alpha},p_{\alpha}T_{z}(Y_{-\beta})\big)\alpha(X)=B\big(Y_{-\beta},p_{\beta}T_{z}(Y_{-\alpha})\big)\beta(X).
\end{equation}
This identity was proved by Brion in \cite[Proposition 2.5]{Brion_VersUneGeneralisationDesEspacesSymmetriques} in case $G$ and $H$ are complex algebraic groups and for one specific choice of $X$.
\end{Rem}

\begin{proof}[Proof of Lemma \ref{Lemma Symmetry in T-map}]
Since $[X,Y_{1}],Y_{2}\in\overline{\fn}_{Q}$ we have
$$
B([X,Y_{1}],Y_{2})=0,
$$
and since $[X,T_{z}(Y_{1})]\in\fn_{Q}$ and $T_{z}(Y_{2})\in\fq$, we have
$$
B\big([X,T_{z}(Y_{1})],T_{z}(Y_{2})\big)=0.
$$
Therefore,
\begin{align*}
B\big([X,Y_{1}],T_{z}(Y_{2})\big)-B\big(T_{z}(Y_{1}),[X,Y_{2}]\big)
&=B\big([X,Y_{1}],T_{z}(Y_{2})\big)+B\big([X,T_{z}(Y_{1})],Y_{2}\big)\\
&=B\big([X,Y_{1}+T_{z}(Y_{1})],Y_{2}+T_{z}(Y_{2})\big).
\end{align*}
The right-hand side equals $0$ as $[X,Y_{1}+T_{z}(Y_{1})]\in\fh_{z}^{\perp}$ and $Y_{2}+T_{z}(Y_{2})\in\fh_{z}$.
\end{proof}

\section{Limits of subspaces}
\label{Section Limits of subspaces}
For $k\in\N$ let $\Gr(\fg,k)$ be the Grassmannian of $k$-dimensional subspaces of the Lie algebra $\fg$.
For our approach to the little Weyl group we will need to consider certain limits of the stabilizer subalgebras $\fh_{z}$ in $\Gr\big(\fg,\dim(\fh_{z})\big)$. In this section we introduce the relevant limits and discuss their properties.

\medbreak

\begin{Defi}\label{Def order-regular}
We say that an element $X\in\fa$ is {\it order-regular} if
$$
\alpha(X)\neq \beta(X)
$$
for all $\alpha,\beta\in \Sigma$ with $\alpha\neq \beta$.
\end{Defi}

If $X\in\fa$ is order-regular, then in particular $\alpha(X)\neq-\alpha(X)$ and therefore $\alpha(X)\neq 0$ for every $\alpha\in\Sigma$. This implies that order-regular elements in $\fa$ are regular. Every order-regular element $X\in\fa$ determines a linear order $\geq$ on $\Sigma$ by setting
$$
\alpha\geq\beta
\quad\text{if and only if}\quad
\alpha(X)\geq\beta(X)
$$
for $\alpha,\beta\in\Sigma$.

\begin{Prop}\label{Prop Limits of subspaces}
Let $E\in \Gr(\fg,k)$ and let $X\in\fa$. The limit
$$
E_{X}:=\lim_{t\to\infty}\Ad\big(\exp(tX)\big)E,
$$
exists in the Grassmannian $\Gr(\fg,k)$. If $\lambda_{1}<\lambda_{2}<\dots<\lambda_{n}$ are the eigenvalues and $p_{1},\dots,p_{n}$ the corresponding projections onto the eigenspaces $V_{i}$ of $\ad(X)$, then $E_{X}$ is given by
\begin{equation}\label{eq formula for E_X}
E_{X}
=\bigoplus_{i=1}^{n}p_{i}\big(E\cap \bigoplus_{j=1}^{i}V_{j}\big).
\end{equation}
The following hold.
\begin{enumerate}[(i)]
\item\label{Prop Limits of subspaces - item 1} If $E$ is a Lie subalgebra of $\fg$, then $E_{X}$ is a Lie subalgebra of $\fg$.
\item\label{Prop Limits of subspaces - item 2} If $X\in\fa$ is order-regular, then $E_{X}$ is $\fa$-stable.
\item\label{Prop Limits of subspaces - item 3} Let $\cR\subseteq\fa$ be a connected component of the set of order-regular elements in $\fa$. If $X\in\overline{\cR}$ and $Y\in\cR$, then $\big(E_{X}\big)_{Y}=E_{Y}$. In particular, if $X,Y\in\cR$, then $E_{X}=E_{Y}$.
\item\label{Prop Limits of subspaces - item 4}If  $g,g'\in G$ and
$$
\lim_{t\to\infty}\exp(tX)g\exp(-tX)
=g',
$$
then
$$
\big(\Ad(g)E\big)_{X}
=\Ad(g')E_{X}
$$
\item\label{Prop Limits of subspaces - item 5}
Let $E_{\C,X}$ be the limit of $\Ad\big(\exp(tX)\big)E_{\C}$ for $t\to\infty$ in the Grassmannian of $k$-dimensional complex subspaces in the complexification $\fg_{\C}$ of $\fg$. Then
$$
E_{\C,X}
=(E_{X})_{\C}.
$$
\end{enumerate}
\end{Prop}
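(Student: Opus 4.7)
The plan is to first establish the limit's existence together with the explicit formula (\ref{eq formula for E_X}), and then deduce each of the five assertions from it. For the existence and formula, I would decompose $\fg = \bigoplus_{i=1}^{n} V_{i}$ into the $\ad(X)$-eigenspaces and, for each nonzero $e \in E$, write $m(e)$ for the largest index at which $e$ has a nonzero component. A Gaussian elimination argument inside $E$ produces a basis $e^{(1)}, \dots, e^{(k)}$ of $E$ with pairwise distinct values $m(e^{(j)})$ and such that $p_{m(e^{(j)})}\bigl(e^{(j')}\bigr) = 0$ whenever $j' \neq j$. After rescaling by $e^{-t \lambda_{m(e^{(j)})}}$, the vector $\Ad(\exp(tX)) e^{(j)}$ converges to $p_{m(e^{(j)})}\bigl(e^{(j)}\bigr)$, and the span of these limits is independent of the basis; matching it against the right-hand side of (\ref{eq formula for E_X}) is a bookkeeping check using the identity $E \cap F_{i} = \spn\{e^{(j)} : m(e^{(j)}) \leq i\}$.

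Once the formula is in hand, parts (\ref{Prop Limits of subspaces - item 1}), (\ref{Prop Limits of subspaces - item 2}), (\ref{Prop Limits of subspaces - item 4}) and (\ref{Prop Limits of subspaces - item 5}) are short. Assertion (\ref{Prop Limits of subspaces - item 1}) uses that the locus of Lie subalgebras is closed in $\Gr(\fg,k)$. For (\ref{Prop Limits of subspaces - item 2}), order-regularity forces each $V_{i}$ to be either a single root space $\fg_{\alpha}$ (on which $\fa$ acts by the scalar $\alpha$) or the zero eigenspace $\fm \oplus \fa$ (on which $\fa$ acts trivially), so every subspace appearing in the decomposition (\ref{eq formula for E_X}) is automatically $\fa$-stable. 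For (\ref{Prop Limits of subspaces - item 4}), I would combine the identity $\Ad(\exp(tX))\Ad(g) = \Ad(\exp(tX) g \exp(-tX)) \Ad(\exp(tX))$ with the continuity of the $G$-action on $\Gr(\fg,k)$. For (\ref{Prop Limits of subspaces - item 5}), complexification commutes both with the intersections $E \cap F_{i}$ and with the projections $p_{i}$, so (\ref{eq formula for E_X}) applied over $\C$ directly identifies $E_{\C,X}$ with $(E_{X})_{\C}$.

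The main obstacle is (\ref{Prop Limits of subspaces - item 3}). The key structural observation is that for $X \in \overline{\cR}$ and $Y \in \cR$ the weak ordering of $\Sigma \cup \{0\}$ induced by $X$ is a coarsening of the strict linear order induced by $Y$. Concretely, listing the $\ad(Y)$-eigenspaces as $V_{1}^{Y}, \dots, V_{N}^{Y}$ in increasing order of eigenvalue, there is a partition of $\{1, \dots, N\}$ into consecutive intervals $I_{1}, \dots, I_{n}$ with $V_{i}^{X} = \bigoplus_{j \in I_{i}} V_{j}^{Y}$ and $F_{i}^{X} = F_{\max I_{i}}^{Y}$. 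Applying (\ref{eq formula for E_X}) first to $E_{X}$ and then to $(E_{X})_{Y}$, the identity $(E_{X})_{Y} = E_{Y}$ reduces to the identity $p_{j}^{Y}(E_{X} \cap F_{j}^{Y}) = p_{j}^{Y}(E \cap F_{j}^{Y})$ for each $j$, where $i$ is the unique index with $j \in I_{i}$. The inclusion $\supseteq$ follows by lifting $e \in E \cap F_{j}^{Y} \subseteq F_{i}^{X}$ to $p_{i}^{X}(e) \in E_{X}$ and verifying that $p_{j}^{Y} p_{i}^{X}(e) = p_{j}^{Y}(e)$. The reverse inclusion follows because any $w \in E_{X} \cap F_{j}^{Y}$ must have vanishing components in all $V_{i'}^{X}$ with $i' > i$, whence the definition of $E_{X}$ lets us represent $w$ starting from an element of $E \cap F_{i}^{X}$ whose $V_{\ell}^{Y}$-components for $\ell > j$ vanish. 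The special case $X, Y \in \cR$ is immediate, since elements of a single order-regular chamber share the same flag $F_{\bullet}$, so $E_{X} = E_{Y}$ follows directly from (\ref{eq formula for E_X}).
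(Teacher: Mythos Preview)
Your overall strategy is sound and close in spirit to what the paper does (the paper in fact defers the existence, the formula, and parts (\ref{Prop Limits of subspaces - item 1})--(\ref{Prop Limits of subspaces - item 3}) to \cite[Lemma~4.1]{KrotzKuitOpdamSchlichtkrull_InfinitesimalCharactersOfDiscreteSeriesForRealSphericalSpaces} and only spells out (\ref{Prop Limits of subspaces - item 4}) and (\ref{Prop Limits of subspaces - item 5}) directly). However, there is a concrete error in your existence argument: you claim that Gaussian elimination produces a basis $e^{(1)},\dots,e^{(k)}$ of $E$ with \emph{pairwise distinct} values $m(e^{(j)})$ and with $p_{m(e^{(j)})}(e^{(j')})=0$ for $j'\neq j$. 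This is false whenever some eigenspace $V_{i}$ meets $E$ in dimension at least $2$; for instance if $E\subseteq V_{i}$ with $k\geq 2$ then every nonzero vector of $E$ has $m=i$. What Gaussian elimination actually yields is a basis adapted to the filtration $E\cap F_{1}\subseteq E\cap F_{2}\subseteq\cdots$, i.e.\ one for which the leading terms $p_{m(e^{(j)})}(e^{(j)})$ are linearly independent (but not necessarily supported in distinct $V_{i}$). With that corrected hypothesis the remainder of your argument goes through verbatim: the rescaled vectors $e^{-t\lambda_{m(e^{(j)})}}\Ad(\exp(tX))e^{(j)}$ still converge to $p_{m(e^{(j)})}(e^{(j)})$, these limits are linearly independent, and their span is exactly the right-hand side of (\ref{eq formula for E_X}).

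Your handling of (\ref{Prop Limits of subspaces - item 3}) via the coarsening of orderings is correct and more explicit than the paper's citation. The reverse inclusion in your last step can be tightened slightly: once you know $w_{i'}=0$ for $i'>i$ and write $w_{i}=p_{i}^{X}(e)$ with $e\in E\cap F_{i}^{X}$, the condition $w_{i}\in F_{j}^{Y}$ forces $p_{\ell}^{Y}(e)=0$ for every $\ell\in I_{i}$ with $\ell>j$ (since $p_{\ell}^{Y}p_{i}^{X}=p_{\ell}^{Y}$ for $\ell\in I_{i}$), and together with $e\in F_{i}^{X}=F_{\max I_{i}}^{Y}$ this already gives $e\in E\cap F_{j}^{Y}$, so no further modification of $e$ is needed.
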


\begin{proof}
The proofs for all assertions with the exception of (\ref{Prop Limits of subspaces - item 4}) and (\ref{Prop Limits of subspaces - item 5}) are given in \cite[Lemma 4.1]{KrotzKuitOpdamSchlichtkrull_InfinitesimalCharactersOfDiscreteSeriesForRealSphericalSpaces}. Although order-regular elements are in \cite{KrotzKuitOpdamSchlichtkrull_InfinitesimalCharactersOfDiscreteSeriesForRealSphericalSpaces} assumed to have the additional property that they are contained in $\fa^{-}$, this is not used anywhere in the proof of Lemma 4.1 {\em loc. cit.}

We move on to prove (\ref{Prop Limits of subspaces - item 4}). If $A_{t}\to \1$ for $t\to \infty$ in $\End(E)$, then $A_{t}\,\Ad\big(\exp(tX)\big)E$ tends to $E_{X}$ as $t\to\infty$. The identity now follow straightforwardly from the fact that $(g')^{-1}\exp(tX)g\exp(-tX)$ converges to $e$ in $G$.

Finally we prove (\ref{Prop Limits of subspaces - item 5}). The space $E_{\C,X}$ is a complex subspace of $\fg\otimes_{\R}\C$. Since $E$ is contained in $E\otimes_{\R}\C$ as a real subspace, the limit $E_{X}$ is contained in $E_{\C,X}$ as a real subspace. Therefore, $E_{X}\otimes_{\R}\C\subseteq E_{\C,X}$. A dimension count shows that equality holds.
\end{proof}

\begin{Rem}
If $X$ is not order-regular, then $E_{X}$ need not to be stable under the action of $\fa$, even if $X$ is regular. An example of this can be constructed as follows. Let $\alpha, \beta\in\Sigma(\fa)$ be such that $\alpha\neq\beta$ and $\alpha(X)=\beta(X)$. Let $Y_{\alpha}\in\fg_{\alpha}$ and $Y_{\beta}\in\fg_{\beta}$ and define $E=\R(Y_{\alpha}+ Y_{\beta})$. Now $E$ consists of eigenvectors for $\ad(X)$, hence
$E_{X}=E$. However $E$ is not $\fa$-stable.
\end{Rem}

\section{The compression cone}
\label{Section Compression cone}
In this section we introduce the compression cone of a point $z\in Z$. It consists of all $X\in\fa$ for which the limit $\fh_{z,X}$ is equal to a given limit subalgebra. The main result in this section is that the compression cones are the same for all adapted points. (See Proposition \ref{Prop relation between compression cones}.) The compression cone for an adapted point is therefore an invariant of the space $Z$, which we call the compression cone of $Z$. For a non-adapted point the compression cone may be strictly smaller than the compression cone of $Z$. The closure of the compression cone of $Z$ will serve as a fundamental domain of the little Weyl group.

\medbreak

We fix an adapted point $z_{0}\in Z$ and define the subalgebra
\begin{equation}\label{eq Def fh_emptyset}
\fh_{\emptyset}
:=(\fl_{Q}\cap\fh_{z_{0}})+\overline{\fn}_{Q}.
\end{equation}
This subalgebra was defined in the introduction as a limit subalgebra. From Lemma \ref{Lemma Properties of C_z} it will follow that the two definitions indeed agree.

Clearly $\fh_{\emptyset}$ depends on the choice of the adapted point $z_{0}\in Z$. However in view of the following lemma, another choice of $z_{0}$ would yield an $M$-conjugate of $\fh_{\emptyset}$.

\begin{Lemma}\label{Lemma dependence on z in def fh_empty}
Let $z\in Z$ be adapted. There exists an $m\in M$ so that
$$
(\fl_{Q}\cap\fh_{z})\oplus\overline{\fn}_{Q}
=\Ad(m)\fh_{\emptyset}.
$$
\end{Lemma}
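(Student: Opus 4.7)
The plan is to reduce to the case when $z$ and $z_{0}$ lie in the same open $P$-orbit, and then apply Lemma \ref{Lemma characterization weakly adapted points} to conjugate $\fl_{Q}\cap\fh_{z}$ to $\fl_{Q}\cap\fh_{z_{0}}$ by an element of $M$; the nilpotent part $\overline{\fn}_{Q}$ is $M$-stable and so it is carried along for free. Since $\overline{\fn}_{Q}$ is normalized by $M\subseteq L_{Q}$, it suffices to produce $m\in M$ with $\fl_{Q}\cap\fh_{z}=\Ad(m)(\fl_{Q}\cap\fh_{z_{0}})$.

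First I would bridge the two $P$-orbits $P\cdot z$ and $P\cdot z_{0}$. By Proposition \ref{Prop Open orbits}, the open $P$-orbit $P\cdot z$ contains a point of the form $f\cdot z_{0}$ with $f\in G\cap\exp(i\fa)H_{z_{0},\C}$, and for such an $f$ the point $f\cdot z_{0}$ is adapted with
\[
\fl_{Q}\cap\fh_{f\cdot z_{0}}
=\fl_{Q}\cap\fh_{z_{0}}.
\]
Thus $f\cdot z_{0}$ is an adapted point in $P\cdot z$ whose Levi-intersection coincides with that of the reference point $z_{0}$.

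Next I would compare $z$ with $f\cdot z_{0}$ inside their common open $P$-orbit. Both points are adapted, so in particular each satisfies the hypothesis (\ref{eq fl_(Q,nc) subseteq fq cap fh_z =fl_Q cap fh_z}) of Lemma \ref{Lemma characterization weakly adapted points} by Proposition \ref{Prop LST holds for adapted points}. Applying that lemma with $f\cdot z_{0}$ in the role of the fixed adapted point, there exist $m\in M$, $a\in A$ and $n\in Z_{N_{Q}}(\fl_{Q}\cap\fh_{f\cdot z_{0}})$ with $z=man\cdot(f\cdot z_{0})$, and (\ref{eq fl_Q cap fh_(man z)=Ad(m)(fl_Q cap fh_z)}) then gives
\[
\fl_{Q}\cap\fh_{z}
=\Ad(m)\bigl(\fl_{Q}\cap\fh_{f\cdot z_{0}}\bigr)
=\Ad(m)\bigl(\fl_{Q}\cap\fh_{z_{0}}\bigr).
\]

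Finally, using that $M$ normalizes $\overline{\fn}_{Q}$ (indeed $M\subseteq L_{Q}$ and $\overline{\fn}_{Q}$ is $L_{Q}$-stable), we conclude
\[
(\fl_{Q}\cap\fh_{z})\oplus\overline{\fn}_{Q}
=\Ad(m)\bigl(\fl_{Q}\cap\fh_{z_{0}}\bigr)\oplus\Ad(m)\overline{\fn}_{Q}
=\Ad(m)\fh_{\emptyset}.
\]
There is no real obstacle here; the only subtle step is ensuring a common $P$-orbit to which Lemma \ref{Lemma characterization weakly adapted points} applies, and this is exactly what Proposition \ref{Prop Open orbits} supplies.
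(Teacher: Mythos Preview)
Your proof is correct and follows essentially the same approach as the paper's own proof, which simply cites Proposition \ref{Prop Open orbits} and Lemma \ref{Lemma characterization weakly adapted points} (the latter being applicable in view of Proposition \ref{Prop LST holds for adapted points}). You have spelled out the details of how these results combine, which the paper leaves implicit.
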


\begin{proof}
The assertion follows directly from Proposition \ref{Prop Open orbits} and Lemma \ref{Lemma characterization weakly adapted points}. The latter lemma we may apply in view of Proposition \ref{Prop LST holds for adapted points}.
\end{proof}

\begin{Defi}\label{Def fh_(z,X) and cC_z}
For $z\in Z$ and $X\in\fa$ we define
$$
\fh_{z,X}
:=(\fh_{z})_{X}
=\lim_{t\to\infty}\Ad\big(\exp(tX)\big)\fh_{z}.
$$
Here the limit is taken in the Grassmannian of $\dim(\fh_{z})$-dimensional subspaces of $\fg$.
We further define for $z\in Z$ the cone in $\fa$
$$
\cC_{z}
:=\{X\in\fa: \fh_{z,X}=\Ad(m)\fh_{\emptyset}\text{ for some }m\in M\}.
$$
\end{Defi}

\begin{Lemma}\label{Lemma Properties of C_z with z adapted}
Let $z\in Z$ be adapted.
We define the set
\begin{align}\label{eq def S_z}
S_{z}
&:=\{\alpha+\beta:\alpha\in\Sigma(Q),\beta\in{\supp}_{z}(\fg_{-\alpha})\cap\Sigma(Q)\}\\
&\nonumber\qquad    \cup\{\alpha\in\Sigma(Q):\fa\in{\supp}_{z}(\fg_{\alpha})\text{ or }\fm\in{\supp}_{z}(\fg_{\alpha})\}.
\end{align}
The cone $\cC_{z}$ is given by
\begin{equation}\label{eq C_z dual to S_z}
\cC_{z}
=\{X\in\fa:\gamma(X)<0\text{ for all }\gamma\in S_{z}\}.
\end{equation}
In particular $\cC_{z}$ is an open cone in $\fa$, $\fa^{-}$ is contained in $\cC_{z}$, and
\begin{equation}\label{eq C_z+fa_fh=C_z}
\cC_{z}+\fa_{\fh}
=\cC_{z}.
\end{equation}
The dual cone
$$
\cC_{z}^{\vee}
:=\{\lambda\in\fa^{*}:\lambda(X)\geq 0 \text{ for all }X\in\cC_{z}\}
$$
is equal to the finitely generated cone
$$
\cC_{z}^{\vee}
=\sum_{\gamma\in S_{z}}\R_{\leq0}\gamma.
$$
Finally, $\cC_{z}$ is equal to the interior of the double dual cone $(\cC_{z}^{\vee} )^{\vee}$ and thus it is equal to the smallest convex open cone containing the order-regular elements in $\cC_{z}$.
\end{Lemma}

\begin{proof}
Let $X\in \fa$. Since $\fh_{z}=(\fl_{Q}\cap \fh_{z})\oplus\cG(T_{z})$ we have $\fh_{z,X}=\Ad(m)\fh_{\emptyset}$ for some $m\in M$ if and only if
$$
\lim_{t\to\infty}\Ad\big(\exp(tX)\big)\cG(T_{z})
=\overline{\fn}_{Q}.
$$
In view of (\ref{eq formula for E_X}) the latter is equivalent to the conditions
$$
\left\{
  \begin{array}{ll}
    -\alpha(X)>\beta(X) & \text{ if }\alpha,\beta\in\Sigma(Q) \text{ and }\beta\in{\supp}_{z}(\fg_{-\alpha}), \\
    -\alpha(X)>0 & \text{ if }\alpha\in\Sigma(Q), \text{ and } \fm\in{\supp}_{z}(\fg_{-\alpha}) \text{ or }\fa\in{\supp}_{z}(\fg_{-\alpha}).
  \end{array}
\right.
$$
This proves (\ref{eq C_z dual to S_z}). The identity (\ref{eq C_z+fa_fh=C_z}) follows from (\ref{eq alpha|_fa_fh formulae}). All other assertions are trivial consequences of (\ref{eq C_z dual to S_z}).
\end{proof}

\begin{Lemma}\label{Lemma Properties of C_z}
Let $z\in Z$. The following hold.
\begin{enumerate}[(i)]
\item\label{Lemma Properties of C_z - item 1}
     For every $m\in M$ and $a\in A$ we have $\cC_{ma\cdot z}=\cC_{z}$.
\item\label{Lemma Properties of C_z - item 2}
     $\cC_{z}\neq\emptyset$ if and only if $P\cdot z$ is open. In that case $\fa^{-}\subseteq\cC_{z}$.
\end{enumerate}
\end{Lemma}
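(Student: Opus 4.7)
For (i), since $MA$ centralizes $\fa$, $\Ad(ma)$ commutes with each $\Ad(\exp(tX))$ for $X\in\fa$; passing to the limit in the Grassmannian gives $\fh_{ma\cdot z,X}=\Ad(ma)\,\fh_{z,X}$. Applying Lemma \ref{Lemma dependence on z in def fh_empty} to the adapted point $a\cdot z_{0}$ yields $\Ad(a)\fh_{\emptyset}=\Ad(m_{a})\fh_{\emptyset}$ for some $m_{a}\in M$, so if $\fh_{z,X}=\Ad(m')\fh_{\emptyset}$ then $\fh_{ma\cdot z,X}=\Ad(mm'm_{a})\fh_{\emptyset}$ (using that $M$ and $A$ commute). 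Hence $X\in\cC_{z}$ if and only if $X\in\cC_{ma\cdot z}$.

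For the forward direction of (ii), take $X\in\cC_{z}$; then $\fh_{z,X}=\Ad(m)\fh_{\emptyset}$ for some $m\in M\subseteq P$. The identity $\fp+\fh_{\emptyset}=\fg$ follows from the decompositions $\overline{\fn}=\overline{\fn}_{Q}\oplus(\overline{\fn}\cap\fl_{Q})$ and $\fl_{Q}=(\fm+\fa)+\fl_{Q,\nc}$ combined with $\fl_{Q,\nc}\subseteq\fl_{Q}\cap\fh_{z_{0}}$, an adapted-point property. Hence $\fp+\fh_{z,X}=\fg$. To transfer this back to $\fh_{z}$, I would invoke the lower semicontinuity of the map $E\mapsto\dim(E+\fp)$ on the Grassmannian (equivalently, upper semicontinuity of $E\mapsto\dim(E\cap\fp)$): the limit attaining full dimension forces $\Ad(\exp(tX))\fh_{z}+\fp=\fg$ for all sufficiently large $t$, and since $A$ normalizes $\fp$, applying $\Ad(\exp(-tX))$ yields $\fh_{z}+\fp=\fg$, i.e.\ $P\cdot z$ is open.

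For the backward direction of (ii), assume $P\cdot z$ is open. By Remark \ref{Rem Adapted points} (\ref{Rem Adapted points - item 1}) there exists an adapted $z_{1}\in P\cdot z$, so we may write $z=man\cdot z_{1}$ with $man\in P=MAN$. By part (i) it suffices to show $\fa^{-}\subseteq\cC_{n\cdot z_{1}}$. For $X\in\fa^{-}$ the conjugate $\exp(tX)\,n\,\exp(-tX)$ tends to $e$ as $t\to\infty$, since $\Ad(\exp(tX))$ contracts $\fn$, so Proposition \ref{Prop Limits of subspaces} (\ref{Prop Limits of subspaces - item 4}) gives $\fh_{n\cdot z_{1},X}=\fh_{z_{1},X}$. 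Using the graph description $\fh_{z_{1}}=(\fl_{Q}\cap\fh_{z_{1}})\oplus\cG(T_{z_{1}})$ from Section \ref{Section Description of fh} together with formula (\ref{eq formula for E_X}), I would argue that the weights appearing in $T_{z_{1}}(Y_{-\alpha})$ lie in $\Sigma(Q)\cup\{0\}$ (the $\fm$- and $\fa$-components having weight $0$), and thus all satisfy $\beta(X)\le 0$, whereas $-\alpha(X)>0$. Therefore the $-\alpha$ component of each vector $Y_{-\alpha}+T_{z_{1}}(Y_{-\alpha})$ dominates, and the Grassmannian limit equals $(\fl_{Q}\cap\fh_{z_{1}})+\overline{\fn}_{Q}$. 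By Lemma \ref{Lemma dependence on z in def fh_empty} this subalgebra is $M$-conjugate to $\fh_{\emptyset}$, so $X\in\cC_{z_{1}}=\cC_{n\cdot z_{1}}=\cC_{z}$.

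The main technical step is the weight-space limit computation for adapted $z_{1}$ in the backward direction, where one must verify that the image of $T_{z_{1}}(\fg_{-\alpha})$ yields only subdominant contributions along $X\in\fa^{-}$ relative to the growing factor $e^{-t\alpha(X)}$ in front of $Y_{-\alpha}$; the semicontinuity argument in the forward direction is routine once one notes the $A$-invariance of $\fp$.
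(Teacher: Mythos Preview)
Your proof is correct and follows essentially the same route as the paper. Two minor simplifications are available: in (i) the paper simply observes that $\fh_{\emptyset}$ is $A$-stable (so no detour through Lemma~\ref{Lemma dependence on z in def fh_empty} is needed), and in the backward direction of (ii) the inclusion $\fa^{-}\subseteq\cC_{z_{1}}$ for adapted $z_{1}$ is already recorded in Lemma~\ref{Lemma Properties of C_z with z adapted}, so your weight computation, while correct, is redundant.
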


\begin{proof}
Let $X\in\fa$.
It follows from Proposition \ref{Prop Limits of subspaces} (\ref{Prop Limits of subspaces - item 4}) that $\fh_{ma\cdot z,X}=\Ad(ma)\fh_{z,X}$. Since $\fh_{\emptyset}$ is $A$-stable it follows that $X\in\cC_{ma\cdot z}$ if and only if $X\in\cC_{z}$. This proves (\ref{Lemma Properties of C_z - item 1}).\\

We move on to prove (\ref{Lemma Properties of C_z - item 2}). Assume that $\cC_{z}\neq\emptyset$ and let $X\in\cC_{z}$. Then $\overline{\fn}_{P}\subseteq\fh_{z,X}$, and hence $\fh_{z,X}+\fp=\fg$. This implies that $\fg=\Ad\big(\exp(tX)\big)\fh_{z}+\fp$ for large $t>0$. Since $\fg$ and $\fp$ are both stable under the adjoint action of $A$, it follows that $\fg=\fh_{z}+\fp$ and thus $P\cdot z$ is open.

Assume now that $\cO:=P\cdot z$ is open. We will show that $\fa^{-}\subseteq\cC_{z}$. To do so, let $z'\in\cO$ be adapted and let $m\in M$, $a\in A$ and $n\in N_{P}$ so that $z=man\cdot z'$. It follows from Lemma \ref{Lemma Properties of C_z with z adapted} that $\fa^{-}$ is contained in $\cC_{z'}$. In view of Proposition \ref{Prop Limits of subspaces} (\ref{Prop Limits of subspaces - item 4}) we have $\fh_{n\cdot z',X}=\fh_{z',X}$ for every $X\in\fa^{-}$. Therefore, $\fa^{-}\subseteq\cC_{n\cdot z'}$. It follows from (\ref{Lemma Properties of C_z - item 1}) that $\cC_{z}=\cC_{n\cdot z'}$, and hence we have $\fa^{-}\subseteq\cC_{z}$. This proves (\ref{Lemma Properties of C_z - item 2}).
\end{proof}

\begin{Prop}\label{Prop relation between compression cones}
Let $z\in Z$ be adapted. For every $z'\in Z$ such that $P\cdot z'$ is open, we have
$$
\fa^{-}
\subseteq\cC_{z'}
\subseteq\cC_{z}.
$$
Moreover, if $z'$ is adapted, then
$$
\cC_{z'}=\cC_{z}.
$$
\end{Prop}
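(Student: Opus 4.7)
The plan is to establish the inclusion $\cC_{z'}\subseteq\cC_z$ by reducing to the case where $z'$ lies in the open $P$-orbit of an adapted reference point, to prove the equality of compression cones for any two adapted points, and to combine these two facts. The ``moreover'' assertion then follows by applying the inclusion with $z$ and $z'$ interchanged. The inclusion $\fa^-\subseteq\cC_{z'}$ is precisely Lemma~\ref{Lemma Properties of C_z}~(ii).

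For $\cC_{z'}\subseteq\cC_z$, note that $\cC_{z'}=\emptyset$ unless $P\cdot z'$ is open (Lemma~\ref{Lemma Properties of C_z}~(ii)), so I may assume $P\cdot z'$ is open. By Remark~\ref{Rem Adapted points}~(a) there exists an adapted $z''\in P\cdot z'$; write $z'=man\cdot z''$ with $m\in M$, $a\in A$, $n\in N$. Lemma~\ref{Lemma Properties of C_z}~(i) reduces the problem to showing $\cC_{n\cdot z''}\subseteq\cC_z$. I split the task into two parts: (A) the equality $\cC_{z''}=\cC_z$ for the adapted points $z$ and $z''$, and (B) the inclusion $\cC_{n\cdot z''}\subseteq\cC_{z''}$ for arbitrary $n\in N$.

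For (A), the cross-orbit case proceeds via Proposition~\ref{Prop Open orbits}: each open $P$-orbit contains an adapted point $f\cdot z$ with $f\in G\cap\exp(i\fa)H_{z,\C}$. Writing $f=a_\C h_\C$ with $a_\C\in\exp(i\fa)$ and $h_\C\in H_{z,\C}$, one has $\fh_{f\cdot z,\C}=\Ad(a_\C)\fh_{z,\C}$; since $\Ad(a_\C)$ commutes with $\Ad(\exp(tX))$ for every $X\in\fa$ and preserves $\fh_{\emptyset,\C}$ (the latter because $\fh_\emptyset$ is $A$-stable), Proposition~\ref{Prop Limits of subspaces}~(v) yields $\cC_{f\cdot z}=\cC_z$. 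For two adapted points in the same open $P$-orbit, Proposition~\ref{Prop parametrization of adapted points} reduces the equality to $\cC_{n_0\cdot z}=\cC_z$ with $n_0=\exp(\Phi_z(X_0))\in Z_{N_Q}(\fl_Q\cap\fh_z)$; a direct computation using Lemma~\ref{Lemma Symmetry in T-map} together with the explicit description in Lemma~\ref{Lemma Properties of C_z with z adapted} shows that the defining sets $S_{n_0\cdot z}$ and $S_z$ cut out the same cone.

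The technical core is (B). I argue by contraposition: suppose $X\notin\cC_{z''}$, so by Lemma~\ref{Lemma Properties of C_z with z adapted} there exists $\gamma\in S_{z''}$ with $\gamma(X)\geq 0$. Writing $\gamma=\alpha+\beta$ with $\beta\in\supp_{z''}(\fg_{-\alpha})\cap\Sigma(Q)$ (the cases with $\fm$ or $\fa$ in the support being analogous), pick $Y_{-\alpha}\in\fg_{-\alpha}$ with $p_\beta(T_{z''}(Y_{-\alpha}))\neq 0$ and set $W=Y_{-\alpha}+T_{z''}(Y_{-\alpha})\in\fh_{z''}$. Since $\Ad(n)$ for $n\in N$ shifts $\fa$-weights upward but preserves the nonzero $\fg_\beta$-coefficient of $W$ to leading order, the explicit formula in Proposition~\ref{Prop Limits of subspaces} forces $\fh_{n\cdot z'',X}\cap\fn_Q\neq\{0\}$; since every $M$-conjugate of $\fh_\emptyset$ meets $\fn_Q$ only in $\{0\}$, this yields $X\notin\cC_{n\cdot z''}$. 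The main obstacle is to rule out that higher-order $\Ad(n)$-corrections cancel the leading $\fg_\beta$-contribution or generate an even larger dominant weight outside $\fn_Q$; handling this requires tracking all $\fa$-weights appearing simultaneously in $\Ad(n)W$ and exploiting the triangular structure of $\Ad(N)$ on the root space decomposition of $\fg$.
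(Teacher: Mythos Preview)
Your logical structure --- reduce to the same open orbit, then split into (A) equality for adapted points and (B) the inclusion $\cC_{n\cdot z''}\subseteq\cC_{z''}$ --- is reasonable, but both (A) and (B) contain real gaps that are not merely cosmetic.

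In part (A), the same-orbit case is not actually carried out. You assert that ``a direct computation using Lemma~\ref{Lemma Symmetry in T-map} together with Lemma~\ref{Lemma Properties of C_z with z adapted} shows that the defining sets $S_{n_0\cdot z}$ and $S_z$ cut out the same cone,'' but this is far from immediate: the map $T_{n_0\cdot z}$ is obtained from $T_z$ by conjugating the graph by $n_0\in Z_{N_Q}(\fl_Q\cap\fh_z)$, and there is no clean formula relating the supports $\supp_{n_0\cdot z}(\fg_{-\alpha})$ to $\supp_z(\fg_{-\alpha})$. Lemma~\ref{Lemma Symmetry in T-map} constrains $T_z$ itself, not how $T_z$ changes under conjugation by $n_0$. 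The paper avoids this entirely: it proves only the inclusion $\cC_{z'}\subseteq\cC_z$ (for \emph{any} $z'$ with open orbit), and then gets equality for adapted $z'$ by symmetry. So you do not need (A) as a separate step.

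In part (B), the gap is the one you yourself flag. You want to show that if $X\notin\cC_{z''}$ then the limit of some line $\R\Ad(n)W$ lies in $\fn_Q$. But $n\in N$ is \emph{arbitrary}, so $\Ad(n)$ applied to the $\fg_{-\alpha}$-component of $W$ can produce contributions in any $\fg_{-\alpha+\delta}$ with $\delta$ a sum of positive roots; these may have $X$-weight exceeding $\beta(X)$ and need not lie in $\fn_Q$. The ``triangular structure of $\Ad(N)$'' does not rule this out. The paper's argument runs in the opposite direction and uses both hypotheses actively: assuming $X\in\cC_{n\cdot z}$ is order-regular, one writes $n=n_-n_+$ with $\log n_+$ supported on roots with $\alpha(X)>0$, uses Proposition~\ref{Prop Limits of subspaces}~(\ref{Prop Limits of subspaces - item 4}) to get $\fh_{n\cdot z,X}=\fh_{n_+\cdot z,X}=\Ad(m')\fh_\emptyset$, and then shows $n_+=e$. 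The key tool is the adaptedness of $z$: pick $X_\perp\in\fa^\circ_{\reg}\cap\fh_z^\perp$, expand $\Ad(n_+)X_\perp=X_\perp+\sum U_\beta$, take the root $\alpha$ with $U_\alpha\neq0$ and $\alpha(X)$ maximal, and use that $\R\theta U_\alpha\subseteq\overline\fn\subseteq\fh_{n_+\cdot z,X}$ to produce an element $Y\in\fh_{n_+\cdot z}$ with $B(\Ad(n_+)X_\perp,Y)=0$, forcing $B(U_\alpha,\theta U_\alpha)=0$. This exploits the \emph{conclusion} $\overline\fn\subseteq\fh_{n_+\cdot z,X}$ rather than trying to obstruct it, and the element $X_\perp$ provides exactly the pairing needed to kill the problematic $n_+$.
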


If $P\cdot z'$ is open, but $z'$ is not adapted, then the inclusion $\cC_{z'}\subseteq\cC_{z}$ may be strict.
Before we prove the proposition, we first consider the example of $Z=G/\overline{N}_{P}$ where this phenomenon is readily seen.

\begin{Ex}\label{Ex G/oN compression cone}
Let $Z=G/\overline{N}_{P}$ and let $z=e\cdot\overline{N}_{P}$. We recall from Example \ref{ex G/oN adapted points} that the only open $P$-orbit in $Z$ is $P\cdot z$ and the set of adapted points is equal to $MA\cdot z$.

Since $\overline{\fn}_{P}$ is $\fa$-stable, we have
$$
\cC_{z}
=\fa.
$$
Let $Y\in\fn_{P}$ and write $Y=\sum_{\alpha\in\Sigma^{+}}Y_{\alpha}$ with $Y_{\alpha}\in\fg_{\alpha}$. We claim that
$$
\cC_{\exp(Y)\cdot z}
=\{X\in\fa:\alpha(X)<0 \text{ for all }\alpha\in\Sigma^{+} \text{ with }Y_{\alpha}\neq 0\}.
$$
In view of Proposition \ref{Prop Limits of subspaces} (\ref{Prop Limits of subspaces - item 4}) the set on the right-hand side is contained in $\cC_{\exp(Y)\cdot z}$.
For the other inclusion it suffices to show that no order-regular element in the complement of the set on the right-hand side is contained in $\cC_{\exp(Y)\cdot z}$.
Let $X\in\fa$ be order-regular, and assume that there exists a root $\alpha\in\Sigma^{+}$ so that $Y_{\alpha}\neq 0$ and $\alpha(X)>0$. Let $\alpha_{0}\in\Sigma^{+}$ be so that $\alpha_{0}(X)$ is minimal among the numbers $\alpha(X)$ with $\alpha\in\Sigma^{+}$, $Y_{\alpha}\neq 0$ and $\alpha(X)>0$.
Now
$$
\Ad(Y)\theta Y_{\alpha_{0}}
\in\theta Y_{\alpha_{0}}+[Y_{\alpha_{0}},\theta Y_{\alpha_{0}}]+\fn_{P},
$$
and hence
$$
\Ad\big(\exp(tX)\big)\Ad(Y)\theta Y_{\alpha_{0}}
\in e^{-t\alpha_{0}(X)}\theta Y_{\alpha_{0}}+[Y_{\alpha_{0}},\theta Y_{\alpha_{0}}]+\fn_{P}
$$
The limit of $\Ad\big(\exp(tX)\big)\R\Big(\Ad(Y)\theta Y_{\alpha_{0}}\Big)$ in $\P(\fg)$ is a line contained in $\fp$ as $-\alpha_{0}(X)<0$ and $[Y_{\alpha_{0}},\theta Y_{\alpha_{0}}]\in\fa\setminus\{0\}$. It follows that $X\notin\cC_{\exp(Y)\cdot z}$.
\end{Ex}

The proof of Proposition \ref{Prop relation between compression cones} relies on the following lemma, which will also be of use later on.

\begin{Lemma}\label{Lemma fh_(z,X)=fh_(th z,X)}
Let $z\in Z$ and $f\in G\cap \exp(i\fa)H_{z,\C}$. For every order-regular element $X\in\fa$
$$
\fh_{z,X}
=\fh_{f\cdot z,X}.
$$
\end{Lemma}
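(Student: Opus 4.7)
The plan is to complexify everything and exploit the fact that $\exp(i\fa)$ and $\exp(tX)$ both sit inside the abelian group $A_\C$, while invoking order-regularity of $X$ to force the limit subalgebra to absorb the action of $\Ad(a)$ for $a \in \exp(i\fa)$.

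First, write $f = ah$ with $a \in \exp(i\fa)$ and $h \in H_{z,\C}$. Since $H_{z,\C}$ is an algebraic subgroup whose Lie algebra is $\fh_{z,\C}$, its adjoint action stabilizes $\fh_{z,\C}$, so
$$
\fh_{f\cdot z,\C} = \Ad(f)\fh_{z,\C} = \Ad(a)\,\Ad(h)\,\fh_{z,\C} = \Ad(a)\fh_{z,\C}.
$$
Next, by Proposition \ref{Prop Limits of subspaces}(\ref{Prop Limits of subspaces - item 5}) the limit in the complex Grassmannian of $\Ad(\exp(tX))\fh_{f\cdot z, \C}$ equals $(\fh_{f\cdot z,X})_\C$. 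Since $A_\C$ is abelian, $\Ad(\exp(tX))$ commutes with $\Ad(a)$, and since the $\Ad(a)$-action on the Grassmannian is continuous, I can pull $\Ad(a)$ outside the limit to obtain
$$
(\fh_{f\cdot z,X})_\C = \lim_{t\to\infty}\Ad(\exp(tX))\Ad(a)\fh_{z,\C} = \Ad(a)(\fh_{z,X})_\C.
$$

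Now order-regularity enters: by Proposition \ref{Prop Limits of subspaces}(\ref{Prop Limits of subspaces - item 2}), $\fh_{z,X}$ is $\fa$-stable, so $(\fh_{z,X})_\C$ is $\fa_\C$-stable. Writing $a = \exp(Y)$ with $Y \in i\fa \subseteq \fa_\C$, this means $\Ad(a) = \exp(\ad Y)$ preserves the subspace $(\fh_{z,X})_\C$, and hence acts as the identity on it as a point of the complex Grassmannian. Therefore $(\fh_{f\cdot z,X})_\C = (\fh_{z,X})_\C$, and intersecting with the real form $\fg$ yields $\fh_{f\cdot z,X} = \fh_{z,X}$.

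There isn't a serious obstacle here; the only bookkeeping point is verifying that the decomposition $f = ah$ can be exploited despite $a$ and $h$ not lying in $G$ individually. This is precisely what the complexification step accomplishes, since Proposition \ref{Prop Limits of subspaces}(\ref{Prop Limits of subspaces - item 5}) lets us compute the limit in the complex Grassmannian and then recover the real limit at the end.
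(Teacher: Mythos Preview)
Your proof is correct and follows essentially the same approach as the paper: write $f=ah$, pass to the complexification via Proposition~\ref{Prop Limits of subspaces}(\ref{Prop Limits of subspaces - item 5}), commute $\Ad(a)$ past the limit, use order-regularity and Proposition~\ref{Prop Limits of subspaces}(\ref{Prop Limits of subspaces - item 2}) to see that $\Ad(a)$ normalizes $(\fh_{z,X})_\C$, and then intersect with $\fg$. Your closing remark about why complexification is needed (since $a$ and $h$ individually lie only in $G_\C$) is exactly the point.
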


\begin{proof}
Let $a\in\exp(i\fa)$ and $h\in H_{z,\C}$ be so that $f=ah$.
In view of Proposition \ref{Prop Limits of subspaces} (\ref{Prop Limits of subspaces - item 5}) limits and complexifications can be interchanges. Therefore, for every order-regular element $X\in\fa$
\begin{align*}
(\fh_{f\cdot z,X})_{\C}
&=\lim_{t\to\infty}\Ad\big(\exp(tX)\big)\fh_{ah\cdot z,\C}\\
&=\lim_{t\to\infty}\Ad\big(\exp(tX)ah\big)\fh_{z,\C}\\
&=\Ad(a)(\fh_{z,X})_{\C}.
\end{align*}
By Proposition \ref{Prop Limits of subspaces} (\ref{Prop Limits of subspaces - item 2}) the space $\fh_{z,X}$ is $\fa$-stable and therefore $(\fh_{z,X})_{\C}$ is normalized by $a$. It follows that $(\fh_{f\cdot z,X})_{\C} =(\fh_{z,X})_{\C}$. Intersecting both sides with $\fg$ now yields the desired identity.
\end{proof}

\begin{proof}[Proof of Proposition \ref{Prop relation between compression cones}]
By Proposition \ref{Prop Open orbits} there exists an $f\in G\cap\exp(i\fa)H_{z,\C}$ so that $f\cdot z$ is adapted and $z'\in Pf\cdot z$.
By Lemma \ref{Lemma fh_(z,X)=fh_(th z,X)} we have $\fh_{z,X}=\fh_{f\cdot z,X}$ for every order-regular element $X\in\fa$, and hence $\cC_{f\cdot z}=\cC_{z}$.
By replacing $z$ by $f\cdot z$ we may thus without loss of generality assume that $z'\in P\cdot z$.

It follows from Lemma \ref{Lemma Properties of C_z} (\ref{Lemma Properties of C_z - item 2}) that $\fa^{-}$ is contained in $\cC_{z'}$.
We move on to show that $\cC_{z'}$ is contained in $\cC_{z}$. Let $m\in M$, $a\in A$ and $n\in N_{Q}$ be so that $z'=man\cdot z$. Such elements exist by Proposition \ref{Prop LST holds for adapted points}; see Remark \ref{Rem on Prop LST holds for adapted points} (\ref{Rem on Prop LST holds for adapted points - item 2}). In view of Lemma \ref{Lemma Properties of C_z} (\ref{Lemma Properties of C_z - item 1}) we have $\cC_{z'}=\cC_{n\cdot z}$.

Let $X\in\cC_{n\cdot z}$ be order-regular. We may write $n=n_{-}n_{+}$ with
$$
\log(n_{\pm})
\in\bigoplus_{\substack{\alpha\in\Sigma(Q)\\\pm\alpha(X)>0}}\fg_{\alpha}.
$$
In view of Proposition \ref{Prop Limits of subspaces} (\ref{Prop Limits of subspaces - item 4}) we have $\fh_{n\cdot z,X}=\fh_{n_{+}\cdot z,X}$. We claim that $n_{+}=e$. Assuming the claim is true, we have $\fh_{n\cdot z,X}=\fh_{z,X}$ and thus $X\in\cC_{z}$.

To prove the claim we assume that $n_{+}\neq e$ and work towards a contradiction.
Let $X_{\perp}\in\fa^{\circ}_{\reg}\cap\fh_{z}^{\perp}$.
For $\beta\in\Sigma(Q)$ let $U_{\beta}\in\fg_{\beta}$ be so that
$$
\Ad(n_{+})X_{\perp}
=X_{\perp}+\sum_{\beta\in\Sigma(Q)}U_{\beta}.
$$
Note that there exists an $\beta\in\Sigma(Q)$ so that $U_{\beta}\neq 0$, and that $U_{\beta}\neq 0$ only if $\beta(X)>0$.
Let $\alpha\in\Sigma(Q)$ be the maximal root for the order defined by $X$ for which $U_{\alpha}\neq 0$.
Set $Y_{-\alpha}:=\theta U_{\alpha}$.
There exists an $m'\in M$ so that $\fh_{n_{+}\cdot z,X}=\fh_{n\cdot z,X}=\Ad(m^{-1})\fh_{z',X}=\Ad(m')\fh_{\emptyset}$. It follows that
$$
\R Y_{-\alpha}
\subseteq\overline{\fn}_{Q}
=\Ad(m')\overline{\fn}_{Q}
\subseteq\Ad(m')\fh_{\emptyset}
= \fh_{n_{+}\cdot z,X}.
$$
We will exploit this fact.

Let $Y\in\fh_{n_{+}\cdot z}$ be so that $(\R Y)_{X}=\R Y_{-\alpha}$. The existence of such an element $Y$ is guaranteed by equation (\ref{eq formula for E_X}) in Proposition \ref{Prop Limits of subspaces}. The projection of $Y$ onto $\fg_{-\alpha}$ along the root space decomposition is up to scaling equal to $Y_{-{\alpha}}$. After rescaling $Y$, we may therefore assume that $Y$ decomposes as
$$
Y
=Y_{-\alpha}+\sum_{\substack{\beta\in\Sigma\cup\{0\}\\\beta\neq\alpha}}Y_{-\beta}
$$
with $Y_{-\beta}\in\fg_{-\beta}$ if $\beta\in\Sigma$ and $Y_{0}\in\fm\oplus\fa$. Since $(\R Y)_{X}=\R Y_{-\alpha}$, the element $Y_{-\beta}$ can only be non-zero if $\beta(X)\geq\alpha(X)>0$ (and since $X$ is order-regular, equality holds if and only if $\beta=\alpha$). Therefore, $B(X_{\perp},Y_{-\beta})=B(U_{\beta'},Y_{-\beta})=0$ for all $\beta\in\Sigma\cup\{0\}$ for which $Y_{-\beta}\neq 0$ and all $\beta'\in\Sigma(Q)$ for which $U_{\beta'}\neq 0$. It follows that
$$
B(U_{\alpha},Y_{-\alpha})\\
=B\big(\Ad(n_{+})X_{\perp},Y\big)\\
=B\big(X_{\perp},\Ad(n_{+}^{-1})Y\big)
=0.
$$
For the last equality we used that $\Ad(n_{+}^{-1})Y\in\fh_{z}$.
Since $-B(\dotvar,\theta\dotvar)$ is positive definite on the semisimple part of $\fg$, we conclude that $U_{\alpha}=0$. This is a contradiction.

We have now proven that $\cC_{z'}\subseteq \cC_{z}$. For the second assertion in the proposition we may reverse the role of $z'$ and $z$ and further obtain the other inclusion $\cC_{z}\subseteq \cC_{z'}$.
\end{proof}

Proposition \ref{Prop relation between compression cones} allows us to make the following definition.

\begin{Defi}
We define $\cC\subseteq\fa$ to be the cone given by $\cC:=\cC_{z}$, where $z$ is any adapted point in $Z$. The cone $\cC$ is called the compression cone of $Z$.
\end{Defi}

Let $\fa_{E}$ be the edge of $\overline{\cC}$, i.e.,
\begin{equation}\label{eq Def a_E}
\fa_{E}
:=\overline{\cC}\cap(-\overline{\cC}).
\end{equation}
We note that $\fa_{E}$ is a subspace of $\fa$. We end this section with a description of the relation between $\fa_{E}$ and the normalizer of $\fh_{z}$.

Recall the set $S_{z}$ from (\ref{eq def S_z}).

\begin{Prop}\label{Prop Edge of compression cone is normalizer}
Let  $z\in Z$ be adapted.
\begin{enumerate}[(i)]
\item\label{Prop Edge of compression cone is normalizer - item 1}The space $\fa_{E}$ is equal to the joint kernel of $S_{z}$, i.e.,
$$
\fa_{E}
=\{X\in\fa:\sigma(X)=0 \text{ for all }\sigma\in S_{z}\}.
$$
\item\label{Prop Edge of compression cone is normalizer - item 2} $\fa_{E}=N_{\fa}(\fh_{z})$.
\end{enumerate}
\end{Prop}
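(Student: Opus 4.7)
The plan for (i) is to invoke Lemma \ref{Lemma Properties of C_z with z adapted}, which realises $\cC=\cC_z$ as the open polyhedral cone $\{X\in\fa:\gamma(X)<0\text{ for all }\gamma\in S_z\}$, a cone which is non-empty since it contains $\fa^{-}$. Standard convex geometry then gives
$$
\overline{\cC}
=\{X\in\fa:\gamma(X)\le 0\text{ for all }\gamma\in S_z\},
$$
and intersecting with $-\overline{\cC}$ yields
$$
\fa_{E}
=\overline{\cC}\cap(-\overline{\cC})
=\{X\in\fa:\gamma(X)=0\text{ for all }\gamma\in S_z\},
$$
which is the joint kernel of $S_z$.

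For (ii) I would use the description of $\fh_z$ from Section \ref{Section Description of fh}, namely $\fh_z=(\fl_Q\cap\fh_z)\oplus\cG(T_z)$ with $T_z:\overline{\fn}_Q\to(\fm\cap(\fm\cap\fh_z)^{\perp})\oplus\fa^{\circ}\oplus\fn_Q$. Both $\fl_Q\cap\fh_z$ and $\overline{\fn}_Q$ are $\fa$-stable, so for $X\in\fa$ normalisation of $\fh_z$ is equivalent to $\ad(X)$ preserving $\cG(T_z)$. Writing a graph element as $Y+T_z(Y)$ with $Y\in\overline{\fn}_Q$, one has $[X,Y]\in\overline{\fn}_Q$ while $[X,T_z(Y)]\in\fn_Q$, because $\ad(X)$ annihilates the $\fm$- and $\fa$-components of $T_z(Y)$. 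Consequently preservation of the graph is equivalent to the relation
$$
[X,T_z(Y)]=T_z\big([X,Y]\big)
\qquad(Y\in\overline{\fn}_Q).
$$

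To turn this commutation into explicit linear conditions I would decompose with respect to the root spaces: for $Y_{-\alpha}\in\fg_{-\alpha}$ with $\alpha\in\Sigma(Q)$ the relation becomes $[X,T_z(Y_{-\alpha})]=-\alpha(X)\,T_z(Y_{-\alpha})$. Projecting onto each component of $\supp_z(\fg_{-\alpha})$, a non-zero projection onto $\fg_\beta$ with $\beta\in\Sigma(Q)\cap\supp_z(\fg_{-\alpha})$ forces $\beta(X)=-\alpha(X)$, i.e.\ $(\alpha+\beta)(X)=0$, while a non-zero projection onto the $\fm$- or $\fa$-summand forces $\alpha(X)=0$ (as $\ad(X)$ kills it). These are precisely the conditions that every $\gamma\in S_z$ vanishes on $X$, and conversely these conditions imply $[X,T_z(Y_{-\alpha})]=-\alpha(X)T_z(Y_{-\alpha})$ on each root space. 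Combined with (i) this gives $N_\fa(\fh_z)=\fa_E$.

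The only substantive step is the root-space bookkeeping in the last paragraph; it is not difficult, but must be matched carefully against the two pieces of $S_z$ in \eqref{eq def S_z}. Beyond this no new input is required, only Lemma \ref{Lemma Properties of C_z with z adapted}, the $T_z$-description of $\fh_z$, and the elementary observation that $\fa$ centralises $\fm\oplus\fa$ and acts on $\fg_\beta$ by the scalar $\beta(X)$.
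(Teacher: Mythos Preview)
Your argument is correct. Part (i) is handled exactly as in the paper (the paper simply calls it ``trivial'', and you spell out why, via Lemma \ref{Lemma Properties of C_z with z adapted}).

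For part (ii) your route and the paper's differ slightly. For the inclusion $\fa_{E}\subseteq N_{\fa}(\fh_{z})$ both arguments are the same: the vanishing of all $\sigma\in S_{z}$ on $X$ forces $\ad(X)$ to preserve $\cG(T_{z})$, and $\fl_{Q}\cap\fh_{z}$ is $\fa$-stable because its root-space components lie in $\fl_{Q,\nc}\subseteq\fh_{z}$. (One small point you gloss over: a priori $[X,\cG(T_{z})]\subseteq\fh_{z}$ only says the bracket lands in $(\fl_{Q}\cap\fh_{z})\oplus\cG(T_{z})$, not in $\cG(T_{z})$ alone; but since $[X,\cG(T_{z})]\subseteq\overline{\fn}_{Q}\oplus\fn_{Q}$ and the codomain of $T_{z}$ meets $(\fm\cap\fh_{z})\oplus\fa_{\fh}$ trivially, the $\fl_{Q}\cap\fh_{z}$-component is forced to be zero, so your commutation relation $[X,T_{z}(Y)]=T_{z}([X,Y])$ is indeed equivalent to normalisation.)

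For the reverse inclusion $N_{\fa}(\fh_{z})\subseteq\fa_{E}$ the paper does \emph{not} unwind the root-space conditions. Instead it observes that if $X\in N_{\fa}(\fh_{z})$ then $\fh_{z,X+Y}=\fh_{z,Y}$ for every $Y\in\fa$, whence $N_{\fa}(\fh_{z})+\cC=\cC$; this immediately gives $N_{\fa}(\fh_{z})\subseteq\overline{\cC}\cap(-\overline{\cC})=\fa_{E}$. Your explicit computation achieves the same thing by reading off from $[X,T_{z}(Y_{-\alpha})]=-\alpha(X)T_{z}(Y_{-\alpha})$ precisely the conditions $\sigma(X)=0$ for $\sigma\in S_{z}$. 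Both approaches are short; yours is more hands-on and makes the link to $S_{z}$ completely transparent, while the paper's limit argument avoids the bookkeeping by leaning on the cone description of $\cC$ already established.
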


\begin{proof}
Assertion (\ref{Prop Edge of compression cone is normalizer - item 1}) follows from Lemma \ref{Lemma Properties of C_z with z adapted}.
We move on to (\ref{Prop Edge of compression cone is normalizer - item 2}). It follows from (\ref{Prop Edge of compression cone is normalizer - item 1}) that  $\fa_{E}$ normalizes the graph $\cG(T_{z})$. Moreover, $\fa$ normalizes $\fl_{Q}\cap\fh_{z}$, and hence $\fa_{E}$ normalizes $\fh_{z}$.
This shows that $\fa_{E}\subseteq N_{\fa}(\fh_{z})$.

Let $X\in N_{\fa}(\fh_{z})$. For every $Y\in\fa$ we have $\fh_{z,X+Y}=\fh_{z,Y}$. In particular $N_{\fa}(\fh_{z})+\cC=\cC$. It follows that $N_{\fa}(\fh_{z})\subseteq \overline{\cC}$ and thus $N_{\fa}(\fh_{z})\subseteq\overline\cC\cap(-\overline{\cC})=\fa_{E}$.
\end{proof}

\section{Limit subalgebras and open $P$-orbits}
\label{Section Limits and open orbits}
In this section we describe a relation between limits of $\fh_{z}$ and open $P$-orbits in $PN_{G}(\fa)\cdot z$ for an adapted point $z$.

\medbreak

We define the group
\begin{equation}\label{eq Def cN}
\cN
:=N_{G}(\fa)\cap N_{G}(\fl_{Q,\nc}+\fa_{\fh}).
\end{equation}

The group $\cN$ is relevant because of the following lemma.

\begin{Lemma}\label{Lemma fh_(z,X)=Ad(v)fh_empty implies v in cN}
Let $v\in N_{G}(\fa)$, $z\in Z$, and $X\in\fa$. If $z$ is adapted and $\fh_{z,X}=\Ad(v)\fh_{\emptyset}$, then $v\in\cN$.
\end{Lemma}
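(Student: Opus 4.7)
Since $v\in N_G(\fa)$ is given by hypothesis, the task reduces to showing that $\Ad(v)$ normalizes $\fl_{Q,\nc}+\fa_\fh$. My starting observation is that both $\fl_{Q,\nc}$ and $\fa_\fh=\fa\cap\fh_z$ are contained in $\fh_z$ (since $z$ is adapted, cf.\ Definition \ref{Def fa_fh, fa^circ, fa_reg^circ}) and are $\ad(\fa)$-stable; hence they are preserved by $\Ad(\exp(tX))$ for every $t$, and by formula (\ref{eq formula for E_X}) they survive in the Grassmannian limit. Thus $\fl_{Q,\nc}+\fa_\fh\subseteq\fh_{z,X}=\Ad(v)\fh_\emptyset$, or equivalently $\Ad(v^{-1})(\fl_{Q,\nc}+\fa_\fh)\subseteq\fh_\emptyset$.

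The strategy is to exploit the $\fa$-weight structure of $\fh_\emptyset$. Setting $\Sigma_{\fl_Q}:=\{\alpha\in\Sigma:\fg_\alpha\subseteq\fl_Q\}$ (which coincides with the set of roots of $\fa$ in $\fl_{Q,\nc}$), one has
$$
\fh_\emptyset=(\fm\cap\fh_{z_0})\oplus\fa_\fh\oplus\bigoplus_{\alpha\in\Sigma_{\fl_Q}\cup(-\Sigma(Q))}\fg_\alpha.
$$
In particular $\fh_\emptyset\cap\fa=\fa_\fh$, so $\Ad(v^{-1})\fa_\fh\subseteq\fa\cap\fh_\emptyset=\fa_\fh$, and a dimension count (using that $\Ad(v^{-1})|_\fa$ is bijective) forces $\Ad(v^{-1})\fa_\fh=\fa_\fh$.

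The main obstacle lies in the $\fl_{Q,\nc}$-part. Since $v\in N_G(\fa)$, the subspace $\Ad(v^{-1})\fl_{Q,\nc}$ is $\fa$-stable with nonzero weights $v^{-1}\Sigma_{\fl_Q}$, and by the inclusion above these lie in $\Sigma_{\fl_Q}\cup(-\Sigma(Q))$. I would upgrade this to $v^{-1}\Sigma_{\fl_Q}\subseteq\Sigma_{\fl_Q}$ by a symmetry argument: if some $\alpha\in\Sigma_{\fl_Q}$ had $v^{-1}\alpha\in-\Sigma(Q)$, then because $\Sigma_{\fl_Q}$ is symmetric, also $-\alpha\in\Sigma_{\fl_Q}$, and $v^{-1}(-\alpha)=-v^{-1}\alpha\in\Sigma(Q)$; but the same inclusion applied to $-\alpha$ yields $v^{-1}(-\alpha)\in\Sigma_{\fl_Q}\cup(-\Sigma(Q))$, a set disjoint from $\Sigma(Q)$, a contradiction. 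Finiteness of $\Sigma_{\fl_Q}$ then gives equality.

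To conclude, I would use that $\fl_{Q,\nc}$ is generated as a Lie algebra by $\bigoplus_{\alpha\in\Sigma_{\fl_Q}}\fg_\alpha$: the ideal $\fk\subseteq\fl_{Q,\nc}$ generated by these root spaces has a complementary ideal $\fk'$ whose only $\fa$-weight is $0$, so $\fk'\subseteq\fm\oplus\fa$; but then $\fk'=[\fk',\fk']\subseteq[\fm\oplus\fa,\fm\oplus\fa]\subseteq\fm$, making $\fk'$ a compact semisimple ideal of $\fl_{Q,\nc}$, hence zero by definition of $\fl_{Q,\nc}$. Therefore $\Ad(v)\fl_{Q,\nc}$ is generated by $\bigoplus_{\alpha\in\Sigma_{\fl_Q}}\fg_{v\alpha}=\bigoplus_{\alpha\in\Sigma_{\fl_Q}}\fg_\alpha$, so $\Ad(v)\fl_{Q,\nc}=\fl_{Q,\nc}$. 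Together with $\Ad(v)\fa_\fh=\fa_\fh$, this yields $v\in N_G(\fl_{Q,\nc}+\fa_\fh)$, and thus $v\in\cN$.
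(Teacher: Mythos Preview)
Your proof is correct and follows essentially the same approach as the paper's. Both arguments begin with the observation that the $\fa$-stable subalgebra $\fl_{Q,\nc}+\fa_\fh$ lies in $\fh_{z,X}=\Ad(v)\fh_\emptyset$, then use the symmetry $\Sigma_{\fl_Q}=-\Sigma_{\fl_Q}$ versus the asymmetry of $-\Sigma(Q)$ to force the root spaces of $\fl_{Q,\nc}$ into $\Ad(v)\fl_{Q,\nc}$; the paper phrases this as ``if $\fg_{-\alpha}\oplus\fg_\alpha\subseteq\Ad(v)\fh_\emptyset$ then $\fg_{-\alpha}\oplus\fg_\alpha\subseteq\Ad(v)\fl_{Q,\nc}$'', which is your symmetry argument read from the other side. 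The only difference is that you spell out explicitly why $\fl_{Q,\nc}$ is generated by its nonzero root spaces (via the compact-complementary-ideal argument), whereas the paper leaves this implicit when passing from the root-space inclusion to $\fl_{Q,\nc}\subseteq\Ad(v)\fl_{Q,\nc}$.
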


\begin{proof}
Since $z$ is adapted, the $\fa$-stable subalgebra $\fl_{Q,\nc}+\fa_{\fh}$ is contained in $\fh_{z}$. Therefore, $\fl_{Q,\nc}+\fa_{\fh}$ is also contained in $\fh_{z,X}=\Ad(v)\fh_{\emptyset}$.
From (\ref{eq Def fh_emptyset}) it is easily seen that the maximal $\theta$-stable subspace of $\Ad(v)\fh_{\emptyset}$ is $\Ad(v)(\fl_{Q}\cap \fh)$.
Since $\fl_{Q,\nc}$ is $\theta$-stable, it follows that $\fl_{Q,\nc}\subseteq\Ad(v)(\fl_{Q}\cap\fh)$. From the fact that $\fl_{Q,\nc}$ is the sum of all non-compact simple ideals in $\fl_{Q}\cap\fh$, it follows that in fact $\fl_{Q,\nc}=\Ad(v)\fl_{Q,\nc}$. Thus we find that $v$ normalizes $\fl_{Q,\nc}$. Moreover,
$$
\fa_{\fh}
\subseteq \fa\cap\fh_{z,X}
= \fa\cap\Ad(v)\fh_{\emptyset}
=\Ad(v)\big(\fa\cap\fh_{\emptyset}\big)
=\Ad(v)\fa_{\fh}.
$$
Therefore, $v\in \cN$.
\end{proof}

The main result of this section is the following proposition.

\begin{Prop}\label{Prop limits vs open orbits}
Let $z\in Z$ be adapted and let $w\in\cN$. The following are equivalent.
\begin{enumerate}[(i)]
\item\label{Prop limits vs open orbits - item 1} There exists a $X\in\fa$ so that $\fh_{z,X}=\Ad(wm)\fh_{\emptyset}$ for some $m\in M$,
\item\label{Prop limits vs open orbits - item 2} $Pw^{-1}\cdot z$ is open in $Z$,
\item\label{Prop limits vs open orbits - item 3} $X\in\Ad(w)\cC$ if and only if $\fh_{z,X}=\Ad(wm)\fh_{\emptyset}$ for some $m\in M$.
\end{enumerate}
\end{Prop}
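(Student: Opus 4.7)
The plan is to establish the cyclic chain $(iii)\Rightarrow(i)\Rightarrow(ii)\Rightarrow(iii)$. The common thread is the identity
\[
\fh_{w^{-1}\cdot z,\,\Ad(w^{-1})X}
=\Ad(w^{-1})\fh_{z,X}
\qquad(w\in N_{G}(\fa),\ X\in\fa),
\]
which follows by combining Proposition~\ref{Prop Limits of subspaces}(\ref{Prop Limits of subspaces - item 4}) with the relation $w^{-1}\exp(tX)=\exp\big(t\Ad(w^{-1})X\big)w^{-1}$. The implication $(iii)\Rightarrow(i)$ is then immediate, since $\cC$ contains $\fa^{-}$ and is in particular non-empty, so any $X\in\Ad(w)\cC$ supplies the required limit.

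For $(i)\Rightarrow(ii)$ I set $Y:=\Ad(w^{-1})X$ and use the identity above to rewrite the hypothesis as $\fh_{w^{-1}\cdot z,Y}=\Ad(m)\fh_{\emptyset}$. Since $m\in M\subseteq L_{Q}$ normalizes $\overline{\fn}_{Q}$ and $\fh_{\emptyset}$ contains $\overline{\fn}_{Q}$, a direct inspection shows that $\Ad(m)\fh_{\emptyset}+\fp=\fg$, so $\fh_{w^{-1}\cdot z,Y}+\fp=\fg$. The subspace $\fp$ is $\fa$-stable, so
\[
\fp+\Ad\big(\exp(tY)\big)\fh_{w^{-1}\cdot z}
=\Ad\big(\exp(tY)\big)\big(\fp+\fh_{w^{-1}\cdot z}\big)
\]
has constant dimension in $t$, and the lower semi-continuity of the dimension of a sum of two subspaces in the Grassmannian then forces $\dim(\fp+\fh_{w^{-1}\cdot z})\geq\dim(\fp+\fh_{w^{-1}\cdot z,Y})=\dim\fg$. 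This gives $\fp+\fh_{w^{-1}\cdot z}=\fg$, which is (ii).

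For $(ii)\Rightarrow(iii)$ I first verify that $w^{-1}\cdot z$ is itself adapted. Since $w\in\cN$ normalizes $\fa$ and $\fl_{Q,\nc}+\fa_{\fh}$, it also normalizes $\fm+\fa=Z_{\fg}(\fa)$, and hence $\fl_{Q}=(\fm+\fa)+\fl_{Q,\nc}$. Choosing $X_{0}\in\fa^{\circ}_{\reg}\cap\fh_{z}^{\perp}$, the element $\Ad(w^{-1})X_{0}$ lies in $\fa\cap\fh_{w^{-1}\cdot z}^{\perp}$ with centralizer $\Ad(w^{-1})\fl_{Q}=\fl_{Q}$; combined with (ii), Proposition~\ref{Prop z adapted iff (1) and (3)} then implies that $w^{-1}\cdot z$ is adapted, so by Proposition~\ref{Prop relation between compression cones} we have $\cC_{w^{-1}\cdot z}=\cC$. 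The key identity then yields the chain of equivalences
\[
X\in\Ad(w)\cC
\iff \Ad(w^{-1})X\in\cC_{w^{-1}\cdot z}
\iff \fh_{z,X}=\Ad(wm)\fh_{\emptyset}\text{ for some }m\in M,
\]
proving (iii).

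The principal subtlety is the argument in $(i)\Rightarrow(ii)$, where openness of $Pw^{-1}\cdot z$ has to be recovered from information about the limit subalgebra alone; it is precisely the $\fa$-stability of $\fp$ that makes this possible, forcing the flow to move $\fp+\fh_{w^{-1}\cdot z}$ within a single fixed-dimension stratum of the Grassmannian, after which lower semi-continuity does the rest.
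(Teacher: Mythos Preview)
Your proof is correct and follows essentially the same route as the paper: the key identity $\fh_{w^{-1}\cdot z,\Ad(w^{-1})X}=\Ad(w^{-1})\fh_{z,X}$, the reduction of $(i)\Rightarrow(ii)$ to $\fh_{w^{-1}\cdot z,Y}+\fp=\fg$ together with $\fa$-stability of $\fp$, and the use of Propositions~\ref{Prop z adapted iff (1) and (3)} and~\ref{Prop relation between compression cones} for $(ii)\Rightarrow(iii)$ all match the paper's argument. The only cosmetic difference is that you inline the contents of Lemma~\ref{Lemma Properties of C_z}(\ref{Lemma Properties of C_z - item 2}) and Lemma~\ref{Lemma z adapted implies vz adapted} rather than citing them, and phrase the openness step via lower semi-continuity rather than ``for large $t$''.
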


Before we prove the proposition, we first prove a lemma.

\begin{Lemma}\label{Lemma z adapted implies vz adapted}
Let $z\in Z$ and $v\in \cN$. If $z$ is adapted and $Pv^{-1}\cdot z$ is open, then $v^{-1}\cdot z$ is adapted.
\end{Lemma}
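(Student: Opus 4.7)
The plan is to apply Proposition~\ref{Prop z adapted iff (1) and (3)}, which reduces adaptedness of $v^{-1}\cdot z$ to two conditions: openness of $P\cdot(v^{-1}\cdot z)$, and existence of an element $X\in\fa\cap\fh_{v^{-1}\cdot z}^{\perp}$ with $Z_{\fg}(X)=\fl_{Q}$. The first is exactly the hypothesis.

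For the second condition, the natural candidate is $X:=\Ad(v^{-1})X_{0}$, where $X_{0}\in\fa\cap\fh_{z}^{\perp}$ is any witness to the adaptedness of $z$, so in particular $Z_{\fg}(X_{0})=\fl_{Q}$. Since $v\in N_{G}(\fa)$ one has $X\in\fa$, and since $\fh_{v^{-1}\cdot z}^{\perp}=\Ad(v^{-1})\fh_{z}^{\perp}$ one has $X\in\fh_{v^{-1}\cdot z}^{\perp}$. The centralizer computes as $Z_{\fg}(X)=\Ad(v^{-1})\fl_{Q}$, so the entire problem reduces to proving $v\in N_{G}(\fl_{Q})$.

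To establish this, I would decompose $\fl_{Q}=\fm+\fa+\fl_{Q,\nc}$ via~(\ref{eq L_Q=MAL_(Q,nc)}) and check each summand separately. Stability of $\fa$ under $\Ad(v)$ is built into the definition of $\cN$. For $\fm$, observe that $\Ad(v)$ preserves $Z_{\fg}(\fa)=\fm\oplus\fa$ and the form $B$, and that within $\fm\oplus\fa$ the subspace $\fm$ is precisely the $B$-orthogonal complement of $\fa$ (using that $B$ is positive definite on $\fa$ and negative definite on $\fm$); stability of $\fa$ therefore forces stability of $\fm$.

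The main obstacle is the stability of $\fl_{Q,\nc}$, which does not follow tautologically from $v\in N_{G}(\fl_{Q,\nc}+\fa_{\fh})$. The key observation is that $\fl_{Q,\nc}$ is the derived subalgebra of $\fl_{Q,\nc}+\fa_{\fh}$: since $\fa_{\fh}$ is abelian, $\fa$ normalizes $\fl_{Q,\nc}$, and $\fl_{Q,\nc}$ is semisimple, a direct bracket computation gives
$$
[\fl_{Q,\nc}+\fa_{\fh},\,\fl_{Q,\nc}+\fa_{\fh}]
=[\fl_{Q,\nc},\fl_{Q,\nc}]+[\fl_{Q,\nc},\fa_{\fh}]
=\fl_{Q,\nc}.
$$
Any Lie algebra automorphism of $\fl_{Q,\nc}+\fa_{\fh}$ preserves its derived subalgebra, so $\Ad(v)$ preserves $\fl_{Q,\nc}$. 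Combining the three stabilities yields $\Ad(v)\fl_{Q}=\fl_{Q}$, hence $Z_{\fg}(X)=\fl_{Q}$, and Proposition~\ref{Prop z adapted iff (1) and (3)} completes the proof.
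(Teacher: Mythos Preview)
Your proof is correct and follows essentially the same route as the paper: apply Proposition~\ref{Prop z adapted iff (1) and (3)}, take $X=\Ad(v^{-1})X_{0}$ for a witness $X_{0}$ to the adaptedness of $z$, and reduce everything to showing $v$ normalizes $\fl_{Q}$ via the decomposition $\fl_{Q}=\fm+\fa+\fl_{Q,\nc}$. The only minor difference is in handling the $\fl_{Q,\nc}$ summand: you show $v$ normalizes $\fl_{Q,\nc}$ itself by identifying it as the derived subalgebra of $\fl_{Q,\nc}+\fa_{\fh}$, whereas the paper is content with the weaker (but sufficient) observation that $\Ad(v)\fl_{Q,\nc}\subseteq\fl_{Q,\nc}+\fa_{\fh}\subseteq\fl_{Q}$, which already gives $\Ad(v)\fl_{Q}\subseteq\fl_{Q}$ and hence equality by dimension.
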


\begin{proof}
Assume that $z$ is adapted and $Pv^{-1}\cdot z$ is open.
As $v$ normalizes $\fa$ and $\fl_{Q,\nc}+\fa_{\fh}$, it also normalizes $\fm$ and hence $\fa+\fm+\fl_{Q,\nc}=\fl_{Q}$.
If $X\in\fa\cap\fh_{z}^{\perp}$ is so that $\fl_{Q}=Z_{\fg}(X)$, then
$$
\fl_{Q}
=\Ad(v^{-1})\fl_{Q}
=Z_{\fg}(\Ad(v^{-1})X).
$$
Moreover, $\Ad(v^{-1})X\in\Ad(v^{-1})\big(\fa\cap\fh_{z}^{\perp}\big)=\fa\cap\fh_{v^{-1}\cdot z}^{\perp}$. The assertion now follows from Proposition \ref{Prop z adapted iff (1) and (3)}.
\end{proof}

\begin{proof}[Proof of Proposition \ref{Prop limits vs open orbits}]
{\em (\ref{Prop limits vs open orbits - item 1})$\Rightarrow$(\ref{Prop limits vs open orbits - item 2}):}
Let $X\in\fa$. If $\fh_{z,X}=\Ad(wm)\fh_{\emptyset}$ for some $m\in M$, then
$$
\fh_{w^{-1}\cdot z,\Ad(w^{-1})X}
=\Ad(w^{-1})\fh_{z,X}
=\Ad(m)\fh_{\emptyset},
$$
and hence $\Ad(w^{-1})X\in \cC_{w^{-1}\cdot z}$. Now $Pw^{-1}\cdot z$ is open in view of Lemma \ref{Lemma Properties of C_z} (\ref{Lemma Properties of C_z - item 2}).
\\
{\em (\ref{Prop limits vs open orbits - item 2})$\Rightarrow$(\ref{Prop limits vs open orbits - item 3}):}
By Lemma \ref{Lemma z adapted implies vz adapted} the point $w^{-1}\cdot z$ is adapted.
It follows from Proposition \ref{Prop relation between compression cones}  that $\cC_{w^{-1}\cdot z}=\cC$. Therefore $X\in\Ad(w)\cC$ if and only if
$$
\fh_{w^{-1}\cdot z,\Ad(w^{-1})X}
=\Ad(m)\fh_{\emptyset}
$$
for some $m\in M$.
The implication now follows from the identity
$$
\Ad(w^{-1})\fh_{z,X}
=\fh_{w^{-1}\cdot z,\Ad(w^{-1})X}.
$$
{\em (\ref{Prop limits vs open orbits - item 3})$\Rightarrow$(\ref{Prop limits vs open orbits - item 1}):}
This implication is trivial.

\end{proof}

\section{Limits of $\fh_{z}$}
\label{Section Limits of fh}
In this section we describe the closure of $\Ad(G)\fh_{z}$ in the Grassmannian. We will show that this closure is a finite union of $G$-orbits, each of the form $\Ad(G)\fh_{z,X}$, where $z$ is an adapted point in $Z$ and $X\in \overline{\cC}$. The crucial tool for this is the polar decomposition (\cite{KnopKrotzSayagSchlichtkrull_SimpleCompactificationsAndPolarDecomposition}) for $Z$.

Recall the set $S_{z}$ defined in (\ref{eq def S_z}).
For an adapted point $z\in Z$ let $\cM_{z}$ be the monoid generated by $S_{z}$, i.e.,
\begin{equation}\label{eq Def M_z}
\cM_{z}
:=\N S_{z}.
\end{equation}
We note that the negative dual cone
$$
-\cC^{\vee}
:=-\{\xi\in\fa^{*}:\xi(X)\geq 0 \text{ for all } X\in\cC\}
$$
of $\cC$ is equal to the cone generated by $\cM_{z}$. A priori $\cM_{z}$ may depend on the adapted point $z\in Z$, but the cone spanned by $\cM_{z}$ is independent of $z$. We write $\cS_{z}$ for the set of indecomposable elements in $\cM_{z}$. Note that $\cS_{z}\subseteq S_{z}$.

The closure of the compression cone $\overline{\cC}$ is finitely generated and hence polyhedral as $-\cC^{\vee}$ is finitely generated.
We call a subset $\cF\subseteq\overline{\cC}$ a face of $\overline{\cC}$ if $\cF=\overline{\cC}$ or there exists a closed half-space $\cH$ so that
$$
\cF
=\overline{\cC}\cap\cH
\quad\text{and}\quad
\cC\cap\partial\cH=\emptyset.
$$
There exist finitely many faces of $\overline{\cC}$ and each face is polyhedral cone.
A face $\cF$ of $\overline{\cC}$ is said to be a wall of $\cC$ or $\overline{\cC}$ if $\spn(\cF)$ has codimension $1$.

Let $z\in Z$ be adapted. Each subset $S$ of $\cS_{z}$ corresponds to a face $\cF$ of $\overline{\cC}$, namely
\begin{equation}\label{eq Relation cF to cS_z}
\cF
=\overline{\cC}\cap\bigcap_{\alpha\in S}\ker(\alpha).
\end{equation}
The map from the power set of $\cS_{z}$ to the set of faces of $\overline{\cC}$ is surjective. If $\cC^{\vee}$ is generated by a set of linearly independent elements, then this map is also injective. If $\cF$ is a wall of $\cC$, then there exists an element $\alpha\in\cS_{z}$ so that
\begin{equation}\label{eq cF=oline cC cap ker alpha}
\cF
=\overline{\cC}\cap\ker(\alpha).
\end{equation}

For an adapted point $z\in Z$ and a face $\cF$ of $\overline{\cC}$ we define
\begin{equation}\label{eq Def M_(z,F)}
\cM_{z,\cF}
:=\{\sigma\in\cM_{z}:\sigma\big|_{\cF}=0\}.
\end{equation}
Note that $\cM_{z,\cF}$ is a submonoid of $\cM_{z}$. We further note that the annihilator of $\cM_{z,\cF}$ is equal to
$$
\fa_{\cF}
:=\spn(\cF),
$$
i.e.,
\begin{equation}\label{eq a_F joint kernel of M_(z,F)}
\bigcap_{\sigma\in\cM_{z,\cF}}\ker(\sigma)
=\fa_{\cF}.
\end{equation}

\begin{Lemma}\label{Lemma Limits are equal for X in int F}
Let $z\in Z$ be adapted and let $\cF$ be a face of $\overline{\cC}$. Let $\cM_{z,\cF}$ be as in (\ref{eq Def M_(z,F)}).
For every $X$ in the (relative) interior of $\cF$
\begin{equation}\label{eq Formula for fh_(z,F)}
\fh_{z,X}
=(\fl_{Q}\cap\fh_{z})\oplus\bigoplus_{\alpha\in\Sigma(Q)}\cG\Big(\sum_{\sigma\in-\alpha+\cM_{z,\cF}}p_{\sigma}\circ T_{z}\big|_{\fg_{-\alpha}}\Big).
\end{equation}
In particular, for all $X$ and $X'$ in the (relative) interior of $\cF$
$$
\fh_{z,X}
=\fh_{z,X'}.
$$
\end{Lemma}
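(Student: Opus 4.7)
The plan is to prove the displayed formula by showing that the right-hand side is contained in $\fh_{z,X}$ and that the two sides have the same dimension. The dimension identity is immediate from the decomposition $\fh_z = (\fl_Q \cap \fh_z) \oplus \cG(T_z)$ of Section~\ref{Section Description of fh}: this gives $\dim\fh_{z,X} = \dim\fh_z = \dim(\fl_Q\cap\fh_z) + \dim\overline{\fn}_Q$, and the right-hand side of~(\ref{eq Formula for fh_(z,F)}) also sums to this since each $\cG\big(\sum_\sigma p_\sigma\circ T_z|_{\fg_{-\alpha}}\big)$ is a graph over $\fg_{-\alpha}$, the various $\alpha$-summands are in direct position via their $\fg_{-\alpha}$-components, and no nonzero such graph element lies in $\fl_Q\cap\fh_z$. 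Once the formula is established, the ``in particular'' statement follows automatically because the right-hand side depends only on $\cF$.

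For the inclusion $\fl_Q\cap\fh_z \subseteq \fh_{z,X}$, I would show that $\fl_Q\cap\fh_z$ is $\Ad(A)$-stable. The case $m=n=e$ of Lemma~\ref{Lemma characterization weakly adapted points} yields $\fl_Q\cap\fh_{a\cdot z} = \fl_Q\cap\fh_z$ for every $a\in A$; combined with $\Ad(a)(\fl_Q\cap\fh_z) = \fl_Q\cap\fh_{a\cdot z}$ (using $\Ad(a)\fl_Q = \fl_Q$ and $\Ad(a)\fh_z = \fh_{a\cdot z}$), this gives $\Ad(\exp(tX))(\fl_Q\cap\fh_z) = \fl_Q\cap\fh_z$ for all $t$. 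The Grassmannian limit of this constant family is itself, and since $\Ad(\exp(tX))(\fl_Q\cap\fh_z)\subseteq \Ad(\exp(tX))\fh_z$ for each $t$, passing to limits gives $\fl_Q\cap\fh_z \subseteq \fh_{z,X}$.

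For the inclusion of each $\alpha$-summand, fix $\alpha\in\Sigma(Q)$ and $Y_{-\alpha}\in\fg_{-\alpha}$, and decompose $T_z(Y_{-\alpha}) = \sum_\sigma p_\sigma T_z(Y_{-\alpha})$ with $\sigma$ ranging over $\Sigma(Q)\cup\{\fm,\fa\}$. Then
\[
\Ad(\exp(tX))\big(Y_{-\alpha} + T_z(Y_{-\alpha})\big) = e^{-t\alpha(X)}Y_{-\alpha} + \sum_\sigma e^{t\sigma(X)}\,p_\sigma T_z(Y_{-\alpha}),
\]
with the convention $\sigma(X):=0$ for $\sigma\in\{\fm,\fa\}$. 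The crucial observation is that whenever $p_\sigma T_z(Y_{-\alpha})\neq 0$, the definition of $S_z$ in~(\ref{eq def S_z}) places $\alpha+\sigma$ in $\cM_z\subseteq -\cC^\vee$, so $(\alpha+\sigma)(X)\leq 0$, i.e.\ $\sigma(X)\leq -\alpha(X)$. Thus $-\alpha(X)$ dominates all exponents; multiplying by $e^{t\alpha(X)}$ (which does not change the subspace $\Ad(\exp(tX))\fh_z$) and letting $t\to\infty$, all terms with strict inequality vanish, leaving the limit vector $Y_{-\alpha} + \sum_{\sigma:\sigma(X)=-\alpha(X)} p_\sigma T_z(Y_{-\alpha})$. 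For $X$ in the relative interior of $\cF$, a standard supporting-hyperplane argument shows that $\tau\in\cM_z$ satisfies $\tau(X)=0$ iff $\tau\in\cM_{z,\cF}$; applied to $\tau = \alpha+\sigma$, the surviving $\sigma$ are exactly those in $-\alpha + \cM_{z,\cF}$. Since $\Ad(\exp(tX))\fh_z\to\fh_{z,X}$ in the Grassmannian, this limit vector lies in $\fh_{z,X}$, and letting $Y_{-\alpha}$ vary gives the required containment.

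The main obstacle is the first containment: $A$ does not normalise $\fh_z$ in general, so the $\Ad(A)$-invariance of $\fl_Q\cap\fh_z$ has to be extracted from the fine structure of adapted stabilisers supplied by Lemma~\ref{Lemma characterization weakly adapted points}. The second containment, once one recognises that $\cM_z\subseteq -\cC^\vee$ forces $-\alpha(X)$ to dominate every other exponent in $\Ad(\exp(tX))(Y_{-\alpha}+T_z(Y_{-\alpha}))$, reduces to eigenvalue bookkeeping refined by the face $\cF$.
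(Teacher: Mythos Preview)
Your proof is correct and follows essentially the same strategy as the paper's: decompose $\fh_z = (\fl_Q\cap\fh_z)\oplus\cG(T_z)$, compute the limit of each line $\R(Y_{-\alpha}+T_z(Y_{-\alpha}))$ by identifying $-\alpha(X)$ as the dominant exponent and using that $\tau\in\cM_z$ vanishes at an interior point of $\cF$ iff $\tau\in\cM_{z,\cF}$, and conclude by a dimension count. The only real difference is that you justify $\fl_Q\cap\fh_z\subseteq\fh_{z,X}$ explicitly via Lemma~\ref{Lemma characterization weakly adapted points}, whereas the paper simply writes $\fh_{z,X}=(\fl_Q\cap\fh_z)\oplus\big(\cG(T_z)\big)_X$ without further comment; your route is valid, though a slightly more direct argument is available (since $\fl_{Q,\nc}\subseteq\fh_z$ one has $\fl_Q\cap\fh_z=(\fm\cap\fh_z)\oplus\fa_\fh\oplus\bigoplus_{\fg_\beta\subseteq\fl_Q}\fg_\beta$, which is visibly $\ad(\fa)$-stable).
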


\begin{proof}
Let $X$ be an element from the interior of $\cF$. Then
$$
\cM_{z,\cF}
=\{\sigma\in\cM_{z}:\sigma(X)=0\}.
$$
For $\sigma\in\Sigma\cup\{0\}$, let $p_{\sigma}:\fg\to\fg_{\sigma}$ be the projection onto $\fg_{\sigma}$ along the Bruhat decomposition, where $\fg_{0}$ denotes $\fm\oplus\fa$. If $\alpha\in\Sigma(Q)$ and $Y\in\fg_{-\alpha}$, then
\begin{align*}
&\Ad\big(\exp(tX)\big)\big(Y+T_{z}(Y)\big)\\
&\qquad=e^{-t\alpha(X)}\Big(Y+\sum_{\sigma\in-\alpha+\cM_{z,\cF}}p_{\sigma}T_{z}(Y)\Big)
    +\sum_{\sigma\in(-\alpha+\cM_{z})\setminus(-\alpha+\cM_{z,\cF})}e^{t\sigma(X)}p_{\sigma}T_{z}(Y).
\end{align*}
If $\sigma\in-\alpha+\cM_{z}$ but $\sigma\notin-\alpha+\cM_{z,\cF}$, then $\sigma(X)<-\alpha(X)$.
Therefore,
$$
\Big(\R\big(Y+T_{z}(Y)\big)\Big)_{X}
=\R\Big(Y+\sum_{\sigma\in-\alpha+\cM_{z,\cF}}p_{\sigma}T_{z}(Y)\Big),
$$
and hence
$$
\bigoplus_{\alpha\in\Sigma(Q)}\cG\Big(\sum_{\sigma\in-\alpha+\cM_{z,\cF}}p_{\sigma}\circ T_{z}\big|_{\fg_{-\alpha}}\Big)
\subseteq\big(\cG(T_{z})\big)_{X}.
$$
In fact, equality holds since the dimensions of both spaces are equal.
As
$$
\fh_{z,X}
=(\fl_{Q}\cap\fh_{z})\oplus\big(\cG(T_{z})\big)_{X},
$$
this proves (\ref{eq Formula for fh_(z,F)}).
It follows from (\ref{eq Formula for fh_(z,F)}) that $\fh_{z,X}$ does not depend on the choice of $X$ in the interior of $\cF$.
\end{proof}

Lemma \ref{Lemma Limits are equal for X in int F} allows us to make the following definition.

\begin{Defi}
For an adapted point $z\in Z$ and a face $\cF$ of $\overline{\cC}$, define
$$
\fh_{z,\cF}
:=\fh_{z,X}
$$
with $X$ contained in the interior of $\cF$.
\end{Defi}

We note that for every adapted point $z\in Z$ there exists an $m\in M$ so that
$$
\fh_{z,\overline{\cC}}
=\Ad(m)\fh_{\emptyset}.
$$

\begin{Lemma}\label{Lemma properties fh_(z,cF)}
Let $z\in Z$ be adapted and let $\cF$ be a face of $\overline{\cC}$. The Lie algebra $\fh_{z,\cF}$ is a real spherical subalgebra of $\fg$. Moreover,
$$
N_{\fg}(\fh_{z,\cF})
=\fh_{z,\cF}+\fa_{\cF}+N_{\fm}(\fh_{z,\cF}).
$$
Finally,
$$
\fh_{z,\cF}\cap\fa
=\fa_{\fh}.
$$
\end{Lemma}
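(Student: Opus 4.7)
All three assertions will be proved from the explicit description
$$\fh_{z,\cF}=(\fl_Q\cap\fh_z)\oplus\bigoplus_{\alpha\in\Sigma(Q)}\cG(T_{\cF,\alpha}),\qquad T_{\cF,\alpha}:=\sum_{\sigma\in-\alpha+\cM_{z,\cF}}p_\sigma\circ T_z\big|_{\fg_{-\alpha}},$$
that was obtained in Lemma~\ref{Lemma Limits are equal for X in int F}.

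For real sphericity and for the intersection with $\fa$: each graph $\cG(T_{\cF,\alpha})$ projects onto $\fg_{-\alpha}$ along $\fq$, so $\fh_{z,\cF}+\fq=\fg$. Since $z$ is adapted, Proposition~\ref{Prop LST holds for adapted points} gives $P\cdot z=Q\cdot z$, whence $\fp+(\fl_Q\cap\fh_z)=\fq$ and therefore $\fp+\fh_{z,\cF}=\fg$. For the intersection with $\fa$, every nonzero vector of $\cG(T_{\cF,\alpha})$ has a nonzero $\fg_{-\alpha}$-component and hence cannot lie in $\fa$, so $\fh_{z,\cF}\cap\fa=(\fl_Q\cap\fh_z)\cap\fa=\fa_\fh$.

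The main work is the normalizer. The plan is to recognize $\fh_{z,\cF}$ as the stabilizer Lie algebra of an adapted point in a new real spherical space and then invoke Corollary~\ref{Cor Normalizer of fh}. Being a limit under a one-parameter subgroup, $\fh_{z,\cF}$ is algebraic; let $\underline{H_\cF}$ be the connected algebraic subgroup with this Lie algebra, set $H_\cF:=G\cap\underline{H_\cF}(\C)$, $Z_\cF:=G/H_\cF$, and $z_\cF:=eH_\cF$. I claim $z_\cF$ is adapted to $P=MAN$. Conditions (i) and (ii) of Definition~\ref{Def Adapted points} are immediate from the real sphericity just proven and from $\fl_{Q,\nc}\subseteq\fl_Q\cap\fh_z\subseteq\fh_{z,\cF}$. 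For (iii), take $X\in\fa^\circ_{\reg}\cap\fh_z^\perp$ and verify $X\in\fh_{z,\cF}^\perp$: for $Y\in\fg_{-\alpha}$ the pairing $B(X,Y+T_{\cF,\alpha}(Y))$ reduces to $B(X,p_\fa T_{\cF,\alpha}(Y))$, which is nonzero only if $\alpha\in\cM_{z,\cF}$, and in that case equals $B(X,p_\fa T_z(Y))=0$ (since $X\perp T_z(Y)$).

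Corollary~\ref{Cor Normalizer of fh} applied to $z_\cF$ yields
$$N_\fg(\fh_{z,\cF})=\fh_{z,\cF}+N_\fa(\fh_{z,\cF})+N_\fm(\fh_{z,\cF}),$$
so the claim reduces to showing $N_\fa(\fh_{z,\cF})=\fa_\cF$. The inclusion $\fa_\cF\subseteq N_\fa(\fh_{z,\cF})$ is direct: for $X\in\fa_\cF$, each $\sigma\in-\alpha+\cM_{z,\cF}$ satisfies $\sigma(X)=-\alpha(X)$ (because $\cM_{z,\cF}$ annihilates $\fa_\cF$), so $\ad(X)$ acts on $\cG(T_{\cF,\alpha})$ by the scalar $-\alpha(X)$ and preserves $\fh_{z,\cF}$. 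For the reverse, Proposition~\ref{Prop Edge of compression cone is normalizer} applied to $z_\cF$ identifies $N_\fa(\fh_{z,\cF})$ with the edge of $\overline{\cC_{z_\cF}}$; Proposition~\ref{Prop Limits of subspaces}~(iii) lets one compute $\cC_{z_\cF}$ as the tangent cone of $\cC$ at any $X_0$ in the relative interior of $\cF$, whose edge is $\fa_\cF$. The main obstacle is this last identification, which requires a careful interplay between iterated limits in the Grassmannian and the polyhedral structure of $\overline{\cC}$ near the face $\cF$.
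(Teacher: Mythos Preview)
Your proofs of real sphericity and of $\fh_{z,\cF}\cap\fa=\fa_{\fh}$ are correct and in fact more direct than the paper's: the paper argues by taking a further limit in a direction $Y\in\cC$ and invoking Proposition~\ref{Prop Limits of subspaces}~(\ref{Prop Limits of subspaces - item 3}) to reduce to $\fh_{\emptyset}$, whereas you read both facts straight off the graph decomposition~(\ref{eq Formula for fh_(z,F)}). Your explicit verification that $z_{\cF}$ is adapted (so that Corollary~\ref{Cor Normalizer of fh} applies) is also correct and is something the paper does only implicitly.

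The gap is exactly where you flag it: the inclusion $N_{\fa}(\fh_{z,\cF})\subseteq\fa_{\cF}$. Your proposed route through Proposition~\ref{Prop Edge of compression cone is normalizer} and the tangent cone is not wrong in spirit, but Proposition~\ref{Prop Limits of subspaces}~(\ref{Prop Limits of subspaces - item 3}) only identifies $(\fh_{z,\cF})_{Y}$ with $\fh_{z,Y}$ for $Y$ lying in order-regular chambers whose closure meets the interior of $\cF$; it says nothing about $Y$ far from $\cF$. To complete your argument you would have to prove $\cC_{z_{\cF}}=\cC+\fa_{\cF}$ (this is what later becomes Proposition~\ref{Prop Compression cone of Z_(O,F)}), and the only way to do that is a direct computation from~(\ref{eq Formula for fh_(z,F)}). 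The paper carries out that computation in a more economical form, bypassing the compression cone entirely: if $X\in N_{\fa}(\fh_{z,\cF})$ then $\ad(X)$ preserves each graph $\cG(T_{\cF,\alpha})$, and comparing the $\fg_{-\alpha}$-component with the $\fg_{\sigma}$-components forces $(\alpha+\sigma)(X)=0$ whenever $\sigma\in\supp_{z}(\fg_{-\alpha})$ and $\alpha+\sigma\in\cM_{z,\cF}$. Since every element of $S_{z}\cap\cM_{z,\cF}$ has this form, and $\cM_{z,\cF}$ is generated by $\cS_{z}\cap\cM_{z,\cF}$ (each indecomposable generator is nonpositive on $\cF$, so a sum vanishing on $\cF$ forces each summand to vanish), one concludes $X\in\bigcap_{\sigma\in\cM_{z,\cF}}\ker\sigma=\fa_{\cF}$. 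You should replace the tangent-cone detour with this two-line argument.
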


\begin{proof}
By Proposition \ref{Prop Limits of subspaces} (\ref{Prop Limits of subspaces - item 3}) there exists an $m\in M$ so that for all $X\in\cC$
$$
(\fh_{z,\cF})_{X}
=\fh_{z,X}
=\Ad(m)\fh_{\emptyset}.
$$
Since $\overline{\fn}_{P}\subseteq\fh_{\emptyset}$, it follows that $(\fh_{z,\cF})_{X}+\fp=\fg$ and hence $\Ad\big(\exp(tX)\big)\fh_{z,\cF}+\fp=\fg$ for sufficiently large $t>0$. Since $\fp$ and $\fg$ are both stable under the action of $A$, we find
$$
\fh_{z,\cF}+\fp
=\fg.
$$
In particular $\fh_{z,\cF}$ is a real spherical subalgebra of $\fg$.

By Corollary \ref{Cor Normalizer of fh}
$$
N_{\fg}(\fh_{z,\cF})
=\fh_{z,\cF}+N_{\fa}(\fh_{z,\cF})+N_{\fm}(\fh_{z,\cF}).
$$
To prove the second assertion in the lemma, it suffices to show that $N_{\fa}(\fh_{z,\cF})=\fa_{\cF}$.
It follows from equation (\ref{eq Formula for fh_(z,F)}) that $\fh_{z,\cF}$ is normalized by $\fa_{\cF}$, and hence $\fa_{\cF}\subseteq N_{\fa}(\fh_{z,\cF})$.
To prove the other inclusion, let $X\in N_{\fa}(\fh_{z,\cF})$. It follows from (\ref{eq Formula for fh_(z,F)}) that $\sigma(X)=0$ for all $\sigma\in\cM_{z,\cF}$ so that $-\alpha+\sigma\in{\supp}_{z}(\fg_{-\alpha})$ for some $\alpha\in\Sigma(Q)$. The submonoid $\cM_{z,\cF}$ is generated by the indecomposable elements from $\cM_{z}$ that vanish on $\cF$. Therefore, there exists a set of generators $\sigma$ of $\cM_{z,\cF}$ with $-\alpha+\sigma\in{\supp}_{z}(\fg_{-\alpha})$ for some $\alpha\in\Sigma(Q)$. It follows that $X$ is in the joint kernel of a  set of generators of $\cM_{z,\cF}$, and hence $\sigma(X)=0$ for all $\sigma\in\cM_{z,\cF}$. By (\ref{eq a_F joint kernel of M_(z,F)}) the annihilator of $\cM_{z,\cF}$ is equal to $\fa_{\cF}$.
Therefore, $X\in\fa_{\cF}$. This proves the second assertion.

Finally, for every $X\in\cC$
$$
\fa_{\fh}
\subseteq\fa\cap\fh_{z,\cF}
=(\fa\cap\fh_{z,\cF})_{X}
\subseteq\fa\cap(\fh_{z,\cF})_{X}
=\fa\cap\fh_{\emptyset}
\subseteq \fa_{\fh}.
$$
Here we used Proposition \ref{Prop Limits of subspaces} (\ref{Prop Limits of subspaces - item 3}) for the second equality. It follows that $\fa\cap\fh_{z,\cF}=\fa_{\fh}$.
\end{proof}

The following proposition describes the dependence of the Lie algebras $\fh_{z,\cF}$ on the adapted point $z$.

\begin{Prop}\label{Prop fh_I independent of choice z,X}
Let $z, z'\in Z$ be adapted and let $\cF$ be a face of $\overline{\cC}$. If $P\cdot z= P\cdot z'$, then
$$
\Ad(G)\fh_{z,\cF}
=\Ad(G)\fh_{z',\cF}.
$$
\end{Prop}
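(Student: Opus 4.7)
My plan is to reduce to an explicit relation $z' = \exp(W)\cdot z$ with $W\in\fn_Q$, and then invoke Proposition~\ref{Prop Limits of subspaces}~(iv) to realize $\fh_{z',\cF}$ as an $\Ad(G)$-conjugate of $\fh_{z,\cF}$. By Proposition~\ref{Prop parametrization of adapted points} there exist $m\in M$, $a\in A$ and $X_0\in\fa^\circ_{\reg}$ such that $z' = ma\exp(\Phi_z(X_0))\cdot z$. Since $m$ and $a$ commute with $\exp(tY)$ for every $Y\in\fa$, one has $\fh_{ma\cdot w,\cF} = \Ad(ma)\fh_{w,\cF}$ for any $w\in Z$, so after replacing $z'$ by $(ma)^{-1}z'$ I may assume $z' = \exp(W)\cdot z$ with $W := \Phi_z(X_0)\in\fn_Q$.

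Fix $Y$ in the relative interior of $\cF$; by Lemma~\ref{Lemma Limits are equal for X in int F} we have $\fh_{z,Y} = \fh_{z,\cF}$ and $\fh_{z',Y} = \fh_{z',\cF}$. Using the factorisation
$$
\Ad(\exp(tY))\Ad(\exp W)\fh_z = \Ad\bigl(\exp(tY)\exp(W)\exp(-tY)\bigr)\,\Ad(\exp(tY))\fh_z
$$
together with Proposition~\ref{Prop Limits of subspaces}~(iv), it suffices to show that the conjugate
$n^\ast := \lim_{t\to\infty}\exp(tY)\exp(W)\exp(-tY)$
exists in $G$, for then $\fh_{z',Y} = \Ad(n^\ast)\fh_{z,Y}$, giving the desired equality of $G$-orbits. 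Decomposing $W = \sum_{\alpha\in\Sigma(Q)}W_\alpha$ with $W_\alpha\in\fg_\alpha$, we have $\exp(tY)\exp(W)\exp(-tY) = \exp\bigl(\sum_\alpha e^{t\alpha(Y)}W_\alpha\bigr)$, which converges in $N_Q$ exactly when $\alpha(Y)\leq 0$ for every $\alpha\in\supp(W)$. By Lemma~\ref{Lemma Properties of C_z with z adapted} every element of $S_z$, and hence of the monoid $\cM_z := \N S_z$, is non-positive on $\overline{\cC}$; since $Y\in\overline{\cC}$, the existence of $n^\ast$ will follow from the inclusion $\supp(W)\subseteq\cM_z$.

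The main obstacle is thus to verify this support inclusion, which I will carry out in two steps. First, using Lemma~\ref{Lemma def T^perp} and the decomposition $\fh_z = (\fl_Q\cap\fh_z)\oplus\cG(T_z)$, pairing the defining relation $X_0 + T_z^\perp(X_0)\in\fh_z^\perp$ against an element $Y_{-\alpha} + T_z(Y_{-\alpha})\in\cG(T_z)$ (with $Y_{-\alpha}\in\fg_{-\alpha}$, $\alpha\in\Sigma(Q)$) and exploiting $B(\fn_Q,\fq)=0$ yields
$$
B\bigl(p_\alpha T_z^\perp(X_0),Y_{-\alpha}\bigr) = -B\bigl(X_0, p_{\fa}T_z(Y_{-\alpha})\bigr),
$$
whose right-hand side vanishes identically in $Y_{-\alpha}$ whenever $\fa\notin\supp_z(\fg_{-\alpha})$; by non-degeneracy of $B$ on $\fg_\alpha\times\fg_{-\alpha}$ this forces $\supp(T_z^\perp(X_0))\subseteq S_z$. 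Second, the equation $\Ad(\exp(-W))X_0 = X_0 + T_z^\perp(X_0)$ rewrites as
$$
\sum_{k\geq 1}\frac{(-1)^k}{k!}\ad(W)^k X_0 = T_z^\perp(X_0),
$$
which I solve root-by-root by induction on the height of $\rho\in\Sigma(Q)$ with respect to a fixed element of $\fa$ that is positive on $\Sigma(Q)$. At minimal heights the leading-order part gives $W_\rho = p_\rho T_z^\perp(X_0)/\rho(X_0)$, which is nonzero only when $\rho\in S_z\subseteq\cM_z$; at a general height $\rho$ the higher-order corrections involve only brackets $[W_\delta, W_\gamma]$ with $\delta + \gamma = \rho$ and $\delta,\gamma$ of strictly smaller height, so the induction hypothesis $\delta,\gamma\in\cM_z$ together with closure of $\cM_z$ under addition forces $\rho\in\cM_z$. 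This yields $\supp(W)\subseteq\cM_z$ and concludes the proof.
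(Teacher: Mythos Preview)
Your argument is correct and follows the same route as the paper: reduce via Proposition~\ref{Prop parametrization of adapted points} to $z'=man\cdot z$ with $n=\exp(\Phi_z(X_0))$, show that $\exp(tY)\,n\,\exp(-tY)$ converges for $Y$ in the interior of $\cF$, and apply Proposition~\ref{Prop Limits of subspaces}~(\ref{Prop Limits of subspaces - item 4}). The only difference is organisational: the paper isolates the required support estimate as Lemma~\ref{Lemma bound on image of Phi_z} (namely $\Im(\Phi_z)\subseteq\bigoplus_{\alpha|_{\overline{\cC}}\le 0}\fg_\alpha$) and simply cites it, whereas you reprove it inline. Your pairing computation showing $\supp(T_z^\perp(X_0))\subseteq\{\alpha:\fa\in\supp_z(\fg_{-\alpha})\}\subseteq S_z$ is exactly the calculation in the proof of Lemma~\ref{Lemma bound on image of Phi_z}; your height induction, which upgrades this to $\supp(W)\subseteq\cM_z$, makes explicit a step the paper compresses into a single ``therefore''. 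The conclusion you reach ($\supp(W)\subseteq\cM_z$) is nominally weaker than the first inclusion asserted in Lemma~\ref{Lemma bound on image of Phi_z}, but it is exactly what is needed here, since every element of $\cM_z$ is non-positive on $\overline{\cC}$.
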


The proof for the proposition relies on the following two lemmas. Recall the map $T_{z}^{\perp}:\fa^{\circ}\to Z_{\fn_{Q}}(\fl_{Q}\cap \fh_{z})$ from Lemma \ref{Lemma def T^perp}.

\begin{Lemma}\label{Lemma bound on image of T^perp}
Let $z\in Z$ be adapted. Then
$$
\Im(T_{z}^{\perp})
\subseteq\bigoplus_{\substack{\alpha\in\Sigma(Q)\\\fa\in {\supp}_{z}(\fg_{-\alpha})}}\fg_{\alpha}.
$$
\end{Lemma}

\begin{proof}
Let $p_{\fa}$ be the projection $\fg\to\fa$ along the decomposition $\fg=\fm\oplus\fa\oplus\bigoplus_{\alpha\in\Sigma}\fg_{\alpha}$.
We claim that
\begin{equation}\label{eq Im(T^perp)}
\Im(T_{z}^{\perp})
\subseteq\big(\ker(p_{\fa}\circ T_{z})\big)^{\perp}\cap\fn_{Q}
\end{equation}

To prove the claim, let $X\in\fa^{\circ}$ and  $Y\in\ker(p_{\fa}\circ T_{z})$.
Now $T_{z}(Y)\in \fm\oplus\fn_{Q}$. Using that $Y\in \overline{\fn}_{Q}$, it follows that $Y+T_{z}(Y)\in\overline{\fn}_{Q}\oplus\fm\oplus\fn_{Q}$. Therefore, $B\big(X,Y+T_{z}(Y)\big)=0$. Moreover, as $T_{z}^{\perp}(X)\in \fn_{Q}$ and $T_{z}(Y)\in \fm\oplus\fn_{Q}$, we have $B\big(T_{z}^{\perp}(X), T_{z}(Y)\big)=0$. It follows that
\begin{align*}
B\big(T_{z}^{\perp}(X),Y\big)
&=B\big(T_{z}^{\perp}(X),Y\big)+B\big(T_{z}^{\perp}(X), T_{z}(Y)\big)+B\big(X,Y+T_{z}(Y)\big)\\
&=B\big(X+T_{z}^{\perp}(X),Y+T_{z}(Y)\big).
\end{align*}
The right-hand side vanishes as $X+T_{z}^{\perp}(X)\in\fh_{z}^{\perp}$ and $Y+T_{z}(Y)\in\fh_{z}$. It follows that $B\big(\Im(T_{z}^{\perp}),\ker(p_{\fa}\circ T_{z})\big)=\{0\}$, and hence the claimed identity (\ref{eq Im(T^perp)}) follows.

We have
$$
\big(\ker(p_{\fa}\circ T_{z})\big)^{\perp}\cap\fn_{Q}
\subseteq\Big(\bigoplus_{\substack{\alpha\in\Sigma(Q)\\\fa\notin{\supp}_{z}(\fg_{-\alpha})}}\fg_{-\alpha}\Big)^{\perp}\cap\fn_{Q}
=\bigoplus_{\substack{\alpha\in\Sigma(Q)\\\fa\in{\supp}_{z}(\fg_{-\alpha})}}\fg_{\alpha},
$$
and hence
$$
\Im(T_{z}^{\perp})
\subseteq\bigoplus_{\substack{\alpha\in\Sigma(Q)\\\fa\in{\supp}_{z}(\fg_{-\alpha})}}\fg_{\alpha}
$$
in view of (\ref{eq Im(T^perp)}).
\end{proof}

Recall the map $\Phi_{z}:\fa^{\circ}_{\reg}\to \fn_{Q}$ from (\ref{eq def Phi}).

\begin{Lemma}\label{Lemma bound on image of Phi_z}
Let $z\in Z$ be adapted. Then
$$
\Im(\Phi_{z})
\subseteq\bigoplus_{\substack{\alpha\in\Sigma(Q)\\\alpha|_{\overline{\cC}}\leq0}}\fg_{\alpha}.
$$
\end{Lemma}

\begin{proof}
If $X\in \fa^{\circ}_{\reg}$, then $X$ does not vanish on any root in $\Sigma(Q)$, and hence the map
$$
\Psi:\fn_{Q}\to \fn_{Q},\quad Y\mapsto \Ad\big(\exp(Y)\big)X-X
$$
is a diffeomorphism. As
$$
\fn_{0}:=\bigoplus_{\substack{\alpha\in\Sigma(Q)\\\alpha|_{\overline{\cC}}\leq0}}\fg_{\alpha}
$$
is an $\fa$-stable Lie subalgebra of $\fn_{Q}$, the restriction of $\Psi$ to $\fn_{0}$ maps $\fn_{0}$ onto itself. Therefore, it suffices to prove that $\Ad\big(\exp\big(\Phi_{z}(X)\big)\big)X-X\in\fn_{0}$.

It follows from (\ref{eq defining property Phi}) that $\Ad\big(\exp\big(\Phi_{z}(X)\big)\big)X-X\in\Im(T_{z}^{\perp})$ for every $X\in\fa^{\circ}_{\reg}$.
By Lemma \ref{Lemma bound on image of T^perp} the image of $T_{z}^{\perp}$ is contained in the direct sum of all root spaces for roots $\alpha\in\Sigma(Q)$ with $\fa\in{\supp}_{z}(\fg_{-\alpha})$.
By Lemma \ref{Lemma Properties of C_z with z adapted} we have $\alpha\big|_{\overline{\cC}}\leq 0$ for any such root. This proves the lemma.
\end{proof}

\begin{proof}[Proof of Proposition \ref{Prop fh_I independent of choice z,X}]
Assume that $P\cdot z=P\cdot z'$ and let $X$ be contained in the interior of $\cF$. By Proposition \ref{Prop parametrization of adapted points} there exist $m\in M$, $a\in A$ and $n\in \exp\big(\Im(\Phi_{z})\big)$ so that $z'=man\cdot z$. By Lemma \ref{Lemma bound on image of Phi_z}
$$
n\in\exp\Big(\bigoplus_{\substack{\alpha\in\Sigma(Q)\\\alpha(X)\leq 0}}\fg_{\alpha}\Big).
$$
It follows that the limit of $\exp(tX)man\exp(tX)^{-1}=ma\exp(tX)n\exp(tX)^{-1}$ for $t\to\infty$ exists in $G$. We write $g$ for the limit. By Proposition \ref{Prop Limits of subspaces} (\ref{Prop Limits of subspaces - item 4}) we now have
$$
\fh_{z',\cF}
=\fh_{z',X}
=\big(\Ad(man)\fh_{z}\big)_{X}
=\Ad(g)\fh_{z,X}
\in\Ad(G)\fh_{z,X}
=\Ad(G)\fh_{z,\cF}.
$$
\end{proof}

We continue with a description of the closure of $\Ad(G)\fh_{z}$ in the Grassmannian. For this we need the so-called polar decomposition.
The following proposition, describing the polar decomposition for $Z$, is an adaptation from \cite[Theorem 5.13]{KnopKrotzSayagSchlichtkrull_SimpleCompactificationsAndPolarDecomposition}.

\begin{Prop}\label{Prop Polar decomposition}
Let $\Xi\subseteq Z$ be a finite set of adapted points so that $P\cdot \Xi$ is the union of all open $P$-orbits in $Z$. Then there exists a compact subset $\Omega\subseteq G$ so that
\begin{equation}\label{eq Polar decomp}
Z
=\Omega\exp(\overline{\cC})\cdot\Xi.
\end{equation}
\end{Prop}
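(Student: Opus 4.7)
I would deduce the polar decomposition from three inputs: the Iwasawa-style identity $G = KP$ (for $K$ a maximal compact subgroup compatible with the Langlands decomposition $P = MAN$), the local structure theorem for adapted points (Proposition \ref{Prop LST holds for adapted points} and Remark \ref{Rem on Prop LST holds for adapted points}(b)), and the polyhedral structure of the compression cone together with its walls (Lemma \ref{Lemma Properties of C_z with z adapted}).

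The first step is to reduce everything to a statement about the open $P$-orbits. Since $Z$ is a single $G$-orbit, $Z = K \cdot P \cdot \Xi$, and by Remark \ref{Rem on Prop LST holds for adapted points}(b) each $P \cdot z$ with $z \in \Xi$ equals $N_Q \cdot M \cdot A \cdot z$, with the $M$-factor compact and $\exp(\fa_\fh) \subseteq H_z$ acting trivially. Hence $Z = K \cdot N_Q \cdot M \cdot A \cdot \Xi$, and the compact factors $K$ and $M$ contribute to $\Omega$ from the outset. What remains is to absorb the non-compact factor $N_Q$ into $\Omega$ and to cut the $A$-factor down to $\exp(\overline{\cC})$.

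For the $N_Q$-factor, the key observation is the contraction $\exp(tY) n \exp(-tY) \to e$ as $t \to +\infty$, valid for $Y \in \fa^- \subseteq \cC$ and $n \in N_Q$; the identity
$$
n \exp(tY) \cdot z = \exp(tY) \, \bigl[\exp(-tY) n \exp(tY)\bigr] \cdot z
$$
then puts every element of $N_Q \cdot z$ inside $\Omega_0 \cdot \exp(\overline{\cC}) \cdot z$ for some compact $\Omega_0 \subseteq N_Q$. For the $A$-factor, I would invoke the polyhedral structure of $\overline{\cC}$: the reflections in its walls (indexed by the indecomposable elements of $\cM_z$ of Lemma \ref{Lemma Properties of C_z with z adapted}) together with $\overline{\cC}$ tile $\fa/\fa_\fh$, and each such reflection can be implemented by an element $\tilde{v} \in N_K(\fa)$. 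By Lemma \ref{Lemma z adapted implies vz adapted} the reflected point $\tilde v^{-1} \cdot z$ is again adapted, hence lies in the $P$-orbit of some $z' \in \Xi$; thus $\exp(Y') \cdot z = \tilde v \exp(Y) \cdot (\tilde v^{-1} \cdot z)$ with $Y \in \overline{\cC}$ and $\tilde v$ in a fixed compact subset of $N_K(\fa)$ that I absorb into $\Omega$.

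The main obstacle is extending the argument from the open dense union $\bigsqcup_{z \in \Xi} P \cdot z$ to the whole of $Z$, since lower-dimensional $P$-orbits lie in the topological boundary of open ones and the normal form $k n_Q m a \cdot z$ breaks down there. I would handle this by showing that $\Omega \exp(\overline{\cC}) \cdot \Xi$ is closed in $Z$ via the Grassmannian embedding $z \mapsto \fh_z$: if $\omega_n \exp(Y_n) \cdot z_n \to z^* \in Z$ with $\omega_n \in \Omega$ and $Y_n \in \overline{\cC}$, then after passing to a subsequence with $\omega_n \to \omega$ and $z_n = z \in \Xi$ (by finiteness of $\Xi$), we have $\Ad(\exp Y_n) \fh_z \to \Ad(\omega^{-1}) \fh_{z^*}$ in the Grassmannian. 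By Proposition \ref{Prop Limits of subspaces} and Lemma \ref{Lemma Limits are equal for X in int F}, if $\|Y_n\| \to \infty$ the limit would be a strict boundary degeneration $\fh_{z, \cF}$ with $\cF \subsetneq \overline{\cC}$ (or $\fh_\emptyset$), which sits in a different $G$-orbit of $\Gr(\fg, \dim H)$ than $\{\fh_{z'} : z' \in Z\}$, contradicting $z^* \in Z$. Hence $\{Y_n\}$ is bounded in $\overline{\cC}$, a convergent subsequence gives $z^* \in \Omega \exp(\overline{\cC}) \cdot \Xi$, and density of $\bigsqcup_{z\in\Xi} P \cdot z$ completes the proof.
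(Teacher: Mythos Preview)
Your approach has a fatal circularity. In step 4 you assume that the reflections in the walls of $\overline{\cC}$, together with $\overline{\cC}$ itself, tile $\fa/\fa_\fh$ --- but that is precisely Theorem \ref{Thm cW is the little Weyl group}, the main result of the paper. Its proof depends on the polar decomposition through the chain Proposition \ref{Prop Polar decomposition} $\Rightarrow$ Proposition \ref{Prop Limits are boundary degenerations} $\Rightarrow$ Lemma \ref{Lemma limits in Ad(W)fh_emptyset determined by fa_z,X cap fa} $\Rightarrow$ Lemma \ref{Lemma Admissible points in case C contains half-space} $\Rightarrow$ Proposition \ref{Prop Admissible points exist}, without which $\cW$ is not even well-defined. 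You also invoke Lemma \ref{Lemma z adapted implies vz adapted}, but that lemma requires $v\in\cN$ with $Pv^{-1}\cdot z$ already known to be open, which again presupposes the structure you are trying to establish.

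Step 3 is also broken as written. In the identity $n\exp(tY)\cdot z=\exp(tY)\bigl[\exp(-tY)\,n\,\exp(tY)\bigr]\cdot z$ the bracketed factor is the \emph{expanding} conjugate for $Y\in\fa^-$ and $n\in N_Q$, not the contracting one $\exp(tY)\,n\,\exp(-tY)$ that you correctly identified a line earlier; and in any case what must be covered is $N_Q A\cdot z$ with $a\in A$ arbitrary, not merely $N_Q\cdot z$. Nothing in your outline reduces a general $a\in A$ to $\exp(\overline{\cC})$ without the tiling. Step 5 has the same flavour of gap: it assumes that $\fh_{z,\cF}$ for a proper face $\cF$ lies in a different $\Ad(G)$-orbit than $\fh_z$, which is not available at this point.

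The paper takes a completely different route: it imports the polar decomposition from \cite[Theorem 5.13]{KnopKrotzSayagSchlichtkrull_SimpleCompactificationsAndPolarDecomposition} as a black box, in the form $Z=\Omega_0\exp(\overline{\cC})F\cdot z_0$ for one specific adapted $z_0$ and a finite $F\subseteq G\cap\exp(i\fa)N_{G_\C}(\fh_{z_0,\C})$, and then uses Lemma \ref{Lemma fa cap fh_z^perp determines MA cdot z}, Proposition \ref{Prop parametrization of adapted points}, and Lemma \ref{Lemma bound on image of Phi_z} to replace $F\cdot z_0$ by the prescribed $\Xi$ at the cost of enlarging $\Omega_0$ to a still-compact $\Omega$. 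The substantive content --- that $\exp(\overline{\cC})$ suffices rather than all of $A$ --- comes from the compactification arguments in \cite{KnopKrotzSayagSchlichtkrull_SimpleCompactificationsAndPolarDecomposition} and is not reproved here.
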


\begin{proof}
By \cite[Theorem 5.13]{KnopKrotzSayagSchlichtkrull_SimpleCompactificationsAndPolarDecomposition} there exists an adapted point $z_{0}\in Z$, a finite set $F\subseteq G\cap\exp(i\fa)N_{G_{\C}}(\fh_{z_{0},\C})$ and a compact set $\Omega_{0}\subseteq G$ so that
\begin{equation}\label{eq Polar decomp 1}
Z
=\Omega_{0} \exp(\overline{\cC})F\cdot z_{0}.
\end{equation}
Moreover, for every open $P$-orbit $\cO$ in $Z$ there exists an $f\in F$ so that $f\cdot z_{0}\in\cO$. A priori it is possible that there exists $f,f'\in F$ with $f\neq f'$, but $Pf\cdot z_{0}=Pf'\cdot z_{0}$.

We claim that for every $f\in F$ the point $f\cdot z_{0}$ is adapted and $\fa\cap\fh_{f\cdot z_{0}}^{\perp}=\fa\cap\fh_{z_{0}}^{\perp}$. The proof for the claim is the same as the proof for the analogous statements in Proposition \ref{Prop Open orbits}.

Let $\cO$ be an open $P$-orbit in $Z$ and let $f\in F$ be so that $f\cdot z_{0}\in\cO$. By Proposition \ref{Prop Open orbits} we may choose a $f_{\cO}\in G\cap\exp(i\fa)H_{z,\C}$ so that $Pf_{\cO}\cdot z_{0}=\cO$. Then
$$
\fa\cap\fh_{f_{\cO}\cdot z_{0}}^{\perp}
=\fa\cap\fh_{z_{0}}^{\perp}
=\fa\cap\fh_{f\cdot z_{0}}^{\perp}
$$
In view of Lemma \ref{Lemma fa cap fh_z^perp determines MA cdot z} and the decomposition (\ref{eq L_Q=MAL_(Q,nc)}) of $L_{Q}$ with $z=z_{0}$, there exist for every $f\in F_{\cO}$ elements  $m_{f}\in M$ and $a_{f}\in A$ so that
$$
f\cdot z_{0}
=m_{f}a_{f}f_{\cO}\cdot z_{0}.
$$
It follows from (\ref{eq Polar decomp 1}) that
\begin{equation}\label{eq Polar decomp 2}
Z
=\Omega_{1}\exp(\overline{\cC})F_{1}\cdot z_{0},
\end{equation}
where
$$
\Omega_{1}:=\Omega_{0}\{m_{f}a_{f}:f\in F\}
\quad\text{and}\quad
F_{1}:=\{f_{\cO}:\cO \text{ is an open $P$-orbit}\}.
$$
Note that $\Omega_{1}$ is compact.

Let $\cO$ be an open $P$-orbit and let $z\in\Xi\cap\cO$. By Proposition \ref{Prop parametrization of adapted points} there exist $m_{z}\in M$, $a_{z}\in A$ and $n_{z}\in\Im(\exp\circ\Phi_{f_{\cO}\cdot z_{0}})$ so that $f_{\cO}\cdot z_{0}=m_{z}a_{z}n_{z}\cdot z$. It follows from (\ref{eq Polar decomp 2}) that (\ref{eq Polar decomp}) holds with
$$
\Omega
:=\Omega_{2}\Big(\bigcup_{z\in\Xi}\overline{\{m_{z}a_{z}an_{z}a^{-1}:a\in\exp(\overline{\cC})\}}\Big).
$$
In view of Lemma \ref{Lemma bound on image of Phi_z} the elements $\log(n_{z})$ are sums of root vectors for roots that are non-positive on $\overline{\cC}$.
Therefore, the sets $\{m_{z}a_{z}an_{z}a^{-1}:a\in\exp(\overline{\cC})\}$ are bounded, and thus we conclude that $\Omega$ is compact.
\end{proof}

\begin{Prop}\label{Prop Limits are boundary degenerations}
Let $z_{0}\in Z$ and let $\Xi\subseteq Z$ be a finite set of adapted points so that $P\cdot \Xi$ is the union of all open $P$-orbits in $Z$.
Then the following equality of subsets of the Grassmannian of $\dim(\fh_{z_{0}})$-dimensional subspaces of $\fg$ holds,
$$
\overline{\Ad(G)\fh_{z_{0}}}
=\bigcup_{z\in\Xi, \cF \text{ face of }\overline{\cC}}\Ad(G)\fh_{z,\cF}.
$$
\end{Prop}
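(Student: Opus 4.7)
The plan is to prove the two inclusions separately. For $\supseteq$, I would argue directly: since $Z = G/H$ is a single $G$-orbit, for each $z \in \Xi$ there exists $g \in G$ with $\fh_z = \Ad(g) \fh_{z_{0}}$, so $\Ad(G) \fh_z = \Ad(G) \fh_{z_{0}}$; then for any face $\cF$ and any $X$ in its relative interior, Lemma \ref{Lemma Limits are equal for X in int F} gives $\fh_{z,\cF} = \lim_{t \to \infty} \Ad(\exp(tX)) \fh_z$, which lies in $\overline{\Ad(G) \fh_{z_{0}}}$. Since the closure of a $G$-invariant set is $G$-invariant, $\Ad(G)\fh_{z,\cF} \subseteq \overline{\Ad(G) \fh_{z_{0}}}$.

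For the nontrivial inclusion $\subseteq$, I will combine the polar decomposition with a careful limit analysis. Let $\fl \in \overline{\Ad(G) \fh_{z_{0}}}$, written as $\fl = \lim_n \fh_{g_n \cdot z_{0}}$ for some sequence $g_n \in G$. Applying Proposition \ref{Prop Polar decomposition}, I would write $g_n \cdot z_{0} = \omega_n \exp(X_n) \cdot z_n$ with $\omega_n$ in a fixed compact set $\Omega$, $X_n \in \overline{\cC}$, and $z_n \in \Xi$. After passing to a subsequence (using compactness of $\Omega$ and finiteness of $\Xi$), $\omega_n \to \omega$ in $G$ and $z_n = z \in \Xi$ is constant. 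Then $\fl = \Ad(\omega) \lim_n \Ad(\exp(X_n)) \fh_z$, so it remains to identify the inner limit as $\Ad(A) \fh_{z,\cF}$ for some face $\cF$.

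The next step is to analyze the sequence $X_n \in \overline{\cC}$ in terms of the finite set $\cS_z$ of indecomposable elements of $\cM_z$. Passing to a further subsequence, $\tau(X_n) \to \ell_\tau \in [-\infty, 0]$ for each $\tau \in \cS_z$. I would set $S_{\mathrm{fin}} := \{\tau \in \cS_z : \ell_\tau > -\infty\}$ and $\cF := \overline{\cC} \cap \bigcap_{\tau \in S_{\mathrm{fin}}} \ker(\tau)$, a face of $\overline{\cC}$. A linear-algebra argument then produces a decomposition $X_n = R_n + \tilde X_n$, with $R_n$ in a fixed complement of the joint kernel of $S_{\mathrm{fin}}$ in $\fa$ satisfying $\tau(R_n) = \tau(X_n)$ for every $\tau \in S_{\mathrm{fin}}$; the equations are consistent because any linear relation among the $\tau \in S_{\mathrm{fin}}$ automatically holds on the $\ell_\tau$. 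Then $R_n$ converges to some $R_\infty \in \fa$, while $\tau(\tilde X_n) = 0$ for $\tau \in S_{\mathrm{fin}}$ and $\tau(\tilde X_n) \to -\infty$ for $\tau \in \cS_z \setminus S_{\mathrm{fin}}$. Since $R_n$ and $\tilde X_n$ lie in $\fa$ and commute, $\Ad(\exp(X_n)) = \Ad(\exp(R_n)) \Ad(\exp(\tilde X_n))$.

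To finish, I apply the explicit computation from the proof of Lemma \ref{Lemma Limits are equal for X in int F} to each line $\R(Y_{-\alpha} + T_z(Y_{-\alpha})) \subseteq \fh_z$: after rescaling by $e^{-\alpha(\tilde X_n)}$, the summand $p_\sigma T_z(Y_{-\alpha})$ carries a factor $e^{(\sigma+\alpha)(\tilde X_n)}$, which tends to $1$ when $\sigma+\alpha \in \N S_{\mathrm{fin}}$ and to $0$ otherwise; combined with the $\fa$-stability of $\fl_Q \cap \fh_z = (\fm \cap \fh_z) \oplus \fa_{\fh} \oplus \fl_{Q,\nc}$ (a consequence of Proposition \ref{Prop LST holds for adapted points} and Remark \ref{Rem on Prop LST holds for adapted points}), this yields $\lim_n \Ad(\exp(\tilde X_n)) \fh_z = \fh_{z,\cF}$, hence $\lim_n \Ad(\exp(X_n)) \fh_z = \Ad(\exp(R_\infty)) \fh_{z,\cF} \in \Ad(G) \fh_{z,\cF}$. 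The hard part will be the polyhedral identification $\cM_{z,\cF} = \N S_{\mathrm{fin}}$ that matches the sequential limit with the formula of Lemma \ref{Lemma Limits are equal for X in int F}: the key observation is that any $\tau \in \cS_z$ vanishing on $\cF$ must already lie in $S_{\mathrm{fin}}$, because otherwise $\tau(X_n) = \tau(R_n) \to \tau(R_\infty)$ would be finite and contradict $\ell_\tau = -\infty$; the reverse containment then follows from the face-lattice extremality of $\cS_z$ in $-\cC^\vee$, namely that a non-negative integer combination of elements of $\cS_z$ can vanish on $\cF$ only if each of its constituents does.
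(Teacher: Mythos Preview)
Your approach is essentially the same as the paper's: reduce via the polar decomposition (Proposition \ref{Prop Polar decomposition}) to analyzing $\Ad(\exp(X_n))\fh_z$ with $X_n\in\overline{\cC}$, then split $X_n=R_n+\tilde X_n$ with $R_n$ convergent and $\tilde X_n$ driving the degeneration to a face. The paper is terser --- it simply asserts $\lim_n\Ad(b_n^{-1}a_n)\fh_z=\fh_{z,\cF}$ without spelling out the monoid identification --- whereas you work through the formula of Lemma \ref{Lemma Limits are equal for X in int F} and explicitly argue $\cM_{z,\cF}=\N S_{\mathrm{fin}}$. Your inclusion $\supseteq$ is also made explicit, which the paper leaves implicit.

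One point deserves tightening. In your ``key observation'' you write that if $\tau\in\cS_z$ vanishes on $\cF$ then $\tau(X_n)=\tau(R_n)$. This equality means $\tau(\tilde X_n)=0$, but $\tilde X_n$ lies only in the linear subspace $V:=\bigcap_{\tau'\in S_{\mathrm{fin}}}\ker\tau'$, not a~priori in $\cF=\overline{\cC}\cap V$, so $\tau|_\cF=0$ does not immediately give $\tau(\tilde X_n)=0$. The fix is short and uses facts you already established: since $\tau'(\tilde X_n)=0$ for $\tau'\in S_{\mathrm{fin}}$ and $\tau'(\tilde X_n)\to-\infty$ for $\tau'\in\cS_z\setminus S_{\mathrm{fin}}$, for large $n$ one has $\tilde X_n$ in the relative interior of $\cF$ inside $V$; hence $\spn(\cF)=V$, and so $\tau|_\cF=0$ forces $\tau|_V=0$, giving $\tau(\tilde X_n)=0$ as needed. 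With this inserted, your argument is complete and matches the paper's.
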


\begin{proof}
Let $\Omega$ be a compact subset of $G$ so that (\ref{eq Polar decomp}) holds.
Let $\fs\in\overline{\Ad(G)\fh_{z_{0}}}$ and let $(\omega_{n})_{n\in\N}$, $(a_{n})_{n\in\N}$ and $(z_{n})_{n\in\N}$ be sequences in $\Omega$, $\exp(\overline{\cC})$ and $\Xi$, respectively,  so that $\Ad(\omega_{n}a_{n})\fh_{z_{n}}$ converges to $\fs$ for $n\to\infty$. By taking suitable subsequences we may assume that $\omega_{n}$ converges to an element $\omega\in\Omega$ for $n\to\infty$ and $z_{n}=z$ is constant.

Let $I$ be the subset of $\cS_{z}$ consisting of all $\alpha\in \cS_{z}$ so that $a_{n}^{\alpha}$ is bounded away from $0$. By taking a suitable subsequence we assume that there exists a convergent sequence $b_{n}\in A$ so that $(b_{n}^{-1}a_{n})^{\alpha}$ is equal to $1$ for all $\alpha\in I$ and converges to $0$ as $n\to\infty$ for all $\alpha\in \cS_{z}\setminus I$. Let $b\in A$ be the limit of the sequence $(b_{n})_{n\in\N}$. Let $\cF$ be the face of $\overline{\cC}$ defined by $I$ via the formula (\ref{eq Relation cF to cS_z}). Now
$$
\lim_{n\to\infty}\Ad(b_{n}^{-1}a_{n})\fh_{z}
=\fh_{z,\cF}
$$
and thus
$$
\fs
=\lim_{n\to\infty}\Ad(\omega_{n}a_{n})\fh_{z}
=\Ad(\omega b)\fh_{z,\cF}
\in\Ad(G)\fh_{z,\cF}.
$$
\end{proof}

\section{Walls of the compression cone}
\label{Section Walls of Compression Cone}
For every wall $\cF$ of $\overline{\cC}$ there exists an $\alpha\in \Sigma+(\Sigma\cup\{0\})$ so that (\ref{eq cF=oline cC cap ker alpha}) holds.
The main result in this section is the following proposition, which puts restriction on the elements $\alpha$ which can occur.
The result will be needed for the proof of Lemma \ref{Lemma cW_alpha generated by simple reflection in alpha}.

\begin{Prop}\label{Prop Form of simple spherical roots}
Let $\cF$ be a wall of $\cC$. Then either there exists a root $\alpha\in\Sigma(Q)$ so that $\fa_{\cF}=\ker(\alpha)$, or there exist $\beta,\gamma\in\Sigma(Q)$ so that $\fa_{\cF}=\ker(\beta+\gamma)$ and the following hold,
\begin{enumerate}[(i)]
\item $\beta$ is a simple root,
\item $\beta$ and $\gamma$ are orthogonal
\item $\spn(\beta^{\vee},\gamma^{\vee})\cap\fa_{\fh}\neq \{0\}$.
\end{enumerate}
\end{Prop}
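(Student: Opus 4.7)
The plan is to identify the wall $\cF$ with an indecomposable spherical root $\sigma\in\cS_{z}$ for an admissible point $z$, and then to classify the possible shapes of $\sigma$ using the symmetry properties of $T_{z}$ together with a case analysis on the rank-$2$ root subsystems of $\Sigma$.

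First I will reduce to the interesting case. By the discussion following \eqref{eq Relation cF to cS_z} the wall $\cF$ corresponds to a unique indecomposable $\sigma\in\cS_{z}$ with $\fa_{\cF}=\ker(\sigma)$. The defining formula \eqref{eq def S_z} shows that $\sigma$ is either a single root $\alpha\in\Sigma(Q)$ or of the form $\alpha_{1}+\alpha_{2}$ with $\alpha_{i}\in\Sigma(Q)$ and $\alpha_{2}\in\supp_{z}(\fg_{-\alpha_{1}})$. In the former case the first alternative of the proposition holds immediately, and the same is true in the latter case whenever $\alpha_{1}+\alpha_{2}$ agrees, as a functional on $\fa$, with some root in $\Sigma(Q)$. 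Thus we may assume $\sigma=\alpha_{1}+\alpha_{2}$ with $\alpha_{1}\neq\alpha_{2}$ and that $\sigma$ is not a root.

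Next I will derive condition (iii) directly from Lemma \ref{Lemma Symmetry in T-map}. Since $\alpha_{2}\in\supp_{z}(\fg_{-\alpha_{1}})$, there exists $Y_{-\alpha_{1}}$ with $p_{\alpha_{2}}T_{z}(Y_{-\alpha_{1}})\neq 0$; choosing $Y_{-\alpha_{2}}\in\fg_{-\alpha_{2}}$ non-orthogonal to $p_{\alpha_{2}}T_{z}(Y_{-\alpha_{1}})$, the identity \eqref{eq Symmetry in T-map} becomes $C\cdot\alpha_{1}(X)=C'\cdot\alpha_{2}(X)$ for non-zero constants $C,C'$ and all $X\in\fa\cap\fh_{z}^{\perp}$. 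Since $\fa\cap\fh_{z}^{\perp}=\fa^{\circ}$, this shows that $\alpha_{1}|_{\fa^{\circ}}$ and $\alpha_{2}|_{\fa^{\circ}}$ are linearly dependent; under the identification of $\fa$ with $\fa^{*}$ via $B$ this is precisely the condition $\spn(\alpha_{1}^{\vee},\alpha_{2}^{\vee})\cap\fa_{\fh}\neq\{0\}$. As a by-product $\alpha_{1}\in\supp_{z}(\fg_{-\alpha_{2}})$, so the situation is genuinely symmetric in $\alpha_{1}$ and $\alpha_{2}$.

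The main obstacle will be to show that $\sigma$ can be rewritten as $\beta+\gamma$ with $\beta$ simple in $\Sigma$ and $\beta\perp\gamma$. I plan to carry this out by a case analysis on the rank-$2$ subsystem of $\Sigma$ generated by $\alpha_{1}$ and $\alpha_{2}$ (types $A_{1}\times A_{1}$, $A_{2}$, $B_{2}$, $G_{2}$, and their non-reduced variants). Using Lemma \ref{Lemma T determined by simple roots} applied to $[Y_{-\alpha_{1}},Y_{-\alpha_{2}}]$ and to iterated brackets involving vectors in the image of $T_{z}$, one extracts compatibility conditions on $\supp_{z}$ at the neighbouring roots. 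In the non-orthogonal types these conditions should either force $\sigma$ to coincide as a functional with a single element of $\Sigma(Q)$ (reducing to the first alternative after all), or exhibit a decomposition of $\sigma$ as a sum of two non-zero elements of $\cM_{z}$, contradicting the indecomposability of $\sigma\in\cS_{z}$. In the remaining orthogonal situation, subtracting suitable simple roots from $\alpha_{1}$ or $\alpha_{2}$ while tracking which modifications remain inside $\Sigma(Q)$ will allow one to arrange that $\beta$ is simple, preserving both orthogonality and condition (iii). This combinatorial verification, organised by the rank-$2$ type, is the explicit computation that is deferred to the Appendix and supplies (i) and (ii).
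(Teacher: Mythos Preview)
Your derivation of (iii) contains a genuine error: you assert that $\fa\cap\fh_{z}^{\perp}=\fa^{\circ}$, but these are different spaces. By definition $\fa^{\circ}=\fa\cap(\fa\cap\fh_{z})^{\perp}=\fa\cap\fa_{\fh}^{\perp}$, whereas $\fa\cap\fh_{z}^{\perp}$ is the kernel of the map $T_{z}^{\perp}$ of Lemma~\ref{Lemma def T^perp} and is in general a proper subspace of $\fa^{\circ}$ (for a generic adapted point it is one-dimensional, cf.\ Proposition~\ref{Prop parametrization of adapted points}). The symmetry identity \eqref{eq Symmetry in T-map} therefore only yields $C\,\alpha_{1}(X)=C'\,\alpha_{2}(X)$ for $X$ in this small subspace, which does not force $\alpha_{1}\big|_{\fa^{\circ}}$ and $\alpha_{2}\big|_{\fa^{\circ}}$ to be proportional and hence does not give $\spn(\alpha_{1}^{\vee},\alpha_{2}^{\vee})\cap\fa_{\fh}\neq\{0\}$. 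Varying the adapted point does not help either, since the constants $C,C'$ (and even the supports) then depend on the point.

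The paper's argument proceeds in the opposite order and by different means. First Lemma~\ref{Lemma Simple roots are realized in graph of simple root spaces} shows directly that every indecomposable element of $\cM_{z}$ already has the form $\beta+\gamma$ with $\beta$ simple and $\gamma\in\supp_{z}(\fg_{-\beta})$, so (i) is obtained without any rank-$2$ case analysis. Orthogonality (ii) is then proved by contradiction: if $\langle\beta,\gamma\rangle>0$ (the case $<0$ being excluded since then $\beta+\gamma$ would be a root), one computes $p_{\beta}T_{z}([Y_{-\beta},Y_{\beta-\gamma}])$, exploits indecomposability of $\sigma$ to kill cross-terms, and combines this with \eqref{eq Symmetry in T-map} for a single well-chosen $X$ to reach $p_{\gamma}T_{z}\big|_{\fg_{-\beta}}=0$, a contradiction. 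Only after orthogonality is established does the paper prove (iii): since $\beta+\gamma$ is not a root one has $[Y_{-\beta},Y_{-\gamma}]=0$, and expanding the bracket $[Y_{-\beta}+T_{z}(Y_{-\beta}),Y_{-\gamma}+T_{z}(Y_{-\gamma})]$ inside $\fh_{z}$ produces a non-zero element of $\spn(\beta^{\vee},\gamma^{\vee})$ lying in $\fa_{\fh}$. Your proposed rank-$2$ case analysis might in principle be made to work for (i) and (ii), but it would be considerably longer than the paper's route via Lemma~\ref{Lemma Simple roots are realized in graph of simple root spaces}, and in any case your argument for (iii) needs to be replaced.
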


\begin{Rem}
Brion proved a stronger version of this lemma under the additional assumption that $G$ and $H$ are complex groups. See \cite[Theorem 2.6]{Brion_VersUneGeneralisationDesEspacesSymmetriques}. The proof of Proposition \ref{Prop Form of simple spherical roots} is heavily inspired by the proof of Brion.
\end{Rem}

Before we prove the proposition, we first prove a lemma. Recall the set $\cS_{z}$ of indecomposable elements in the monoid $\cM_{z}$, where $z\in Z$ is an adapted point.

\begin{Lemma}\label{Lemma Simple roots are realized in graph of simple root spaces}
Let $z\in Z$ be adapted. The set $\cS_{z}$ consists of $\alpha+\beta\in S_{z}$, with $\alpha$ a simple root in $\Sigma(Q)$, and $\beta\in{\supp}_{z}(\fg_{-\alpha})\cap\Sigma(Q)$ or $\beta=0$.
\end{Lemma}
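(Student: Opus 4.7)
The plan is to argue by contradiction. Let $\sigma\in\cS_{z}$, so that $\sigma\in S_{z}$, and write $\sigma=\alpha+\beta$ with $\alpha\in\Sigma(Q)$ and either $\beta\in\supp_{z}(\fg_{-\alpha})\cap\Sigma(Q)$, or $\beta=0$ and $\fm\in\supp_{z}(\fg_{-\alpha})$ or $\fa\in\supp_{z}(\fg_{-\alpha})$. We will show that if $\alpha$ is not simple in $\Sigma(Q)$, then $\sigma$ decomposes as a sum of two nonzero elements of $\cM_{z}$, contradicting $\sigma\in\cS_{z}$.

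Assuming $\alpha$ non-simple, write $\alpha=\mu+\nu$ with $\mu,\nu\in\Sigma(Q)$. Let $\star$ denote the element of $\Sigma(Q)\cup\{\fm,\fa\}$ in $\supp_{z}(\fg_{-\alpha})$ through which $\sigma$ is registered in $S_{z}$ (so $\star=\beta$ in the first case, and $\star\in\{\fm,\fa\}$ in the second). Since $\fg_{-\alpha}$ is the linear span of brackets $[\fg_{-\mu'},\fg_{-\nu'}]$ with $\mu'+\nu'=\alpha$ and $\mu',\nu'\in\Sigma(Q)$, we may choose $\mu,\nu$ and elements $Y_{-\mu}\in\fg_{-\mu}$, $Y_{-\nu}\in\fg_{-\nu}$ so that $p_{\star}T_{z}([Y_{-\mu},Y_{-\nu}])\neq0$. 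Applying Lemma~\ref{Lemma T determined by simple roots} and projecting to $\fg_{\star}$, we obtain a nonzero contribution from one of the four summands on the right-hand side. The term $\fl_{Q}\cap\fh_{z}$ contributes nothing after applying $p_{\star}$, since $T_{z}$ is valued in the $B$-orthocomplement of $\fl_{Q}\cap\fh_{z}$ inside $\fm\oplus\fa\oplus\fn_{Q}$; so the contribution comes from the bracket term $[T_{z}(Y_{-\mu}),T_{z}(Y_{-\nu})]$ or from one of the two $T_{z}(\bigoplus\fg_{\tilde\gamma-\mu})$, $T_{z}(\bigoplus\fg_{\tilde\gamma-\nu})$ terms.

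In each case one reads off a decomposition of $\sigma$ in $\cM_{z}$. For instance, if $[T_{z}(Y_{-\mu}),T_{z}(Y_{-\nu})]$ has nonzero $\fg_{\beta}$-component, then there exist weights $\gamma_{1}\in\supp_{z}(\fg_{-\mu})$ and $\gamma_{2}\in\supp_{z}(\fg_{-\nu})$ with $\tilde\gamma_{1}+\tilde\gamma_{2}=\beta$; the pairs $(\mu+\tilde\gamma_{1},\,\nu+\tilde\gamma_{2})$, together with the second clause of \eqref{eq def S_z} in case some $\tilde\gamma_{i}=0$, give two nonzero elements of $S_{z}$ summing to $\sigma$. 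The $T_{z}(\fg_{\tilde\gamma-\mu})$ term is handled analogously: the hypothesis $\gamma\in\supp_{z}(\fg_{-\nu})$ furnishes an element $\nu+\tilde\gamma\in S_{z}$ (or $\nu\in S_{z}$ if $\tilde\gamma=0$), and the assumption that $p_{\star}T_{z}\big|_{\fg_{-(\mu-\tilde\gamma)}}\neq0$ furnishes the complementary element $(\mu-\tilde\gamma)+\star\in S_{z}$ (or $\mu-\tilde\gamma\in S_{z}$), whose sum with the former equals $\sigma$. The symmetric term is treated identically. The cases $\star=\fm$ and $\star=\fa$ require two small auxiliary observations, which I expect to be the main technical obstacle: first, the $\fa$-component of a bracket in $\fm\oplus\fa\oplus\fn_{Q}$ always vanishes (since $[\fm,\fa]=0$, $[\fa,\fa]=0$, and brackets of two positive-weight elements never land in $\fa$), so in the case $\star=\fa$ the bracket term contributes nothing and the decomposition must come from terms two and three; second, the $\fm$-component of such a bracket can only arise from $[\fm,\fm]$, which forces $\fm\in\supp_{z}(\fg_{-\mu})\cap\supp_{z}(\fg_{-\nu})$ and thus $\mu,\nu\in S_{z}$ with $\mu+\nu=\sigma$. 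In all cases we obtain a nontrivial decomposition $\sigma=\sigma_{1}+\sigma_{2}$ in $\cM_{z}$, contradicting $\sigma\in\cS_{z}$. Hence $\alpha$ is simple.
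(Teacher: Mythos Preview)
Your proof is correct and follows essentially the same strategy as the paper: both arguments reduce to Lemma~\ref{Lemma T determined by simple roots} and perform the same case analysis on which summand of its right-hand side carries the nonzero $p_{\star}$-component. The paper packages this as an inductive containment of filtered submonoids $\cM_{z,\alpha+\beta}\subseteq\langle\cM_{z,\alpha}\cup\cM_{z,\beta}\rangle$, whereas you argue directly by contradiction on a single indecomposable element; the case-by-case bookkeeping (your bracket term, the two $T_{z}(\fg_{\tilde\gamma-\mu})$ terms, and the special treatment of $\star\in\{\fm,\fa\}$) matches the paper's cases (i)--(v) essentially line for line. One small point: your dismissal of the $\fl_{Q}\cap\fh_{z}$ term when $\star\in\{\fm,\fa\}$ is correct but requires the refined projections onto $\fm\cap(\fm\cap\fh_{z})^{\perp}$ and $\fa^{\circ}$ rather than the full $p_{\fm}$, $p_{\fa}$; and your spanning claim $\fg_{-\alpha}=\sum_{\mu+\nu=\alpha,\;\mu,\nu\in\Sigma(Q)}[\fg_{-\mu},\fg_{-\nu}]$ for $\alpha$ non-simple in $\Sigma(Q)$ is used implicitly by the paper as well (it writes $T_{z}(\fg_{-\alpha-\beta})\subseteq\dots$ directly) and follows from the standard $\Z$-grading of $\fg$ associated with $Q$.
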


\begin{proof}
We first choose a suitable linear order on $\Sigma$. For this let $X^{\circ}\in \fa^{\circ}_{\reg}$ and $X_{\fh}\in \fa_{\fh}$ be so that $X=X^{\circ}+X_{\fh}$ is order-regular and $\alpha(X)>0$ for all $\alpha\in\Sigma(Q)$. By rescaling $X^{\circ}$, we may assume that $\alpha(X)<\beta(X)$ whenever $\alpha,\beta\in\Sigma$ and $\alpha(X^{\circ})<\beta(X^{\circ})$. Let $>$ be the linear order on $\Sigma(Q)$ given by $\alpha>\beta$ if and only if $\alpha(X)>\beta(X)$.

For $\gamma\in\Sigma(Q)\cup\{\fm,\fa\}$ we define $\tilde{\gamma}\in\Sigma(Q)\cup\{0\}$ to be equal to $\gamma$ if $\gamma\in\Sigma(Q)$ and $0$ otherwise.
Further, for a root $\alpha\in\Sigma(Q)$ we define $\cM_{z,\alpha}$ to be the monoid generated by the set
$$
\{\beta+\tilde\gamma:\beta\in\Sigma(Q), \beta\leq \alpha, \gamma\in{\supp}_{z}(\fg_{-\beta})\}.
$$
Note that for the longest root $\alpha\in\Sigma(Q)$ we have $\cM_{z,\alpha}=\cM_{z}$.

To prove the lemma, we will show that a stronger assertion holds true, namely that for every $\gamma\in\Sigma(Q)$ each indecomposable element of $\cM_{z,\gamma}$ is of the form $\alpha+\beta$ with $\alpha$ a simple root in $\Sigma(Q)$, and $\beta\in{\supp}_{z}(\fg_{-\alpha})\cap\Sigma(Q)$ or $\beta=0$. This we will do by induction with respect to the length of the roots $\gamma$.

For simple roots $\gamma\in\Sigma(Q)$ the assertion is trivial. Now let $\alpha\in\Sigma$ be simple and $\beta\in\Sigma(Q)$ so that $\alpha+\beta\in\Sigma(Q)$. Assume that the assertion hold for all roots $\gamma\in\Sigma(Q)$ with $\gamma< \alpha+\beta$.

We have to consider two cases: the case that $\alpha\in\Sigma\setminus\Sigma(Q)$ and the case that $\alpha\in\Sigma(Q)$.

First we assume that  $\alpha\in\Sigma\setminus\Sigma(Q)$ and that $\alpha+\beta$ is a root.
We claim that
$$
\cM_{z,\alpha+\beta}
=\cM_{z,\beta}
$$
Since the assertion is assumed to hold for $\beta$, it follows from the claim that the assertion also holds for $\alpha+\beta$.
To prove the claim, we note that our choice of the linear order on $\Sigma$ guarantees that if $\delta\in\Sigma(Q)$ with $\beta<\delta\leq \alpha+\beta$, then $\delta-\beta\in\Sigma\setminus\Sigma(Q)$, and hence $\fg_{\beta-\delta}\in\fl_{Q}\cap\fh_{z}$ . Since $T_{z}$ is $(L_{Q}\cap H_{z})$-equivariant by Lemma \ref{Lemma T_z is L_Q cap H equivariant}, we have
$$
T_{z}([Y_{\beta-\delta},Y])
=[Y_{\beta-\delta},T_{z}(Y)]
\qquad\big(Y_{\beta-\delta}\in\fg_{\beta-\delta}, Y\in\overline{\fn}_{Q}\big).
$$
It follows that
$$
\{\tilde\gamma: \gamma\in {\supp}_{z}(\fg_{-\delta})\}
\subseteq \{\beta-\delta+\tilde\gamma:\gamma\in{\supp}_{z}(\fg_{-\beta})\},
$$
and hence
$$
\{\delta+\tilde\gamma:\gamma\in{\supp}_{z}(\fg_{-\delta})\}
\subseteq \{\beta+\tilde\gamma:\gamma\in{\supp}_{z}(\fg_{-\beta})\}.
$$
Therefore,
\begin{align*}
\cM_{z,\alpha+\beta}
&=\big\langle\delta+\tilde\gamma:\delta\in\Sigma(Q), \delta\leq \alpha+\beta, \gamma\in{\supp}_{z}(\fg_{-\delta})\big\rangle\\
&=\Big\langle\cM_{z,\beta}\cup
            \big\{\delta+\tilde\gamma:\delta\in\Sigma(Q), \beta<\delta\leq \alpha+\beta, \gamma\in{\supp}_{z}(\fg_{-\delta})\big\}\Big\rangle\\
&\subseteq\Big\langle\cM_{z,\beta}\cup
            \big\{\beta+\tilde\gamma:\gamma\in{\supp}_{z}(\fg_{-\beta})\big\}\Big\rangle
=\cM_{z,\beta}.
\end{align*}
The inclusion $\cM_{z,\beta}\subseteq\cM_{z,\alpha+\beta}$ is a consequence of the fact that $\beta<\alpha+\beta$. This proves the claim.

We now move on to the case that $\alpha\in\Sigma(Q)$.  Let $\delta$ be the largest root so that $\delta<\alpha+\beta$.
We claim that
$$
\cM_{z,\alpha+\beta}\subseteq\langle \cM_{z,\alpha}\cup\cM_{z,\delta}\rangle.
$$
It follows from the claim that the indecomposable elements of $\cM_{z,\alpha+\beta}$ are contained in the union of the sets of indecomposable elements of $\cM_{z,\delta}$ and $\cM_{z,\alpha}$. Since the assertion holds for $\alpha$ and is assumed to hold for $\delta$, it follows that the assertion also holds for $\alpha+\beta$.

It remains to prove the claim.
We first note that
$$
\cM_{z,\alpha+\beta}
=\Big\langle \cM_{z,\delta}\cup\big\{\alpha+\beta+\tilde\gamma:\gamma\in{\supp}_{z}(\fg_{-\alpha-\beta})\big\}\Big\rangle
$$
It thus suffices to prove that
$$
\big\{\alpha+\beta+\tilde\gamma:\gamma\in{\supp}_{z}(\fg_{-\alpha-\beta})\big\}
\subseteq\langle\cM_{z,\alpha}\cup\cM_{z,\beta}\rangle.
$$

Let $Y_{-\alpha}\in\fg_{-\alpha}$ and $Y_{-\beta}\in\fg_{-\beta}$. Let $p_{-}$ be the projection onto $\overline{\fn}_{Q}$, respectively, along the decomposition $\fg=\overline{\fn}_{Q}\oplus\fl_{Q}\oplus\fn_{Q}$.
From the uniqueness of the map $T_{z}$ it follows that
\begin{align*}
&\big[Y_{-\alpha}+T_{z}(Y_{-\alpha}),Y_{-\beta}+T_{z}(Y_{-\beta})\big]\\
&\quad=[Y_{-\alpha},Y_{-\beta}]+\big[Y_{-\alpha},T_{z}(Y_{-\beta})\big]+\big[T_{z}(Y_{-\alpha}),Y_{-\beta}\big]+\big[T_{z}(Y_{-\alpha}),T_{z}(Y_{-\beta})\big]\\
&\quad=[Y_{-\alpha},Y_{-\beta}]+T_{z}\big([Y_{-\alpha},Y_{-\beta}]\big)+Y+T_{z}(Y),
\end{align*}
where
$$
Y
=p_{-}\Big(\big[Y_{-\alpha},T_{z}(Y_{-\beta})\big]+\big[T_{z}(Y_{-\alpha}),Y_{-\beta}\big]\Big).
$$
Therefore,
\begin{align}\label{eq Formula T([Y_-alpha,Y_-beta])}
&T_{z}\big( [Y_{-\alpha},Y_{-\beta}]\big)\\
\nonumber&\qquad=\big[Y_{-\alpha},T_{z}(Y_{-\beta})\big]+\big[T_{z}(Y_{-\alpha}),Y_{-\beta},\big]+\big[T_{z}(Y_{-\alpha}),T_{z}(Y_{-\beta})\big]-T_{z}(Y)-Y.
\end{align}

Now let $\gamma\in S_{\alpha+\beta}$. Then $\gamma-\alpha-\beta\in \Sigma(Q)\cup\{0\}$ is a weight occurring in $T_{z}(\fg_{-\alpha-\beta})=T_{z}\big([\fg_{-\alpha},\fg_{-\beta}]\big)$.
In view of (\ref{eq Formula T([Y_-alpha,Y_-beta])}) one of the following holds.
\begin{enumerate}[I.]
\item
$\gamma-\alpha-\beta$ is a weight of $\fa$ occurring in $[\fg_{-\alpha}, T_{z}(\fg_{-\beta})]$. In this case $\gamma-\beta$ is a weight occurring in $T_{z}(\fg_{-\beta})$, and hence $\gamma\in\cM_{z,\beta}$.
\item
$\gamma-\alpha-\beta$ is a weight of $\fa$ occurring in $[T_{z}(\fg_{-\alpha}),\fg_{-\beta}]$.  In this case $\gamma-\alpha$ is a weight occurring in $T_{z}(\fg_{-\alpha})$, and hence $\gamma\in\cM_{z,\alpha}$.
\item
$\gamma-\alpha-\beta$ is a weight of $\fa$ occurring in $[T_{z}(\fg_{-\alpha}),T_{z}(\fg_{-\beta})]$.
In this case $\gamma-\alpha-\beta=\tilde{\delta}+\tilde{\epsilon}$ for some $\delta\in {\supp}_{z}(\fg_{-\alpha})$ and $\epsilon\in{\supp}_{z}(\fg_{-\beta})$. As $\alpha+\tilde\delta\in\cM_{z,\alpha}$ and $\beta+\tilde\epsilon\in \cM_{z,\beta}$, it follows that $\gamma=\alpha+\tilde\delta+\beta+\tilde\epsilon\in \cM_{z,\alpha}+\cM_{z,\beta}\subseteq \langle\cM_{z,\alpha}\cup\cM_{z,\beta}\rangle$.
\item
$\gamma-\alpha-\beta$ is a weight of $\fa$ occurring in $T_{z}\circ p_{-}\big(\big[\fg_{-\alpha},T_{z}(\fg_{-\beta})\big]\big)$. Since $\alpha$ is simple and $T_{z}(\fg_{-\beta})\subseteq\fm\oplus\fa\oplus\fn_{Q}$,  the space $p_{-}\big(\big[\fg_{-\alpha},T_{z}(\fg_{-\beta})\big]\big)$ is non-trivial only if the weight $0$ occurs in $T_{z}(\fg_{-\beta})$. In this case $\beta\in\cM_{\beta}$ and $p_{-}\big(\big[\fg_{-\alpha},T_{z}(\fg_{-\beta})\big]\big)\subseteq\fg_{-\alpha}$. Now $\gamma-\alpha-\beta=\tilde\delta$ for some $\delta\in {\supp}_{z}(\fg_{-\alpha})$. As $\alpha+\tilde\delta\in\cM_{z,\alpha}$, it follows that $\gamma=\alpha+\beta+\tilde\delta\in\cM_{\alpha}+\cM_{\beta}\subseteq\langle\cM_{\alpha}\cup\cM_{\beta}\rangle$.
\item
$\gamma-\alpha-\beta$ is a weight of $\fa$ occurring in $T_{z}\circ p_{-}\big(\big[T_{z}(\fg_{-\alpha}),\fg_{-\beta}\big]\big)$. In this case there occurs a weight $\delta$ in $T_{z}(\fg_{-\alpha})$ so that $\delta-\beta\in-\Sigma(Q)$ and the weight $\gamma-\alpha-\beta$ occurs in $T_{z}(\fg_{\delta-\beta})$. Now $\gamma-\alpha-\delta=(\beta-\delta)+(\gamma-\alpha-\beta)\in \cM_{z,\beta-\delta}$ and $\alpha+\delta\in \cM_{z,\alpha}$. Therefore, $\gamma\in \cM_{z,\beta-\delta}+\cM_{z,\alpha}$. The fact that $\delta-\beta$ is a negative root implies that $\delta<\beta$. It follows that
$\cM_{z,\beta-\delta}\subseteq\cM_{z,\beta}$ and thus $\gamma\in \cM_{z,\beta}+\cM_{z,\alpha}\subseteq\langle\cM_{z,\alpha}\cup\cM_{z,\beta}\rangle$.
\end{enumerate}

In each of the cases  I--V we have $\gamma\in \langle\cM_{z,\alpha}\cup\cM_{z,\beta}\rangle$. This proves the lemma.
\end{proof}

\begin{proof}[Proof of Proposition \ref{Prop Form of simple spherical roots}]
Let $z\in Z$ be adapted. In the course of the proof we will need the existence of an element $X\in\fa^{\circ}_{\reg}\cap\fh_{z}^{\perp}$ so that $\beta(X)\neq-\gamma(X)$ for every pair of roots $\beta,\gamma\in\Sigma(Q)$. By Proposition \ref{Prop parametrization of adapted points} we may choose $z$ so that such an element $X$ exists.

Let $\alpha\in\cM_{z}$ be an indecomposable element so that (\ref{eq cF=oline cC cap ker alpha}) holds. Note that $\alpha\in\Sigma(Q)+(\Sigma(Q)\cup\{0\})$.
If $\alpha\in\Sigma(Q)\cup2\Sigma(Q)$, then there is nothing left to prove. Therefore, assume that $\alpha\notin\Sigma(Q)\cup2\Sigma(Q)$.
In view of Lemma \ref{Lemma Simple roots are realized in graph of simple root spaces} there exists a simple root $\beta\in\Sigma(Q)$ so that $\gamma:=\alpha-\beta$ is a root in $\Sigma(Q)$ and $\gamma\in{\supp}_{z}(\fg_{-\beta})$. Since $\alpha\notin\Sigma\cup2\Sigma$, $\gamma\neq \beta$ and $\beta+\gamma$ is not a root. We will first show that $\beta$ and $\gamma$ are orthogonal. To do this, we will work towards a contradiction and we thus assume that $\langle\beta,\gamma\rangle > 0$. Note that $\gamma-\beta$ is a root and is positive.

Let $\delta\in\Sigma(Q)\cup\{\fm,\fa\}$. We define $\tilde{\delta}\in\Sigma(Q)\cup\{0\}$ to be equal to $\delta$ if $\delta\in\Sigma(Q)$ and $0$ otherwise. We claim that
\begin{equation}\label{eq restriction on support I}
\tilde\delta-\beta\notin-\Sigma(Q)
\quad\text{or}\quad
\delta\notin{\supp}_{z}(\fg_{\beta-\gamma})
\quad\text{or}\quad
\beta\notin{\supp}_{z}(\fg_{\tilde\delta-\beta}).
\end{equation}
Indeed, otherwise $\gamma-\beta+\tilde\delta\in\cM_{z}$ and $2\beta-\tilde\delta\in\cM_{z}$, and hence $\alpha=(\gamma-\beta+\tilde\delta)+(2\beta-\tilde\delta)$ would be decomposable. Likewise,
\begin{equation}\label{eq restriction on support II}
\tilde\delta+\beta-\gamma\notin-\Sigma(Q)
\quad\text{or}\quad
\delta\notin{\supp}_{z}(\fg_{-\beta})
\quad\text{or}\quad
\beta\notin{\supp}_{z}(\fg_{\tilde\delta+\beta-\gamma})
\end{equation}
since otherwise $\beta+\tilde\delta\in\cM_{z}$ and $\gamma-\tilde\delta\in\cM_{z}$ and thus $\alpha=(\beta+\tilde\delta)+(\gamma-\tilde\delta)$ would be decomposable.

Let $Y_{\beta-\gamma}\in\fg_{\beta-\gamma}$ and $Y_{-\beta}\in\fg_{-\beta}$. Then
\begin{align*}
&[Y_{-\beta},Y_{\beta-\gamma}]+T_{z}\Big([Y_{-\beta},Y_{\beta-\gamma}]\Big)\\
&\qquad\in\big[Y_{-\beta}+T_{z}(Y_{-\beta}),Y_{\beta-\gamma}+T_{z}(Y_{\beta-\gamma})\big]+(\fl_{Q}\cap\fh_{z})\\
&\qquad\qquad+\sum_{\substack{\delta\in{\supp}_{z}(\fg_{-\beta})\\\tilde\delta+\beta-\gamma\in-\Sigma(Q)}}
        \cG\Big(T_{z}\big|_{\fg_{\tilde\delta+\beta-\gamma}}\Big)
 +\sum_{\substack{\delta\in{\supp}_{z}(\fg_{\beta-\gamma})\\\tilde\delta-\beta\in-\Sigma(Q)}}
        \cG\Big(T_{z}\big|_{\fg_{\tilde\delta-\beta}}\Big)
\end{align*}
In view of (\ref{eq restriction on support I}) and (\ref{eq restriction on support II}) we have
\begin{align*}
p_{\beta}T_{z}\Big([Y_{-\beta},Y_{\beta-\gamma}]\Big)
&=p_{\beta}\Big(\big[Y_{-\beta}+T_{z}(Y_{-\beta}),Y_{\beta-\gamma}+T_{z}(Y_{\beta-\gamma})\big]\Big)\\
&=\big[Y_{-\beta},p_{2\beta}T_{z}(Y_{\beta-\gamma})\big]
    +\big[p_{\gamma}T_{z}(Y_{-\beta}),Y_{\beta-\gamma}\big]\\
    &\qquad+\big[p_{0}T_{z}(Y_{-\beta}),p_{\beta}T_{z}(Y_{\beta-\gamma})\big]
    +\big[p_{\beta}T_{z}(Y_{-\beta}),p_{0}T_{z}(Y_{\beta-\gamma})\big].
\end{align*}
For the second equality we used that $\beta$ is simple, so that the only pairs of non-negative $\fa$-weights that add up to $\beta$ are $(\beta,0)$ and $(0,\beta)$. We claim that the last two terms on the right-hand side are equal to $0$. Indeed, if $\big[p_{0}T_{z}(Y_{-\beta}),p_{\beta}T_{z}(Y_{\beta-\gamma})\big]\neq 0$, then $\fa\in{\supp}_{z}(Y_{-\beta})$ or $\fm\in{\supp}_{z}(Y_{-\beta})$, and moreover $\beta\in{\supp}_{z}(Y_{\beta-\gamma})$. In particular $\beta\in\cM_{z}$ and $\gamma\in\cM_{z}$ and thus $\alpha=\beta+\gamma$ would be decomposable. Likewise, if $\big[p_{\beta}T_{z}(Y_{-\beta}),p_{0}T_{z}(Y_{\beta-\gamma})\big]\neq 0$, then it would follow that $2\beta\in\cM_{z}$ and $\gamma-\beta\in\cM_{z}$, and hence $\alpha=2\beta+(\gamma-\beta)$ would be decomposable. Therefore,
\begin{equation}\label{eq T identity}
p_{\beta}T_{z}\Big([Y_{-\beta},Y_{\beta-\gamma}]\Big)
=\big[Y_{-\beta},p_{2\beta}T_{z}(Y_{\beta-\gamma})\big]
    +\big[p_{\gamma}T_{z}(Y_{-\beta}),Y_{\beta-\gamma}\big]
\end{equation}
for all $Y_{-\beta}\in\fg_{-\beta}$ and $Y_{\beta-\gamma}\in\fg_{\beta-\gamma}$.

Let $\tilde Y_{-\beta}\in\fg_{-\beta}$. Let further $X\in\fh^{\perp}\cap\fa$ be so that $\beta(X)\neq -\gamma(X)$.  In view of (\ref{eq Symmetry in T-map}) and (\ref{eq T identity})
\begin{align*}
&B\big([Y_{-\beta},Y_{\beta-\gamma}],p_{\gamma}T_{z}(\tilde Y_{-\beta})\big)\gamma(X)
=B\Big(\tilde Y_{-\beta},p_{\beta}T_{z}\big([Y_{-\beta},Y_{\beta-\gamma}]\big)\Big)\beta(X)\\
&\qquad=B\Big(\tilde Y_{-\beta},\big[Y_{-\beta},p_{2\beta}T_{z}(Y_{\beta-\gamma})\big]\Big)\beta(X)
    +B\Big(\tilde Y_{-\beta},\big[p_{\gamma}T_{z}(Y_{-\beta}),Y_{\beta-\gamma}\big]\Big)\beta(X).
\end{align*}
Rearranging the terms we obtain
\begin{align*}
&\frac{\gamma(X)}{2}B\big([Y_{-\beta},Y_{\beta-\gamma}],p_{\gamma}T_{z}(\tilde Y_{-\beta})\big)
-\frac{\beta(X)}{2}B\Big(\tilde Y_{-\beta},\big[p_{\gamma}T_{z}(Y_{-\beta}),Y_{\beta-\gamma}\big]\Big)\\
&\qquad=-\frac{\gamma(X)}{2}B\big([Y_{-\beta},Y_{\beta-\gamma}],p_{\gamma}T_{z}(\tilde Y_{-\beta})\big)
+\frac{\beta(X)}{2}B\Big(\tilde Y_{-\beta},\big[p_{\gamma}T_{z}(Y_{-\beta}),Y_{\beta-\gamma}\big]\Big)\\
&\qquad\quad+B\Big(\tilde Y_{-\beta},\big[Y_{-\beta},p_{2\beta}T_{z}(Y_{\beta-\gamma})\big]\Big)\beta(X).
\end{align*}
We now apply the identities $B(U,[V,W])=B([U,V],W)$ and $B(U,V)=B(V,U)$ for $U,V,W\in\fg$ to each of the terms on both sides of this identity. We thus find
\begin{align*}
&\frac{\beta(X)+\gamma(X)}{2}
    B\Big([p_{\gamma}T_{z}(\tilde Y_{-\beta}),Y_{-\beta}]+[p_{\gamma}T_{z}(Y_{-\beta}),\tilde Y_{-\beta}],Y_{\beta-\gamma}\Big)\\
&\qquad=\frac{\beta(X)-\gamma(X)}{2}
    B\Big([p_{\gamma}T_{z}(\tilde Y_{-\beta}),Y_{-\beta}]-[p_{\gamma}T_{z}(Y_{-\beta}),\tilde Y_{-\beta}],Y_{\beta-\gamma}\Big)\\
&\qquad\qquad +\beta(X)B\Big([\tilde Y_{-\beta},Y_{-\beta}],p_{2\beta}T_{z}(Y_{\beta-\gamma})\Big).
\end{align*}
The left-hand side is unchanged when swapping $\tilde Y_{-\beta}$ and $Y_{-\beta}$, while the right-hand side changes sign.
Therefore, both sides equal $0$ for all $Y_{-\beta},\tilde Y_{-\beta}\in\fg_{-\alpha}$ and $Y_{\beta-\gamma}\in\fg_{\beta-\gamma}$. In particular
$$
[p_{\gamma}T_{z}(\tilde Y_{-\beta}),Y_{-\beta}]+[p_{\gamma}T_{z}(Y_{-\beta}),\tilde Y_{-\beta}]
=0
\qquad\big(Y_{-\beta},\tilde Y_{-\beta}\in\fg_{-\alpha}\big),
$$
and hence
\begin{equation}\label{eq T identity 2}
[p_{\gamma}T_{z}(Y_{-\beta}),Y_{-\beta}]
=0
\qquad\big(Y_{-\beta}\in\fg_{-\alpha}\big).
\end{equation}

Taking commutators on both sides of (\ref{eq T identity}) with $Y_{-\beta}$ and using the Jacobi-identity and (\ref{eq T identity 2}) we obtain
\begin{equation}\label{eq T identity 3}
\big[Y_{-\beta},p_{\beta}T_{z}\Big([Y_{-\beta},Y_{\beta-\gamma}]\Big)\big]
=\big[p_{\gamma}T_{z}(Y_{-\beta}),[Y_{-\beta},Y_{\beta-\gamma}]\big]+\big[Y_{-\beta},[Y_{-\beta},p_{2\beta}T_{z}(Y_{\beta-\gamma})]\big]
\end{equation}
for all $Y_{-\beta}\in\fg_{-\beta}$, $Y_{\beta-\gamma}\in\fg_{\beta-\gamma}$. Note that the second term on the right-hand side is contained in $\fm$. We now pair both sides of (\ref{eq T identity 3}) with $X$ via the Killing form and obtain
\begin{align}\label{eq T identity 4}
&-\beta(X)B\Big(Y_{-\beta},p_{\beta}T_{z}\big([Y_{-\beta},Y_{\beta-\gamma}]\big)\Big)
=B\Big(X,\big[Y_{-\beta},p_{\beta}T_{z}\Big([Y_{-\beta},Y_{\beta-\gamma}]\Big)\big]\Big)\\
\nonumber&\qquad=B\Big(X,\big[p_{\gamma}T_{z}(Y_{-\beta}),[Y_{-\beta},Y_{\beta-\gamma}]\big]\Big)
=\gamma(X)B\Big(p_{\gamma}T_{z}(Y_{-\beta}),[Y_{-\beta},Y_{\beta-\gamma}]\Big).
\end{align}

We claim that $[Y_{-\beta},\fg_{\beta-\gamma}]=\fg_{-\gamma}$ for every non-zero $Y_{-\beta}\in\fg_{-\beta}$. To see this, let $Y_{-\gamma}\in\fg_{-\gamma}$. Then $[\theta Y_{-\beta},Y_{-\gamma}]\in\fg_{\beta-\gamma}$ and
$$
\big[Y_{-\beta},[\theta Y_{-\beta},Y_{-\gamma}]\big]
=-\big[\theta Y_{-\beta},[Y_{-\gamma},Y_{-\beta}]\big]-\big[Y_{-\gamma},[Y_{-\beta},\theta Y_{-\beta}]\big].
$$
The first term on the right-hand side vanishes because $-\beta-\gamma$ is not a root, while the second term is equal to $\gamma\big([\theta Y_{-\beta},Y_{-\beta}]\big)Y_{-\gamma}$, which is a non-zero multiple of $Y_{-\gamma}$ due to the assumption that $\langle\beta,\gamma\rangle>0$. This proves the claim.

Because of the claim and (\ref{eq T identity 4}) we have
$$
-\beta(X)B\big(Y_{-\beta},p_{\beta}T_{z}(Y_{-\gamma})\big)
=\gamma(X)B\big(Y_{-\gamma},p_{\gamma}T_{z}(Y_{-\beta})\big)
$$
for every $Y_{-\beta}\in\fg_{-\beta}$ and $Y_{-\gamma}\in\fg_{-\gamma}$.
However, in view of (\ref{eq Symmetry in T-map}) we also have
$$
\beta(X)B\big(Y_{-\beta},p_{\beta}T_{z}(Y_{-\gamma})\big)
=\gamma(X)B\big(Y_{-\gamma},p_{\gamma}T_{z}(Y_{-\beta})\big).
$$
It follows that $p_{\gamma}T_{z}(Y_{-\beta})=0$ for all $Y_{-\beta}\in\fg_{-\beta}$. This is in contradiction with the assumption that $\gamma\in{\supp}_{z}(\fg_{-\beta})$. We have thus proven that $\beta$ and $\gamma$ are orthogonal.

We move on to show that $\spn(\beta^{\vee},\gamma^{\vee})\cap\fh_{z}\neq\{0\}$.
Let $\delta\in\Sigma(Q)\cup\{\fm,\fa\}$. We have
\begin{equation}\label{eq restriction on support III}
\{\fm,\fa\}\cap{\supp}_{z}(\fg_{-\gamma})=\emptyset
\quad\text{or}\quad
\{\fm,\fa\}\cap{\supp}_{z}(\fg_{-\beta})=\emptyset.
\end{equation}
Indeed, otherwise $\gamma\in\cM_{z}$ and $\beta\in\cM_{z}$, and hence $\alpha=\beta+\gamma$ would be decomposable. Likewise,
\begin{equation}\label{eq restriction on support IV}
\tilde\delta-\gamma\notin-\Sigma(Q)
\quad\text{or}\quad
\delta\notin{\supp}_{z}(\fg_{-\beta})
\quad\text{or}\quad
\{\fm,\fa\}\cap{\supp}_{z}(\fg_{\tilde\delta-\gamma})=\emptyset
\end{equation}
since otherwise $\beta+\tilde\delta\in\cM_{z}$ and $\gamma-\tilde\delta\in\cM_{z}$ and thus $\alpha=(\beta+\tilde\delta)+(\gamma-\tilde\delta)$ would be decomposable.

Let $Y_{-\gamma}\in\fg_{-\gamma}$ and $Y_{-\beta}\in\fg_{-\beta}$. Then
\begin{align*}
&[Y_{-\beta},Y_{-\gamma}]+T_{z}\Big([Y_{-\beta},Y_{-\gamma}]\Big)\\
&\qquad\in\big[Y_{-\beta}+T_{z}(Y_{-\beta}),Y_{-\gamma}+T_{z}(Y_{-\gamma})\big]+(\fl_{Q}\cap\fh_{z})\\
&\qquad\qquad+\sum_{\delta\in{\supp}_{z}(\fg_{-\gamma})\cap\{\fm,\fa\}}\cG\Big(T_{z}\big|_{\fg_{-\beta}}\Big)
    +\sum_{\substack{\delta\in{\supp}_{z}(\fg_{-\beta})\\\tilde\delta-\gamma\in-\Sigma(Q)}}\cG\Big(T_{z}\big|_{\fg_{\tilde\delta-\gamma}}\Big)
\end{align*}
In view of (\ref{eq restriction on support III}) and (\ref{eq restriction on support IV}) we have
\begin{align*}
p_{\fa}T_{z}\Big([Y_{-\beta},Y_{-\gamma}]\Big)
&\in p_{\fa}\Big(\big[Y_{-\beta}+T_{z}(Y_{-\beta}),Y_{-\gamma}+T_{z}(Y_{-\gamma})\big]\Big)+\fa_{\fh}\\
&=\big[Y_{-\beta},p_{\beta}T_{z}(Y_{-\gamma})\big]+\big[p_{\gamma}T_{z}(Y_{-\beta}),Y_{-\gamma}\big]+\fa_{\fh}\\
&=-\frac{\|\beta\|^{2}}{2}B\big(Y_{-\beta},p_{\beta}T_{z}(Y_{-\gamma})\big)\beta^{\vee}
    +\frac{\|\gamma\|^{2}}{2}B\big(p_{\gamma}T_{z}(Y_{-\beta}),Y_{-\gamma}\big)\gamma^{\vee}+\fa_{\fh}.
\end{align*}
Since $\alpha=\beta+\gamma$ is not a root, the left-hand side is equal to $0$ and thus
$$
-\frac{\|\beta\|^{2}}{2}B\big(Y_{-\beta},p_{\beta}T_{z}(Y_{-\gamma})\big)\beta^{\vee}
    +\frac{\|\gamma\|^{2}}{2}B\big(p_{\gamma}T_{z}(Y_{-\beta}),Y_{-\gamma}\big)\gamma^{\vee}\in\fa_{\fh}.
$$
Moreover, since $\beta$ and $\gamma$ are linearly independent, the left-hand side is not equal to $0$ if $Y_{-\beta}\in\fg_{-\beta}$ satisfies $p_{\gamma}T_{z}(Y_{-\beta})\neq 0$ and $Y_{-\gamma}=\theta p_{\gamma}T_{z}(Y_{-\beta})$. Such a $Y_{-\beta}$ exists because $\gamma\in{\supp}_{z}(\fg_{-\beta})$.
\end{proof}

\section{Adapted points in boundary degenerations}
\label{Section Adapted points in boundary degenerations}
The real spherical homogeneous spaces with stabilizer subgroup equal to the connected subgroup with Lie algebra $\fh_{z,\cF}$, where $z\in Z$ is adapted and $\cF$ is a face of $\overline{\cC}$ are called boundary degenerations. In this section we establish a correspondence between adapted points in $Z$ and adapted points in the boundary degenerations, and secondly, we give a comparison between the compression cones for $Z$ and the boundary degenerations.

In view of Proposition \ref{Prop fh_I independent of choice z,X} we may make the following definition.

\begin{Defi}\label{Def Z_(O,F)}
Let $\cO$ be an open $P$-orbit in $Z$ and let $\cF$ be a face of $\overline{\cC}$.
We define the homogeneous space
$$
Z_{\cO,\cF}
:=G/H_{z,\cF},
$$
where $z$ is any adapted point in $\cO$ and $H_{z,\cF}$ is the connected subgroup of $G$ with Lie algebra $\fh_{z,\cF}$. The spaces $Z_{\cO,\cF}$ are called the boundary degenerations of $Z$. If $z\in Z_{\cO,\cF}$, then we write $\fh^{\cO,\cF}_{z}$ for the stabilizer subalgebra of $z$.
\end{Defi}

We note that the spaces $Z_{\cO,\cF}$ are quasi-affine real spherical spaces. We will now first explore the relation between adapted points in $Z$ and in $Z_{\cO,\cF}$.

\begin{Lemma}\label{Lemma adapted points in Z_(O,F)}
Let $\cO$ be an open $P$-orbit in $Z$ and let $\cF$ be a face of $\overline{\cC}$.
Let $y\in Z_{\cO,\cF}$. If there exists an adapted point $z\in \cO$ so that $\fh^{\cO,\cF}_{y}=\fh_{z,\cF}$, then $y$ is adapted and
\begin{equation}\label{eq fa cap fh_z subseteq fa cap fh^(O,F)_y}
\fa\cap\fh_{z}^{\perp}
\subseteq\fa\cap(\fh^{\cO,\cF}_{y})^{\perp}.
\end{equation}
\end{Lemma}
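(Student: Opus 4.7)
The plan is to verify the three conditions of Definition \ref{Def Adapted points} for $y$ viewed as a point of the real spherical space $Z_{\cO,\cF}$, and to derive (\ref{eq fa cap fh_z subseteq fa cap fh^(O,F)_y}) in the process. I would first record two general properties of the limit operation $E \mapsto E_X$ of Proposition \ref{Prop Limits of subspaces}. Since $B$ is $\Ad(G)$-invariant, the map $E \mapsto E^{\perp}$ is an $\Ad(G)$-equivariant homeomorphism of Grassmannians; in particular it commutes with the limit and yields $(E_X)^{\perp} = (E^{\perp})_X$ for every $E$ and $X \in \fa$. Moreover, any subspace $V \subseteq E$ that is $\ad(X)$-invariant satisfies $V = \Ad(\exp(tX))V \subseteq \Ad(\exp(tX))E$ for every $t$, so passing to the limit in the Grassmannian yields $V \subseteq E_{X}$; this also reads off directly from the explicit formula (\ref{eq formula for E_X}).

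Fix $X_{\cF}$ in the relative interior of $\cF$, so that by Lemma \ref{Lemma Limits are equal for X in int F} we have $\fh^{\cO,\cF}_{y} = \fh_{z,\cF} = \fh_{z,X_{\cF}}$. Condition (i) of Definition \ref{Def Adapted points}, namely $\fp + \fh^{\cO,\cF}_{y} = \fg$, is exactly the real sphericality of $\fh_{z,\cF}$ established in Lemma \ref{Lemma properties fh_(z,cF)}. For condition (ii), the subalgebra $\fl_{Q,\nc}$ is a sum of simple ideals of $\fl_Q$ and therefore preserved by the inner derivations $\ad(\fa)$; combined with $\fl_{Q,\nc} \subseteq \fh_z$ (since $z$ is adapted), the second observation above, applied to $V = \fl_{Q,\nc}$ and $E = \fh_z$, gives $\fl_{Q,\nc} \subseteq \fh_{z,X_{\cF}} = \fh^{\cO,\cF}_{y}$. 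For condition (iii), pick $X_{0} \in \fa \cap \fh_z^{\perp}$ with $Z_{\fg}(X_{0}) = \fl_Q$, which exists because $z$ is adapted. Since $X_{0}$ and $X_{\cF}$ both lie in the abelian algebra $\fa$, the line $\R X_{0}$ is $\Ad(\exp(tX_{\cF}))$-stable, so the second observation gives $X_{0} \in (\fh_z^{\perp})_{X_{\cF}}$, while the first observation identifies this space with $(\fh_{z,X_{\cF}})^{\perp} = (\fh^{\cO,\cF}_{y})^{\perp}$.

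The inclusion (\ref{eq fa cap fh_z subseteq fa cap fh^(O,F)_y}) is already contained in the argument for (iii): an arbitrary $X_{0} \in \fa \cap \fh_z^{\perp}$ is fixed by $\Ad(\exp(tX_{\cF}))$ and therefore lies in $(\fh_z^{\perp})_{X_{\cF}} = (\fh^{\cO,\cF}_{y})^{\perp}$. I do not foresee a genuine obstacle. The one delicate point is that ``adapted'' for $y \in Z_{\cO,\cF}$ is a priori defined relative to the Levi produced by the local structure theorem for $Z_{\cO,\cF}$ itself; however, exhibiting in (iii) an element $X_{0} \in \fa \cap (\fh^{\cO,\cF}_{y})^{\perp}$ with $Z_{\fg}(X_{0}) = \fl_Q$ and $\alpha(X_{0}) > 0$ for all $\alpha \in \Sigma(Q)$, both inherited from $z$, pins this Levi down to be $\fl_Q$ by the uniqueness built into Proposition \ref{Prop Local structure theorem}. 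Thus using $\fl_Q$ throughout is legitimate, and the verification of Definition \ref{Def Adapted points} is complete.
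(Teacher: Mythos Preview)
Your proof is correct and follows essentially the same approach as the paper: both establish the inclusion $\fa \cap \fh_z^\perp \subseteq \fa \cap (\fh^{\cO,\cF}_y)^\perp$ via the commutation of $(\,\cdot\,)^\perp$ with the limit operation together with the fact that an $\ad(X_{\cF})$-stable subspace of $E$ survives in $E_{X_{\cF}}$, and both deduce openness of $P\cdot y$ from the real sphericality of $\fh_{z,\cF}$. The paper streamlines slightly by appealing to Proposition~\ref{Prop z adapted iff (1) and (3)} instead of verifying condition~(ii) of Definition~\ref{Def Adapted points} directly, whereas your closing paragraph on why the Levi for $Z_{\cO,\cF}$ is still $\fl_Q$ makes explicit a point the paper leaves tacit.
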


\begin{proof}
Assume that $z\in \cO$ is adapted and $\fh^{\cO,\cF}_{y}=\fh_{z,\cF}$.
We will prove that $y$ is adapted by verifying the conditions in Proposition \ref{Prop z adapted iff (1) and (3)}.
Let $X$ be in the interior of $\cF$ and $Y\in \cC$. By Proposition \ref{Prop Limits of subspaces} (\ref{Prop Limits of subspaces - item 3}) we have
$$
(\fh^{\cO,\cF}_{y})_{Y}
=(\fh_{z,\cF})_{Y}
=(\fh_{z,X})_{Y}
=\fh_{z,Y}
=\Ad(m)\fh_{\emptyset}
$$
for some $m\in M$. In view of Lemma \ref{Lemma Properties of C_z} (\ref{Lemma Properties of C_z - item 2}) the $P$-orbit through $y$ is open in $Z_{\cO,\cF}$. Furthermore,
$$
\fa\cap\fh_{z}^{\perp}
=(\fa\cap\fh_{z}^{\perp})_{X}
\subseteq\fa\cap(\fh_{z}^{\perp})_{X}
=\fa\cap\fh_{z,X}^{\perp}
=\fa\cap(\fh^{\cO,\cF}_{y})^{\perp}.
$$
This proves (\ref{eq fa cap fh_z subseteq fa cap fh^(O,F)_y}).
Since $z$ is adapted, we have $\fa^{\circ}_{\reg}\cap\fh_{z}^{\perp}\neq\emptyset$, and hence $\fa^{\circ}_{\reg}\cap(\fh^{\cO,\cF}_{y})^{\perp}\neq\emptyset$. In view of Proposition \ref{Prop z adapted iff (1) and (3)} the point $y$ is adapted.
\end{proof}

It follows from Lemma \ref{Lemma properties fh_(z,cF)} and Lemma \ref{Lemma adapted points in Z_(O,F)} that there exists an adapted point $y\in Z_{\cO,\cF}$ so that $\fa\cap \fh^{\cO,\cF}_{y}=\fa_{\fh}$. By Corollary \ref{Cor fa cap fh_z = fa cap fh_z'} the same holds for all adapted points $y\in Z_{\cO,\cF}$, and hence $\fa^{\circ}$ defined in Definition \ref{Def fa_fh, fa^circ, fa_reg^circ} equals $\fa\cap(\fa\cap \fh^{\cO,\cF}_{y})^{\perp}$.

For an adapted point $y\in Z_{\cO,\cF}$ we write $\Phi^{\cO,\cF}_{y}$ for the unique smooth rational map
$$
\Phi^{\cO,\cF}_{y}:\fa^{\circ}\to \fn_{Q}
$$
satisfying (\ref{Prop parametrization of adapted points - item 2}) and  (\ref{Prop limits vs open orbits - item 3}) in Proposition \ref{Prop parametrization of adapted points} with $Z$ replaced by $Z_{\cO,\cF}$.

\begin{Lemma}\label{Lemma relation Phi and Phi^(O,F)}
Let $\cO$ be an open $P$-orbit in $Z$ and let $\cF$ be a face of $\overline{\cC}$. Let $z\in \cO$ be adapted and let $y\in Z_{\cO,\cF}$ satisfy $\fh^{\cO,\cF}_{y}=\fh_{z,\cF}$.
Then
$$
\lim_{t\to\infty}\Ad\big(\exp(tX)\big)\circ\Phi_{z}
=\Phi^{\cO,\cF}_{y}
$$
for every $X$ in the interior of $\cF$, where the convergence is pointwise.
\end{Lemma}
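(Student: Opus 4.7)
The plan is to characterize the pointwise limit via the same uniqueness property that defines $\Phi^{\cO,\cF}_y$. Fix $Y \in \fa^\circ_{\reg}$ and set $\tilde\Phi(Y) := \lim_{t\to\infty}\Ad(\exp(tX))\Phi_z(Y)$ (existence to be verified). By the defining identity (\ref{eq defining property Phi}) for $\Phi_z$,
$$
\Ad\bigl(\exp(-\Phi_z(Y))\bigr)Y \;=\; Y + T_z^\perp(Y) \;\in\; \fh_z^\perp.
$$
Since $X,Y\in\fa$ commute, conjugating by $\exp(tX)$ and using $\Ad(\exp(tX))Y=Y$ gives
$$
\Ad\bigl(\exp(-\Ad(\exp(tX))\Phi_z(Y))\bigr)Y \;=\; Y + \Ad(\exp(tX))\,T_z^\perp(Y).
$$
The plan is to take $t\to\infty$ on both sides of this identity.

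First I would verify existence of the relevant limits by controlling supports. By Lemma \ref{Lemma bound on image of Phi_z}, $\Phi_z(Y)\in\bigoplus_{\alpha\in\Sigma(Q),\,\alpha|_{\overline{\cC}}\leq 0}\fg_\alpha$, and the same bound applies to $T_z^\perp(Y)$: this was shown en route to that lemma, where it is established that $\Im(T_z^\perp)\subseteq\bigoplus_{\alpha:\fa\in\supp_z(\fg_{-\alpha})}\fg_\alpha$, and by definition of $S_z$ each such $\alpha$ lies in $S_z$, hence satisfies $\alpha|_{\overline{\cC}}\leq 0$. Since $X\in\overline{\cC}$, each weight $\alpha$ appearing has $\alpha(X)\leq 0$, so the limits
$$
\tilde\Phi(Y)=\pi_X\Phi_z(Y),\qquad \Xi(Y):=\lim_{t\to\infty}\Ad(\exp(tX))T_z^\perp(Y)=\pi_X T_z^\perp(Y)
$$
exist in $\fn_Q$, where $\pi_X$ denotes the projection onto $\bigoplus_{\alpha(X)=0}\fg_\alpha$ along the remaining root spaces. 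In particular $\Ad(\exp(-\Ad(\exp(tX))\Phi_z(Y)))\to \Ad(\exp(-\tilde\Phi(Y)))$ on $\End(\fg)$, so passing to the limit in the displayed conjugated identity yields
$$
\Ad\bigl(\exp(-\tilde\Phi(Y))\bigr)Y \;=\; Y+\Xi(Y).
$$

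Next I would verify that $Y+\Xi(Y)\in\fh_{z,\cF}^\perp$. Since taking orthogonal complements is continuous in the Grassmannian, Proposition \ref{Prop Limits of subspaces} gives $\fh_{z,\cF}^\perp=\fh_{z,X}^\perp=\lim_{t\to\infty}\Ad(\exp(tX))\fh_z^\perp$; and the curve $\Ad(\exp(tX))(Y+T_z^\perp(Y))\in\Ad(\exp(tX))\fh_z^\perp$ has limit $Y+\Xi(Y)$ by the support analysis above, so the membership follows from the explicit formula (\ref{eq formula for E_X}) applied to $\fh_z^\perp$.

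Finally, the statement follows from uniqueness: $\tilde\Phi(Y)\in\fn_Q$ and $\Ad(\exp(-\tilde\Phi(Y)))Y\in\fh_{z,\cF}^\perp=(\fh^{\cO,\cF}_y)^\perp$, and by Lemma \ref{Lemma adapted points in Z_(O,F)} the point $y$ is adapted (to the Langlands decomposition) in $Z_{\cO,\cF}$, so Proposition \ref{Prop parametrization of adapted points} applied in $Z_{\cO,\cF}$ characterizes $\Phi^{\cO,\cF}_y(Y)$ as the unique element of $\fn_Q$ with this property. Hence $\tilde\Phi(Y)=\Phi^{\cO,\cF}_y(Y)$, as desired. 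The only genuinely non-routine point is the support bound for $T_z^\perp$, but this is already packaged inside the proof of Lemma \ref{Lemma bound on image of Phi_z}; everything else is conjugation, continuity and a uniqueness appeal.
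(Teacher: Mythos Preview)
Your proof is correct and follows essentially the same strategy as the paper: use the support bound from Lemma~\ref{Lemma bound on image of Phi_z} to establish pointwise convergence, then identify the limit via the uniqueness characterization of $\Phi^{\cO,\cF}_y$ in Proposition~\ref{Prop parametrization of adapted points}. Your execution is slightly more direct---you pass to the limit in the defining identity $\Ad(\exp(-\Phi_z(Y)))Y\in\fh_z^\perp$ and invoke pointwise uniqueness, whereas the paper routes the same information through the adapted-point formalism of Lemma~\ref{Lemma adapted points in Z_(O,F)} and the final assertion of Proposition~\ref{Prop parametrization of adapted points}---but the underlying argument is the same.
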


\begin{proof}
Let $X$  be an element from the interior of $\cF$. By Lemma \ref{Lemma bound on image of Phi_z}
$$
\Im\big(\Phi_{z}\big)
\subseteq\bigoplus_{\substack{\alpha\in\Sigma(Q)\\\alpha|_{\overline{\cC}}\leq0}}\fg_{\alpha}.
$$
Since $X\in\overline{\cC}$, it follows that $\Ad\big(\exp(tX)\big)\circ\Phi_{z}$ converges pointwise. The limit is equal to
$$
\Psi:=\Big(\sum_{\substack{\alpha\in\Sigma(Q)\\\alpha(X)=0}}p_{\alpha}\Big)\circ\Phi_{z},
$$
where $p_{\alpha}$ denotes the projection $\fg\to\fg_{\alpha}$ along the Bruhat decomposition. It remains to prove that $\Psi=\Phi^{\cO,\cF}_{y}$.

Let $Y\in \fa_{\reg}^{\circ}$. By Proposition \ref{Prop parametrization of adapted points} the point $\exp\big(\Phi_{z}(Y)\big)\cdot z$ is adapted. By Lemma \ref{Lemma adapted points in Z_(O,F)} a point in $Z_{\cO,\cF}$ with stabilizer subalgebra $\fh_{\exp(\Phi_{z}(Y))\cdot z,\cF}$ is adapted. It follows from Proposition \ref{Prop Limits of subspaces} (\ref{Prop Limits of subspaces - item 4}) that
\begin{align*}
\fh_{\exp(\Phi_{z}(Y))\cdot z,\cF}
&=\Big(\Ad\Big(\exp\big(\Phi_{z}(Y)\big)\Big)\fh_{z}\Big)_{X}
=\Ad\Big(\exp\big(\Psi(Y)\big)\Big)\fh_{z,\cF}\\
&=\Ad\Big(\exp\big(\Psi(Y)\big)\Big)\fh^{\cO,\cF}_{y}
=\fh^{\cO,\cF}_{\exp(\Psi(Y))\cdot y}.
\end{align*}
We thus conclude that the point $\exp\big(\Psi(Y)\big)\cdot y$ is adapted.

By Proposition \ref{Prop parametrization of adapted points} we have $\R Y\subseteq\fh_{\exp(\Phi_{z}(Y))\cdot z}^{\perp}$, and hence applying (\ref{eq fa cap fh_z subseteq fa cap fh^(O,F)_y}) to the point $\exp(\Phi_{z}(Y))\cdot z$ yields
$$
\R Y
\subseteq\big(\fh^{\cO,\cF}_{\exp(\Psi(Y))\cdot y}\big)^{\perp}.
$$
It follows from the final assertion in Proposition \ref{Prop parametrization of adapted points} that there exist $m\in M$ and $a\in A$ so that
$$
\exp\big(\Psi(Y)\big)\cdot y
=ma\exp\big(\Phi^{\cO,\cF}_{y}(Y)\big)\cdot y.
$$
By Proposition \ref{Prop LST holds for adapted points} the stabilizer of $y$ is contained in $L_{Q}$. As $\Psi$ and $\Phi_{y}^{\cO,\cF}$ both map to $N_{Q}$, it follows that
$\Psi(Y)=\Phi^{\cO,\cF}_{y}(Y)$.
\end{proof}

\begin{Prop}\label{Prop Relation adapted points in Z and Z_(O,F)}
Let $\cO$ be an open $P$-orbit in $Z$ and let $\cF$ be a face of $\overline{\cC}$.
Let further $z_{0}\in \cO$ be adapted and let $y_{0}\in Z_{\cO,\cF}$ be so that $\fh^{\cO,\cF}_{y_{0}}=\fh_{z_{0},\cF}$. In view of Lemma \ref{Lemma adapted points in Z_(O,F)} the point $y_{0}$ is adapted. Then a point $y\in P\cdot y_{0}$ is adapted if and only if there exists an adapted point $z\in\cO$ so that $\fh^{\cO,\cF}_{y}=\fh_{z,\cF}$.
Moreover, if $Y,Y'\in \fa^{\circ}_{\reg}$ and $z\in \cO$ and $y\in P\cdot y_{0}$ and satisfy
\begin{equation}\label{eq z in MA exp Phi(Y) z_0 and y in MA exp Phi^(O,F)(Y') y_0}
z\in MA\exp\big(\Phi_{z_{0}}(Y)\big)\cdot z_{0}
\quad\text{and}\quad
y\in MA\exp\big(\Phi^{\cO,\cF}_{y_{0}}(Y')\big)\cdot y_{0}
\end{equation}
(and hence are adapted),
then  $\fh^{\cO,\cF}_{y}=\fh_{z,\cF}$ if and only if $\Phi^{\cO,\cF}_{y_{0}}(Y)=\Phi^{\cO,\cF}_{y_{0}}(Y')$.
\end{Prop}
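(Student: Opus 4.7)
The plan rests on a single computation that expresses $\fh_{z,\cF}$ in terms of the parametrization of adapted points in the boundary degeneration $Z_{\cO,\cF}$. Given any decomposition $z = ma\exp(\Phi_{z_{0}}(Y))\cdot z_{0}$ and any $X$ in the interior of $\cF$, the element
\[
\exp(tX)\cdot\bigl(ma\exp(\Phi_{z_{0}}(Y))\bigr)\cdot\exp(-tX)
= ma\cdot\exp\bigl(\Ad(\exp(tX))\Phi_{z_{0}}(Y)\bigr)
\]
converges in $G$ as $t\to\infty$ to $ma\exp(\Phi^{\cO,\cF}_{y_{0}}(Y))$ by Lemma~\ref{Lemma relation Phi and Phi^(O,F)}. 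Proposition~\ref{Prop Limits of subspaces}(\ref{Prop Limits of subspaces - item 4}), together with the identity $\fh_{z_{0},\cF}=\fh^{\cO,\cF}_{y_{0}}$, yields the key formula
\[
\fh_{z,\cF}
= \Ad\bigl(ma\exp(\Phi^{\cO,\cF}_{y_{0}}(Y))\bigr)\,\fh^{\cO,\cF}_{y_{0}}
= \fh^{\cO,\cF}_{ma\exp(\Phi^{\cO,\cF}_{y_{0}}(Y))\cdot y_{0}}.
\]

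For the first equivalence, the implication $(\Leftarrow)$ is precisely Lemma~\ref{Lemma adapted points in Z_(O,F)}. For $(\Rightarrow)$, given an adapted $y\in P\cdot y_{0}$, apply Proposition~\ref{Prop parametrization of adapted points} in $Z_{\cO,\cF}$ to write $y = m'a'\exp(\Phi^{\cO,\cF}_{y_{0}}(Y'))\cdot y_{0}$, and then set $z := m'a'\exp(\Phi_{z_{0}}(Y'))\cdot z_{0}\in\cO$, which is adapted by Proposition~\ref{Prop parametrization of adapted points} in $Z$. The key formula applied with the same data gives $\fh_{z,\cF}=\fh^{\cO,\cF}_{y}$.

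For the second equivalence the key formula reduces matters to deciding when the two adapted points $y^{\ast}:=ma\exp(\Phi^{\cO,\cF}_{y_{0}}(Y))\cdot y_{0}$ and $y = m'a'\exp(\Phi^{\cO,\cF}_{y_{0}}(Y'))\cdot y_{0}$ of $Z_{\cO,\cF}$ share the same stabilizer. For $(\Rightarrow)$: by the defining relation~(\ref{eq defining property Phi}) in $Z_{\cO,\cF}$ the element $Y$ lies in $\fa\cap(\fh^{\cO,\cF}_{y^{\ast}})^{\perp}$, and by the hypothesis of equal stabilizers also in $\fa\cap(\fh^{\cO,\cF}_{y})^{\perp}$. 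Writing $y^{\ast}=q\cdot y$ with $q\in Q$, Lemma~\ref{Lemma fa cap fh_z^perp determines MA cdot z} (applied to $Z_{\cO,\cF}$) forces $q\in L_{Q}= MA\cdot L_{Q,\nc}$, and since $L_{Q,\nc}\subseteq H^{\cO,\cF}_{y}$ one obtains $y^{\ast}\in MA\cdot y$. Comparing the two LST decompositions of $y^{\ast}$ using Proposition~\ref{Prop LST holds for adapted points}(\ref{Prop LST holds for adapted points - item 2}), the commutativity of $MA$ and identity~(\ref{eq MA cap H=(M cap H)(A cap H)}) lead to $\Phi^{\cO,\cF}_{y_{0}}(Y) = \Ad(h^{-1})\Phi^{\cO,\cF}_{y_{0}}(Y')$ for some $h\in(M\cap H^{\cO,\cF}_{y_{0}})\exp(\fa_{\fh})$. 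That such $h$ fix the values of $\Phi^{\cO,\cF}_{y_{0}}$ follows by applying $\Ad(h)$ to the defining relation~(\ref{eq defining property Phi}) in $Z_{\cO,\cF}$ and invoking the $(M\cap H^{\cO,\cF}_{y_{0}})(A\cap H^{\cO,\cF}_{y_{0}})$-equivariance of $T^{\cO,\cF,\perp}_{y_{0}}$, yielding $\Phi^{\cO,\cF}_{y_{0}}(Y)=\Phi^{\cO,\cF}_{y_{0}}(Y')$. The $(\Leftarrow)$ direction reverses this chain: equal $\Phi$-values force the unipotent parts of the LST decompositions of $y^{\ast}$ and $y$ to agree up to $\Ad$-conjugation by an element of $(M\cap H^{\cO,\cF}_{y_{0}})\exp(\fa_{\fh})$, and tracing this back through the uniqueness and equivariance assertions shows that $\fh^{\cO,\cF}_{y^{\ast}}=\fh^{\cO,\cF}_{y}$.

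The main obstacle lies in this second equivalence, which requires carefully interleaving the uniqueness of the LST decomposition, the explicit equivariance of $\Phi^{\cO,\cF}_{y_{0}}$ under the subgroup $(M\cap H^{\cO,\cF}_{y_{0}})\exp(\fa_{\fh})$, and the passage from $MA$-orbit coincidence to stabilizer-algebra coincidence. The equivariance property is what makes $\Phi^{\cO,\cF}_{y_{0}}(Y)$ a genuinely well-defined invariant of the adapted point (not merely of a particular decomposition), and it is this fact that ultimately underpins the asserted if and only if.
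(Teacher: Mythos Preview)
Your approach for the first equivalence and the key formula matches the paper's. For the $(\Rightarrow)$ direction of the second equivalence, your argument is correct but more elaborate than necessary: the paper observes directly that $Y\in\fh_{z}^{\perp}$ (from Proposition~\ref{Prop parametrization of adapted points}(\ref{Prop parametrization of adapted points - item 3}) together with the fact that $MA$ fixes $Y\in\fa$), hence $Y\in\fh_{z,\cF}^{\perp}=(\fh^{\cO,\cF}_{y})^{\perp}$, and then the final assertion of Proposition~\ref{Prop parametrization of adapted points} applied in $Z_{\cO,\cF}$ immediately gives $y\in MA\exp\bigl(\Phi^{\cO,\cF}_{y_{0}}(Y)\bigr)\cdot y_{0}$; comparing $N_{Q}$-components via Proposition~\ref{Prop LST holds for adapted points}(\ref{Prop LST holds for adapted points - item 2}) finishes. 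Your detour through Lemma~\ref{Lemma fa cap fh_z^perp determines MA cdot z} and the equivariance of $\Phi^{\cO,\cF}_{y_{0}}$ under $(M\cap H^{\cO,\cF}_{y_{0}})\exp(\fa_{\fh})$ reaches the same conclusion but with extra machinery.

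Your $(\Leftarrow)$ argument for the second equivalence has a genuine gap. From $\Phi^{\cO,\cF}_{y_{0}}(Y)=\Phi^{\cO,\cF}_{y_{0}}(Y')$ and the key formula you obtain
\[
\fh_{z,\cF}=\Ad(ma)\,\fh^{\cO,\cF}_{y_{0}'}
\qquad\text{and}\qquad
\fh^{\cO,\cF}_{y}=\Ad(m'a')\,\fh^{\cO,\cF}_{y_{0}'},
\]
where $y_{0}':=\exp\bigl(\Phi^{\cO,\cF}_{y_{0}}(Y)\bigr)\cdot y_{0}$; but the factors $ma$ and $m'a'$ come from two independent decompositions of $z$ and $y$ and are unrelated by the hypotheses. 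Your claim that the unipotent parts ``agree up to $\Ad$-conjugation by an element of $(M\cap H^{\cO,\cF}_{y_{0}})\exp(\fa_{\fh})$'' does not close this: even when the unipotent parts agree exactly, the resulting stabilizer algebras differ by $\Ad\bigl((m'a')^{-1}ma\bigr)$, and nothing forces this element to normalize $\fh^{\cO,\cF}_{y_{0}'}$. Note that the paper's own proof establishes only the $(\Rightarrow)$ direction; the converse as literally stated does not follow without some link between the $MA$-parts, and it is not used elsewhere in the paper (the application in the proof of Proposition~\ref{Prop Admissible points exist} invokes the key formula with the \emph{same} $m,a,Y$ on both sides, for which the conclusion is immediate).
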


\begin{proof}
Assume that $y\in P\cdot y_{0}$ is adapted. By Proposition \ref{Prop parametrization of adapted points} there exists $m\in M$, $a\in A$ and $Y\in\fa^{\circ}$ so that $y=ma\exp\big(\Phi^{\cO,\cF}_{y_{0}}(Y)\big)\cdot y_{0}$. Set $z=\exp\big(\Phi_{z_{0}}(Y)\big)\cdot z_{0}$ and let $X$ be in the interior of $\cF$. Then by Proposition \ref{Prop Limits of subspaces} (\ref{Prop Limits of subspaces - item 4}) and
Lemma \ref{Lemma relation Phi and Phi^(O,F)}
$$
\fh_{z,\cF}
=\Big(\Ad\Big(ma\exp\big(\Phi_{z_{0}}(Y)\big)\Big)\fh_{z_{0}}\Big)_{X}
=\Ad\Big(ma\exp\big(\Phi^{\cO,\cF}_{y_{0}}(Y)\big)\Big)\fh_{z_{0},X}
=\fh^{\cO,\cF}_{y}.
$$

If $z\in \cO$ is adapted and $y\in P\cdot y_{0}$ satisfies $\fh^{\cO,\cF}_{y}=\fh_{z,\cF}$, then $y$ is adapted by Lemma \ref{Lemma adapted points in Z_(O,F)}. This proves the first assertion. We move on to the second.

Assume that (\ref{eq z in MA exp Phi(Y) z_0 and y in MA exp Phi^(O,F)(Y') y_0}) holds. If $\fh^{\cO,\cF}_{y}=\fh_{z,\cF}$, then $Y\in\fh_{z}^{\perp}$ and hence $Y\in\fh_{z,\cF}^{\perp}$. This implies that $Y\in(\fh^{\cO,\cF}_{y} )^{\perp}$. By Proposition \ref{Prop parametrization of adapted points}
$$
y\in MA\exp\big(\Phi^{\cO,\cF}_{y_{0}}(Y)\big)\cdot y_{0},
$$
and hence $\Phi^{\cO,\cF}_{y_{0}}(Y)=\Phi^{\cO,\cF}_{y_{0}}(Y')$ in view of Proposition \ref{Prop LST holds for adapted points} (\ref{Prop LST holds for adapted points - item 2}). The other implication is trivial.
\end{proof}

We end this section with a description of the compression cone of $Z_{\cO,\cF}$.

\begin{Prop}\label{Prop Compression cone of Z_(O,F)}
Let $\cO$ be an open $P$-orbit in $Z$ and let $\cF$ be a face of $\overline{\cC}$. The compression cone of $Z_{\cO,\cF}$ is equal to $\cC+\fa_{\cF}$.
\end{Prop}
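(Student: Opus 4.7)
The plan is to compute the compression cone $\cC_y$ of $Z_{\cO,\cF}$ at a suitable adapted point $y$ by determining the analogue of the set $S_z$ from~(\ref{eq def S_z}) and then invoking Lemma~\ref{Lemma Properties of C_z with z adapted}. First, fix an adapted $z\in\cO$ and, using Lemma~\ref{Lemma adapted points in Z_(O,F)}, choose an adapted $y\in Z_{\cO,\cF}$ with $\fh^{\cO,\cF}_y=\fh_{z,\cF}$. The formula~(\ref{eq Formula for fh_(z,F)}) immediately implies that $\fl_Q\cap\fh_{z,\cF}=\fl_Q\cap\fh_z$, so the horospherical degeneration of $Z_{\cO,\cF}$ coincides with $\fh_\emptyset$; in particular $\cC_y$ is defined with respect to the same horospherical model as $\cC$, and by Proposition~\ref{Prop relation between compression cones} applied to $Z_{\cO,\cF}$, $\cC_y$ is precisely the compression cone of $Z_{\cO,\cF}$.

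Next, I would read off the map $T_y$ from~(\ref{eq Formula for fh_(z,F)}): for $\alpha\in\Sigma(Q)$ and $Y_{-\alpha}\in\fg_{-\alpha}$,
$$
T_y(Y_{-\alpha}) = \sum_{\sigma\in-\alpha+\cM_{z,\cF}} p_\sigma T_z(Y_{-\alpha}).
$$
Consequently $\tau\in\supp_y(\fg_{-\alpha})$ if and only if $\tau\in\supp_z(\fg_{-\alpha})$ and $\alpha+\tilde\tau\in\cM_{z,\cF}$ (with $\tilde\tau=\tau$ if $\tau\in\Sigma(Q)$ and $\tilde\tau=0$ otherwise). Comparing with the definition~(\ref{eq def S_z}) yields $S_y=S_z\cap\cM_{z,\cF}$; that is, $S_y$ consists exactly of those elements of $S_z$ which vanish on $\cF$. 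Applying Lemma~\ref{Lemma Properties of C_z with z adapted} to $Z_{\cO,\cF}$ then gives the dual description
$$
\cC_y = \{Y\in\fa : \gamma(Y)<0\text{ for all }\gamma\in S_z\cap\cM_{z,\cF}\}.
$$

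It then remains to verify the two inclusions. For $\cC+\fa_\cF\subseteq\cC_y$: given $Y=Y_1+Y_2$ with $Y_1\in\cC$ and $Y_2\in\fa_\cF$, every $\gamma\in S_z\cap\cM_{z,\cF}$ satisfies $\gamma(Y_2)=0$ (since $\fa_\cF$ is the joint kernel of $\cM_{z,\cF}$ by Lemma~\ref{Lemma properties fh_(z,cF)}) while $\gamma(Y_1)<0$, so $\gamma(Y)<0$. For the converse, let $Y\in\cC_y$ and fix any $X_0$ in the relative interior of $\cF$. A short convexity argument (using that $\gamma\leq0$ on $\overline{\cC}$ and that $X_0$ lies in the relative interior of $\cF$) shows that for $\gamma\in S_z$ one has $\gamma(X_0)=0$ exactly when $\gamma$ vanishes on all of $\cF$, i.e.\ when $\gamma\in\cM_{z,\cF}$, and $\gamma(X_0)<0$ otherwise. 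Hence $Y_s:=Y+sX_0$ satisfies $\gamma(Y_s)<0$ for every $\gamma\in S_z$ once $s>0$ is sufficiently large, so $Y_s\in\cC$ and $Y=Y_s+(-sX_0)\in\cC+\fa_\cF$. The main obstacle will be the bookkeeping in the identification of $T_y$ from~(\ref{eq Formula for fh_(z,F)}) and the resulting combinatorial identity $S_y=S_z\cap\cM_{z,\cF}$; once that is established, both inclusions are elementary consequences of the dual description of $\cC$ from Lemma~\ref{Lemma Properties of C_z with z adapted}.
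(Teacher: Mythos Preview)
Your argument is correct and is precisely the detailed unpacking of what the paper means when it says the assertion follows directly from~(\ref{eq Formula for fh_(z,F)}) and~(\ref{eq Def M_(z,F)}): you read off $T_y$ from the explicit formula for $\fh_{z,\cF}$, obtain $S_y=S_z\cap\cM_{z,\cF}$, and then the two inclusions are immediate from Lemma~\ref{Lemma Properties of C_z with z adapted} together with the characterization~(\ref{eq M_(z,F) determined by X in int F}) of $\cM_{z,\cF}$ via an interior point of $\cF$. The approach is identical to the paper's; you have simply made the bookkeeping explicit.
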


\begin{proof}
The assertion follows directly from (\ref{eq Formula for fh_(z,F)}) and (\ref{eq Def M_(z,F)}).
\end{proof}

In view of Proposition \ref{Prop Compression cone of Z_(O,F)} the compression cone of $Z_{\cO,\cF}$ does not depend on the open $P$-orbit $\cO$. We therefore write $\cC_{\cF}$ for the compression cone of $Z_{\cO,\cF}$, i.e.,
$$
\cC_{\cF}
:=\cC+\fa_{\cF}.
$$

\section{Admissible points}
\label{Section Admissible points}
Recall the group $\cN$ from (\ref{eq Def cN}).
Proposition \ref{Prop limits vs open orbits} is most useful for points $z\in Z$ for which the limits $\fh_{z,X}$ for order-regular elements $X\in\fa$ are conjugates of $\fh_{\emptyset}$ by some element in $\cN$. The purpose of the next definition is to single out those adapted points for which all such limits have this property.

\begin{Defi}\label{Def admissible points}
We say that an adapted point $z\in Z$ is admissible if for every order-regular element $X\in\fa$, there exists an element $w\in \cN$ so that
$$
\fh_{z,X}
=\Ad(w)\fh_{\emptyset}.
$$
\end{Defi}

In the remainder of this section we will prove the existence of admissible points.
In the next section we will use the set of elements $w\in\cN$ so that $\Ad(w)\fh_{\emptyset}$ occurs as a limit $\fh_{z,X}$ of $\fh_{z}$ for an admissible point $z$ to construct the little Weyl group.

We begin with a few remarks.

\begin{Rem}\label{Rem Properties of admissible points}\,
\begin{enumerate}[(a)]
\item\label{Rem Properties of admissible points - item 1} The set of admissible points is $L_{Q}$-stable.
\item\label{Rem Properties of admissible points - item 2} If $z$ is admissible and $v\in\cN$ is such that $Pv^{-1}\cdot z$ is open, then $v^{-1}\cdot z$ is adapted by Lemma \ref{Lemma z adapted implies vz adapted}.
    Moreover, if $X\in\fa$, then
    $$
    \fh_{v^{-1}\cdot z,X}
    =\Ad(v^{-1})\fh_{z,\Ad(v)X}.
    $$
    From this it follows that $v^{-1}\cdot z$ is also admissible.
\end{enumerate}
\end{Rem}

We define the subgroup $\cN_{\emptyset}$ of $\cN$ by
$$
\cN_{\emptyset}
:=\{w\in \cN:\Ad(w)\fh_{\emptyset}=\Ad(m)\fh_{\emptyset}\text{ for some }m\in M\}.
$$

\begin{Lemma}\label{Lemma cN_emptyset is realized in L_Q and normal in cN}
We have $\cN_{\emptyset}=N_{L_{Q}}(\fa)$. Moreover, $\cN_{\emptyset}$ is a normal subgroup of $\cN$. Finally, the group $\cN/\cN_{\emptyset}$ is finite.
\end{Lemma}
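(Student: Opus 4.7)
The plan is to establish the identification $\cN_{\emptyset}=N_{L_{Q}}(\fa)$ first, and then to deduce normality and finiteness from it. Two structural observations will be used throughout. First, $\cN$ normalizes the semisimple ideal $\fl_{Q,\nc}$, because $\fl_{Q,\nc}$ equals the derived algebra $[\fl_{Q,\nc}+\fa_{\fh},\,\fl_{Q,\nc}+\fa_{\fh}]$: the bracket $[\fl_{Q,\nc},\fa_{\fh}]$ lands in $\fl_{Q,\nc}$ since $\fa_{\fh}$ splits as $(\fa_{\fh}\cap\fa_{z})\oplus(\fa\cap\fl_{Q,\nc})$, where $\fa_{z}:=\fa\cap\fz(\fl_{Q})$ is central and the second summand lies in $\fl_{Q,\nc}$. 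Second, $\fa$ acts trivially on the compact part $\fl_{Q,c}$ of $\fl_{Q}$, since for $X\in\fa$ the eigenvalues of $\ad(X)|_{\fl_{Q,c}}$ are both real (as $X$ is $\R$-split) and purely imaginary (by compactness of $\fl_{Q,c}$), hence zero. Consequently $\fa=\fa_{z}\oplus(\fa\cap\fl_{Q,\nc})$ orthogonally with $\fa\cap\fl_{Q,\nc}$ the split Cartan of $\fl_{Q,\nc}$, $\fa\cap\fl_{Q,\nc}\subseteq\fa_{\fh}$, and every nonzero $\fa$-root space in $\fl_{Q}$ lies in $\fl_{Q,\nc}$.

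For the inclusion $N_{L_{Q}}(\fa)\subseteq\cN_{\emptyset}$, take $w\in N_{L_{Q}}(\fa)$. Membership in $\cN$ follows because $w\in L_{Q}$ normalizes the ideal $\fl_{Q,\nc}$ and centralizes $\fa_{z}$ pointwise (since $L_{Q}=M\cdot A\cdot L_{Q,\nc}$ and each factor commutes with $\fa_{z}$), so $\Ad(w)$ preserves both summands of $\fa_{\fh}=(\fa_{\fh}\cap\fa_{z})\oplus(\fa\cap\fl_{Q,\nc})$. For membership in $\cN_{\emptyset}$, use $L_{Q,\nc}\subseteq H_{z_{0}}$ together with $L_{Q}=MAL_{Q,\nc}$ to write $w=mah$ with $m\in M$, $a\in A$, $h\in L_{Q}\cap H_{z_{0}}$. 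Then $\Ad(w)\overline{\fn}_{Q}=\overline{\fn}_{Q}$ since $L_{Q}$ normalizes $\overline{\fn}_{Q}$, and
\[
\Ad(w)(\fl_{Q}\cap\fh_{z_{0}})=\fl_{Q}\cap\fh_{w\cdot z_{0}}=\Ad(m)(\fl_{Q}\cap\fh_{z_{0}})
\]
by Lemma~\ref{Lemma characterization weakly adapted points} applied with $n=e$; summing gives $\Ad(w)\fh_{\emptyset}=\Ad(m)\fh_{\emptyset}$.

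The reverse inclusion $\cN_{\emptyset}\subseteq N_{L_{Q}}(\fa)$ is the main obstacle. Given $w\in\cN_{\emptyset}$, the associated $m\in M$ lies in $L_{Q}\cap N_{G}(\fa)$, so after replacing $w$ by $m^{-1}w$ I may assume $\Ad(w)\fh_{\emptyset}=\fh_{\emptyset}$. The $\fa$-weight decomposition of $\fh_{\emptyset}$ reads
\[
\fh_{\emptyset}=\bigl(\fh_{\emptyset}\cap(\fm+\fa)\bigr)\oplus\bigoplus_{\alpha\in -\Sigma(Q)\,\cup\,\Sigma_{\fl_{Q,\nc}}}\fg_{\alpha},
\]
where $\Sigma_{\fl_{Q,\nc}}:=\{\alpha\in\Sigma:\fg_{\alpha}\subseteq\fl_{Q,\nc}\}$ and the listed root spaces sit fully in $\fh_{\emptyset}$ by the weight analysis of $\fh_{\emptyset}=(\fl_{Q}\cap\fh_{z_{0}})+\overline{\fn}_{Q}$. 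The Weyl-element action of $w$ on $\fa^{*}$ permutes $-\Sigma(Q)\cup\Sigma_{\fl_{Q,\nc}}$; since $w$ separately preserves $\Sigma_{\fl_{Q,\nc}}$ (as it normalizes $\fl_{Q,\nc}$), it preserves the disjoint complement $-\Sigma(Q)$. Therefore $\Ad(w)\fn_{Q}=\fn_{Q}$ and $\Ad(w)\overline{\fn}_{Q}=\overline{\fn}_{Q}$, placing $w\in Q\cap\overline{Q}\cap N_{G}(\fa)=N_{L_{Q}}(\fa)$.

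For normality, the fact that $\cN$ normalizes $\fl_{Q,\nc}$ and $\fa$ implies $\cN$ normalizes $\fa\cap\fl_{Q,\nc}$, and by $\Ad$-invariance of $B$ also its orthogonal complement $\fa_{z}$ in $\fa$; hence $\cN$ normalizes $L_{Q}=Z_{G}(\fa_{z})$, and combined with $\cN\subseteq N_{G}(\fa)$ this shows $\cN$ normalizes $N_{L_{Q}}(\fa)=\cN_{\emptyset}$. Finiteness is immediate: the identification yields $\cN_{\emptyset}=\cN\cap N_{L_{Q}}(\fa)$, so $\cN/\cN_{\emptyset}$ injects into $N_{G}(\fa)/N_{L_{Q}}(\fa)$, a finite set (its cardinality equals $|W(\fg,\fa)|/|W(\fl_{Q},\fa)|$ after quotienting by $Z_{G}(\fa)=MA$).
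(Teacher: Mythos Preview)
Your proof is correct. The overall structure matches the paper's---establish both inclusions, then normality, then finiteness---but several individual steps are carried out differently.

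For the inclusion $\cN_{\emptyset}\subseteq N_{L_{Q}}(\fa)$, the paper uses a single clean observation: since $w\in\cN_{\emptyset}$ normalizes $\fa$ (hence $\fm+\fa$) and sends $\fh_{\emptyset}$ to $\Ad(m)\fh_{\emptyset}$, it normalizes $\fh_{\emptyset}+\fm+\fa=\overline{\fq}$, so $w\in\overline{Q}\cap N_{G}(\fa)=N_{L_{Q}}(\fa)$. Your root-space analysis reaches the same conclusion---$w$ must permute $-\Sigma(Q)$---by decomposing $\fh_{\emptyset}$ into $\fa$-weights and using that $w$ already preserves $\Sigma_{\fl_{Q,\nc}}$. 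Both work; the paper's version is shorter and avoids checking that the full root spaces $\fg_{\alpha}$ sit in $\fh_{\emptyset}$. For the opposite inclusion you invoke Lemma~\ref{Lemma characterization weakly adapted points}, whereas the paper argues directly from the explicit decomposition $\fh_{\emptyset}=(\fm\cap\fh_{z})\oplus\fa_{\fh}\oplus\fl_{Q,\nc}\oplus\overline{\fn}_{Q}$. For normality, the paper simply observes $L_{Q}=MAL_{Q,\nc}$ and that $\cN$ normalizes each factor $M$, $A$, $L_{Q,\nc}$; your route via $\fa_{z}$ and $L_{Q}=Z_{G}(\fa_{z})$ is equivalent but requires justifying that group-level identity. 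Finiteness is handled the same way in both proofs, via the surjection $\cN/MA\to\cN/\cN_{\emptyset}$ with $\cN/MA$ sitting inside the Weyl group.
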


\begin{proof}
Since the elements in $\cN$ normalize $\fa$, they also normalize $\fm+\fa$. Note that $\fh_{\emptyset}+\fm+\fa=\overline{\fq}$ is $M$-stable. Therefore, the elements in $\cN_{\emptyset}$ normalize $\overline{\fq}$, and hence $\cN_{\emptyset}\subseteq N_{G}(\fa)\cap \overline{Q}= N_{L_{Q}}(\fa)$. To prove the other inclusion we first note that $L_{Q}$ normalizes $\fl_{Q,\nc}+\fa_{\fh}$. Therefore, it follows directly from the definition (\ref{eq Def cN}) that $N_{L_{Q}}(\fa)\subseteq\cN$.
Now choose an adapted point $z\in Z$ so that $\fh_{\emptyset}=\fh_{z,X}$ for some $X\in\cC$. Then
$$
\fh_{\emptyset}
=(\fl_{Q}\cap\fh_{z})\oplus\overline{\fn}_{Q}.
$$
We recall that $L_{Q}=MAL_{Q,\nc}$, see (\ref{eq L_Q=MAL_(Q,nc)}).
The group $L_{Q,\nc}$ is contained in $\overline{Q}$, and hence normalizes $\overline{\fn}_{Q}$. As $L_{Q,\nc}\subseteq L_{Q}\cap H_{z}$, also $\fl_{Q}\cap\fh_{z}$ is normalized by $L_{Q,\nc}$. It follows that $L_{Q,\nc}$ normalizes $\fh_{\emptyset}$.
Further, $\fh_{\emptyset}$ is $A$-stable. It thus follows that
$$
\Ad(L_{Q})\fh_{\emptyset}
=\Ad(M)\fh_{\emptyset}.
$$
In particular, $N_{L_{Q}}(\fa)\subseteq\cN\cap L_{Q}\subseteq \cN_{\emptyset}$.
This proves the first assertion.

We move on to the second assertion. By definition $\cN$ normalizes $A$ and $L_{Q,\nc}$. Every element normalizing $A$ also normalizes $M$. As $L_{Q}=MAL_{Q,\nc}$, it follows that $\cN$ normalizes $\cN_{\emptyset}$. This proves the second assertion.

For the final assertion we note that $\cN$ and $\cN_{\emptyset}$ both contain the group $MA$. As $\cN/MA$ is a subgroup of the Weyl group of $\Sigma$, it is finite.
This implies that $\cN/\cN_{\emptyset}$ is finite.
\end{proof}

We note that the quotient $\cN/\cN_{\emptyset}$ is a group in view of Lemma \ref{Lemma cN_emptyset is realized in L_Q and normal in cN}.
The main result in this section is the following proposition.

\begin{Prop}\label{Prop Admissible points exist}\
\begin{enumerate}[(i)]
\item\label{Prop Admissible points exist - item 1}
The set of admissible points is dense and has non-empty interior in the set of adapted points in $Z$ (all with respect to the subspace topology). In particular, every open $P$-orbit in $Z$ contains an admissible point.
\item\label{Prop Admissible points exist - item 2}
For $z\in Z$ define
$$
\cW_{z}
:=\{w\cN_{\emptyset}\in \cN/\cN_{\emptyset}: w\in \cN \text{ and there exist }X\in\fa \text{ so that }
    \fh_{z,X}=\Ad(w)\fh_{\emptyset}\}.
$$
Let $z\in Z$ be admissible and let $z'\in Z$. If $z'$ is adapted, then $\cW_{z'}\subseteq\cW_{z}$. Moreover, if $z'$ is admissible, then $\cW_{z'}=\cW_{z}$.
\end{enumerate}
\end{Prop}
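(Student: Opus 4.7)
The proof splits into (i) existence of admissible points and (ii) relations between the sets $\cW_z$. I would establish (i) by reducing to the second-most-degenerate boundary degenerations and lifting through the correspondence of Section \ref{Section Adapted points in boundary degenerations}; (ii) then follows from (i) combined with the characterization of $\cW_z$ by open $P$-orbits in Proposition \ref{Prop limits vs open orbits} and the tiling property that admissibility imposes on $\fa$.

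For (i), two cases are easy. In the horospherical degeneration $Z_\emptyset = G/H_\emptyset$, every adapted point is admissible: $\fh_\emptyset$ is $\fa$-stable, so by Proposition \ref{Prop Limits of subspaces}(\ref{Prop Limits of subspaces - item 4}) the limit $\fh_{z,X}$ at an adapted $z\in Z_\emptyset$ lies in $P\cdot\fh_\emptyset$, and Lemma \ref{Lemma dependence on z in def fh_empty} identifies it with $\Ad(m)\fh_\emptyset$ for some $m\in M\subseteq\cN_\emptyset$. For a wall $\cF$ of $\overline{\cC}$, the degeneration $Z_{\cO,\cF}$ has compression cone $\cC+\fa_\cF$ by Proposition \ref{Prop Compression cone of Z_(O,F)}, which modulo its edge has a single wall, so the limits in the one non-trivial direction can be analyzed by a direct computation with the map $T_y$ from Section \ref{Section Description of fh} and the formula \eqref{eq Formula for fh_(z,F)} of Lemma \ref{Lemma Limits are equal for X in int F}; the verification that some adapted point of $Z_{\cO,\cF}$ is admissible is the technical heart of the proof and is carried out in the Appendix. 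To lift admissibility from these boundary degenerations to $Z$, I parameterize the adapted points of an open orbit $\cO\subset Z$ by $Y\in\fa^\circ_{\reg}$ via $\Phi_{z_0}$ and use the identity $\Phi_{y_0}^{\cO,\cF} = \lim_{t\to\infty}\Ad(\exp(tX))\circ\Phi_{z_0}$ of Lemma \ref{Lemma relation Phi and Phi^(O,F)} together with Proposition \ref{Prop Relation adapted points in Z and Z_(O,F)}. Admissibility of the image in each $Z_{\cO,\cF}$ is then an open condition on $Y$ (the subset of subalgebras $\Ad(\cN)\fh_\emptyset$ is a finite union of closed $M$-orbits in the Grassmannian, and $\fh_{z,X}$ varies continuously in $z$ for fixed order-regular $X$), and intersecting over the finitely many walls yields a non-empty open set of $Y$, from which density and non-empty interior of the admissible locus follow.

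For (ii), Proposition \ref{Prop limits vs open orbits} gives $\cW_z=\{w\cN_\emptyset:Pw^{-1}\cdot z\text{ is open in }Z\}$ for any adapted $z$. For admissible $z$ the map sending an order-regular $X$ to the unique coset $v\cN_\emptyset$ with $\fh_{z,X}=\Ad(v)\fh_\emptyset$ is well-defined (the coset is unique because $\Ad(v)\fh_\emptyset=\Ad(v')\fh_\emptyset$ implies $v^{-1}v'\in N_G(\fh_\emptyset)\cap\cN\subseteq\cN_\emptyset$), locally constant by Proposition \ref{Prop Limits of subspaces}(\ref{Prop Limits of subspaces - item 3}), and surjective onto $\cW_z$; combined with Proposition \ref{Prop limits vs open orbits}(\ref{Prop limits vs open orbits - item 3}), this forces the translates $\{\Ad(v)\cC : v\cN_\emptyset\in\cW_z\}$ to tile $\fa$ with pairwise disjoint interiors. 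Given $w\cN_\emptyset\in\cW_{z'}$ for adapted $z'$, I choose an order-regular $X$ in the interior of $\Ad(w)\cC$, so $\fh_{z',X}=\Ad(w)\fh_\emptyset$. By the tiling at $z$, this $X$ lies in the interior of a unique $\Ad(v)\cC$ with $v\cN_\emptyset\in\cW_z$, and to conclude $w\cN_\emptyset=v\cN_\emptyset$ I pass to the translated point $v^{-1}\cdot z$, which is again admissible by Remark \ref{Rem Properties of admissible points}(\ref{Rem Properties of admissible points - item 2}) and whose tile indexed by $e\cN_\emptyset$ is $\cC$ itself; the point $\Ad(v^{-1})X$ lies in the interior of both $\cC$ and $\Ad(v^{-1}w)\cC$, and a careful comparison using the admissible tiling at $v^{-1}\cdot z$ together with Proposition \ref{Prop limits vs open orbits} forces $v^{-1}w\in\cN_\emptyset$. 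The equality $\cW_z=\cW_{z'}$ for two admissible points then follows by symmetry.

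The main obstacle is the wall computation in (i), where the reflection structure of $\cW$ first emerges; the inductive lift from boundary degenerations to $Z$ and the tiling arguments for (ii) are comparatively formal once this computation is in hand.
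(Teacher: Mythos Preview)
Your argument for (ii) via the tiling of $\fa/\fa_{\fh}$ by the cones $w\cdot p_{\fh}(\cC)$ is essentially correct and close to what the paper does; once one knows that distinct cosets in $\cN/\cN_{\emptyset}$ give disjoint translates (which follows from $N_{N_{G}(\fa)}(\fh_{\emptyset})\subseteq\cN_{\emptyset}$), the inclusion $\cW_{z'}\subseteq\cW_{z}$ for admissible $z$ is immediate.

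For (i), however, your one-shot argument has a genuine gap. The correspondence with the wall degeneration $Z_{\cO,\cF}$ only controls the limits $\fh_{z,X}$ for $X$ in order-regular chambers whose closure meets the relative interior of $\cF$: by Proposition~\ref{Prop Limits of subspaces}(\ref{Prop Limits of subspaces - item 3}) one has $(\fh_{z,\cF})_{X}=\fh_{z,X}$ only for such $X$. Thus, even if for every wall $\cF$ of $\overline{\cC}$ the image of $z$ in $Z_{\cO,\cF}$ is admissible, you have only shown that $\cA_{z}$ contains $p_{\fh}(\cC)$ together with the chambers immediately across each wall of $\overline{\cC}$; chambers farther away remain uncontrolled. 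To reach those you must pass to a translate $v^{-1}\cdot z$ (which has new walls) and repeat, and nothing in your outline explains why the conditions you need at successive stages can be met simultaneously. A second, independent problem is that your justification of openness is backwards: $\Ad(\cN)\fh_{\emptyset}$ is closed in the Grassmannian, so continuity of $z\mapsto\fh_{z,X}$ would make the condition closed, not open, and in any case this map is not continuous in $z$ for fixed order-regular $X$. Finally, the wall computation you attribute to the Appendix is actually Lemma~\ref{Lemma Admissible points in case C contains half-space}; the Appendix (Proposition~\ref{Prop Form of simple spherical roots}) concerns the shape of spherical roots and is used only later, in Lemma~\ref{Lemma cW_alpha generated by simple reflection in alpha}.

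The paper closes both gaps with a propagation lemma you have not mentioned (Lemma~\ref{Lemma W_z subseteq W_z' for z' in open and dense set}): using the Pl\"ucker embedding and the rationality of $\Phi_{z}$, one shows that for any adapted $z$ the inclusion $\cW_{z}\subseteq\cW_{z'}$ holds for $z'$ in an open \emph{dense} subset of adapted points. This density is what allows the iteration to proceed: starting from any adapted $z$, if $z$ is not admissible one finds a wall $\cF$ on the boundary of $\overline{\cA_{z}}$, uses Lemma~\ref{Lemma Admissible points in case C contains half-space} in $Z_{\cO,\cF}$ and the lifting of Proposition~\ref{Prop Relation adapted points in Z and Z_(O,F)} to produce $z'$ with $\cA_{z'}\supsetneq\cA_{z}$, and then invokes the propagation lemma to pass to $z''$ in a dense open set with $\cA_{z''}\supseteq\cA_{z}\cup\cA_{z'}$. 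Finitely many iterations (there are only finitely many order-regular chambers) yield an admissible point. Your proposal has the right building blocks but is missing this inductive structure and the rationality-based propagation that makes it work.
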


The remainder of this section is devoted to the proof of the proposition. We break the proof up into a sequence of lemmas.

\begin{Lemma}\label{Lemma W_z subseteq W_z' for z'in neighborhood of z}
Let $z\in Z$ be adapted. There exists an open neighborhood $U$ of $z$ in $P\cdot z$ so that $\cW_{z}\subseteq\cW_{z'}$ for all adapted points $z'\in U$.
\end{Lemma}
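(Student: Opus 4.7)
The plan is to reduce the assertion to Proposition~\ref{Prop limits vs open orbits} together with a straightforward continuity argument. Take any representative $w \in \cN$ of a coset in $\cW_z$. By definition of $\cW_z$ there exists $X \in \fa$ with $\fh_{z,X} = \Ad(w)\fh_{\emptyset}$, and since $z$ is adapted, Proposition~\ref{Prop limits vs open orbits} then gives that the $P$-orbit $\cO_w := Pw^{-1}\cdot z$ is open in $Z$.

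The main observation is that ``lying in an open $P$-orbit'' is itself an open condition on the base point. Concretely, the set $U_w := \{z'' \in Z : w^{-1}\cdot z'' \in \cO_w\}$ is the preimage of the open set $\cO_w$ under the homeomorphism $z'' \mapsto w^{-1}\cdot z''$, hence it is open in $Z$ and contains $z$. For any adapted point $z' \in U_w$ one has $Pw^{-1}\cdot z' = \cO_w$, which is open, so Proposition~\ref{Prop limits vs open orbits} applied in the reverse direction to the adapted point $z'$ yields an $X' \in \fa$ and an $m \in M$ with $\fh_{z',X'} = \Ad(wm)\fh_{\emptyset}$. Since $m \in M \subseteq \cN_{\emptyset}$, the coset $wm\cN_{\emptyset} = w\cN_{\emptyset}$ belongs to $\cW_{z'}$.

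The last step is to take a finite intersection. By Lemma~\ref{Lemma cN_emptyset is realized in L_Q and normal in cN} the quotient $\cN/\cN_{\emptyset}$ is finite, so $\cW_z$ is finite; fixing one representative $w$ for each coset in $\cW_z$ and setting $U := \bigcap_{w\cN_{\emptyset} \in \cW_z} U_w$ produces an open neighborhood of $z$ with the required property.

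There is no genuine obstacle here: the openness characterization of Proposition~\ref{Prop limits vs open orbits} does the heavy lifting by translating the condition ``$w\cN_{\emptyset} \in \cW_{z'}$'' into the topologically robust condition ``$Pw^{-1}\cdot z'$ is open'', and the finiteness of $\cN/\cN_{\emptyset}$ is exactly what allows one to pass from each individual $U_w$ to a common $U$. The argument is asymmetric in $z$ and $z'$ precisely because $\cW_{z'}$ may be strictly larger: an adapted perturbation can cause new open orbits of the form $Pw^{-1}\cdot z'$ to appear.
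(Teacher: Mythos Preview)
Your proof is correct and follows essentially the same approach as the paper: both arguments use Proposition~\ref{Prop limits vs open orbits} to convert the condition $w\cN_{\emptyset}\in\cW_{z'}$ into the open condition that $Pw^{-1}\cdot z'$ is open, obtain for each representative $w$ an open neighborhood of $z$ on which this persists, and then intersect over the finitely many cosets in $\cW_{z}$. Your set $U_{w}=w\cdot\cO_{w}$ is simply the maximal such neighborhood, whereas the paper phrases it as $U_{v}\cdot z$ for a neighborhood $U_{v}$ of $e$ in $G$; and you make the finiteness step (via Lemma~\ref{Lemma cN_emptyset is realized in L_Q and normal in cN}) explicit where the paper leaves it implicit.
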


\begin{proof}
Let $w\in\cW_{z}$ and let $v\in\cN$ be so that $v\cN_{\emptyset}=w$. By Proposition \ref{Prop limits vs open orbits} the $P$-orbit $Pv^{-1}\cdot z$ is open. Therefore, there exists an open neighborhood $U_{v}$ of $e$ in $G$ so that $v^{-1}U_{v}\cdot z\subseteq Pv^{-1}\cdot z$.
It follows from the same proposition that $v\in\cW_{z'}$ for every adapted point in $z'\in U_{v}\cdot z$. The assertion now follows with $U$ equal to the intersection of the sets $U_{v}\cdot z$, where $v$ runs over a set of representatives in $\cN$ for $\cW_{z}$.
\end{proof}

We now use Lemma \ref{Lemma W_z subseteq W_z' for z'in neighborhood of z} to prove a much stronger statement.

\begin{Lemma}\label{Lemma W_z subseteq W_z' for z' in open and dense set}
Let $z\in Z$ be adapted. There exists an open and dense subset $U$ of the set of adapted points in $Z$ (with respect to the subspace topology) so that $\cW_{z}\subseteq\cW_{z'}$ for all $z'\in U$.
\end{Lemma}

\begin{proof}
Let $w\in\cW_{z}$. We will prove that there exists an open and dense subset $U_{w}$ of the set of adapted points so that $w\in\cW_{z'}$ for all $z'\in U_{w}$. Since $\cN/\cN_{\emptyset}$ is finite, the assertion in the lemma follows from this with $U$ equal to the finite intersection $U=\bigcap_{w\in\cW_{z}}U_{w}$.

Let $k=\dim(\fh_{z})$ with $z\in Z$, and let $\iota:\Gr(\fg,k)\hookrightarrow \P(\bigwedge^{k}\fg)$ be the Pl{\"u}cker embedding, i.e., $\iota$ is the map given by
$$
\iota\big(\mathrm{span}(v_{1},\dots,v_{k})\big)
=\R (v_{1}\wedge\cdots \wedge v_{k}).
$$
The map $\iota$ is a diffeomorphism onto a compact submanifold of $\P(\bigwedge^{k}\fg)$.
The image is in fact an algebraic subvariety of $\P(\bigwedge^{k}\fg)$, as it is the intersection of a number of quadrics defined by the Pl{\"u}cker relations.
See \cite[p. 209--211]{GriffithsHarris}.

Let $v\in\cN$ be a representative of $w$ and let $X\in \Ad(v)\cC$.
Let $e_{1},\dots,e_{m}$ be a basis of $\bigwedge^{k}\fg$ consisting of eigenvectors of $\ad(X)$. We write $\mu_{1},\dots,\mu_{m}\in\R$ for the corresponding eigenvalues. We may order the eigenvectors so that $\mu_{1}\geq\mu_{2}\geq\cdots\geq\mu_{m}$.
Let $\xi\in\bigwedge^{k}\fg$ be the element so that $\iota(\fh_{z})=\R \xi$ and let $c_{1},\dots,c_{m}:\fn_{Q}\to\R$ be the functions determined by
$$
\Ad\big(\exp(Y)\big)\xi
=\sum_{i=1}^{m}c_{i}(Y)e_{i}.
$$
Since $\fn_{P}$ is a nilpotent Lie algebra, the function
$$
\fn_{P}\to\bigwedge^{k}\fg;\quad Y\mapsto\Ad\big(\exp(Y)\big)\xi
$$
is polynomial, and hence also the functions $c_{i}$ are polynomial. Since $\Phi_{z}$ is a rational function, the functions $c_{i}\circ\Phi_{z}:\fa^{\circ}\to\R$ are rational. Let $j_{0}$ be the smallest number so that $c_{j_{0}}\circ\Phi_{z}$ is not identically zero, and let $j_{1}$ be the largest number so that $\mu_{j_{1}}=\mu_{j_{0}}$. Define the rational map $\fa^{\circ}\to\R$
$$
p
:=\sum_{i=j_{0}}^{j_{1}}(c_{i}\circ\Phi_{z})^{2}.
$$
Then for every $Y$ in the open and dense subset $V:=p^{-1}(\R\setminus\{0\})$ of $\fa^{\circ}$
$$
\Big(\Ad\big(\exp\circ\Phi_{z}(Y)\big)\xi\Big)_{X}
=\R\sum_{i=j_{0}}^{j_{1}}c_{i}\circ\Phi_{z}(Y) e_{j}.
$$

By Lemma \ref{Lemma W_z subseteq W_z' for z'in neighborhood of z} there exists an open neighborhood $U'$ of $z$ so that $w\in \cW_{z'}$ for all adapted points $z'\in U'$. Since $X\in\Ad(v)\cC$ we have in view of Proposition \ref{Prop limits vs open orbits} that for every adapted point $z'\in U'$ there exists a $m\in M$ so that $\fh_{z',X}=\Ad(vm)\fh_{\emptyset}$.
It follows that
\begin{equation}\label{eq Limit contained in Ad(M)iota(h_empty)}
\R\sum_{i=j_{0}}^{j_{1}}c_{i}\circ\Phi_{z}(Y) e_{j}
\in \Ad(vM)\iota(\fh_{\emptyset})
\end{equation}
for all $Y\in \fa^{\circ}$ so that $\exp\big(\Phi_{z}(Y)\big)\cdot z\in U'$. The set of elements $Y$ for which this holds is open. Since the functions $c_{i}\circ\Phi_{z}$ are rational, we conclude that (\ref{eq Limit contained in Ad(M)iota(h_empty)}) holds for all $Y\in V$, i.e., for every $Y\in V$
$$
\fh_{\exp\big(\Phi_{z}(Y)\big)\cdot z,X}
=\Ad(vm)\fh_{\emptyset}
$$
for some $m\in M$. In particular
$$
w\in \cW_{\exp\big(\Phi_{z}(Y)\big)\cdot z}
\qquad\big(Y\in V\big).
$$
Since $\fh_{ma\cdot z',X}=\Ad(ma)\fh_{z',X}$ for every $m\in M$, $a\in A$ and $z'\in Z$, it follows that
$$
w\in \cW_{z'}
\qquad(z'\in MA\exp\big(\Phi_{z}(V)\big)\cdot z\big).
$$
In view of Proposition \ref{Prop parametrization of adapted points} the set $MA\exp\big(\Phi_{z}(V)\big)\cdot z$ is open and dense in the set of adapted points in $P\cdot z$.

Finally it follows from Proposition \ref{Prop Open orbits} and Lemma \ref{Lemma fh_(z,X)=fh_(th z,X)} that for each open $P$-orbit $\cO$ there exists a $z'\in \cO$ so that $w\in\cW_{z'}$. The argument above then shows that $w\in\cW_{z'}$ for an open and dense subset of the set of adapted points in $\cO$.
\end{proof}

By Lemma \ref{Lemma cN_emptyset is realized in L_Q and normal in cN} we have $\cN_{\emptyset}=N_{L_{Q}}(\fa)=N_{L_{Q,\nc}}(\fa)\times MA$.
Every coroot $\alpha^{\vee}$ of a root $\alpha\in\Sigma(\fa,\fl_{Q,\nc})$ lies in $\fa_{\fh}$, and hence $\Ad(w)X-X\in\fa_{\fh}$ for every $w\in \cN_{\emptyset}$ and $X\in\fa$. Therefore, $\cN/\cN_{\emptyset}$ acts naturally on $\fa/\fa_{\fh}$. Note that the compression cone $\cC$ is stable under translation by elements in $\fa_{\fh}$. We write $p_{\fh}$ for the projection $\fa\to\fa/\fa_{\fh}$.

For an adapted point $z\in Z$ we define
$$
\cA_{z}
:=\{X+\fa_{\fh}\in\fa/\fa_{\fh}:\fh_{z,X}=\Ad(w)\fh_{\emptyset} \text{ for some } w\in\cN\}.
$$

\begin{Lemma}\label{Lemma Properties A_z}
Let $z\in Z$ be adapted. The following hold.
\begin{enumerate}[(i)]
\item\label{Lemma Properties A_z - item 1}
The point $z$ is admissible if and only if $\cA_{z}$ is dense in $\fa/\fa_{\fh}$.
\item\label{Lemma Properties A_z - item 2}
$\cA_{z}=\bigcup_{w\in\cW_{z}}w\cdot p_{\fh}(\cC)$.
\item\label{Lemma Properties A_z - item 3}
Let $w\in\cW_{z}$ and let $v\in\cN$ be so that $w=v\cN_{\emptyset}$. By Proposition \ref{Prop limits vs open orbits} and Lemma \ref{Lemma z adapted implies vz adapted} the point $v^{-1}\cdot z$ is adapted. Then
$$
\cA_{v^{-1}\cdot z}
=w^{-1}\cdot\cA_{z}.
$$
\item\label{Lemma Properties A_z - item 4}
There exists an open and dense subset $U$ of the set of adapted points in $Z$ (with respect to the subspace topology) so that $\cA_{z}\subseteq\cA_{z'}$ for all $z'\in U$.
\end{enumerate}
\end{Lemma}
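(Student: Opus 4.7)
The plan is to prove the four parts out of order: (iii) first by a direct computation of how limits transform under conjugation, then (ii) using Proposition \ref{Prop limits vs open orbits}, then (i) using the local constancy statement Proposition \ref{Prop Limits of subspaces} (\ref{Prop Limits of subspaces - item 3}), and finally (iv) as an immediate consequence of (ii) and Lemma \ref{Lemma W_z subseteq W_z' for z' in open and dense set}. No step presents a serious obstacle; the main recurring technicality is tracking $\fa_{\fh}$-invariance so that assertions in $\fa$ and in $\fa/\fa_{\fh}$ translate cleanly into each other.

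For (iii), since $v \in \cN$ normalizes both $\fa$ and $\fa_{\fh}$, the identity $\exp(tX) v^{-1} = v^{-1} \exp(t\Ad(v)X)$ implies
$$
\fh_{v^{-1}\cdot z, X} = \Ad(v^{-1})\fh_{z, \Ad(v)X}.
$$
The existence of $w' \in \cN$ with $\fh_{v^{-1}\cdot z, X} = \Ad(w')\fh_{\emptyset}$ is therefore equivalent to $\fh_{z, \Ad(v)X} = \Ad(vw')\fh_{\emptyset}$ for some $w' \in \cN$. Because $\cN_{\emptyset}$ acts trivially on $\fa/\fa_{\fh}$, the action of $\cN$ on $\fa/\fa_{\fh}$ descends to $\cW$, and the above equivalence becomes precisely $\cA_{v^{-1}\cdot z} = w^{-1}\cdot \cA_z$.

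For (ii), the inclusion $\subseteq$ follows from Proposition \ref{Prop limits vs open orbits} (\ref{Prop limits vs open orbits - item 3}): if $\fh_{z,X} = \Ad(w)\fh_{\emptyset}$ with $w \in \cN$, then $X \in \Ad(w)\cC$, and hence $X + \fa_{\fh} \in (w\cN_{\emptyset})\cdot p_{\fh}(\cC)$. For the reverse, let $v \in \cN$ represent $w \in \cW_z$ and take $X \in \Ad(v)\cC$; the same proposition yields $\fh_{z,X} = \Ad(vm)\fh_{\emptyset}$ with $vm \in \cN$, so $X + \fa_{\fh} \in \cA_z$. The only verification required is that $\Ad(v)\cC$ is $\fa_{\fh}$-invariant, which follows from $\cC + \fa_{\fh} = \cC$ (Lemma \ref{Lemma Properties of C_z with z adapted}) combined with $\Ad(v)\fa_{\fh} = \fa_{\fh}$.

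For (i), the forward direction is immediate: admissibility places every order-regular $X$ in $\tilde{\cA}_z := \{X \in \fa : \fh_{z,X} = \Ad(w)\fh_{\emptyset} \text{ for some } w \in \cN\}$, whose projection is already dense in $\fa/\fa_{\fh}$. For the converse, observe that $\tilde{\cA}_z$ is $\fa_{\fh}$-invariant: for $Y \in \fa_{\fh} \subseteq \fh_z$ one has $\Ad(\exp(tY))\fh_z = \fh_z$, hence $\fh_{z, X+Y} = \fh_{z, X}$. Thus $\tilde{\cA}_z = p_{\fh}^{-1}(\cA_z)$ is dense in $\fa$. Given any order-regular $X$, Proposition \ref{Prop Limits of subspaces} (\ref{Prop Limits of subspaces - item 3}) ensures $Y \mapsto \fh_{z,Y}$ is constant on the connected component of $X$ in the order-regular locus; density supplies a $Y$ in this component lying in $\tilde{\cA}_z$, so $\fh_{z,X} = \fh_{z,Y} = \Ad(w)\fh_{\emptyset}$ and $z$ is admissible. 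Finally, (iv) follows by combining (ii) with Lemma \ref{Lemma W_z subseteq W_z' for z' in open and dense set}: choosing $U$ so that $\cW_z \subseteq \cW_{z'}$ for all $z' \in U$, formula (ii) gives $\cA_z = \bigcup_{w \in \cW_z} w\cdot p_{\fh}(\cC) \subseteq \bigcup_{w \in \cW_{z'}} w\cdot p_{\fh}(\cC) = \cA_{z'}$.
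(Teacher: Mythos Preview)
Your proof is correct and follows essentially the same approach as the paper's. The only differences are cosmetic: the paper proves (ii) first and then derives (iii) by establishing $\cW_{v^{-1}\cdot z}=w^{-1}\cW_{z}$ and applying (ii), whereas you argue (iii) directly at the level of $\cA_{z}$; and the paper's treatment of (i) is terser, relying on the openness of $\cA_{z}$ supplied by (ii), while you spell out the $\fa_{\fh}$-invariance of $\tilde{\cA}_{z}$ explicitly.
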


\begin{proof}
The identity in (\ref{Lemma Properties A_z - item 2}) follows from Proposition \ref{Prop limits vs open orbits}.
The identity shows that $\cA_{z}$ is open. It follows from Proposition \ref{Prop Limits of subspaces} (\ref{Prop Limits of subspaces - item 3}) that $\cA_{z}$ is dense if and only if $p_{\fh}^{-1}(\cA_{z})$ contains all order-regular elements. The latter is true if and only if $z$ is admissible. This proves (\ref{Lemma Properties A_z - item 1}).

We move on to prove (\ref{Lemma Properties A_z - item 3}). Since $\fh_{v^{-1}\cdot z,X}=\Ad(v^{-1})\fh_{z,\Ad(v)X}$ for every $X\in\fa$, we have
$$
\cW_{v^{-1}\cdot z}
=\{w'\cN/\cN_{\emptyset}:\text{ there exist }X\in\fa \text{ so that }\fh_{z,X}=\Ad(ww')\fh_{\emptyset}\}
=w^{-1}\cW_{z}.
$$
The identity in (\ref{Lemma Properties A_z - item 3}) now follows from (\ref{Lemma Properties A_z - item 2}).

The assertion in (\ref{Lemma Properties A_z - item 4}) follows from (\ref{Lemma Properties A_z - item 2}) and Lemma \ref{Lemma W_z subseteq W_z' for z' in open and dense set}.
\end{proof}

In view of the Lemmas \ref{Lemma W_z subseteq W_z' for z' in open and dense set} and \ref{Lemma Properties A_z} it suffices to prove the existence of one admissible point in $Z$. For this we need an alternative characterization of admissible points.

\begin{Lemma}\label{Lemma limits in Ad(W)fh_emptyset determined by fa_z,X cap fa}
Let $z\in Z$ and let $X\in\fa$ be order regular. Then $\dim(\fa_{\fh})=\dim(\fh_{z,X}\cap\fa)$ if and only if there exists a $w\in N_{G}(\fa)$ so that $\fh_{z,X}=\Ad(w)\fh_{\emptyset}$.
\end{Lemma}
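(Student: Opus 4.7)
The $(\Leftarrow)$ direction is a direct check. If $\fh_{z,X}=\Ad(w)\fh_{\emptyset}$ for some $w\in N_{G}(\fa)$, then $\Ad(w)$ preserves $\fa$, so $\fh_{z,X}\cap\fa=\Ad(w)(\fh_{\emptyset}\cap\fa)$. From the definition $\fh_{\emptyset}=(\fl_{Q}\cap\fh_{z_{0}})+\overline{\fn}_{Q}$ for an adapted $z_{0}$, together with $\overline{\fn}_{Q}\cap\fa=\{0\}$ and Corollary \ref{Cor fa cap fh_z = fa cap fh_z'} (which gives $\fa\cap\fh_{z_{0}}=\fa_{\fh}$), one sees $\fh_{\emptyset}\cap\fa=\fa_{\fh}$. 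Hence $\dim(\fh_{z,X}\cap\fa)=\dim\fa_{\fh}$.

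For the $(\Rightarrow)$ direction, assume the dimensional equality. Proposition \ref{Prop Limits of subspaces}(ii) shows that $\fh_{z,X}$ is $\fa$-stable because $X$ is order-regular. Since $\fh_{z}$ lies in $\Ad(G)\fh_{z_{0}}$ for a fixed adapted $z_{0}$, the subalgebra $\fh_{z,X}$ belongs to the closure $\overline{\Ad(G)\fh_{z_{0}}}$. By Proposition \ref{Prop Limits are boundary degenerations} there exist $g\in G$, an adapted $z'$, and a face $\cF$ of $\overline{\cC}$ with
$$
\fh_{z,X}=\Ad(g)\fh_{z',\cF}.
$$

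The main step is to show $\cF=\overline{\cC}$. The $\fa$-stability of $\Ad(g)\fh_{z',\cF}$ means that $\Ad(g^{-1})\fa$ is a split Cartan subalgebra of $\fg$, of dimension $\dim\fa$, sitting inside $N_{\fg}(\fh_{z',\cF})$. By Lemma \ref{Lemma properties fh_(z,cF)} we have $N_{\fg}(\fh_{z',\cF})=\fh_{z',\cF}+\fa_{\cF}+N_{\fm}(\fh_{z',\cF})$, whose intersection with $\fa$ is $\fa_{\fh}+\fa_{\cF}=\fa_{\cF}$; here we use that every face of $\overline{\cC}$ contains the edge $\fa_{E}$, which in turn contains $\fa_{\fh}$ by Proposition \ref{Prop Edge of compression cone is normalizer}. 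Any split torus of $N_{\fg}(\fh_{z',\cF})$ meets its nilpotent radical (which contains the graph part of $\fh_{z',\cF}$) trivially and therefore embeds into the reductive Levi $(\fl_{Q}\cap\fh_{z'})+\fa_{\cF}+N_{\fm}(\fh_{z',\cF})$, whose maximal split abelian subalgebra has dimension $\dim\fa_{\cF}$. The inclusion $\Ad(g^{-1})\fa\subseteq N_{\fg}(\fh_{z',\cF})$ then forces $\dim\fa\leq\dim\fa_{\cF}$, whence $\fa_{\cF}=\fa$ and $\cF=\overline{\cC}$.

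Once $\cF=\overline{\cC}$, Lemma \ref{Lemma dependence on z in def fh_empty} produces $m\in M$ with $\fh_{z',\overline{\cC}}=\Ad(m)\fh_{\emptyset}$. Both $\fa$ and $\Ad(g^{-1})\fa$ are maximal split Cartan subalgebras inside the algebraic group $N_{G}(\fh_{z',\overline{\cC}})$, and standard conjugacy of maximal split tori yields $n\in N_{G}(\fh_{z',\overline{\cC}})$ with $\Ad(gn)\fa=\fa$, i.e., $gn\in N_{G}(\fa)$. One then concludes
$$
\fh_{z,X}=\Ad(g)\fh_{z',\overline{\cC}}=\Ad(gn)\fh_{z',\overline{\cC}}=\Ad(gnm)\fh_{\emptyset},
$$
so $w:=gnm\in N_{G}(\fa)\cdot M=N_{G}(\fa)$ has the desired property. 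The principal obstacle is the split-rank computation in the third paragraph, which requires identifying the Levi and nilpotent parts of $N_{\fg}(\fh_{z',\cF})$ carefully.
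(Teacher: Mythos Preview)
Your argument in the third paragraph has a genuine gap: it never uses the hypothesis $\dim(\fh_{z,X}\cap\fa)=\dim\fa_{\fh}$. If the reasoning were valid as written, it would prove $\cF=\overline{\cC}$ for \emph{every} order-regular $X$, hence that every adapted point is admissible---contradicting the whole point of Section~\ref{Section Admissible points}.

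The specific error is the parenthetical claim that the nilpotent radical of $N_{\fg}(\fh_{z',\cF})$ contains the graph part of $\fh_{z',\cF}$. This is false in general. If $\fa\in\supp_{z'}(\fg_{-\alpha})$ for some $\alpha\in\Sigma(Q)$ and the corresponding piece of $T'$ survives in $\fh_{z',\cF}$, then vectors $Y+T'(Y)$ with $Y\in\fg_{-\alpha}$ have nonzero $\fa$-component and are therefore not nilpotent; the graph cannot sit inside any nilpotent ideal. Consequently the Levi of $N_{\fg}(\fh_{z',\cF})$ is strictly larger than $(\fl_{Q}\cap\fh_{z'})+\fa_{\cF}+N_{\fm}(\fh_{z',\cF})$, and its split rank may exceed $\dim\fa_{\cF}$. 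Your inequality $\dim\fa\leq\dim\fa_{\cF}$ does not follow.

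The paper repairs this by working in the quotient $N_{G}(\fh_{z,X})/H_{z,X}$ rather than in $N_{\fg}(\fh_{z',\cF})$ itself. Passing to the quotient kills $\fh_{z,X}$ (and in particular the graph part), and the identity component of the quotient is $\exp\big(\fa_{\cF}\cap\fa_{\fh}^{\perp}\big)\times(\text{compact})$, whose split rank is honestly $\dim(\fa_{\cF}/\fa_{\fh})$. This is where the hypothesis enters: the image of $\fa$ in the quotient has dimension $\dim\fa-\dim(\fh_{z,X}\cap\fa)$, which equals $\dim(\fa/\fa_{\fh})$ precisely under the assumed dimensional equality, and the resulting bound $\dim(\fa/\fa_{\fh})\leq\dim(\fa_{\cF}/\fa_{\fh})$ forces $\cF=\overline{\cC}$.

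Your final step (producing $w\in N_{G}(\fa)$ via conjugacy of maximal split tori in $N_{G}(\fh_{\emptyset})$) is a legitimate alternative to the paper's route, which instead uses the Bruhat decomposition $g=nw\overline{n}$ together with $A$-stability of $\fh_{z,X}$ and $\Ad(w)\fh_{\emptyset}$ to strip off the unipotent factors.
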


\begin{Rem}
It follows from Lemma \ref{Lemma fh_(z,X)=Ad(v)fh_empty implies v in cN} and Lemma \ref{Lemma limits in Ad(W)fh_emptyset determined by fa_z,X cap fa} that an adapted point $z\in Z$ is admissible if and only if $\dim(\fh_{z,X}\cap\fa)=\dim (\fa_{\fh})$ for every order regular element $X\in\fa$.
\end{Rem}

\begin{proof}[Proof of Lemma \ref{Lemma limits in Ad(W)fh_emptyset determined by fa_z,X cap fa}]
For every $w\in N_{G}(\fa)$ we have $\Ad(w)\fh_{\emptyset}\cap\fa=\Ad(w)\fa_{\fh}$. Therefore, it trivially follows that $\fh_{z,X}\cap\fa=\Ad(w)\fa_{\fh}$ if  $\fh_{z,X}=\Ad(w)\fh_{\emptyset}$ for some $w\in N_{G}(\fa)$, and hence $\dim(\fh_{z,X}\cap\fa)=\dim(\fa_{\fh})$.
It remains to prove the other implication.

Assume that $\dim(\fh_{z,X}\cap\fa)=\dim(\fa_{\fh})$. By Lemma \ref{Prop Limits are boundary degenerations} there exist an adapted  point $y\in Z$, an element $g\in G$ and a face $\cF$ of $\overline{\cC}$ so that $\fh_{z,X}=\Ad(g)\fh_{y,\cF}$.
It follows from Lemma \ref{Lemma properties fh_(z,cF)} that
$$
N_{\fg}(\fh_{z,X})
=\fh_{z,X}+\Ad(g)\fa_{\cF}+\Ad(g)N_{\fm}(\fh_{y,\cF}).
$$
Let $H_{z,X}$ be the connected subgroup of $G$ with Lie algebra $\fh_{z,X}$ and let $\Gamma$ be the open connected subgroup of $N_{G}(\fh_{z,X})/H_{z,X}$.
The open connected subgroup of $N_{\fg}(\fh_{z,X})$ is equal to $\exp\big(\Ad(g)\fa_{\cF}\big) M_{0} H_{z,X}$, where $M_{0}$ is the open connected subgroup of $gN_{M}(\fh_{y,\cF})g^{-1}$. Like in (\ref{eq MA cap H=(M cap H)(A cap H)}) we have
\begin{align*}
\exp\big(\Ad(g)\fa_{\cF}\big) M_{0}\cap H_{z,X}
&=\exp\big(\Ad(g)\fa_{\cF}\cap\fh_{z,X}\big)\big(M_{0}\cap H_{z,X}\big)\\
&=g\exp\big(\fa_{\cF}\cap\fh_{y,\cF}\big)g^{-1}\big(M_{0}\cap H_{z,X}\big).
\end{align*}
In view of Lemma \ref{Lemma properties fh_(z,cF)} we have $\fa_{\cF}\cap\fh_{y,\cF}=\fa_{\fh}$, and hence
$$
\exp\big(\Ad(g)\fa_{\cF}\big) M_{0}\cap H_{z,X}
=g\exp\big(\fa_{\fh}\big)g^{-1}\big(M_{0}\cap H_{z,X}\big).
$$
It follows that
\begin{align*}
\Gamma
&\simeq\Big(\exp\big(\Ad(g)\fa_{\cF}\big) M_{0}\Big)/\Big(\exp\big(\Ad(g)\fa_{\cF}\big) M_{0}\cap H_{z,X}\Big)\\
&\simeq g\exp\Big(\fa_{\cF}\cap \fa_{\fh}^{\perp}\Big)g^{-1}\times M^{\circ},
\end{align*}
where $M^{0}$ is compact. In view of Proposition \ref{Prop Limits of subspaces} (\ref{Prop Limits of subspaces - item 2}) the subalgebra $\fh_{z,X}$ is $\fa$-stable, and hence $\fa\subseteq N_{\fg}(\fh_{z,X})$. Since $\dim(\fh_{z,X}\cap\fa)=\dim(\fa_{\fh})$, the group $\Gamma$ contains a split abelian subgroup of dimension $\dim (\fa/\fa_{\fh})$ and hence
$$
\dim\big(\fa_{\cF}\cap\fa_{\fh}^{\perp}\big)
=\dim\big(\fa/\fa_{\fh}\big).
$$
This implies that $\cF=\overline{\cC}$, and hence $\fh_{z,X}=\Ad(g)\fh_{\emptyset}$.

Note that $\fh_{\emptyset}$ contains the subalgebra $\overline{\fn}_{P}$.
By the Bruhat decomposition of $G$ we may write $g=nw\overline{n}$ with $n\in N_{P}$, $w\in N_{G}(\fa)$ and $\overline{n}\in\overline{N}_{P}$. Then $\overline{n}$ normalizes $\fh_{\emptyset}$ and thus $\fh_{z,X}=\Ad(nw)\fh_{\emptyset}$.
Since both $\fh_{z,X}$ and $\Ad(w)\fh_{\emptyset}$ are normalized by $A$, we even have $\fh_{z,X}=\Ad(w)\fh_{\emptyset}$.
\end{proof}

\begin{Lemma}\label{Lemma Commutators not 0}
Let $\alpha\in\Sigma$. The following hold.
\begin{enumerate}[(i)]
\item\label{Lemma Commutators not 0 - item 1}
If $U\in\fg_{\alpha}\setminus\{0\}$ and $V\in\fg_{-\alpha}\setminus\{0\}$, then $\ad^{2}(U)V\neq 0$.
\item\label{Lemma Commutators not 0 - item 2}
If $U\in\fg_{\alpha}\setminus\{0\}$ and $V\in\fg_{-2\alpha}\setminus\{0\}$, then $\ad^{4}(U)V\neq 0$.
\item\label{Lemma Commutators not 0 - item 3}
If $U\in\fg_{2\alpha}\setminus\{0\}$ and $V\in\fg_{-\alpha}\setminus\{0\}$, then $\ad(U)V\neq 0$.
\end{enumerate}
\end{Lemma}

\begin{proof}
Assume that $U\in\fg_{\alpha}$.
Let $\ff$ be the Lie subalgebra of $\fg$ generated by $U$ and $\theta U$ and let
$$
\cV
:=\fg_{-2\alpha}\oplus\fg_{-\alpha}\oplus\fm\oplus\R\alpha^{\vee}\oplus\fg_{\alpha}\oplus\fg_{2\alpha}.
$$
Note that $\ff$ is isomorphic to $\mathfrak{sl}(2,\R)$ and that $\cV$ is a representation of $\ff$. Now $\cV$ decomposes as
$$
\cV
=\cV_{0}\oplus\cV_{\alpha}\oplus\cV_{2\alpha},
$$
where $\cV_{0}$ is a finite sum of copies of the trivial representation, $\cV_{\alpha}$ is a finite sum of copies of the highest weight representation with highest weight $\alpha$ (i.e., $\ff$), and $\cV_{2\alpha}$ is a finite sum of copies of the highest weight-representation of $\ff$ with highest weight $2\alpha$.

The kernel of $\ad(U)$ in $\cV_{\alpha}$ is equal to the space of highest weight vectors and hence is contained in $\fg_{\alpha}$. This implies that the kernel of $\ad^{2}(U)$ in $\cV_{\alpha}$ is contained in $\fm\oplus\R\alpha^{\vee}\oplus\fg_{\alpha}$.  In a similar fashion we deduce that the kernel of $\ad^{2}(U)$ in $\cV_{2\alpha}$ is contained in $\fg_{\alpha}\oplus\fg_{2\alpha}$. The assertion in (\ref{Lemma Commutators not 0 - item 1}) now follows as $\fg_{-\alpha}\subseteq\cV_{\alpha}\oplus\cV_{2\alpha}$.

For (\ref{Lemma Commutators not 0 - item 2}) we continue the analysis and conclude in the same manner as before that the kernel of $\ad^{4}(U)$ in $\cV_{2\alpha}$ is contained in $\fg_{-\alpha}\oplus\fm\oplus\R\alpha^{\vee}\oplus\fg_{\alpha}\oplus\fg_{2\alpha}$. The assertion now follows as $\fg_{-2\alpha}\subseteq\cV_{2\alpha}$.

To prove (\ref{Lemma Commutators not 0 - item 3}), assume that $U\in \fg_{2\alpha}$. Let $\fe$ be the subalgebra of $\fg$ generated by $U$ and $\theta(U)$ and let
$$
\cV'
:=\fg_{-\alpha}\oplus\fg_{\alpha}.
$$
Now $\fe$ is isomorphic to $\mathfrak{sl}(2,\R)$ and $\cV'$ is a representation of $\fe$. It is a sum of copies of the highest weight representation of $\fe$ with highest weight $\frac{1}{2}\alpha$. The kernel of $\ad(U)$ consists of highest weight-vectors and hence is contained in $\fg_{\alpha}$. This proves the final assertion.
\end{proof}

We now prove the existence of admissible points under a very restrictive assumption on $Z$.

\begin{Lemma}\label{Lemma Admissible points in case C contains half-space}
Assume that the compression cone $\cC$ of $Z$ contains an open half-space. Then every open $P$-orbit in $Z$ contains an admissible point.
\end{Lemma}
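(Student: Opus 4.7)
The plan is to combine the admissibility criterion of Lemma~\ref{Lemma limits in Ad(W)fh_emptyset determined by fa_z,X cap fa} --- an adapted $z$ is admissible iff $\dim(\fh_{z,X}\cap\fa)=\dim\fa_{\fh}$ for every order-regular $X\in\fa$ --- with a sharp simplification of the Grassmannian closure $\overline{\Ad(G)\fh_{z}}$ made possible by the half-space hypothesis. Assuming $\overline{\cC}$ is a proper half-space (the case $\overline{\cC}=\fa$ being trivial), one has $\cS_{z}=\{\alpha_{0}\}$ for a single indecomposable $\alpha_{0}$, so that $\overline{\cC}=\{X:\alpha_{0}(X)\leq 0\}$, $\fa_{E}=\ker\alpha_{0}$, and $\overline{\cC}$ has only two faces, namely itself and $\fa_{E}$. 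A direct substitution into formula~(\ref{eq Formula for fh_(z,F)}) yields $\fh_{y,\overline{\cC}}=\Ad(m)\fh_{\emptyset}$ for some $m\in M$, while the identity $\cM_{y,\fa_{E}}=\cM_{y}$ --- which holds because every element of $\cM_{y}$ is a non-negative multiple of $\alpha_{0}$ and hence vanishes on $\fa_{E}$ --- gives $\fh_{y,\fa_{E}}=\fh_{y}$ for every adapted $y\in Z$. Proposition~\ref{Prop Limits are boundary degenerations} therefore collapses to
$$
\overline{\Ad(G)\fh_{z}}
=\Ad(G)\fh_{\emptyset}\,\cup\,\Ad(G)\fh_{z}.
$$

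For $X\in\cC$ the admissibility criterion is automatic, so the content is when $X$ is order-regular with $\alpha_{0}(X)>0$. In that case the limit $\fh_{z,X}$ lies in the above two-orbit union. If $\fh_{z,X}\in\Ad(G)\fh_{\emptyset}$, the Bruhat decomposition argument from the proof of Lemma~\ref{Lemma limits in Ad(W)fh_emptyset determined by fa_z,X cap fa} (which applies because $\overline{\fn}\subseteq\fh_{\emptyset}$) produces $w\in N_{G}(\fa)$ with $\fh_{z,X}=\Ad(w)\fh_{\emptyset}$, so $\dim(\fh_{z,X}\cap\fa)=\dim\fa_{\fh}$ and Lemma~\ref{Lemma fh_(z,X)=Ad(v)fh_empty implies v in cN} forces $w\in\cN$. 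The alternative $\fh_{z,X}=\Ad(g)\fh_{z}$ with $g\in G$ is highly constrained: the $\fa$-stability of the limit (Proposition~\ref{Prop Limits of subspaces}~(\ref{Prop Limits of subspaces - item 2})) together with Corollary~\ref{Cor Normalizer of fh} and Proposition~\ref{Prop Edge of compression cone is normalizer}~(\ref{Prop Edge of compression cone is normalizer - item 2}) forces $\Ad(g^{-1})\fa\subseteq\fh_{z}+\fa_{E}+N_{\fm}(\fh_{z})$.

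To ensure admissibility in each open $P$-orbit $\cO$ I would then invoke the semicontinuity of Lemma~\ref{Lemma Properties A_z}~(\ref{Lemma Properties A_z - item 4}): it is enough to produce one adapted $z_{*}\in\cO$ whose $\cW_{z_{*}}$ contains an element $w$ acting non-trivially on $\fa/\fa_{\fh}$, for then $w\cdot p_{\fh}(\cC)$ and $p_{\fh}(\cC)$ are distinct half-spaces of $\fa/\fa_{\fh}$, the set $\cA_{z_{*}}=\bigcup_{w\in\cW_{z_{*}}}w\cdot p_{\fh}(\cC)$ is dense in $\fa/\fa_{\fh}$, and Lemma~\ref{Lemma Properties A_z}~(\ref{Lemma Properties A_z - item 1}) yields admissibility of $z_{*}$. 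The heart of the proof --- and the step I expect to be the main obstacle --- is the explicit production of such a non-trivial $w$. The natural strategy is to pick a root $\alpha\in\Sigma$ whose restriction to $\fa/\fa_{\fh}$ is a positive multiple of $\alpha_{0}$, a representative $\sigma_{\alpha}\in N_{G}(\fa)$ of the Weyl reflection $s_{\alpha}$, and then to verify --- using the symmetry identity~(\ref{eq Symmetry in T-map}) satisfied by $T_{z}$ together with the $\mathfrak{sl}_{2}$-commutator computations of Lemma~\ref{Lemma Commutators not 0} --- that for a suitable choice of $z_{*}\in\cO$ the conjugate $\Ad(\sigma_{\alpha})\fh_{\emptyset}$ genuinely appears as a limit $\fh_{z_{*},X}$ with $\alpha_{0}(X)>0$. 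This rank-one analysis on the unique wall of $\overline{\cC}$ is where the half-space hypothesis is used essentially.
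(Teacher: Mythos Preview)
Your proposal correctly sets up the framework --- the dimension criterion, the two-face structure of $\overline{\cC}$, and the collapse of Proposition~\ref{Prop Limits are boundary degenerations} to $\overline{\Ad(G)\fh_{z}}=\Ad(G)\fh_{\emptyset}\cup\Ad(G)\fh_{z}$ --- but it leaves the essential step as an acknowledged black box. Your Case~1/Case~2 dichotomy does not resolve anything: Case~2 (where $\fh_{z,X}\in\Ad(G)\fh_{z}$) is precisely what occurs for a non-admissible adapted $z$, and the constraint $\Ad(g^{-1})\fa\subseteq N_{\fg}(\fh_{z})$ does not exclude it. You then say the ``main obstacle'' is producing a non-trivial $w$, and you propose to exhibit a Weyl reflection $\sigma_{\alpha}$ and verify that $\Ad(\sigma_{\alpha})\fh_{\emptyset}$ appears as a limit --- but this verification is the entire content of the lemma, and the proposal does not carry it out. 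Your suggestion also presupposes that $\alpha_{0}$ is proportional to a root in $\Sigma$, which is not given a priori.

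The paper's proof is more direct and never attempts to identify $w$. Starting from a non-admissible adapted $z$, it first observes that some line $(\R Y)_{X}$ with $Y\in\cG(T_{z})$ must land in $\fa^{\circ}$, forcing $\fa\in\supp_{z}(\fg_{-\alpha})$ for a genuine root $\alpha\in\Sigma(Q)$; this yields $\cM_{z}\subseteq\R_{>0}\alpha$ and hence $\Im(\Phi_{z})\subseteq\fg_{\alpha}\oplus\fg_{2\alpha}$. Via the explicit formula $\phi_{k}(X)=\frac{1}{k\alpha(X)}p_{k\alpha}T_{z}^{\perp}(X)$, it then constructs a one-parameter family of adapted points $n_{t}\cdot z$ with $n_{t}=\exp(C_{\alpha}+C_{2\alpha}+tU_{\alpha}+tU_{2\alpha})$, obtained by evaluating $\Phi_{z}$ along a ray approaching $\ker\alpha$ transversally; a separate argument is needed to show $U_{\alpha}\neq 0$, i.e.\ that $p_{\alpha}\circ T_{z}^{\perp}$ does not vanish on all of $\ker\alpha\cap\fa^{\circ}$. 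The $\mathfrak{sl}_{2}$-computation of Lemma~\ref{Lemma Commutators not 0} then shows that for $t\gg 1$ every nonzero $Y\in\fg_{-\alpha}\oplus\fg_{-2\alpha}$ satisfies $(p_{\alpha}+p_{2\alpha})\Ad(n_{t})(Y+T_{z}Y)\neq 0$ (the leading terms being $t^{2}\ad^{2}(U_{\alpha})Y_{-\alpha}$ and $t^{4}\ad^{4}(U_{\alpha})Y_{-2\alpha}$), so the limits $(\R\,\cdot\,)_{X}$ land in root spaces rather than $\fa$, giving $\fh_{n_{t}\cdot z,X}\cap\fa=\fa_{\fh}$ for every order-regular $X\notin\cC$. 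This is exactly the computation you gesture at but do not perform.
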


\begin{proof}
If $\cC=\fa$, then every adapted point is admissible. Therefore, we assume that $\cC$ is equal to a half-space. Let $z\in Z$ be adapted. If $z$ is admissible, then we are done. Assume therefore that $z$ is not admissible. In view of Lemma \ref{Lemma limits in Ad(W)fh_emptyset determined by fa_z,X cap fa} there exists an order-regular element $X\in\fa$ so that $\fa_{\fh}\subsetneq\fh_{z,X}\cap\fa$. This implies that there exists a $Y\in\overline{\fn}_{Q}$ so that the limit $\R\big(Y+T_{z}(Y)\big)_{X}$ is a line in $\fa^{\circ}$. Now $\fa\in{\supp}_{z}(Y)$, and hence there exists a root $\alpha\in\Sigma(Q)$ so that $\fa\in{\supp}_{z}(\fg_{-\alpha})$. It follows that $\alpha\in\cM_{z}$.
Since $\cC$ is a half-space, the negative dual cone $-\cC^{\vee}$ is a half-line. As $-\cC^{\vee}$ is generated by $\cM_{z}$, it follows that
\begin{equation}\label{eq M_z subseteq R_>0 alpha}
\cM_{z}
\subseteq\R_{>0}\alpha.
\end{equation}
Note that $\alpha$ vanishes on $\fa_{\fh}$.
The root $\alpha$ may not be reduced. Without loss of generality we may however assume that $\alpha$ is the shortest element in $\R\alpha\cap\Sigma(Q)$ so that $\fa\in{\supp}_{z}(\fg_{-\alpha})$.

The fact that $\fa$ occurs in the support of some element $Y$ implies that there exist $X\in \fa^{\circ}$ so that $X\notin \fh_{z}^{\perp}$. It follows from Proposition \ref{Prop parametrization of adapted points} that the function $\Phi_{z}$ defined in that proposition is non-trivial.
The only roots in $\Sigma(Q)$ that are non-positive on $\overline{\cC}$ are multiplies of $\alpha$, and hence by Lemma \ref{Lemma bound on image of Phi_z}
$$
\Im(\Phi_{z})
\subseteq \bigoplus_{\beta\in\Sigma(Q)\cap\R\alpha}\fg_{\beta}
$$
We claim that in fact
$$
\Im(\Phi_{z})
\subseteq \fg_{\alpha}+\oplus\fg_{2\alpha}.
$$
To prove the claim we use Lemma \ref{Lemma bound on image of T^perp}, from which it follows that
$$
\Im\big(T_{z}^{\perp}\big)
\subseteq\fg_{\alpha}\oplus\fg_{2\alpha}.
$$
The claim now follows from (\ref{eq defining property Phi}).
We define the maps
$$
\phi_{k}:\fa^{\circ}_{\reg}\to\fg_{k\alpha}
\qquad(k=1,2)
$$
to be determined by $\Phi_{z}=\phi_{1}+\phi_{2}$.
By assumption $\phi_{1}$ is not identically equal to $0$.
We can derive explicit expressions for $\phi_{1}$ and $\phi_{2}$ from (\ref{eq defining property Phi}). Using that
$$
\Ad\big(\exp(Y)\big)X
=\sum_{k=0}^{\infty}\frac{1}{k!}\ad^{k}(Y)X
\qquad(X\in\fa, Y\in\fn_{Q}),
$$
we obtain for $X\in\fa^{\circ}$
\begin{align*}
T_{z}^{\perp}(X)
&=\Ad\Big(\exp\big(-\phi_{1}(X)-\phi_{2}(X)\big)\Big)X-X\\
&=-[\phi_{1}(X)+\phi_{2}(X),X]+\frac{1}{2}[\phi_{1}(X)+\phi_{2}(X),[\phi_{1}(X)+\phi_{2}(X),X]]\\
&=\alpha(X)\Big(\phi_{1}(X)+2\phi_{2}(X)\Big)-\alpha(X)[\phi_{1}(X)+\phi_{2}(X),\phi_{1}(X)+2\phi_{2}(X)]\\
&=\alpha(X)\Big(\phi_{1}(X)+2\phi_{2}(X)\Big).
\end{align*}
It follows that
\begin{equation}\label{eq Formula phi_k}
\phi_{k}(X)
=\frac{1}{k\alpha(X)}p_{k\alpha}T_{z}^{\perp}(X)
\qquad\big(k\in\{1,2\}, X\in\fa^{\circ}, \alpha(X)\neq 0\big),
\end{equation}
where $p_{k\alpha}:\fg\to\fg_{k\alpha}$ is the projection along the Bruhat decomposition.

We claim that there exists an element $X\in\ker(\alpha)\cap\fa^{\circ}$ so that $p_{\alpha}\big(T_{z}^{\perp}(X)\big)\neq 0$.
To prove the claim we aim at a contradiction and assume that $\ker(\alpha)\cap\fa^{\circ}\subseteq\ker(p_{\alpha}\circ T_{z}^{\perp})$.
Since $\phi_{1}$ is not identically equal to $0$ it follows from Lemma \ref{Lemma bound on image of T^perp} that $\fa\in{\supp}_{z}(\fg_{-\alpha})$. Therefore, not all of $\fa^{\circ}$ is contained in $\big(\cG(T_{z}\big|_{\fg_{-\alpha}})\big)^{\perp}$. Moreover, if $X\in \fa^{\circ}$ is not contained in $\big(\cG(T_{z}\big|_{\fg_{-\alpha}})\big)^{\perp}$ then there exists a $Y_{-\alpha}\in\fg_{-\alpha}$  so that
$$
B\big(X,Y_{-\alpha}+T_{z}(Y_{-\alpha})\big)\neq 0.
$$
However,
$$
B\big(X+T_{z}^{\perp}(X),Y_{-\alpha}+T_{z}(Y_{-\alpha})\big)
=0.
$$
It follows that
\begin{align*}
B\big(T_{z}^{\perp}(X),Y_{-\alpha}\big)
&=B\big(T_{z}^{\perp}(X),Y_{-\alpha}+T_{z}(Y_{-\alpha})\big)\\
&=B\big(X+T_{z}^{\perp}(X),Y_{-\alpha}+T_{z}(Y_{-\alpha})\big)-B\big(X,Y_{-\alpha}+T_{z}(Y_{-\alpha})\big)
\neq 0
\end{align*}
For the first equality we used that $T_{z}^{\perp}(X)\in \fn_{Q}$ and $T_{z}(Y_{-\alpha})\in \fq$, so that
$$
B\big(T_{z}^{\perp}(X),T_{z}(Y_{-\alpha})\big)
=0.
$$
It follows that $p_{\alpha}\big( T_{z}^{\perp}(X)\big)\neq 0$ and thus the map $p_{\alpha}\circ T_{z}^{\perp}$ is not identically equal to $0$.  As $\ker(\alpha)\cap\fa^{\circ}$ has codimension $1$ in $\fa^{\circ}$, it follows that $\ker(\alpha)\cap\fa^{\circ}=\ker(p_{\alpha}\circ T_{z}^{\perp})$. Now
$$
\fa^{\circ}\cap\fh_{z}^{\perp}
=\ker(T_{z}^{\perp})
\subseteq\ker(p_{\alpha}\circ T_{z}^{\perp})
=\ker(\alpha)\cap\fa^{\circ}.
$$
This implies that $\alpha$ vanishes on $\fa^{\circ}\cap\fh_{z}^{\perp}$ and hence $\fa^{\circ}_{\reg}\cap\fh_{z}^{\perp}=\emptyset$.
This is in contradiction with the assumption that $z$ is adapted, and hence the claim is proven.

We now fix an element $X\in\ker(\alpha)\cap\fa^{\circ}$ so that $p_{\alpha}\big(T_{z}^{\perp}(X)\big)\neq 0$. Let $U_{\alpha}, C_{\alpha}\in\fg_{\alpha}$ and $U_{2\alpha}, C_{2\alpha}\in\fg_{2\alpha}$ be so that
$$
T_{z}^{\perp}(X)
=2U_{\alpha}+4U_{2\alpha}
\quad\text{and}\quad
T_{z}^{\perp}(\alpha^{\vee})
=2C_{\alpha}+4C_{2\alpha}.
$$
If $t>0$ is sufficiently large, then $X+\frac{1}{t}\alpha^{\vee}\in\fa^{\circ}_{\reg}$.
By (\ref{eq Formula phi_k}) we then have for $t\gg1$
$$
\phi_{k}\big(X+\frac{1}{t}\alpha^{\vee}\big)
=t U_{k\alpha}+C_{k\alpha}.
$$
For $t\in\R$ define
\begin{equation}\label{eq Def n_t}
n_{t}
:=\exp\big(C_{\alpha}+C_{2\alpha}+tU_{\alpha}+tU_{2\alpha}\big).
\end{equation}
Note that for sufficiently large $t>0$
$$
n_{t}
=\exp\Big(\Phi_{z}\big(X+\frac{1}{t}\alpha^{\vee}\big)\Big),
$$
and hence $n_{t}\cdot z$ is adapted. We claim that $n_{t}\cdot z$ is admissible for sufficiently large $t>0$. Let $X\in \fa$ be order-regular. We will show that $\fh_{n_{t}\cdot z,X}\cap\fa=\fa_{\fh}$ for sufficiently large $t>0$. The claim then follows from Lemma \ref{Lemma limits in Ad(W)fh_emptyset determined by fa_z,X cap fa} and Lemma \ref{Lemma fh_(z,X)=Ad(v)fh_empty implies v in cN}.

Define
$$
\ff
:=\fm\oplus\fa\oplus\bigoplus_{\beta\in \R\alpha\cap\Sigma}\fg_{\beta}
\quad\text{and}\quad
\cE
:=\bigoplus_{\beta\in\Sigma(Q)\setminus\R\alpha}\fg_{-\beta}\oplus\fg_{\beta}.
$$
It follows from (\ref{eq M_z subseteq R_>0 alpha}) that
$$
T_{z}(\fg_{-\beta})\subseteq\bigoplus_{\gamma\in\big(\beta+\R\alpha\big)\cap \big(\Sigma(Q)\cup\{0\}\big)}\fg_{\gamma}
\qquad\big(\beta\in\Sigma(Q)\big),
$$
where $\fg_{0}=\fm\oplus\fa$. In particular, $\cG\big(T_{z}\big|_{\fg_{-\beta}}\big)$ is contained in $\ff$ if and only if $\beta\in\R_{>0}\alpha$. It follows that $\fh_{z}$ decomposes as
$$
\fh_{z}
=(\fl_{Q}\cap\fh_{z})\oplus\cG(T_{z})
=(\fl_{Q}\cap\fh_{z})\oplus\big(\ff\cap\cG(T_{z})\big)\oplus\big(\cE\cap\cG(T_{z})\big).
$$
Now $\ff$ is a Lie subalgebra of $\fg$, which normalizes $\cE$ and centralizes $\fl_{Q}\cap\fh_{z}$.
Therefore, as $n_{t}\in \exp(\ff)$, we have
$$
\fh_{n_{t}\cdot z}
=\Ad(n_{t})\fh_{z}
=(\fl_{Q}\cap\fh_{z})\oplus\big(\ff\cap \Ad(n_{t})\cG(T_{z})\big)\oplus\big(\cE\cap\Ad(n_{t})\cG(T_{z})\big).
$$
Since $\fl_{Q}$, $\ff$ and $\cE$ are $\fa$-stable
$$
\fh_{n_{t}\cdot z,X}\cap\fa
=\fa_{\fh}\oplus \Big(\fa\cap \big(\ff\cap \Ad(n_{t})\cG(T_{z})\big)_{X}\Big).
$$
It remains to prove that
\begin{equation}\label{eq fa cap (Ad(n_t)(f cap G(T)))_X=0}
\fa\cap \big(\ff\cap \Ad(n_{t})\cG(T_{z})\big)_{X}
=\{0\}.
\end{equation}

\medbreak

For the proof of (\ref{eq fa cap (Ad(n_t)(f cap G(T)))_X=0}) we distinguish between two cases: the case that $\frac{1}{2}\alpha$ is not a root, and the case that $\frac{1}{2}\alpha$ is a root.

We first assume that $\frac{1}{2}\alpha$ is not a root.
For $Y=Y_{-\alpha}+Y_{-2\alpha}\in\fg_{-\alpha}$ and $t\in\R$ we set
\begin{align*}
P_{1}(Y,t)
&:=p_{\alpha}\Big(\Ad(n_{t})\big(Y+T_{z}(Y)\big)\Big)-\frac{t^{3}}{6}\ad(U_{\alpha})^{3}Y_{-2\alpha}\in \fg_{\alpha},\\
P_{2}(Y,t)
&:=p_{2\alpha}\Big(\Ad(n_{t})\big(Y+T_{z}(Y)\big)\Big)-\frac{t^{4}}{24}\ad(U_{\alpha})^{4}Y_{-2\alpha}\in \fg_{2\alpha}.
\end{align*}
Both $P_{1}$ and $P_{2}$ depend linearly on the first variable and are vector valued polynomial functions in the second. The degrees of $P_{1}(Y,\dotvar)$ and $P_{2}(Y,\dotvar)$ are at most $2$ and $3$ respectively. By Lemma \ref{Lemma Commutators not 0} we have $\ad(U_{\alpha})^{4}Y_{-2\alpha}\neq 0$ if $Y_{-2\alpha}\neq 0$ and $\ad(U_{\alpha})^{2}Y_{-\alpha}\neq 0$ if $Y_{-\alpha}$. Therefore, for every $Y\neq 0$  the polynomial function
$$
P_{Y}:t\mapsto\frac{t^{3}}{6}\ad(U_{\alpha})^{3}Y_{-2\alpha}+P_{1}(Y,t)+\frac{t^{4}}{24}\ad(U_{\alpha})^{4}Y_{-2\alpha}+P_{2}(Y,t)
$$
is non-constant. Moreover,  if we restrict to $Y$ in the sphere
$$
S:=\{Y\in\fg_{-\alpha}\oplus\fg_{-2\alpha}: -B(Y,\theta Y)=1\},
$$
then the vector-valued coefficients of $P_{Y}$ are uniformly bounded. Therefore, there exists an $r>0$ so that $P_{Y}(t)\neq 0$ for every $Y\in S$ and $t>r$. We claim that (\ref{eq fa cap (Ad(n_t)(f cap G(T)))_X=0}) holds for $t>r$.

To prove the claim, we note for every non-zero $Y\in \fg_{-\alpha}\oplus\fg_{-2\alpha}$ we have
$$
(p_{\alpha}+p_{2\alpha})\Big(\Ad(n_{t})\big(Y+T_{z}(Y)\big)\Big)
=P_{Y}(t)
\neq 0.
$$
Therefore, if $\alpha(X)>0$, then
the limit \begin{equation}\label{eq Limit of Ad(n_t)(Y+T(Y))}
\Big(\R\Ad(n_{t})\big(Y+T{z}(Y)\big)\Big)_{X}
\end{equation}
is contained in $\fg_{\alpha}\oplus\fg_{2\alpha}$. If $\alpha(X)<0$, then (\ref{eq Limit of Ad(n_t)(Y+T(Y))})
is equal to $\R Y_{-2\alpha}$ if $Y_{-2\alpha}\neq 0$ and $\R Y_{-\alpha}$ otherwise. In particular, the limit (\ref{eq Limit of Ad(n_t)(Y+T(Y))}) is not contained in $\fa$.
It is easily seen from (\ref{eq formula for E_X}) that a limit of a subspace is spanned by the limits of all lines in the subspace. Therefore,
$$
\Big(\Ad(n_{t})\big(\ff\cap\cG(T{z})\big)\Big)_{X}
$$
is spanned by the lines (\ref{eq Limit of Ad(n_t)(Y+T(Y))}) with $Y\in S$. It follows that (\ref{eq fa cap (Ad(n_t)(f cap G(T)))_X=0}) holds, and thus we have proven that $n_{t}\cdot z$ is admissible for $t\gg1$ in case $\frac{1}{2}\alpha$ is not a root.

We move on to the second case and assume that $\frac{1}{2}\alpha$ is a root. Now $2\alpha$ is not a root and therefore (\ref{eq Def n_t}) simplifies to
$$
n_{t}
=\exp\big(C_{\alpha}+tU_{\alpha}\big).
$$
For every $Y=Y_{-\alpha/2}+Y_{-\alpha}\in\fg_{-\frac{1}{2}\alpha}\oplus\fg_{-\alpha}$ and $t\in\R$
\begin{align*}
P_{\frac{1}{2}}(Y,t)
&:=p_{\frac{1}{2}\alpha}\Big(\Ad(n_{t})\big(Y+T_{z}(Y)\big)\Big)-t \ad(U_{\alpha})Y_{-\alpha/2}\in\fg_{\frac{1}{2}\alpha},\\
P_{1}(Y,t)
&:=p_{\alpha}\Big(\Ad(n_{t})\big(Y+T_{z}(Y)\big)\Big)-\frac{t^{2}}{2}\ad(U_{\alpha})^{2}Y_{-\alpha}
\end{align*}
define functions that are linear in the first and polynomial in the second variable. In fact $P_{\frac{1}{2}}(Y,\dotvar)$ is constant and the degree of $P_{1}(Y,\dotvar)$ is at most $1$.
By Lemma \ref{Lemma Commutators not 0} we have that $\ad(U_{\alpha})Y_{-\alpha/2}\neq 0$ if $Y_{-\alpha/2}\neq 0$, and $\ad(U_{\alpha})^{2}Y_{-\alpha}\neq 0$ if $Y_{-\alpha}\neq 0$. It follows that the polynomial function
\begin{align*}
P_{Y}:t\mapsto &(p_{\frac{1}{2}\alpha}+p_{\alpha})\Big(\Ad(n_{t})\big(Y+T_{z}(Y)\big)\Big)\\
&=t \ad(U_{\alpha})Y_{-\alpha/2}+P_{\frac{1}{2}}(Y,t)+\frac{t^{2}}{2}\ad(U_{\alpha})^{2}Y_{-\alpha}+P_{1}(Y,t)
\end{align*}
is non-constant. The same reasoning as in the previous case now shows that (\ref{eq fa cap (Ad(n_t)(f cap G(T)))_X=0}) holds if $\frac{1}{2}\alpha$ is a root as well.
\end{proof}

\begin{proof}[Proof of Proposition \ref{Prop Admissible points exist}]
In view of the Lemmas \ref{Lemma W_z subseteq W_z' for z' in open and dense set} and \ref{Lemma Properties A_z} it suffices to prove the existence of one admissible point in $Z$.

Let $z\in Z$ be adapted. If $z$ is admissible, then there is nothing left to prove. Thus we assume that $z$ is not admissible and use it to construct an admissible point.

Recall that  $p_{\fh}$ is the projection $\fa\to\fa/\fa_{\fh}$. Now $\cA_{z}$ is not dense in $\fa/\fa_{\fh}$ by Lemma \ref{Lemma Properties A_z} (\ref{Lemma Properties A_z - item 1}). It follows from Lemma \ref{Lemma Properties A_z} (\ref{Lemma Properties A_z - item 2}) that there exist a $w\in\cW_{z}$ and a wall $\cF$ of $\overline{\cC}$ so that $w\cdot p_{\fh}(\cF)$ is contained in the boundary of $\overline{\cA}$. Let $v\in \cN$ be so that $v\cN_{\emptyset}=w$. By Lemma \ref{Lemma Properties A_z} (\ref{Lemma Properties A_z - item 3}) the wall $p_{\fh}(\cF)$ is contained in the boundary of $\overline{\cA_{v^{-1}\cdot z}}$.
By replacing $z$ by $v^{-1}\cdot z$, we may therefore assume that $p_{\fh}(\cF)$ is contained in the boundary of $\cA_{z}$.

Let $\cO=P\cdot z$.
The compression cone of $Z_{\cO,\cF}$ contains the half-space $\fa^{-}+\fa_{\cF}$. (In fact the compression cone is equal to this half-space.) Therefore we may apply Lemma \ref{Lemma Admissible points in case C contains half-space} to the space $Z_{\cO,\cF}$. Let $y\in Z_{\cO,\cF}$ satisfy $\fh_{z,\cF}=\fh^{\cO,\cF}_{y}$. The point $y$ is adapted by Proposition \ref{Prop Relation adapted points in Z and Z_(O,F)}, and hence $P\cdot y$ is open.
By Lemma \ref{Lemma Admissible points in case C contains half-space} there exists an admissible point $y'\in P\cdot y$. In view of Proposition \ref{Prop parametrization of adapted points} there exist $m\in M$, $a\in A$ and $Y\in\fa^{\circ}_{\reg}$ so that $y'=ma\exp\big(\Phi^{\cO,\cF}_{y}(Y)\big)\cdot y$.

Since the set of order-regular elements is dense in $\fa$, the complement of $p_{\fh}^{-1}(\cA_{z})$ is equal to the closure of the set of order-regular elements in the complement of $p_{\fh}^{-1}(\cA_{z})$. The boundary of $p_{\fh}^{-1}(\cA_{z})$ consists of elements $X\in\fa$ that are not order-regular. Therefore, the set of order-order regular elements in the complement of $p_{\fh}^{-1}(\cA_{z})$ is a union of connected components of the set of order-regular elements. Note that there are only finitely many such connected components. It follows that there exists a connected component $\cR$ of the set of order-regular elements, so that $p_{\fh}(\cR)$ is contained in the complement of $\cA_{z}$ and  $\overline{\cR}$ intersects with the interior of $\cF$.

Let $z':=ma\exp\big(\Phi_{z}(Y)\big)\cdot z$.
We claim that $p_{\fh}(\cR)\subseteq\cA_{z'}$. By Proposition \ref{Prop Relation adapted points in Z and Z_(O,F)} we have $\fh^{\cO,\cF}_{y'}=\fh_{z',\cF}$. Then, in view of Proposition \ref{Prop Limits of subspaces} (\ref{Prop Limits of subspaces - item 3}),
$$
\fh_{z',X}
=(\fh_{z',\cF})_{X}
=(\fh^{\cO,\cF}_{y'})_{X}
$$
for every $X\in\cR$.
Since $y'$ is an admissible point in $Z_{\cO,\cF}$, there exists an element $v'\in \cN$ so that $(\fh^{\cO,\cF}_{y'})_{X}=\Ad(v')\fh_{\emptyset}$. It follows that $p_{\fh}(\cR)\subseteq\cA_{z'}$. This proves the claim.

In view of Lemma \ref{Lemma Properties A_z} (\ref{Lemma Properties A_z - item 4}) there exists a dense and open subset $U$ of the set of adapted points so that
$$
\cA_{z}\cup p_{\fh}(\cR)
\subseteq\cA_{z''}
\qquad(z''\in U).
$$

Let $z''\in U$. If $z''$ is admissible, then we are done. If not, we replace $z$ by $z''$ and repeat the above procedure to find another adapted point $z'$ with $\cA_{z}\subsetneq\cA_{z'}$. It follows from Lemma \ref{Lemma Properties A_z} (\ref{Lemma Properties A_z - item 2}) that after finitely many iterations this process ends, and thus we find an admissible point in $Z$.
\end{proof}

\section{The little Weyl group}
\label{Section Little Weyl group}
In this section we construct the little Weyl group of $Z$.

We define
\begin{align}\label{eq Def W}
\cW
&:=\cW_{z}\\
\nonumber&=\{w\cN_{\emptyset}\in \cN/\cN_{\emptyset}: w\in \cN \text{ and there exist }X\in\fa \text{ so that }\fh_{z,X}=\Ad(w)\fh_{\emptyset}\},
\end{align}
where $z\in Z$ is any admissible point. This set does not depend on the choice of $z$ by Proposition \ref{Prop Admissible points exist}.
We recall that $p_{\fh}$ is the projection $\fa\to\fa/\fa_{\fh}$.

\begin{Thm}\label{Thm cW is the little Weyl group}
The set $\cW$ is a subgroup of $\cN/\cN_{\emptyset}$. Moreover, $\cW$ acts faithfully on $\fa/\fa_{\fh}$ as a reflection group and is as such generated by the simple reflections in the walls of $p_{\fh}(\overline{\cC})$. Moreover, $p_{\fh}(\overline{\cC})$ is a fundamental domain for the action of $\cW$ on $\fa/\fa_{\fh}$. Finally, $\cW$ is equal to the little Weyl group of $Z$ as defined in \cite[Section 9]{KnopKrotz_ReductiveGroupActions}.
\end{Thm}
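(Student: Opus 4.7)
The plan is to prove the four assertions in the theorem in sequence, building on the machinery developed in Sections~\ref{Section Compression cone}--\ref{Section Admissible points}.

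First I would show that $\cW$ is a group. Let $z\in Z$ be admissible and let $v,v'\in\cN$ represent elements $w,w'\in\cW=\cW_{z}$. By Proposition~\ref{Prop limits vs open orbits} the orbit $Pv^{-1}\cdot z$ is open, so by Lemma~\ref{Lemma z adapted implies vz adapted} the point $v^{-1}\cdot z$ is adapted, and the identity $\fh_{v^{-1}\cdot z,X}=\Ad(v^{-1})\fh_{z,\Ad(v)X}$ together with Remark~\ref{Rem Properties of admissible points}\,(b) shows it is admissible. Proposition~\ref{Prop Admissible points exist}\,(ii) then gives $\cW_{v^{-1}\cdot z}=\cW$, so there exists $X\in\fa$ with $\fh_{v^{-1}\cdot z,X}=\Ad(v')\fh_{\emptyset}$. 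Applying $\Ad(v)$ yields $\fh_{z,\Ad(v)X}=\Ad(vv')\fh_{\emptyset}$, hence $vv'\cN_{\emptyset}\in\cW$. Since $\cW\subseteq\cN/\cN_{\emptyset}$ is finite (Lemma~\ref{Lemma cN_emptyset is realized in L_Q and normal in cN}), closure under multiplication makes $\cW$ a subgroup.

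Next I would produce the reflections. For each wall $\cF$ of $\overline{\cC}$, the boundary degeneration $Z_{\cO,\cF}$ has compression cone $\cC+\fa_{\cF}$ by Proposition~\ref{Prop Compression cone of Z_(O,F)}, which is an open half-space because $\cF$ has codimension one. Lemma~\ref{Lemma Admissible points in case C contains half-space} therefore supplies admissible points in $Z_{\cO,\cF}$, and one may form its little Weyl group $\cW^{\cO,\cF}$. Since a half-space can only be a fundamental domain for a group containing a reflection across the bounding hyperplane $\fa_{\cF}$, we obtain a nontrivial reflection $s_{\cF}\in\cW^{\cO,\cF}$. Using the correspondence of adapted points in Proposition~\ref{Prop Relation adapted points in Z and Z_(O,F)} and the compatibility of the parametrizations established in Lemma~\ref{Lemma relation Phi and Phi^(O,F)}, every representative $v\in\cN$ realizing $s_{\cF}$ on $Z_{\cO,\cF}$ also satisfies $\fh_{z,X}=\Ad(vm)\fh_{\emptyset}$ for some $X\in\fa$ and $m\in M$ whenever $z\in\cO$ is appropriately chosen, which places $s_{\cF}\in\cW$.

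I would then use these reflections to derive the fundamental-domain and generation statements. By Lemma~\ref{Lemma Properties A_z}\,(ii), $\cA_{z}=\bigcup_{w\in\cW}w\cdot p_{\fh}(\cC)$, and since $z$ is admissible Lemma~\ref{Lemma Properties A_z}\,(i) says $\cA_{z}$ is dense in $\fa/\fa_{\fh}$; consequently the closed translates $w\cdot p_{\fh}(\overline{\cC})$ cover $\fa/\fa_{\fh}$. Because each $s_{\cF}$ sends $p_{\fh}(\overline{\cC})$ across the wall $p_{\fh}(\cF)$ to an adjacent translate, the standard Tits-style argument for chambers of a reflection arrangement shows that the reflections $s_{\cF}$ already act transitively on the translates and hence generate all of $\cW$, with $p_{\fh}(\overline{\cC})$ as fundamental domain. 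As flagged in the introduction, this argument a priori only yields a faithful reflection action on $\fa/\fa_{E}$ where $\fa_{E}$ is the edge of $\overline{\cC}$ from \eqref{eq Def a_E}; to descend further to $\fa/\fa_{\fh}$ one must check that every wall-reflection $s_{\cF}$ fixes $\fa_{\fh}/\fa_{E}$ pointwise, an explicit computation based on the symmetry \eqref{eq Symmetry in T-map} of $T_{z}$ on the roots attached to a wall, which we defer to the appendix. Faithfulness on $\fa/\fa_{\fh}$ is then automatic, as any element of $\cW$ fixing the interior of a fundamental domain is trivial.

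Finally, to identify $\cW$ with the little Weyl group of \cite{KnopKrotz_ReductiveGroupActions}, I would invoke the description of the latter as the finite reflection group on $\fa/\fa_{\fh}$ with the cone of $G$-invariant central valuations as fundamental domain, together with Brion's identification of that valuation cone with $\overline{\cC}$, which carries over verbatim to the real quasi-affine setting (see also \cite{KnopKrotzSayagSchlichtkrull_SimpleCompactificationsAndPolarDecomposition}). Two finite reflection groups on the same vector space sharing the same fundamental chamber coincide. The main obstacle throughout is the descent from $\fa/\fa_{E}$ to $\fa/\fa_{\fh}$: the map $T_{z}$ encoding $\fh_{z}$ does not decouple along a single wall, so to pin down the action of $s_{\cF}$ on $\fa_{\fh}/\fa_{E}$ one has to track how the $z$-supports of the various root vectors constrain $T_{z}$ in the rank-one situation that arises when all but one wall of $\overline{\cC}$ is flattened.
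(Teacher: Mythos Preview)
Your outline is correct and tracks the paper's proof closely: group law via admissibility and Proposition~\ref{Prop limits vs open orbits}, wall-reflections via the codimension-one boundary degenerations, generation and fundamental domain from the covering $\bigcup_{w\in\cW}w\cdot p_{\fh}(\overline{\cC})=\fa/\fa_{\fh}$, and the descent from $\fa/\fa_{E}$ to $\fa/\fa_{\fh}$ deferred to the appendix computation.

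One point where you diverge from the paper and where your argument is looser: in producing $s_{\cF}\in\cW$, you go from $Z_{\cO,\cF}$ back to $Z$ by lifting a representative of $s_{\cF}$ via Proposition~\ref{Prop Relation adapted points in Z and Z_(O,F)}. The paper runs this the other way. It first takes the element $w\in\cW$ whose chamber $w\cdot p_{\fh}(\cC)$ borders $p_{\fh}(\cC)$ across $p_{\fh}(\cF)$ (such $w$ exists because $z$ is admissible), and then uses the identity $(\fh_{z,\cF})_{X}=\fh_{z,X}$ from Proposition~\ref{Prop Limits of subspaces}\,(\ref{Prop Limits of subspaces - item 3}) for $X$ in a chamber adjacent to $\cF$ to conclude $w\in\cW_{\cO,\cF}$. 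This is cleaner because it never requires matching an admissible point of $Z_{\cO,\cF}$ with an \emph{admissible} point of $Z$; your direction only gives an adapted $z$ via Proposition~\ref{Prop Relation adapted points in Z and Z_(O,F)}, and you then need Proposition~\ref{Prop Admissible points exist}\,(\ref{Prop Admissible points exist - item 2}) to pass from $\cW_{z}$ to $\cW$. Either way the bridge is Proposition~\ref{Prop Limits of subspaces}\,(\ref{Prop Limits of subspaces - item 3}), which you should cite explicitly rather than Lemma~\ref{Lemma relation Phi and Phi^(O,F)}. Also, your sentence ``a half-space can only be a fundamental domain for a group containing a reflection'' presupposes the conclusion; the actual logic is that admissibility forces the two translates of the half-space to cover with disjoint interiors, so $\cW_{\cO,\cF}$ has order exactly $2$, and the nontrivial element, being an orthogonal involution fixing the proper cone $p_{E}(\cF)$, must act trivially on $\fa_{\cF}/\fa_{E}$.
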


We will prove the theorem in a number of steps. We begin with the first assertion in the theorem.

\begin{Prop}\label{Prop cW is subgroup}
$\cW$ is a subgroup of $\cN/\cN_{\emptyset}$.
\end{Prop}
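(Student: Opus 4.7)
The plan is to exploit the fact that $\cN/\cN_{\emptyset}$ is finite (Lemma \ref{Lemma cN_emptyset is realized in L_Q and normal in cN}): it then suffices to show that $\cW$ contains the identity coset and is closed under multiplication. The identity coset is immediate. Fix an admissible point $z$; then $z$ is adapted, so $\cC=\cC_z$ is nonempty by Lemma \ref{Lemma Properties of C_z}, and for any $X\in\cC$ we have $\fh_{z,X}=\Ad(m)\fh_{\emptyset}$ with $m\in M$. Since $M$ centralizes $\fa$ and sits inside $L_Q$, we have $m\in N_{L_Q}(\fa)=\cN_{\emptyset}$, so the identity coset belongs to $\cW$.

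For closure under multiplication, let $w_1,w_2\in\cW$ with representatives $v_1,v_2\in\cN$. Using that $M\subseteq\cN_{\emptyset}$ absorbs any $M$-factor into the coset, we may choose $X_1\in\fa$ with $\fh_{z,X_1}=\Ad(v_1)\fh_{\emptyset}$. By Proposition \ref{Prop limits vs open orbits} the orbit $Pv_1^{-1}\cdot z$ is then open, and Remark \ref{Rem Properties of admissible points}(b) (together with Lemma \ref{Lemma z adapted implies vz adapted}) yields that $v_1^{-1}\cdot z$ is again admissible. The crucial input now is Proposition \ref{Prop Admissible points exist}(ii), which gives
\[
\cW_{v_1^{-1}\cdot z}=\cW_{z}=\cW.
\]
Hence $w_2\in\cW_{v_1^{-1}\cdot z}$, so there exist $Y\in\fa$ and $m\in M$ with $\fh_{v_1^{-1}\cdot z,Y}=\Ad(v_2 m)\fh_{\emptyset}$. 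Transporting by $v_1$ via Proposition \ref{Prop Limits of subspaces}(iv) gives
\[
\fh_{z,\Ad(v_1)Y}=\Ad(v_1)\fh_{v_1^{-1}\cdot z,Y}=\Ad(v_1 v_2 m)\fh_{\emptyset}.
\]
Since $m\in M\subseteq\cN_{\emptyset}$, the element $v_1 v_2 m$ lies in the coset $v_1 v_2\cN_{\emptyset}$, and by Lemma \ref{Lemma fh_(z,X)=Ad(v)fh_empty implies v in cN} we have $v_1 v_2 m\in\cN$, so $w_1 w_2=v_1 v_2\cN_{\emptyset}\in\cW$.

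There is essentially no obstacle at this stage: the argument is entirely driven by the translation-invariance $\cW_{v^{-1}\cdot z}=\cW_z$ at admissible points, which is exactly the content of Proposition \ref{Prop Admissible points exist}(ii). All of the genuine work — constructing admissible points and establishing this invariance — has already been done, and the subgroup structure falls out formally.
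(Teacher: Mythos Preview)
Your proof is correct and follows essentially the same route as the paper's: both arguments show that $\cW$ is nonempty and closed under multiplication in the finite group $\cN/\cN_{\emptyset}$, and the key step in both is that if $v_1\cN_{\emptyset}\in\cW$ then $v_1^{-1}\cdot z$ is again admissible, so $\cW_{v_1^{-1}\cdot z}=\cW$ and one may transport the limit defining $w_2$ back by $\Ad(v_1)$. Your write-up is slightly more explicit (you spell out that $e\cN_{\emptyset}\in\cW$ and cite Proposition~\ref{Prop Admissible points exist}(ii) directly, whereas the paper absorbs this into the very definition of $\cW$), but the substance is identical.
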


\begin{proof}
Let $z\in Z$ be admissible. Let $w\in\cW$ and let $v\in\cN$ be so that $w=v\cN_{\emptyset}$. By Proposition \ref{Prop limits vs open orbits} the $P$-orbit $Pv^{-1}\cdot z$ is open, and hence $v^{-1}\cdot z$ is admissible; see Remark \ref{Rem Properties of admissible points} (\ref{Rem Properties of admissible points - item 2}).
Let $w'\in\cW$ and let $v'\in\cN$ be so that $w'=v'\cN_{\emptyset}$. In view of Proposition \ref{Prop limits vs open orbits} there exists a $m\in M$ so that for every $X\in\Ad(vv')\cC$
$$
\Ad(v^{-1})\fh_{z,X}
=\fh_{v^{-1}\cdot z,\Ad(v^{-1})X}
=\Ad(v'm)\fh_{\emptyset},
$$
and hence $\fh_{z,X}=\Ad(vv'm)\fh_{\emptyset}$. Therefore, $ww'=vv'\cN_{\emptyset}\in\cW$. It follows that $w\cW\subseteq \cW$,
and hence, since $\cW$ is finite,
$$
w\cW=\cW.
$$
We thus see that $\cW$ is closed under multiplication. As $\cW$ is finite, it is a subgroup of $\cN/\cN_{\emptyset}$.
\end{proof}

It follows from Proposition \ref{Prop limits vs open orbits}, Proposition \ref{Prop Admissible points exist} and Lemma \ref{Lemma Properties A_z} that
\begin{equation}\label{eq Weyl chambers in fa/fa_fh - I}
\fa/\fa_{\fh}
=\bigcup_{w\in\cW}w\cdot p_{\fh}(\overline{\cC})
\end{equation}
and
\begin{equation}\label{eq Weyl chambers in fa/fa_fh - II}
w\cdot p_{\fh}(\cC)\cap w'\cdot p_{\fh}(\cC)=\emptyset
\qquad (w,w'\in\cW, w\neq w').
\end{equation}

For an open $P$-orbit $\cO$ in $Z$ and a face $\cF$ of $\overline{\cC}$ we write $\cW_{\cO,\cF}$ for the subgroup (\ref{eq Def W}) of $\cN/\cN_{\emptyset}$ for the spherical space $Z_{\cO,\cF}$.

\begin{Lemma}\label{Lemma cW_F subgroup of cW}
Let $\cO$ be an open $P$-orbit in $Z$ and let $\cF$ be a wall of $\overline{\cC}$. Then $\cW_{\cO,\cF}$ is a subgroup of $\cW$ of order $2$. Moreover, $\cW_{\cO,\cF}$ stabilizes $p_{\fh}(\cF)$. Finally, $\cW_{\cO,\cF}$ does not depend on the open $P$-orbit $\cO$.
\end{Lemma}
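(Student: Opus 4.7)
The plan is to establish the three claims in turn, anchored by the observation that the compression cone of $Z_{\cO,\cF}$ is a half-space. Since $\cF$ is a wall of $\overline{\cC}$, the span $\fa_{\cF}$ is a hyperplane in $\fa$, so Proposition \ref{Prop Compression cone of Z_(O,F)} yields that the compression cone $\cC_{\cF} = \cC + \fa_{\cF}$ of $Z_{\cO,\cF}$ is an open half-space with boundary $\fa_{\cF}$. Since $\fa_{\fh} \subseteq \fa_{E} \subseteq \fa_{\cF}$ (the second inclusion because $\fa_{E}$ is a subspace contained in the closed half-space $\overline{\cC_{\cF}}$, hence in its boundary $\fa_{\cF}$), the projection $p_{\fh}(\overline{\cC_{\cF}})$ is a closed half-space in $\fa/\fa_{\fh}$ with boundary hyperplane $p_{\fh}(\fa_{\cF})$. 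Applying (\ref{eq Weyl chambers in fa/fa_fh - I}) and (\ref{eq Weyl chambers in fa/fa_fh - II}) to $Z_{\cO,\cF}$, the translates $w \cdot p_{\fh}(\overline{\cC_{\cF}})$ for $w \in \cW_{\cO,\cF}$ tile $\fa/\fa_{\fh}$ with pairwise disjoint interiors; two closed half-spaces can tile a vector space this way only when they are complementary, giving $|\cW_{\cO,\cF}| = 2$.

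To place $\cW_{\cO,\cF}$ inside $\cN/\cN_{\emptyset}$, I would first verify that the parabolic $Q$ from the local structure theorem for $Z$ also serves this role for $Z_{\cO,\cF}$. For $z \in \cO$ adapted and $y = [\fh_{z,\cF}]$, conditions (i)--(iv) of Proposition \ref{Prop Local structure theorem} at $y$ follow from their counterparts at $z$: (i) is a direct computation from (\ref{eq Formula for fh_(z,F)}) using that $\fg_{-\alpha} \not\subseteq \fq$ for $\alpha \in \Sigma(Q)$; (iii) uses the $\fa$-stability of $\fl_{Q,\nc}$; and (iv) uses the inclusion $\fa \cap \fh_{z}^{\perp} \subseteq \fa \cap \fh_{z,\cF}^{\perp}$ from Lemma \ref{Lemma adapted points in Z_(O,F)}. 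The uniqueness clause following Proposition \ref{Prop Local structure theorem} then gives $\cN^{Z_{\cO,\cF}} = \cN$ and $\cN_{\emptyset}^{Z_{\cO,\cF}} = \cN_{\emptyset}$.

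Next I would take any $z \in \cO$ admissible and set $y = [\fh_{z,\cF}]$. By Lemma \ref{Lemma properties fh_(z,cF)}, $\fh_{z,\cF}$ is $\fa_{\cF}$-stable, so for any order-regular $X \in \fa$ and any $X_{\cF}$ in the interior of $\cF$,
$$
(\fh^{\cO,\cF}_{y})_{X} = (\fh_{z,\cF})_{X} = (\fh_{z,\cF})_{X_{\cF}+tX} = \fh_{z,\,X_{\cF}+tX}
$$
for all sufficiently small $t > 0$: the second equality is obtained by factoring $\exp(s(X_{\cF}+tX))$ and using that $\Ad(\exp(sX_{\cF}))$ fixes $\fh_{z,\cF}$, while the third is Proposition \ref{Prop Limits of subspaces} (\ref{Prop Limits of subspaces - item 3}) applied to $X_{\cF}+tX$, which is order-regular with $X_{\cF}$ in the closure of its connected component. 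Since $z$ is admissible, the right-hand side equals $\Ad(w_{X})\fh_{\emptyset}$ for some $w_{X} \in \cN$, which both shows that $y$ is admissible in $Z_{\cO,\cF}$ and, after writing $\fh_{\emptyset}^{\cO,\cF} = \Ad(m_{0})\fh_{\emptyset}$ with $m_{0} \in M$, identifies the non-trivial element $s = v\cN_{\emptyset}$ of $\cW_{\cO,\cF}$ with $w_{X}\cN_{\emptyset} \in \cW$; thus $\cW_{\cO,\cF} \subseteq \cW$.

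Finally, set $w_{\cF} := w_{X}$ for $X$ order-regular with $\gamma_{\cF}(X) > 0$, where $\gamma_{\cF}$ is the linear functional cutting out $\fa_{\cF}$. By Proposition \ref{Prop limits vs open orbits}, $w_{\cF}$ is characterized as the unique element of $\cW$ for which $\Ad(w_{\cF})\cC$ is the chamber of the $\cW$-tiling adjacent to $\cC$ across $\cF$; this description involves only $\overline{\cC}$ and $\cF$, so $\cW_{\cO,\cF} = \{e, w_{\cF}\}$ is independent of $\cO$. For the stabilization,
$$
p_{\fh}(\overline{\cC}) \cap w_{\cF} p_{\fh}(\overline{\cC}) \subseteq p_{\fh}(\overline{\cC_{\cF}}) \cap w_{\cF} p_{\fh}(\overline{\cC_{\cF}}) = p_{\fh}(\fa_{\cF}),
$$
so this intersection lies in $p_{\fh}(\overline{\cC}) \cap p_{\fh}(\fa_{\cF}) = p_{\fh}(\cF)$; conversely, letting $t \to 0^{+}$ in the construction places the interior of $\cF$ inside $w_{\cF}\overline{\cC}$, giving the reverse inclusion. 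Applying $w_{\cF}$ to the resulting equality and using $w_{\cF}^{2} = e$ yields $w_{\cF} \cdot p_{\fh}(\cF) = p_{\fh}(\cF)$. The main obstacle, if any, is the composite limit identity in the third paragraph coordinating $\fh_{z,\cF}$ with the ordinary limit $\fh_{z,\,X_{\cF}+tX}$.
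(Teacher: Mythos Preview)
Your proposal is correct and follows essentially the same approach as the paper: both arguments hinge on the half-space structure of $\cC_{\cF}$ (giving order $2$) and on the identity $(\fh_{z,\cF})_{X}=\fh_{z,X'}$ for suitable order-regular $X'$ adjacent to $\cF$, which embeds $\cW_{\cO,\cF}$ into $\cW$ and pins down the nontrivial element intrinsically via $p_{\fh}(\overline{\cC})\cap w_{\cF}\cdot p_{\fh}(\overline{\cC})=p_{\fh}(\cF)$.

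The only differences are cosmetic. Where the paper applies Proposition~\ref{Prop Limits of subspaces}~(\ref{Prop Limits of subspaces - item 3}) directly with $X_{\cF}\in\overline{\cR}$ and $Y\in\cR$ to get $(\fh_{z,\cF})_{Y}=\fh_{z,Y}$, you route through the auxiliary point $X_{\cF}+tX$; this is equivalent but slightly longer, and your own caveat about the ``composite limit identity'' is unnecessary---the direct form already gives what you need. You also take the extra step of proving that $y$ is admissible in $Z_{\cO,\cF}$, whereas the paper only uses that $y$ is adapted and (implicitly) invokes Proposition~\ref{Prop Admissible points exist}~(\ref{Prop Admissible points exist - item 2}) to pass from $\cW_{y}^{\cO,\cF}$ to $\cW_{\cO,\cF}$. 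Finally, you explicitly verify that $Q$, $\cN$, and $\cN_{\emptyset}$ agree for $Z$ and $Z_{\cO,\cF}$; the paper leaves this implicit in the surrounding sections (Lemmas~\ref{Lemma properties fh_(z,cF)} and~\ref{Lemma adapted points in Z_(O,F)}), so your care here is a plus rather than a deviation.
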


\begin{proof}
Let $\cR$ be a connected component of the set of order-regular elements in $\fa$ so that $\overline{\cR}$ intersects with the relative interior of $\cF$ and $\cR\cap\cC=\emptyset$. Let $w\in\cW$ be the element so that $p_{\fh}(\cR)\subseteq w\cdot p_{\fh}(\cC)$ and let $v\in \cN$ be a representative of $w$.

Let $z\in Z$ be admissible and let $y\in Z_{\cO,\cF}$ be so that $\fh^{\cO,\cF}_{y}=\fh_{z,\cF}$. By Proposition \ref{Prop Relation adapted points in Z and Z_(O,F)} the point $y$ is adapted.
It follows from Proposition \ref{Prop Limits of subspaces} (\ref{Prop Limits of subspaces - item 3}) and Proposition \ref{Prop limits vs open orbits} that there exists an $m\in M$ so that for all $X\in\cR$
$$
(\fh^{\cO,\cF}_{y})_{X}
=(\fh_{z,\cF})_{X}
=\fh_{z,X}
=\Ad(vm)\fh_{\emptyset},
$$
and hence $w=v\cN_{\emptyset}\in\cW_{\cO,\cF}$.

The compression cone of $Z_{\cO,\cF}$ is given by $\cC_{\cF}=\cC+\fa_{\cF}$, see Proposition \ref{Prop Compression cone of Z_(O,F)}. Since $\cF$ is a wall, the space $p_{\fh}(\fa_{\cF})$ has codimension $1$ in $\fa/\fa_{\fh}$, and hence $p_{\fh}(\cC_{\cF})$ is an open half-space. Therefore, also $w\cdot p_{\fh}(\cC_{\cF})$ is an open half-space. Moreover, $p_{\fh}(\cC_{\cF})$ and $w\cdot p_{\fh}(\cC_{\cF})$ are disjoint, and thus
$$
\fa/\fa_{\fh}
=p_{\fh}(\overline{\cC_{\cF}})\cup w\cdot p_{\fh}(\overline{\cC_{\cF}})
\quad\text{and}\quad
p_{\fh}(\cC_{\cF})\cap w\cdot p_{\fh}(\cC_{\cF})=\emptyset.
$$
It follows that the group $\cW_{\cO,\cF}$ is of order $2$. Since $w$ is non-trivial, we have $\cW_{\cO,\cF}=\{1,w\}$

If $\cR'$ is another connected component of the set of order-regular elements in $\fa$ so that $\overline{\cR'}$ intersects with the relative interior of $\cF$ and $\overline{\cR'}\cap\cC=\emptyset$, then there exists a $w'\in \cW$ so that $p_{\fh}(\cR')\subseteq w'\cdot p_{\fh}(\cC)$. The arguments above show that $\cW_{\cO,\cF}=\{1,w'\}$ and it follows that $w=w'$. Therefore, all connected components $\cR'$ of the set of order-regular elements in $\fa$ so that $\overline{\cR'}$ intersects with the relative interior of $\cF$ and $\overline{\cR'}\cap\cC=\emptyset$ have the property that $p_{\fh}(\cR')\subseteq w\cdot p_{\fh}(\cC)$. This shows that the relative interior of $p_{\fh}(\cF)$ is contained in $w\cdot p_{\fh}(\overline{\cC})$ and hence $p_{\fh}(\cF)$ is a wall of $w\cdot p_{\fh}(\overline{\cC})$. The element $w$ stabilizes $p_{\fh}(\overline{\cC})\cap w\cdot p_{\fh}(\overline{\cC})$. The latter set is equal to the common wall $p_{\fh}(\cF)$.

Finally, if $\cO'$ is another open $P$-orbit in $Z$, then the arguments above yield an element $w'\in\cW$ so that $w'\cdot p_{\fh}(\overline{\cC})\cap p_{\fh}(\overline{\cC})=p_{\fh}(\cF)$. Now both $\overline{w\cdot \cC}$ and $\overline{w'\cdot \cC}$ share the wall $\cF$ with $\overline{\cC}$. It follows that $w\cdot \cC=w'\cdot \cC$, and hence $w'=w$.
\end{proof}

In view of Lemma \ref{Lemma cW_F subgroup of cW} we may for a wall $\cF$ of $\overline{\cC}$ define
$$
\cW_{\cF}
:=\cW_{\cO,\cF},
$$
where $\cO$ is any open $P$-orbit in $Z$.
In the following lemma we identify the non-trivial element in $\cW_{\cF}$.
The lemma heavily relies on Proposition \ref{Prop Form of simple spherical roots}.

\begin{Lemma}\label{Lemma cW_alpha generated by simple reflection in alpha}
For every wall $\cF$ of $\overline{\cC}$ there exists a $s_{\cF}\in \cN$ that acts on $\fa/\fa_{\fh}$ as the reflection in the hyperplane $\fa_{\cF}/\fa_{\fh}$. Moreover,
$$
\cW_{\cF}
=\{e\cN_{\emptyset},s_{\cF}\cN_{\emptyset}\}.
$$
\end{Lemma}

\begin{proof}
Let $z\in Z$ be an admissible point and let $\alpha$ be an indecomposable element in $\cM_{z}$ so that (\ref{eq cF=oline cC cap ker alpha}) holds.

If $\alpha\in\Sigma\cup2\Sigma$, then the simple reflection $s$ in $\alpha$ is contained in the Weyl group $W$ of $\Sigma$ and normalizes $\fa_{\fh}$ as $\alpha\big|_{\fa_{\fh}}=0$. Note that $s$ acts on $\fa/\fa_{\fh}$ by reflecting in $\fa_{\cF}/\fa_{\fh}$.

If $\alpha\notin  \Sigma\cup2\Sigma$, then by Proposition \ref{Prop Form of simple spherical roots} there exist $\beta,\gamma\in\Sigma(Q)$ so that $\alpha=\beta+\gamma$, $\beta$ and $\gamma$ are orthogonal and $\spn(\beta^{\vee},\gamma^{\vee})\cap\fa_{\fh}\neq \{0\}$. Let $\sigma_{\beta}\in W$ and $\sigma_{\gamma}\in W$ be the simple reflections in $\beta$ and $\gamma$ respectively. Then $s:=\sigma_{\beta}\sigma_{\gamma}$ acts on $\fa/\fa_{\fh}$ by reflecting in $\ker\alpha/\fa_{\fh}=\fa_{\cF}/\fa_{\fh}$.

Let $s_{\cF}\in\cN$ be a representative of $s$. It remains to prove that $s_{\cF}\cN_{\emptyset}\in\cW_{\cF}$. Let $v\in\cN$ be a representative of the non-trivial element in $\cW_{\cF}$. Let $\cO=P\cdot z$ and let $y$ be an admissible point in $Z_{\cO,\cF}$. The compression cone $\cC_{\cF}$ is an open half-space. Therefore, for every $X\in\cC_{\cF}$ we have
$$
(\fh^{\cO,\cF}_{y})_{X}
=\Ad(m)\fh_{\emptyset}
\quad\text{and}\quad
(\fh^{\cO,\cF}_{y})_{-X}
=\Ad(m'v)\fh_{\emptyset}
$$
for some elements $m,m'\in M$.
Both $\fh_{\emptyset}$ and $\Ad(v)\fh_{\emptyset}$ are $\fa$-stable.
Let $\iota$ be the Pl{\"u}cker embedding.
If $v_{1},\dots, v_{n}\in\fg$ is a basis of $\fh_{\emptyset}$, then $\iota(\fh_{\emptyset})=\R(v_{1}\wedge\cdots\wedge v_{n})$. Since $\fh_{\emptyset}$ is $\fa$-stable, we may assume that every $v_{i}$ is a joint eigenvector for $\ad(\fa)$. Now $\iota\big(\fh_{\emptyset}\big)$  is a joint eigenspace for $\ad(\fa)$ with weight equal to the sum of the weights of $v_{1},\dots, v_{n}$. From (\ref{eq Def fh_emptyset}) it follows that this weight is equal to $-2\rho_{Q}$, where $\rho_{Q}$ is the half-sum of the roots in $\Sigma(Q)$ counted with multiplicity. Likewise, $\ad(\fa)$ acts on the line $\iota\big(\Ad(v)\fh_{\emptyset}\big)$ with weight $-2\Ad^{*}(v)\rho_{Q}$.

Let $\cM^{\cO,\cF}_{y}$ be the monoid (\ref{eq Def M_z}) for the space $Z_{\cO,\cF}$ and the adapted point $y$. Then
$$
\cM^{\cO,\cF}_{y}
\subseteq\R_{>0}\alpha.
$$
Therefore, if $X\in\cC_{\cF}$ and $Y\in\fh^{\cO,\cF}_{y}\setminus\{0\}$, then $(\R Y)_{-X}$ is a line with eigenweight differing by a non-zero multiple of $\alpha$ from an eigenweight of a line $(\R Y')_{X}$ with $Y'\in\fh^{\cO,\cF}_{y}\setminus\{0\}$. Hence, every $\fa$-weight that occurs in $\Ad(v)\fh_{\emptyset}$ differs by a multiple of $\alpha$ from an $\fa$-weight in $\fh_{\emptyset}$. It follows that $\Ad^{*}(v)\rho_{Q}=\rho_{Q}+r\alpha$ for some $r\in \R\setminus\{0\}$. Since the lengths of $\Ad^{*}(v)\rho_{Q}$ and $\rho_{Q}$ are equal, it follows that
\begin{equation}\label{eq comparison lenghts rho}
\|\rho_{Q}\|^{2}
=\|\rho_{Q}\|^{2}+2r\langle\rho_{Q},\alpha\rangle+r^{2}\|\alpha\|^{2}.
\end{equation}
Because $\alpha$ is either a root in $\Sigma(Q)$ or a sum of roots in $\Sigma(Q)$, we have $\langle\rho_{Q},\alpha\rangle> 0$. Therefore, the equation (\ref{eq comparison lenghts rho}) has precisely one non-zero solution $r$.
As $\Ad^{*}(s_{\cF})\rho_{Q}\in\rho_{Q}+\R\alpha$ and $\Ad^{*}(s_{\cF})\rho_{Q}\neq\rho_{Q}$, it follows that
$$
\Ad^{*}(v)\rho_{Q}
=\Ad^{*}(s_{\cF})\rho_{Q}.
$$
Therefore, $s_{\cF}v^{-1}\in N_{L_{Q}}(\fa)$. By Lemma \ref{Lemma cN_emptyset is realized in L_Q and normal in cN} the latter group is equal to $\cN_{\emptyset}$, and hence $s_{\cF}\cN_{\emptyset}=v\cN_{\emptyset}$.
\end{proof}

\begin{proof}[Proof of Theorem \ref{Thm cW is the little Weyl group}]
In view of Lemmas \ref{Lemma cW_F subgroup of cW} and \ref{Lemma cW_alpha generated by simple reflection in alpha} the group $\cW_{\mathrm{refl}}$ generated by the simple reflections in the walls of $p_{\fh}\big(\overline{\cC}\big)$ is a subgroup of $\cW$. It follows from (\ref{eq Weyl chambers in fa/fa_fh - I}) and (\ref{eq Weyl chambers in fa/fa_fh - II}) that in fact $\cW_{\mathrm{refl}}=\cW$. In particular, $p_{\fh}(\overline{\cC})$ is a fundamental domain for the action of $\cW$ on $\fa/\fa_{\fh}$. Comparison to \cite[Section 9]{KnopKrotz_ReductiveGroupActions} shows that $\cW$ is equal to the little Weyl group.
Indeed, in view of \cite[Theorem 9.5, Corollary 9.6 \& Corollary 12.5]{KnopKrotz_ReductiveGroupActions} the little Weyl group is a reflection group acting on $\fa/\fa_{\fh}$ and is generated by the simple reflections in the walls of the cone $p_{\fh}\big(\overline{\cC}\big)$.
\end{proof}

\section{Spherical root system}
\label{Section Spherical root system}
In this section we attach a root system $\Sigma_{Z}$ to $Z$ of which $\cW$ is the Weyl group.

We recall the edge $\fa_{E}$ of the compression from (\ref{eq Def a_E}).

\begin{Lemma}\label{Lemma W normalizes edge}
$\cW$ acts trivially on the subspace $\fa_{E}/\fa_{\fh}$ in $\fa/\fa_{\fh}$.
\end{Lemma}

\begin{proof}
The little Weyl group $\cW$ is generated by simple reflections in the walls of $p_{\fh}(\overline{\cC})$. Since $\fa_{E}/\fa_{\fh}$ is contained in each of these walls, the simple reflections act trivially on $\fa_{E}/\fa_{\fh}$. It follows that $\cW$ acts trivially on $\fa_{E}/\fa_{\fh}$.
\end{proof}

It follows from Lemma \ref{Lemma W normalizes edge} that $\cW$ acts on $\fa/\fa_{E}$ in a natural manner. We write $p_{E}$ for the projection $\fa\to\fa/\fa_{E}$. If $w\in\cW$, then the action of $w$ commutes with $p_{E}$. It follows from (\ref{eq Weyl chambers in fa/fa_fh - II}) that $w\cdot p_{E}(\cC)=p_{E}(\cC)$ if and only if $w=e$.
Therefore, $w$ acts trivially on $\fa/\fa_{E}$ if and only if $w=e$.

We now come to the main result in this section.

\begin{Thm}
The group $\cW$ is a crystallographic group, i.e., it is the Weyl group of a root system $\Sigma_{Z}$ in $(\fa/\fa_{E})^{*}$.
\end{Thm}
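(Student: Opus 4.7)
The plan is to produce a $\cW$-stable full-rank lattice in $(\fa/\fa_{E})^{*}$; the classical trace argument for lattice-preserving finite reflection groups then forces the order of any product of two reflections in $\cW$ to lie in $\{1,2,3,4,6\}$, so that $\cW$ becomes the Weyl group of a (possibly non-reduced) root system $\Sigma_{Z}$ in $(\fa/\fa_{E})^{*}$. I will take $\Sigma_{Z}$ to be the $\cW$-orbit of the spherical roots $\alpha_{\cF}$ attached to the walls $\cF$ of $\overline{\cC}$ via (\ref{eq cF=oline cC cap ker alpha}).

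First I verify that the quotient map $\cV \to W(\fg,\fa)$ makes the action of $\cV$ on $\fa$ preserve the subspace $\fa_{E}$. By definition $\cV \subseteq \cN$ normalises $\fa_{\fh}$, and $\cW = \cV/\cN_{\emptyset}$ stabilises $\fa_{E}/\fa_{\fh}$ inside $\fa/\fa_{\fh}$ by Lemma \ref{Lemma W normalizes edge}. Since $\cN_{\emptyset} = N_{L_{Q}}(\fa)$ acts trivially on $\fa/\fa_{\fh}$ --- its reflections come from roots of $\Sigma(\fa,\fl_{Q,\nc})$ whose coroots lie in $\fa_{\fh}$, and the central factor $MA$ acts trivially on $\fa$ --- an elementary lifting then shows that $\cV$ itself preserves $\fa_{E} \subseteq \fa$. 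Dually, the image of $\cV$ in $W(\fg,\fa)$ preserves the annihilator $\fa_{E}^{\perp} \subseteq \fa^{*}$, which I identify with $(\fa/\fa_{E})^{*}$. Since $W(\fg,\fa)$ preserves the root lattice $Q := \Z\Sigma$, the intersection $\Lambda := Q \cap \fa_{E}^{\perp}$ is a $\cW$-stable subgroup of $(\fa/\fa_{E})^{*}$.

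Next I observe that each spherical root $\alpha_{\cF}$ lies in $\Lambda$: it is either an element of $\Sigma \cup 2\Sigma$ or a sum $\beta + \gamma$ of two orthogonal roots by Proposition \ref{Prop Form of simple spherical roots}, hence an element of $Q$; and it vanishes on $\fa_{E} \subseteq \cF \subseteq \ker \alpha_{\cF}$. Because $\bigcap_{\cF} \ker \alpha_{\cF}$ equals the edge of $p_{E}(\overline{\cC})$, which is trivial in $\fa/\fa_{E}$, the $\alpha_{\cF}$ span $\fa_{E}^{\perp}$ and therefore $\Lambda$ has full rank in $(\fa/\fa_{E})^{*}$.

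Finally I invoke the standard trace argument: for any two reflections $s,t$ in a finite subgroup of $\mathrm{GL}(\fa/\fa_{E})$ preserving a full-rank lattice, the trace $\tr(st)$ is an integer; and since $st$ is a rotation by angle $2\pi/m$ in the $2$-plane spanned by the two coroots and the identity on its orthogonal complement, $\tr(st) = \dim(\fa/\fa_{E}) - 2 + 2\cos(2\pi/m)$ where $m = \mathrm{ord}(st)$. Integrality of $2\cos(2\pi/m)$ then forces $m \in \{1,2,3,4,6\}$. This shows $\cW$ is crystallographic, and defining $\Sigma_{Z} := \cW \cdot \{\alpha_{\cF}\}_{\cF}$ produces a root system in $(\fa/\fa_{E})^{*}$ whose Weyl group is precisely $\cW$. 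The delicate step is the very first one --- lifting the $\cW$-stability of $\fa_{E}/\fa_{\fh}$ to stability of $\fa_{E}$ itself under the preimage in $\cV$ --- which crucially uses that $\cN_{\emptyset}$ acts trivially on the full quotient $\fa/\fa_{\fh}$, not only on $\fa/\fa_{E}$.
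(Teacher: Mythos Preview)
Your argument is essentially the paper's: both build the $\cW$-stable full-rank lattice $\Lambda=\Z\Sigma\cap(\fa/\fa_{E})^{*}$ and then invoke the standard fact that a finite reflection group preserving a lattice is the Weyl group of a root system. The paper simply cites Bourbaki (VI.2.5, Proposition~9) where you spell out the trace argument; and the ``delicate lifting step'' you flag is not really delicate --- since $\cV\subseteq\cN$ normalises $\fa_{\fh}$ and $\cW$ stabilises $\fa_{E}/\fa_{\fh}$, it is immediate that $\cV$ stabilises $\fa_{E}$.

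There is one genuine slip in your last sentence. Setting $\Sigma_{Z}:=\cW\cdot\{\alpha_{\cF}\}_{\cF}$ need not produce a crystallographic root system: you get a finite $\cW$-stable set spanning $(\fa/\fa_{E})^{*}$ with a reflection attached to each element, but the integrality axiom $s_{\alpha}(\beta)-\beta\in\Z\alpha$ can fail. For instance if some $\alpha_{\cF}=2\beta$ with $\beta\in\Sigma$, then $s_{\cF}(\alpha_{\cF'})-\alpha_{\cF'}$ lies in $\Z\Sigma\cap\R\alpha_{\cF}=\Z\beta$, which is in general strictly larger than $\Z\alpha_{\cF}$. The paper (following Bourbaki's proof) instead takes for each reflection $s\in\cW$ the \emph{primitive} elements $\pm\alpha$ of $\Lambda$ on the $(-1)$-eigenline of $s$; with that normalisation the integrality axiom is automatic. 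Your trace argument already establishes the theorem as stated, so this only affects the explicit description of $\Sigma_{Z}$, not the conclusion.
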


\begin{proof}
We will verify the criterion in Section VI. 2. 5 of \cite{Bourbaki_GroupesEtAlgebresDeLie_IV-V-VI}.  For this we define
$$
\Lambda
:=(\fa/\fa_{E})^{*}\cap\Z\Sigma(\fa).
$$
Let $z\in Z$ be adapted. We recall the monoid $\cM_{z}$ from (\ref{eq Def M_z}) and note that $\cM_{z}\subseteq \Lambda$.
It follows from Proposition \ref{Prop Edge of compression cone is normalizer} (\ref{Prop Edge of compression cone is normalizer - item 1}) and (\ref{eq Def M_z}) that $\Lambda$ has full rank in $(\fa/\fa_{E})^{*}$.

It follows from Theorem \ref{Thm cW is the little Weyl group} and Lemma \ref{Lemma W normalizes edge} that $\cW$ acts faithfully as a finite reflection group on $\fa/\fa_{E}$. Moreover, since $\cW$ is a subquotient of $N_{G}(\fa)$, it preserves the lattice $\Lambda$.
Thus by \cite[Proposition 9 in Section VI.2.5]{Bourbaki_GroupesEtAlgebresDeLie_IV-V-VI} there exist a root system $\Sigma_{Z}$ in $(\fa/\fa_{E})^{*}$ for which $\cW$ is the Weyl group.
\end{proof}

The proof of Proposition 9 in Section VI. 2. 5 of \cite{Bourbaki_GroupesEtAlgebresDeLie_IV-V-VI} provides a construction of $\Sigma_{Z}$. Each reflection $s$ in $\cW$ determines a root as follows. Let $D_{s}\subseteq (\fa/\fa_{E})^{*}$ be the $-1$-eigenspace of $s$. Then the primitive elements $\alpha$, $-\alpha$ in $D_{s}\cap \Lambda$ belong to $\Sigma_{Z}$. All roots in $\Sigma_{Z}$ are obtained in this manner.
This root system is called the spherical root system of the real spherical homogeneous space $Z$.

\begin{Rem}\,
\begin{enumerate}[(i)]
\item In the complex case, the root system constructed here is identical to the one in \cite[Section 6]{Knop_AutomorphismsRootSystemsAndCompactificationsOfHomogeneousVarities}. If $Z$ is symmetric, then Theorem 6.7 in {\em loc. cit.} makes a comparison between $\Sigma_{Z}$ and the restricted roots system $\Sigma^{r}_{Z}$ of the complex symmetric space $Z$. Namely,
    $\Sigma_{Z}$ is the reduced root system associated to $2\Sigma^{r}_{Z}$.
\item Similarly to each real reductive symmetric space $Z$, a restricted root system $\Sigma^{r}_{Z}$ is attached in \cite[Theorem 5]{Rossmann_TheStructureOfSemisimpleSymmetricSpaces}. This root system is in general not reduced.
  The root system  $\Sigma_{Z}$ is the reduced root system associated to $2\Sigma^{r}_{Z}$.
\end{enumerate}
\end{Rem}

\section{Reduction to quasi-affine spaces}
\label{Section Reduction to quasi-affine spaces}
Many results for quasi-affine real spherical homogeneous spaces hold true also for real spherical spaces that are not quasi-affine.
These results can be proven by a simple reduction to the quasi-affine case.
In this section we drop the standing assumption that $Z$ is quasi-affine.

\medbreak

By Chevalley's theorem there exists a real rational representation $(\pi,V)$ of $G$ and a vector $v_{H}\in V$ so that $H$ is equal to the stabilizer of $\R v_{H}$. Let $\chi$ be the character with which $H$ acts on $\R v_{H}$. Set
$$
G'
:=G\times\R^{\times}
\quad\text{and}\quad
H'
:=\{\big(h,\chi(h)^{-1}\big)\in G':h\in H\}.
$$
Then
$$
Z'
:=G'/H'
$$
is a quasi-affine real spherical homogeneous space.
We denote the natural projection $Z'\to Z$ by $\pi$.

The results in the previous sections apply to the space $Z'$. Many of them imply the analogous assertions for $Z$. We will list here the most relevant.

We define $P'$ to be the minimal parabolic subgroup $P\times\R^{\times}$ of $G'$. Define
$$
M':=M\times\{1\},
\quad
A'=A\times\R^{\times}
\quad\text{and}\quad
N_{P}':=N_{P}\times\{1\}.
$$
Then $P'=M'A'N_{P}'$ is a Langlands decomposition of $P'$.

A point $z\in Z$ is called adapted (with respect to $P=MAN_{P}$) if there exists an adapted point $z'\in Z'$ (with respect to $P'=M'A'N'_{P}$) so that $\pi(z')=z$.
Since $\{e\}\times\R^{\times}\subseteq A'$ and the sets of adapted points in $Z'$ are stable under multiplication by elements in $A'$, the set $\pi^{-1}(z)$ consists of adapted points if and only if $z$ is adapted.

For an adapted point $z'\in Z'$ let $L_{Q}'=Z_{G'}\big(\fa'\cap\fh_{z'}^{\perp}\big)$ and let $Q'=L_{Q}'P'$. Define
$$
Q
:=\pi(Q')
\quad\text{and}\quad
L_{Q}
:=\pi(L_{Q}').
$$
Then $Q$ is a parabolic subgroup containing the minimal parabolic subgroup $P$, and (\ref{Prop LST holds for adapted points - item 1}) and (\ref{Prop LST holds for adapted points - item 2}) in Proposition \ref{Prop LST holds for adapted points} hold true for all adapted points $z\in Z$.

We set
$$
\fh_{\emptyset}
:=(\fl_{Q}\cap\fh_{z_{0}})+\overline{\fn}_{Q}
$$
for some adapted point $z_{0}\in Z$.
We further define the compression cone of $Z$ to be
$$
\cC
:=\{X\in\fa: \fh_{z,X}=\Ad(m)\fh_{\emptyset}\text{ for some }m\in M\}
$$
where $z\in Z$ is an adapted point.
The compression cone $\cC'$ for $Z'$ is related to $\cC$ by the identity
$$
\cC'
=\cC\times\R
\subseteq\fa\times\R.
$$
It follows from Proposition \ref{Prop relation between compression cones} that $\cC$ does not depend on the adapted point $z\in Z$ chosen to define it.

We call an adapted point $z\in Z$ admissible if for every order-regular element $X\in\fa$ there exists a $w\in N_{G}(\fa)$ so that $\fh_{z,X}=\Ad(w)\fh_{\emptyset}$.
Then $z$ is admissible if and only if $\pi^{-1}(z)$ consists of admissible points in $Z'$. It follows from Proposition \ref{Prop Admissible points exist} (\ref{Prop Admissible points exist - item 1}) that the set of admissible points is open and dense in the set of adapted points in $Z$  with respect to the subspace topology.
Define
$$
\cN_{\emptyset}
:=\{w\in N_{G}(\fa):\Ad(w)\fh_{\emptyset}=\Ad(m)\fh_{\emptyset}\text{ for some }m\in M\},
$$
and
$$
\cW
:=\{w\cN_{\emptyset}\in N_{G}(\fa)/\cN_{\emptyset}: w\in \cN \text{ and there exist }X\in\fa \text{ so that }
    \fh_{z,X}=\Ad(w)\fh_{\emptyset}\},
$$
where $z$ is an admissible point in $Z$. Then $\pi$ induces a bijection between $\cW$ and the little Weyl group $\cW'$ of $Z'$. In particular $\cW$ is a finite group and acts on $\fa/\fa_{\fh}$ as a reflection group. Let $p_{\fh}$ be the projection $\fa\to\fa/\fa_{\fh}$. Then $\cW$ is generated by the simple reflections in the walls of $p_{\fh}(\overline{\cC})$ and $p_{\fh}(\overline{\cC})$ is a fundamental domain for the action of $\cW$ on $\fa/\fa_{\fh}$.

If $\fa_{E}$ denotes the edge of $\overline{\cC}$, then the edge of $\overline{\cC'}$ is given by $\fa_{E}'=\fa_{E}\times\R$. Therefore,
there is a canonical identification $\phi:\fa/\fa_{E}\to(\fa\times\R)/\fa_{E}'$. The map $\phi$ intertwines the action of $\cW$ and $\cW'$. Finally, if $\Sigma_{Z'}$ is the spherical root system of $Z'$, then
$$
\Sigma_{Z}
:=\{\alpha\circ\phi:\alpha\in\Sigma_{Z'}\}
$$
is a root system, which is called the spherical root system of $Z$.

\section*{Declarations}
{\bf Data Availability} Data sharing is not applicable to this article as no data sets were generated or analyzed during the current study.\\
{\bf Conflict of interest} On behalf of all authors, the corresponding author states that there is no conflict of interest.



\def\adritem#1{\hbox{\small #1}}
\def\distance{\hbox{\hspace{3.5cm}}}
\def\apetail{@}
\def\addSayag{\vbox{
\adritem{E. Sayag}
\adritem{Department of Mathematics}
\adritem{Ben-Gurion University of the Negev}
\adritem{P.O.B. 653}
\adritem{Be'er Sheva 8410501}
\adritem{Israel}
\adritem{E-mail: sayage{\apetail}math.bgu.ac.il}
}
}
\def\addKuit{\vbox{
\adritem{J.~J.~Kuit}
\adritem{Institut f\"ur Mathematik}
\adritem{Universit\"at Paderborn}
\adritem{Warburger Stra{\ss}e 100}
\adritem{33089 Paderborn}
\adritem{Germany}
\adritem{E-mail: j.j.kuit{\apetail}gmail.com}
}
}
\mbox{}
\vfill
\hbox{\vbox{\addKuit}\vbox{\distance}\vbox{\addSayag}}

\end{document}